\documentclass[10 pt, reqno]{amsart}
\usepackage{color}
\addtolength{\oddsidemargin}{-.35in}
	\addtolength{\evensidemargin}{-.35in}
	\addtolength{\textwidth}{.7in}

	\addtolength{\topmargin}{-.2in}
	\addtolength{\textheight}{.4in}

\usepackage{verbatim}
\usepackage{amssymb}
\usepackage{amscd}
\usepackage{amsmath}
\usepackage{amsthm}
\usepackage{eucal}
\usepackage{setspace}
\usepackage{url}
\usepackage[utf8]{inputenc}
\usepackage{booktabs}

\usepackage[hyperfootnotes=false, colorlinks, linkcolor={blue}, citecolor={magenta}, filecolor={blue}, urlcolor={blue}]{hyperref}

\allowdisplaybreaks
\renewcommand{\Im}{\mathrm{Im}}
\renewcommand{\Re}{\mathrm{Re}}
\renewcommand{\H}{\mathbb H}
\newcommand{\leftexp}[2]{{\vphantom{#2}}^{#1}%
      \kern-\scriptspace%
      {#2}}
\newcommand{\A}{{\mathbb A}}
\newcommand{\E}{{\mathcal E}}

\newcommand{\Tr}{\mathrm{Tr}}
\newcommand{\Q}{{\mathbb Q}}
\newcommand{\Z}{{\mathbb Z}}

\newcommand{\R}{{\mathbb R}}
\newcommand{\C}{{\mathbb C}}
\newcommand{\bs}{\backslash}

\newcommand{\p}{\mathfrak p}
\newcommand{\g}{\mathfrak g}
\newcommand{\U}{\mathcal U}

\newcommand{\GL}{{\rm GL}}

\newcommand{\SL}{{\rm SL}}

\newcommand{\GSp}{{\rm GSp}}
\newcommand{\Sp}{{\rm Sp}}

\newcommand{\Aut}{{\rm Aut}}

\newcommand{\vol}{{\mathrm {vol}}}

\newcommand{\diag}{{\rm diag}}

\newcommand{\Hom}{{\rm Hom}}

\newcommand{\sym}{{\rm sym}}
\newcommand{\vl}{{\rm vol}}

\newcommand{\Lambdap}{\Lambda^+}

\renewcommand{\AA}{{\mathcal A}}
\newcommand{\HH}{{\mathbb H}}
\newcommand{\mat}[4]{{\setlength{\arraycolsep}{0.5mm}\left[\begin{array}{cc}#1&#2\\#3&#4\end{array}\right]}}

\newtheorem{lemma}{Lemma}[section]
\newtheorem{theorem}[lemma]{Theorem}

\newtheorem{corollary}[lemma]{Corollary}
\newtheorem{proposition}[lemma]{Proposition}
\newtheorem{definition}[lemma]{Definition}
\theoremstyle{remark}
\newtheorem{remark}[lemma]{Remark}

\begin{document}

\bibliographystyle{plain}

\title[Lowest weight modules and nearly holomorphic forms]{Lowest weight modules of $\Sp_4(\R)$ and nearly holomorphic Siegel modular forms}

\thanks{A.S.\ is partially supported by EPSRC grant EP/L025515/1. A.P.\ and R.S.\ are supported by NSF grant DMS--$1100541$.}
\author{Ameya Pitale}
\address{Department of Mathematics
\\ University of Oklahoma\\ Norman\\
   OK 73019, USA}
\email{apitale@math.ou.edu}

\author{Abhishek Saha}
\address{School of Mathematical Sciences \\
  Queen Mary University of London\\
 London E1 4NS \\
  UK} \email{abhishek.saha@qmul.ac.uk}

\author{Ralf Schmidt}
\address{Department of Mathematics
\\ University of Oklahoma\\ Norman\\
   OK 73019, USA}
\email{rschmidt@math.ou.edu}

\begin{abstract}
We undertake a detailed study of the lowest weight modules for the Hermitian symmetric pair $(G,K)$, where $G=\Sp_4(\R)$ and $K$ is its maximal compact subgroup. In particular, we determine $K$-types and composition series, and write down explicit differential operators that navigate all the highest weight vectors of such a module starting from the unique lowest-weight vector. By rewriting these operators in classical language, we show that the automorphic forms on $G$ that correspond to the highest weight vectors are exactly those that arise from \emph{nearly holomorphic} vector-valued Siegel modular forms of degree $2$.

Further, by explicating the algebraic structure of the relevant space of \emph{$\mathfrak{n}$-finite}  automorphic forms, we are able to prove  a \emph{structure theorem} for the space of nearly holomorphic vector-valued Siegel modular forms of (arbitrary) weight $\det^\ell \sym^m$ with respect to an arbitrary congruence subgroup of $\Sp_4(\Q)$. We show that the \emph{cuspidal} part of this space is the \emph{direct sum} of subspaces obtained by applying explicit differential operators to \emph{holomorphic} vector-valued cusp forms of weight $\det^{\ell'} \sym^{m'}$ with $(\ell', m')$ varying over a certain set. The structure theorem for the space of \emph{all modular forms} is similar, except that we may now have an additional component coming from certain nearly holomorphic forms of weight $\det^{3}\sym^{m'}$ that cannot be obtained from holomorphic forms.

As an application of our structure theorem, we prove several arithmetic results concerning nearly holomorphic modular forms that improve previously known results in that direction.
\end{abstract}

 \maketitle

\section{Introduction}
\subsection{Motivation}
In a series of influential works~\cite{Shimura1986, Shimura1987, Shimuraholoproj, shimura2000}, Shimura defined the notion of a \emph{nearly holomorphic function} on a K\"{a}hler manifold $\mathbb{K}$ and proved various properties of such functions. Roughly speaking, a nearly holomorphic function on such a manifold is a polynomial of some functions $r_1, \ldots r_m$ on $\mathbb{K}$ (determined by the K\"{a}hler structure), over the ring of all holomorphic functions. For example, if $\mathbb{K} = \H_n$, the symmetric space for the group $\Sp_{2n}(\R)$, then $r_i$ are the entries of $\Im(Z)^{-1}$. When there is a notion of holomorphic modular forms on $\mathbb{K}$, one can define nearly holomorphic (scalar or vector-valued) modular forms by replacing holomorphy by near-holomorphy in the definition of modular forms.

The prototype of a nearly holomorphic modular form in the simplest case when $\mathbb{K}$ equals the complex upper-half plane $\H$ is provided by the function
\begin{equation}\label{prototype}
 f(z):= \bigg(\sum_{(c,d) \neq (0,0)}(cz+d)^{-k} |cz +d|^{-2s} \bigg)_{s=0}.
\end{equation}
Here $k$ is a positive even integer. The function $f$ transforms like a modular form of weight $k$ with respect to $\SL_2(\Z)$. If $k >2$, the function is holomorphic, but the case $k=2$ involves a non-holomorphic term of the form $\frac{c}{y}$, where $c$ is a constant.

More generally, special values of Eisenstein series\footnote{The typical situation is as follows. Let $E(z,s)$ be an appropriately normalized Eisenstein series on some Hermitian symmetric space that converges absolutely for $\Re(s)>s_0$ and transforms like a modular form in the variable $z$. Suppose that $E(z,k)$ is holomorphic for some $k \in \Z$. Then $E(z,s')$ is typically a nearly holomorphic modular form for all $s'$ such that $s_0 <s' \le k$, $s' \in \Z$; see~\cite[Thm. 4.2]{Shimura1987}.}, and their restrictions to lower-dimensional manifolds, provide natural examples of nearly holomorphic modular forms. On the other hand, such restrictions of Eisenstein series appear in the theory of $L$-functions via their presence in integrals of Rankin-Selberg type. Thus, the arithmetic theory of nearly holomorphic forms is closely related to the arithmetic theory of $L$-functions. The theory was developed by Shimura in substantial detail and was exploited by him and other authors to prove algebraicity and Galois-equivariance of critical values of various $L$-functions. We refer the reader to the papers~\cite{bluher, bochpilot96, heimboch, sah2, ShimuraHilbert, shimura2000} for some examples. The theory of nearly holomorphic modular forms and the differential operators related to them has also been very fruitful in the study of $p$-adic measures related to modular $L$-functions~\cite{bochschmidt, panchishkin2004, panchishkin2005} and in the derivation of various arithmetic identities~\cite{gradechi, lanphiernearholo}.

From now on, we restrict ourselves to the symplectic case, and we assume further that the base field is $\Q$. The relevant manifold $\mathbb{K}$ is then the degree $n$ Siegel upper half space $\H_n$ consisting of  symmetric $n$ by $n$ matrices $Z =X+iY$ with $Y>0$. For each non-negative integer $p$, we let $N^p(\H_n)$ denote the space of all polynomials of degree $\le p$ in the entries of $Y^{-1}$ with holomorphic functions on $\H_n$ as coefficients. The space $N(\H_n)=\bigcup_{p\ge 0}N^p(\H_n)$ is the space of nearly holomorphic functions on $\H_n$. Note that $N^0(\H_n)$ is the space of holomorphic functions on $\H_n$.

Given any congruence subgroup $\Gamma$ of $\Sp_{2n}(\Q)$ and any irreducible finite-dimensional rational representation $(\eta, V)$ of $\GL_n(\C)$, we let $N_{\eta}^p(\Gamma)$ denote the space of functions $F: \H_n \rightarrow V$ such that
\begin{enumerate}
 \item $F \in N^p(\H_n)$,
 \item $F(\gamma Z) =\eta(CZ+D) (F(Z))$ for all $\gamma = \mat{A}{B}{C}{D} \in \Gamma$.
 \item $F$ satisfies the ``no poles at cusps" condition. This condition states that the Fourier expansion of $F$ at any cusp is supported  on the positive semi-definite matrices.\footnote{If $n>1$, the ``no poles at cusps" condition follows automatically from the previous two conditions by the Koecher principle.}
\end{enumerate}

The set $N_{\eta}^p(\Gamma)$ (which is clearly a complex vector-space) is known as the space of nearly holomorphic vector-valued modular forms of weight $\eta$ and nearly holomorphic degree $p$ for $\Gamma$. In the special case $(\eta, V) = (\det^k, \C)$, we denote the space $N_{\eta}^p(\Gamma)$ by $N_{k}^p(\Gamma)$. We let $N_{\eta}^p(\Gamma)^\circ \subset N_{\eta}^p(\Gamma)$ denote the subspace of cusp forms (the cusp forms can be defined in the usual way via a vanishing condition at all cusps for degenerate Fourier coefficients). We also denote $M_\eta(\Gamma) = N_\eta^0(\Gamma)$, $S_\eta(\Gamma) = N_\eta^0(\Gamma)^\circ$, $N_\eta(\Gamma)= \bigcup_{p\ge 0} N^p_\eta(\Gamma)$ and $N_\eta(\Gamma)^\circ= \bigcup_{p\ge 0} N^p_\eta(\Gamma)^\circ$.

In the case $n=1$, Shimura proved~\cite[Thm.\ 5.2]{Shimura1987} a complete \emph{structure theorem} that describes the set $N_{k}^p(\Gamma)$ precisely for every weight $k$ and every congruence subgroup $\Gamma$ of $\SL_2(\Z)$. For simplicity, write $N_k(\Gamma) =\bigcup_{p\ge 0} N_k^p(\Gamma)$. Let $R$ denote the classical \emph{weight-raising operator} on $\bigcup_{k}N_k(\Gamma)$ that acts on elements of $N_k(\Gamma)$ via the formula $\frac ky+2i\frac{\partial}{\partial z}$. It can be easily checked that $R$ takes  $N_{k}^p(\Gamma)$ to  $N_{k+2}^{p+1}(\Gamma)$. Then a slightly simplified version of the structure theorem of  Shimura says that $N_0(\Gamma)=\C$, and for $k>0$,
\begin{equation}\label{structuredegree1}N_k(\Gamma)=R^{\frac{k-2}2}(\C E_2) \:\oplus\:\bigoplus_{\substack{\ell\ge 1}} R^{\frac{k-\ell}2}\left(M_\ell(\Gamma)\right), \qquad N_k(\Gamma)^\circ
      =\bigoplus_{\substack{\ell\ge 1}} R^{\frac{k-\ell}2}\left(S_\ell(\Gamma)\right),
\end{equation}
where we understand $R^{v} = 0$ if $v \notin \Z_{\ge0}$, and where $E_2$ denotes the weight 2 nearly holomorphic Eisenstein series obtained by putting $k=2$ in~\eqref{prototype}. For the refined structure theorem taking into account the nearly holomorphic degree, we refer the reader to~\cite{pssdeg1}, where we reprove Shimura's results using representation-theoretic methods.

Shimura used his structure theorem to prove that the cuspidal holomorphic projection map from $N_k(\Gamma)$ to $S_k(\Gamma)$ has nice $\Aut(\C)$-equivariance properties, and he even extended these results to the half-integral case~\cite[Prop.\ 9.4]{ShimuraHilbert}. As an application, Shimura obtained many arithmetic results for ratios of Petersson norms and critical values of $L$-functions.

In the case $n >1$, Shimura showed \cite[Prop.\ 14.2]{shimura2000} that if the lowest weight of $\eta$ is ``large enough" compared to the nearly holomorphic degree, then the space $N^p_{\eta}(\Gamma)$ is \emph{spanned} by the functions obtained by letting differential operators act on various spaces $M_{\eta'}(\Gamma)$. Using this, he was able to construct an analogue of the projection map under some additional assumptions. But the arithmetic results thus obtained are weaker than those for $n=1$.

There is another aspect in which the state of our understanding of nearly holomorphic modular forms is unsatisfactory, namely that the precise meaning of these objects in the modern language of automorphic forms on reductive groups, \`{a} la Langlands, has not been worked out. Most work done so far for nearly holomorphic forms has been in the classical language. There has been some work in interpeting these forms from the point of view of vector bundles and sheaf theory, see \cite{harrisarithmetic85,harrisarithmetic86,nappari,Urban2013}. There has also been some work on interpreting the differential operators involved in the language of Lie algebra elements, but this has been carried out explicitly only in the case $n=1$~\cite{gradechi, harris3}. A detailed investigation from the point of view of automorphic representations has so far been lacking in the case $n>1$.

The objective of this paper is to address the issues discussed above in the case $n=2$, i.e., when $\Gamma$ is a  congruence subgroup of $\Sp_4(\Q)$. The relevant $\eta$'s in this case are the representations $\det^\ell \sym^m$ for integers $\ell, m$ with $m\ge 0$, and it is natural to use $N_{\ell, m}(\Gamma)$ to denote the corresponding space of nearly holomorphic forms. We achieve the following goals.

 \begin{itemize}

 \item We prove a structure theorem for  $N_{\ell, m}(\Gamma)$ that is (almost) as complete and explicit as the $n=1$ case. As a consequence, we are able to prove arithmetic results for this space (as well as for certain associated ``isotypic projection" maps, and ratios of Petersson inner products) that  improve previously known results in this direction.

  \item We make a detailed study of the spaces $N_{\ell, m}(\Gamma)$ in the language of $(\g, K)$-modules and automorphic forms for the group $\Sp_4(\R)$. We analyze the $K$-types, weight vectors and composition series, write down \emph{completely explicit} operators from the classical as well as Lie-theoretic points of view, explain exactly how nearly holomorphic forms arise in the Langlands framework, and describe the automorphic representations attached to them.
 \end{itemize}
In the rest of this introduction we explain these results and ideas in more detail.

\subsection{The structure theorem in degree 2}

Let $\Gamma$ be a congruence subgroup of $\Sp_4(\Q)$. In order to prove a structure theorem for $N_{\ell, m}(\Gamma)$, it is necessary to have suitable differential operators that generalize the weight-raising operator considered above. In fact, it turns out that one needs four operators, which we term $X_+$, $U$, $E_{+}$ and $D_{+}$.

Each of these four operators acts on the set $\bigcup_{\ell, m}N_{\ell, m}(\Gamma)$. They take the subspace $N_{\ell, m}^p(\Gamma)$ to the subspace $N_{\ell_1, m_1}^{p_1}(\Gamma)$, where the integers $\ell_1, m_1, p_1$ are given by the following table.

\label{Nplusoperatorsnearholotableintro}
\begin{equation}\label{introtable}\renewcommand{\arraystretch}{1.5}\renewcommand{\arraycolsep}{0.4cm}
 \begin{array}{ccccc}
  \toprule
   \text{operator}&\text{ $\ell_1$}&\text{ $m_1$}&\text{$p_1$}\\
  \toprule
   X_+&\ell&m+2&p+1\\
  \midrule

   U&\ell+2&m-2&p+1\\
  \midrule

   E_{+}&\ell+1&m&p+1\\
  \midrule

   D_+&\ell+2&m&p+2\\

  \bottomrule
 \end{array}
\end{equation}
\medskip

Note that, in the above list, $E_+$ is the only operator that changes the parity of $\ell$. For the explicit formulas for the above differential operators, see~\eqref{operatorsonfunctions1} -- \eqref{operatorsonfunctions8} of this paper. We note that the operator $D_+$ was originally studied by Maass in his book~\cite{maassbook} in the case of scalar-valued forms. The operator $X_+$ (for both scalar and vector-valued forms) was already defined in \cite{BochererSatohYamazaki1992}, where it was called $\delta_{\ell+m}$. Also, the operator $U$ was considered by Satoh~\cite{satoh} (who called it $D$) in the very special case $m=2$. To the best of our knowledge, explicit formulas  for the operators (except in the cases mentioned above) had not been written out before this work.

More generally, if $\mathcal{X}_+$ denotes the free monoid consisting of finite strings of the above four operators, then each element $X \in  \mathcal{X}_+$ takes $N_{\ell,m}^p(\Gamma)$ to $N_{\ell_1,m_1}^{p_1}(\Gamma)$ for some integers $\ell_1, m_1, p_1$ (uniquely determined by $\ell$, $m$, $p$ and $X$) that can be easily calculated using the above table. In particular, the non-negative integer $v= p_1-p$ depends only on $X$; we call it the degree of $X$. For example, the operator $D_+^rU^s \in \mathcal{X}_+$ takes the space $N_{\ell,2s}^p(\Gamma)$ to $N_{\ell+2s+2r,0}^{2r+s+p}(\Gamma)$ and has degree $2r+s$.

Let $X$, $\ell$, $m$, $\ell_1$, $m_1$, $v$ be as above. We show that $X$ has the following properties.
\begin{enumerate}
 \item (Lemma~\ref{slashliecommute}) For all $\gamma \in \GSp_4(\R)^+$, we have
  $$
   (XF)|_{\ell_1, m_1}\gamma = X(F|_{\ell,m}\gamma).
  $$
 \item (Lemma~\ref{preserveeissiegel}) $X$ takes  $N_{\ell, m}(\Gamma)^\circ$ to $N_{\ell_1, m_1}(\Gamma)^\circ$ and takes the orthogonal complement of $N_{\ell, m}(\Gamma)^\circ$ to the orthogonal complement of $N_{\ell_1, m_1}(\Gamma)^\circ$.
 \item (Proposition \ref{peterssonequivprop}) There exists a constant $c_{\ell, m, X}$ (depending only on $\ell$, $m$, $X$) such that for all $F, G$ in $S_{\ell, m}(\Gamma)$,
  $$
   \langle X F, XG \rangle  = c_{\ell, m, X}\langle F , G\rangle.
  $$
 \item (Proposition \ref{proparithmeticitydiff}) For all $\sigma \in \Aut(\C)$, we have
  $$
   \leftexp{\sigma}( (2 \pi)^{-v}XF ) =  (2 \pi)^{-v}X(\leftexp{\sigma}F).
  $$
\end{enumerate}
We now state a coarse version of our structure theorem for cusp forms.
\begin{theorem}[Structure theorem for cusp forms, coarse version]\label{structuretheorem}
 Let $\ell, m$ be integers with $m \ge 0$. For each pair of integers $\ell', m'$, there is a (possibly empty\footnote{Indeed, $\mathcal{X}_{\ell', m'}^{\ell, m}$ is empty unless $m'\ge 0$, $0 \le \ell' \le \ell$, $0 \le \ell'+m' \le \ell+m$, and some additional parity conditions are satisfied. Moreover, $\mathcal{X}_{\ell, m}^{\ell, m}$ is always the singleton set consisting of the identity map whenever $\ell, m$ are non-negative integers.}) finite subset $\mathcal{X}_{\ell', m'}^{\ell, m}$ of $\mathcal{X}_+$ such that the following hold.

  \begin{enumerate}
   \item Each element $X \in  \mathcal{X}_{\ell', m'}^{\ell, m}$ acts injectively on $M_{\ell', m'}(\Gamma)$ and takes this space to $N_{\ell, m}(\Gamma)$.
   \item We have an orthogonal direct sum  decomposition
    $$
     N_{\ell,m}(\Gamma)^\circ=\bigoplus_{\ell'=1}^\ell
 \bigoplus_{m'=0}^{\ell+m-\ell'}\;\sum_{X \in \mathcal{X}_{\ell', m'}^{\ell, m}} X(S_{\ell',m'}(\Gamma)).
    $$
 \end{enumerate}
\end{theorem}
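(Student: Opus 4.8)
The plan is to translate the statement into the language of automorphic forms on $\Sp_4(\R)$, decompose the relevant cuspidal space into isotypic components for lowest weight modules, and then read off the asserted decomposition from the explicit navigation of highest weight vectors carried out in the earlier sections. Concretely, the first step is to pass through the dictionary between nearly holomorphic vector-valued Siegel cusp forms and automorphic forms: a form $F\in N_{\ell,m}(\Gamma)^\circ$ of nearly holomorphic degree $p$ corresponds to a vector $\Phi_F$ in the space $\mathcal{A}^\circ$ of $\mathfrak n$-finite cuspidal automorphic forms on $\Gamma\bs\Sp_4(\R)$, where $\Phi_F$ is a highest weight vector of one of the $K$-types that, under this dictionary, encode nearly holomorphic forms of weight $(\ell,m)$; holomorphic cusp forms, i.e.\ the elements of $S_{\ell',m'}(\Gamma)$, correspond precisely to lowest weight vectors, and every highest weight vector in $\mathcal{A}^\circ$ arises this way. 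Being a space of cusp forms, $\mathcal{A}^\circ$ is an orthogonal direct sum of irreducible $(\g,K)$-modules, only finitely many appearing in each $K$-type, and $\mathfrak n$-finiteness forces each constituent to be a lowest weight module; since such a module is determined by its lowest $K$-type $\det^{\ell'}\sym^{m'}$ and is itself $\mathfrak n$-finite, we obtain an orthogonal isotypic decomposition
$$
\mathcal{A}^\circ \;\cong\; \bigoplus_{\ell',\,m'} \pi(\ell',m') \otimes S_{\ell',m'}(\Gamma),
$$
where $\pi(\ell',m')$ is the (underlying $(\g,K)$-module of the) irreducible lowest weight module of $\Sp_4(\R)$ with lowest $K$-type $\det^{\ell'}\sym^{m'}$ and the multiplicity space is canonically $S_{\ell',m'}(\Gamma)$ via the lowest weight vector. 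Distinct $(\ell',m')$ give inequivalent, hence orthogonal, components, and only finitely many contribute to a fixed $(\ell,m)$.

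The representation-theoretic heart is the next step: using the explicit description of the $K$-types and composition series of $\pi(\ell',m')$ and the differential operators $X_+$, $U$, $E_+$, $D_+$ navigating all of its highest weight vectors from the lowest weight vector $v_0$ (established in the earlier part of the paper), one shows that inside $\pi(\ell',m')$ the span of the highest weight vectors lying in the $K$-types attached to weight $(\ell,m)$ is exactly $\sum_X \C\,(X v_0)$, the sum being over the $X\in\mathcal{X}_+$ whose weight shifts — read off \eqref{introtable} — carry $(\ell',m')$ to $(\ell,m)$. One then defines $\mathcal{X}_{\ell',m'}^{\ell,m}$ to be a subset of these $X$ chosen so that the vectors $Xv_0$ still span this space and each $X$ in it acts injectively on the lowest $K$-type; the existence of such a subset, the combinatorial range outside of which it must be empty ($1\le\ell'\le\ell$, $0\le\ell'+m'\le\ell+m$, and the parity conditions), and the fact that it reduces to $\{\mathrm{id}\}$ precisely when $(\ell',m')=(\ell,m)$, all fall out of this module-theoretic analysis. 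For the cuspidal space there is nothing more to check, because every lowest weight module occurring in $\mathcal{A}^\circ$ genuinely supports holomorphic cusp forms — this is where the cuspidal statement is cleaner than the one for all modular forms, where an extra $\det^3\sym^{m'}$ component can appear.

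Finally, translating the previous step back through the dictionary and using that each $X$ intertwines the slash actions (Lemma~\ref{slashliecommute}, so $XF$ is again $\Gamma$-equivariant) and preserves cuspidality (Lemma~\ref{preserveeissiegel}, so $X(S_{\ell',m'}(\Gamma))\subseteq N_{\ell,m}(\Gamma)^\circ$), the isotypic decomposition above becomes
$$
N_{\ell,m}(\Gamma)^\circ \;=\; \bigoplus_{\ell'=1}^{\ell}\;\bigoplus_{m'=0}^{\ell+m-\ell'}\;\sum_{X\in\mathcal{X}_{\ell',m'}^{\ell,m}}X\big(S_{\ell',m'}(\Gamma)\big),
$$
the outer sum being an orthogonal direct sum because it matches the orthogonal isotypic decomposition of $\mathcal{A}^\circ$; and injectivity of each $X\in\mathcal{X}_{\ell',m'}^{\ell,m}$ on $M_{\ell',m'}(\Gamma)$ follows from injectivity of $X$ on the lowest $K$-type of $\pi(\ell',m')$, a purely module-theoretic fact that does not see cuspidality. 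The main obstacle is the representation-theoretic step: determining the $K$-type multiplicities of the $\pi(\ell',m')$ in all cases — in particular the non-generic small-$\ell'$ range, where $\pi(\ell',m')$ is not the full generalized Verma module on its lowest $K$-type — and checking that $X_+$, $U$, $E_+$, $D_+$ really do reach every highest weight vector with enough injective representatives among the chosen $X$. This is exactly the content of the detailed $(\g,K)$-module computations carried out earlier in the paper; granted those, the spectral assembly above is essentially formal.
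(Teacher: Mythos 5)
Your proposal is correct and follows essentially the same route as the paper: it combines the dictionary between nearly holomorphic cusp forms and $\mathfrak{n}$-finite cuspidal automorphic forms (Lemma~\ref{FPhilemma}, Proposition~\ref{Nplmautformprop}), the isotypic decomposition of $\AA(\Gamma)^\circ_{\mathfrak{n}\text{-fin}}$ into irreducibles $L(\ell'+m',\ell')$ with multiplicities $\dim S_{\ell',m'}(\Gamma)$ (Proposition~\ref{AA0nfindecomp2prop}), and the navigation of highest weight vectors by $X_+$, $U$, $E_+$, $D_+$ (Propositions~\ref{navigateKtypesprop} and~\ref{navigateKtypesell1prop}), exactly as in the paper's proof of Theorem~\ref{cuspidalstructuretheorem}. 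The one point worth noting is that the injectivity assertion on all of $M_{\ell',m'}(\Gamma)$ (not just the cuspidal part) additionally uses that every holomorphic form generates an irreducible module (Proposition~\ref{holomorphicsimplemodule}), which your module-theoretic argument implicitly requires.
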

For the refined version of this result, see Theorem~\ref{cuspidalstructuretheorem}, which contains an exact description of the sets  $\mathcal{X}_{\ell', m'}^{\ell, m}$. We also formulate a version of this theorem for scalar valued cusp forms (Corollary~\ref{cuspidalstructuretheoremcor2}), as well as deduce a result for forms of a fixed nearly holomorphic degree (Corollary~\ref{cuspidalstructuretheoremcor3}).

Next, we turn to a structure theorem for the whole space, including the non-cusp forms. This situation turns out to be more complicated. Indeed, we need to now also include certain non-holomorphic objects among our building blocks. This is to be expected from the $n=1$ situation, where the nearly holomorphic Eisenstein series $E_2$ appears in the direct sum decomposition~\eqref{structuredegree1}.

For each $m \ge 0$, we define a certain subspace $M^*_{3,m}(\Gamma)$  of $N^1_{3,m}(\Gamma)$ consisting of forms that are annihilated by two differential operators that we call $L$ and $E_{-}$ (see Section \ref{s:nearholofn} for their explicit formulas). From the definition, it is immediate that $M^*_{3,m}(\Gamma)$ contains $M_{3,m}(\Gamma)$. However, it may potentially contain more objects.
These extra elements in $M^*_{3,m}(\Gamma)$ cannot exist if $M_{1,m}(\Gamma)=\{0\}$ (which is the case, for instance, when $\Gamma = \Sp_4(\Z))$; moreover, if they exist, they cannot be cuspidal, must lie inside $N^1_{3,m}(\Gamma)$, and cannot be obtained by applying our differential operators to holomorphic modular forms of any weight. Furthermore, we can prove that the space $M^*_{3,m}(\Gamma)$ is $\Aut(\C)$-invariant.

Now, we may state our general structure theorem as follows.

\begin{theorem}[Structure theorem for all modular forms, coarse version]\label{structuretheoremgeneralinto}
 Let $\ell, m$ be integers with $\ell>0$ and $m \ge 0$. For each pair of integers $\ell', m'$, let $\mathcal{X}_{\ell', m'}^{\ell, m}$ be as in Theorem~\ref{structuretheorem}. Then we have a  direct sum  decomposition
 $$
  N_{\ell,m}(\Gamma)=\bigoplus_{\substack{\ell'=1\\ \ell' \neq 3 }}^\ell\bigoplus_{m'=0}^{\ell+m-\ell'}\;\sum_{X \in \mathcal{X}_{\ell', m'}^{\ell, m}} X(M_{\ell',m'}(\Gamma))\oplus\bigoplus_{m'=0}^{\ell+m-3}\;\sum_{X \in \mathcal{X}_{\ell', m'}^{\ell, m}} X(M^*_{3,m'}(\Gamma)).
 $$
 This decomposition is orthogonal in the sense that forms lying in different constituents, and such that at least one of them is cuspidal, are orthogonal with respect to the Petersson inner product.
\end{theorem}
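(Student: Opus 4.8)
The plan is to bootstrap from the cuspidal structure theorem (Theorem~\ref{structuretheorem}) by peeling off the non-cuspidal part, and then to analyze that part via the representation theory of lowest weight modules of $\Sp_4(\R)$ developed in this paper.

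\medskip

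\emph{Step 1 (reduction to the non-cuspidal part).} Using the (convergent) Petersson pairing of a nearly holomorphic form against a cusp form, write $N_{\ell',m'}(\Gamma)=N_{\ell',m'}(\Gamma)^\circ\oplus N_{\ell',m'}(\Gamma)^\perp$; in particular $M_{\ell',m'}(\Gamma)=S_{\ell',m'}(\Gamma)\oplus\mathcal{E}_{\ell',m'}(\Gamma)$ with $\mathcal{E}_{\ell',m'}(\Gamma):=M_{\ell',m'}(\Gamma)^\perp$, and $M^*_{3,m'}(\Gamma)=S_{3,m'}(\Gamma)\oplus\mathcal{M}^*_{3,m'}(\Gamma)$ with $\mathcal{M}^*_{3,m'}(\Gamma):=M^*_{3,m'}(\Gamma)\cap N_{3,m'}(\Gamma)^\perp$, using that the ``extra'' elements of $M^*_{3,m'}(\Gamma)$ are non-cuspidal. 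By Lemma~\ref{preserveeissiegel} every $X\in\mathcal{X}_+$ respects these splittings, and each $X$ is injective on $M_{\ell',m'}(\Gamma)$ (Theorem~\ref{structuretheorem}(1)) and on $M^*_{3,m'}(\Gamma)$. Hence the cuspidal part of the asserted identity is exactly the decomposition of Theorem~\ref{structuretheorem} (the $\ell'=3$ cuspidal summand being carried by $X(S_{3,m'}(\Gamma))\subseteq X(M^*_{3,m'}(\Gamma))$), and the orthogonality clause is automatic: cuspidal--cuspidal orthogonality is part of Theorem~\ref{structuretheorem}, and a cusp form is always orthogonal to a non-cusp form. Thus the theorem reduces to the direct-sum identity
\begin{equation*}
 N_{\ell,m}(\Gamma)^\perp=\bigoplus_{\substack{\ell'=1\\ \ell'\neq 3}}^\ell\bigoplus_{m'=0}^{\ell+m-\ell'}\;\sum_{X\in\mathcal{X}_{\ell',m'}^{\ell,m}}X\big(\mathcal{E}_{\ell',m'}(\Gamma)\big)\;\oplus\;\bigoplus_{m'=0}^{\ell+m-3}\;\sum_{X\in\mathcal{X}_{3,m'}^{\ell,m}}X\big(\mathcal{M}^*_{3,m'}(\Gamma)\big).
\end{equation*}

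\medskip

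\emph{Step 2 (passage to $\mathfrak{n}$-finite automorphic forms).} Adelizing, $N_{\ell,m}(\Gamma)^\perp$ is identified with a space of automorphic forms on $\Sp_4(\A)$ that are fixed by a compact open subgroup, $\mathfrak{n}$-finite (the automorphic avatar of near-holomorphy established earlier in the paper), of ``holomorphic type'' at the archimedean place, and orthogonal to the cuspidal spectrum. By the paper's analysis of the space of $\mathfrak{n}$-finite automorphic forms, the associated $(\g,K)$-module is a direct sum of lowest weight modules, each occurring one generated, in the non-cuspidal spectrum, by an Eisenstein series built from holomorphic data on a Levi of $\Sp_4$ (Siegel, Klingen, or Borel) evaluated at a near-holomorphy point. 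The key identification is that of the lowest weight vector of such a module with a classical object: when the archimedean component is a holomorphic limit of discrete series of weight $\det^{\ell'}\sym^{m'}$ (necessarily $\ell'\neq 3$ in the non-cuspidal range), the lowest weight vector corresponds to an element of $\mathcal{E}_{\ell',m'}(\Gamma)$; and at the exceptional value $\ell'=3$, where the Siegel Eisenstein series fails to be holomorphic, the relevant archimedean module is the indecomposable lowest weight module whose lowest weight vector corresponds to a nearly holomorphic form of degree $1$ and weight $\det^3\sym^{m'}$ annihilated by exactly the two operators $L$ and $E_-$, i.e. to an element of $\mathcal{M}^*_{3,m'}(\Gamma)$; conversely, by the defining condition ``$L=E_-=0$'' every element of $\mathcal{M}^*_{3,m'}(\Gamma)$ is such a lowest weight vector.

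\medskip

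\emph{Step 3 (navigation and assembly).} By the explicit description of lowest weight modules of $\Sp_4(\R)$ obtained in this paper, inside each such module every highest weight vector of every $K$-type is the image of the lowest weight vector under a unique element of $\mathcal{X}_+$, and this map is injective on the lowest weight line. Via Lemma~\ref{slashliecommute} these Lie-theoretic operators coincide with the classical $X_+,U,E_+,D_+$, so a form $F\in N_{\ell,m}(\Gamma)^\perp$, regarded as a highest weight vector of the appropriate $K$-type, decomposes uniquely into contributions $X_{\mathcal{W}}(v_{\mathcal{W}})$, one per occurring lowest weight module $\mathcal{W}$, with $v_{\mathcal{W}}$ its lowest weight vector and $X_{\mathcal{W}}$ the unique element of $\mathcal{X}_+$ reaching that $K$-type. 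The combinatorial bookkeeping already done in the cuspidal case (computation of the sets $\mathcal{X}_{\ell',m'}^{\ell,m}$, cf.\ Theorem~\ref{cuspidalstructuretheorem}) shows that, for source weight $(\ell',m')$, the operators reaching $(\ell,m)$ are exactly those in $\mathcal{X}_{\ell',m'}^{\ell,m}$; together with Step~2 this gives the spanning statement, and directness follows since distinct lowest weight modules in a direct sum, and distinct $K$-types within one module, are independent. Reassembling with the cuspidal decomposition of Step~1 proves the theorem.

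\medskip

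\emph{Main obstacle.} The crux is the non-cuspidal archimedean analysis at $\ell'=3$: one must show that the only lowest weight modules occurring in the non-cuspidal $\mathfrak{n}$-finite spectrum are the holomorphic limits of discrete series for $\ell'\neq 3$ together with exactly the exceptional indecomposable modules at $\ell'=3$ --- which requires cutting the continuous and residual spectrum (on vectors of bounded level and $K$-type) down to finitely many degenerate Eisenstein series from the Siegel and Klingen parabolics --- and that for the latter the lowest weight vector has near-holomorphy degree precisely $1$, with defining equations precisely $L=E_-=0$, so that it lies in $\mathcal{M}^*_{3,m'}(\Gamma)$ and in no smaller space. Everything else runs parallel to the cuspidal case.
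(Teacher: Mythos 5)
There is a genuine gap, and it sits exactly where you place your ``main obstacle.'' Your Step 2 asserts that the non-cuspidal part of the $\mathfrak{n}$-finite spectrum is exhausted by Eisenstein series attached to holomorphic data on the Levi subgroups, and that the only archimedean modules occurring are the $L(\ell'+m',\ell')$ together with exceptional indecomposables at $\ell'=3$. Nothing in the paper (or in your proposal) establishes this spectral claim, and the paper's actual proof does not go anywhere near the continuous or residual spectrum. Instead it is purely algebraic: it decomposes $\AA(\Gamma)_{\mathfrak{n}\text{-fin}}$ by generalized central characters inside category $\mathcal{O}^{\mathfrak{p}}$ (Lemma~\ref{noncusplemma3}), excludes the weights $(k,\ell)$ with $\ell<0$ and the weight $(2,0)$ using Weissauer's vanishing theorem and the explicit trick $G(Z)=F(Z)-2F(2Z)$ (Lemmas~\ref{nonegelllemma}, \ref{weight20lemma}), classifies the possible indecomposable summands in each non-tempered block as $aL(k,1)\oplus bL(k,3)\oplus cN(k,1)^\vee$ via Ext-vanishing (Lemmas~\ref{extvanishinglemma}, \ref{k1k3moduleslemma}), rules out $N(k,1)$ itself as a submodule using Weissauer's square-integrability of weight-one forms (Lemma~\ref{weissauerL2lemma}), and then computes $a,b,c$ by the linear system \eqref{Vkmulteq1}--\eqref{Vkmulteq3} in terms of $\dim M_{1,k-1}$, $\dim M_{3,k-3}$ and $\dim M^*_{3,k-3}$. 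Your route would require the full analytic theory of Eisenstein series plus an identification of which special values are $\mathfrak{n}$-finite of the correct $K$-type; that is a much harder (and here unproved) input than the two classical facts the paper actually uses.

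There is also a concrete error in Steps 2--3. The exceptional indecomposable module is $N(k,1)^\vee$, whose minimal $K$-type is $(k,1)$; its lowest weight vector corresponds to a \emph{holomorphic} form in $M_{1,k-1}(\Gamma)$, not to a degree-one nearly holomorphic form of weight $\det^3\sym^{m'}$. The element of $M^*_{3,k-3}(\Gamma)$ is the weight-$(k,3)$ generator, one $K$-type above the bottom. Since every operator in $\mathcal{X}_+$ preserves the irreducible submodule $L(k,1)\subset N(k,1)^\vee$, the $K$-types lying over the quotient $L(k,3)$ are \emph{not} reachable from the lowest weight vector, so your Step 3 claim that ``every highest weight vector of every $K$-type is the image of the lowest weight vector under an element of $\mathcal{X}_+$'' fails precisely for these modules; this is why the paper needs the separate navigation statement of Lemma~\ref{Nk1veenavigatelemma} and why the building blocks at $\ell'=3$ must be taken in $M^*_{3,m'}(\Gamma)$ rather than among lowest weight vectors. (Smaller point: the element of $\mathcal{X}_+$ reaching a given $K$-type is generally not unique -- the $K$-types occur with multiplicity, which is why the theorem carries a sum over $\mathcal{X}^{\ell,m}_{\ell',m'}$ rather than a single operator.)
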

For a refined version of this result, see Theorem~\ref{noncuspidalstructuretheorem}. We note that the restriction to $\ell>0$ is not serious, since the only nearly holomorphic modular forms with $\ell\leq0$ are the constant functions.

\subsection{Lowest weight modules and \texorpdfstring{$\mathfrak{n}$}{}-finite automorphic forms}

We now describe the representation-theoretic results that form the foundation for Theorems \ref{structuretheorem} and \ref{structuretheoremgeneralinto}. We hope that they are of independent interest, as they explain nearly holomorphic forms from the point of view of representation theory.

Let $\g$ be the Lie algebra of $\Sp_4(\R)$, and let $\g_\C$ be its complexification. We fix a basis of the root system of $\g_\C$, and let $\mathfrak{n}$ be the space spanned by the non-compact negative roots. It is well known that vector-valued holomorphic modular forms $F$ correspond to (scalar-valued) automorphic forms\footnote{Here, and elsewhere in this paper, we use the term ``automorphic form" in the sense of Borel-Jacquet \cite[1.3]{BorelJacquet1979}; in particular, our automorphic forms are always \emph{scalar-valued} functions on $\Sp_4(\R)$. For the precise correspondence between (nearly holomorphic) vector-valued modular forms for $\Gamma$, and automorphic forms on $\Sp_4(\R)$ with respect to  $\Gamma$, see Lemma~\ref{FPhilemma} and Proposition~\ref{Nplmautformprop} of this paper.} $\Phi$ on $\Sp_4(\R)$ that are annihilated by $\mathfrak{n}$. The $(\g,K)$-module $\langle\Phi\rangle$ generated by such a $\Phi$ is a lowest weight module, and $\Phi$ is a lowest weight vector in this module. In fact, it will follow from our results that $\langle\Phi\rangle$ is always an \emph{irreducible} module (see Proposition~\ref{holomorphicsimplemodule}).

We define a vector $v$ in any representation of $\g_\C$ to be \emph{$\mathfrak{n}$-finite}, if the space $\mathcal{U}(\mathfrak{n})v$ is finite-dimensional; here $\mathcal{U}(\mathfrak{n})$ is the universal enveloping algebra of $\mathfrak{n}$, which in our case is simply a polynomial ring in three variables. Applying this concept to the space of automorphic forms on $\Sp_4(\R)$, we arrive at the notion of \emph{$\mathfrak{n}$-finite automorphic form}, which is central to this work. Let $\AA(\Gamma)_{\mathfrak{n}\text{-fin}}$ be the space of $\mathfrak{n}$-finite automorphic forms on $\Sp_4(\R)$ with respect to a fixed congruence subgroup $\Gamma$. Finiteness results from the classical theory imply that $\AA(\Gamma)_{\mathfrak{n}\text{-fin}}$ is an \emph{admissible} $(\g,K)$-module.

Clearly, the lowest weight module $\langle\Phi\rangle$ considered above is contained in $\AA(\Gamma)_{\mathfrak{n}\text{-fin}}$. We will prove the following:
\begin{itemize}
 \item Every automorphic form in $\AA(\Gamma)_{\mathfrak{n}\text{-fin}}$ gives rise to a vector-valued nearly holomorphic modular form on $\HH_2$. See Lemma \ref{FPhilemma}, Proposition \ref{AAnfindecomp2prop} and the discussion following it.
 \item Conversely, the automorphic form corresponding to a vector-valued nearly holomorphic modular form on $\HH_2$ lies in $\AA(\Gamma)_{\mathfrak{n}\text{-fin}}$. See Proposition \ref{Nplmautformprop}.
\end{itemize}
In other words, the $\mathfrak{n}$-finite automorphic forms correspond \emph{precisely} to nearly holomorphic modular forms. The lowest weight vectors in irreducible submodules of $\AA(\Gamma)_{\mathfrak{n}\text{-fin}}$ correspond precisely to holomorphic modular forms.

The structure theorems \ref{structuretheorem} and \ref{structuretheoremgeneralinto} are reflections of the fact that, in a lowest weight module appearing in $\AA(\Gamma)_{\mathfrak{n}\text{-fin}}$, we can navigate from the lowest weight vector to any given $K$-type using certain elements $X_+$, $U$, $E_+$ and $D_+$ in $\mathcal{U}(\g_\C)$ that correspond to the differential operators given in table \eqref{introtable}. See Proposition \ref{navigateKtypesprop} for the precise statement.

To prove statements like Proposition \ref{navigateKtypesprop}, we need rather precise information about $K$-types and multiplicities occurring in lowest weight modules. Such information is in principle available in the literature, but it requires some effort to obtain it from general theorems. It turns out that \emph{category $\mathcal{O}$} provides a framework well-suited for our purposes. More precisely, we will work in a parabolic version called category $\mathcal{O}^\p$, whose objects consist precisely of the finitely generated $(\g,K)$-modules in which all vectors are $\mathfrak{n}$-finite. This category thus contains all the lowest weight modules relevant for the study of $\mathfrak{n}$-finite automorphic forms.

Basic building blocks in category $\mathcal{O}^\p$ are the parabolic Verma modules $N(\lambda)$ and their unique irreducible quotients $L(\lambda)$; here, $\lambda$ is an integral weight.\footnote{The automorphic forms corresponding to elements in $M_{\ell, m}(\Gamma)$ generate the lowest weight module $L(\ell+m,\ell)$. We note that in previous papers, we have used the notation $\mathcal{E}(\ell+m,\ell)$ instead of $L(\ell+m, \ell)$.} We determine which of the $N(\lambda)$ are irreducible (Proposition \ref{Mlambdairredprop}), composition series in each reducible case (Proposition \ref{Mlambdaexactsequencesprop}), and which of the $L(\lambda)$ are square-integrable, tempered, or unitarizable (Proposition \ref{Mlambdaunitaryprop}). This is slightly more information than needed for our applications to automorphic forms, but we found it useful to collect all this information in one place.

By general principles, the admissible $(\g,K)$-module $\AA(\Gamma)_{\mathfrak{n}\text{-fin}}$ decomposes into a direct sum of indecomposable objects in category $\mathcal{O}^\p$. The subspace of cusp forms $\AA(\Gamma)_{\mathfrak{n}\text{-fin}}^\circ \subseteq \AA(\Gamma)_{\mathfrak{n}\text{-fin}}$ decomposes in fact into a direct sum of irreducibles $L(\lambda)$, due to the presence of an inner product. The multiplicities with which each $L(\lambda)$ occurs is given by the dimension of certain spaces of holomorphic modular forms. We can thus determine the complete algebraic structure of the space $\AA(\Gamma)_{\mathfrak{n}\text{-fin}}^\circ$ in terms of these dimensions. See Proposition \ref{AA0nfindecomp2prop} for the precise statement, which may be viewed as a precursor to Theorem \ref{structuretheorem}.

One cannot expect that the entire space $\AA(\Gamma)_{\mathfrak{n}\text{-fin}}$ also decomposes into a direct sum of irreducibles. This is already not the case in the degree $1$ situation, where the modular form $E_2$ generates an indecomposable but not irreducible module. Sections \ref{initialdecompsec} and \ref{nontemperedsec} are devoted to showing that only a very limited class of indecomposable but not irreducible modules can possibly occur in $\AA(\Gamma)_{\mathfrak{n}\text{-fin}}$. These modules account for the presence of the spaces $M^*_{3,m'}(\Gamma)$ in Theorem \ref{structuretheoremgeneralinto}. The algebraic structure of the entire space $\AA(\Gamma)_{\mathfrak{n}\text{-fin}}$ in terms of dimensions of spaces of modular forms is given in Proposition \ref{AAnfindecomp2prop}. As in the cuspidal case, this proposition is a precursor to the structure theorem.

\subsection{Applications of the structure theorem}
The significance of the structure theorem is twofold. On the one hand, it builds up the space of nearly holomorphic forms from holomorphic forms using differential operators. As the differential operators have nice arithmetic properties, this essentially reduces all arithmetic questions about nearly holomorphic  forms to the case of holomorphic forms. Since there is considerable algebraic geometry known for the latter, powerful results  can be obtained. For example, in Section~\ref{s:arithmeticity}, we show that the ``isotypic projection" map from $N_{\ell, m}(\Gamma)$ to $\sum_{X \in \mathcal{X}^{\ell, m}_{\ell', m'}}X(M_{\ell',m'}(\Gamma))$ (this is commonly called the ``holomorphic projection" map when $\ell=\ell'$, $m=m'$) obtained from our structure theorem is $\Aut(\C)$-equivariant (see Propositions~\ref{arithmeticityproj} and \ref{arithmeticitycusppro}). This is a considerable generalization of results of Shimura. We also prove a result on the arithmeticity of ratios of Petersson inner products (Proposition \ref{pterssonratio}) that will be of importance in our subsequent work.

On the other hand, sometimes one prefers to deal with modular forms of scalar weight, rather than vector-valued objects. The structure theorem gives an explicit and canonical way to start with an element of $M_{\ell,m}(\Gamma)$ (with $m$ even and $\ell\ge 2$) and produce a non-zero element of $N_{\ell+m}^{m/2}(\Gamma)$ lying in the same representation. (This does not work if $\ell=1$.)  On a related note, this will also allow one to write down a \emph{scalar-valued} nearly holomorphic lift in cases where previously only holomorphic vector-valued lifts have been considered (e.g., the Yoshida lift of two classical cusp forms $f$ and $g$ both of weight bigger than 2).

Both these points of view will be combined in  a forthcoming work where we will prove results in the spirit of Deligne's conjecture for the standard $L$-function attached to a holomorphic vector valued cusp form with respect to an arbitrary congruence subgroup of $\Sp_4(\Q)$. Such results have so far been proved (in the vector-valued case) only for forms of full level. The main new ingredient of this forthcoming work will be to consider an integral representation consisting only of scalar-valued nearly holomorphic vectors. The results of this paper will be key to doing that.

There are many other potential applications of this work, some of which we plan to pursue elsewhere. For example, one can use our structure theorems to produce exact formulas for the dimensions of spaces of nearly holomorphic modular forms; to the best of our knowledge, no such formulas are currently known in degree 2. One could try to see if our explicit formulas could be used to deal with problems related to congruences or the construction of $p$-adic measures for vector-valued Siegel modular forms, similar to what was done in the scalar-valued case in \cite{panchishkin2004}. One could apply our results to the study of nearly overconvergent modular forms for congruence subgroups of $\Sp_4(\Z)$, following the general framework of~\cite{Urban2013}. One could also explore applications of our work to arithmetic and combinatorial identities, \`{a} la \cite{gradechi}.

Finally, it would be interesting to generalize the results of this paper to the case $n>2$ and possibly to other groups. We hope to come back to this problem in the future.

\subsection*{Acknowledgements}
We would like to thank Siegfried B\"ocherer, Marcela Hanzer, Michael Harris, and Jonathan Kujawa for helpful discussions. We would also like to thank Shuji Horinaga, who had kindly informed us about a mistake in an earlier version of this paper.

\subsection*{Notation}
\begin{enumerate}
\item The symbols $\Z$, $\Z_{\ge0}$, $\Q$, $\R$, $\C$, $\Z_p$ and $\Q_p$ have the usual meanings. The symbol $\A$ denotes the ring of adeles of $\Q$, and $\A^\times$ denotes its group of ideles. We let $\mathfrak{f}$ denote the set of finite places, and $\A_{\mathfrak{f}}$ the subring of $\A$ with trivial archimedean component.

\item For any commutative ring $R$ and positive integer $n$, let $M_n(R)$ denote the ring of $n\times n$ matrices with entries in $R$, and let $M_n^{\rm sym}(R)$ denote the subset of symmetric matrices. We let $\GL_n(R)$ denote the group of invertible elements in $M_n(R)$, and we use $R^{\times}$ to denote $\GL_1(R)$. If $A\in M_n(R)$, we let $^t\!A$ denote its transpose.

\item Define $J_n\in M_n(\Z)$  by
  $J_n=\left[\begin{smallmatrix}0 & I_n\\-I_n & 0 \end{smallmatrix}\right]$, where $I_n$ is the $n$ by $n$ identity matrix. Let $\GSp_4$ and $\Sp_4$  be the algebraic groups whose $\Q$-points are given by
 \begin{align}
  \GSp_4(\Q)&=\{g \in \GL_4(\Q)\;|\; ^tgJ_2g = \mu_2(g)J_2,\:\mu_2(g)\in \Q^{\times} \},  \label{HFdefinition}\\
  \Sp_4(\Q)&=\{g \in \GSp_4(\Q)\;|\; \mu_2(g) = 1\}. \label{Gjdefinition}
 \end{align}
 Let $\GSp_4(\R)^+ \subset \GSp_4(\R)$ consist of the matrices with $\mu_2(g)>0$.

\item For $\tau=x+iy$, we let $$\frac{\partial}{\partial\tau}=\frac12\left(\frac{\partial}{\partial x}-i\frac{\partial}{\partial y}\right), \quad\frac{\partial}{\partial\bar\tau}=\frac12\left(\frac{\partial}{\partial x}+i\frac{\partial}{\partial y}\right)$$ denote the usual Wirtinger derivatives.

 \item The Siegel upper half space of degree $n$ is defined by
  $$
   \H_n= \{ Z \in M_n(\C) \ | \ Z = {}^tZ,\:i( \overline{Z} - Z) \text{ is positive definite}\}.
  $$
For $g=
\left[\begin{smallmatrix} A&B\\ C&D \end{smallmatrix}\right] \in \GSp_4(\R)^+$, $Z\in \H_2$, define $J(g,Z) = CZ + D.$
We let $I$ denote the element $\left[\begin{smallmatrix}i&\\&i\end{smallmatrix}\right]$ of $\H_2$.

 \item We let $\mathfrak{g}=\mathfrak{sp}_4(\R)$ be the Lie algebra of $\Sp_4(\R)$ and $\g_\C =\mathfrak{sp}_4(\C)$ the complexified Lie algebra.  We let $\mathcal{U}(\mathfrak{g}_\C)$ denote the  universal enveloping algebra and let $\mathcal{Z}$ be its center.
 We use the following basis for $\g_\C$.
\begin{alignat*}{2}
 & Z=-i\left[\begin{smallmatrix}0&0&1&0\\0&0&0&0\\-1&0&0&0\\0&0&0&0
 \end{smallmatrix}\right]
  ,\qquad\qquad&&Z'=-i\left[\begin{smallmatrix}0&0&0&0\\0&0&0&1\\0&0&0&0
  \\0&-1&0&0\end{smallmatrix}\right],\\
 &N_+=\frac12\left[\begin{smallmatrix}0&1&0&-i\\-1&0&-i&0\\
  0&i&0&1\\i&0&-1&0\end{smallmatrix}\right]
  ,\qquad\qquad&&N_-=\frac12\left[\begin{smallmatrix}
  0&1&0&i\\-1&0&i&0\\0&-i&0&1\\-i&0&-1&0\end{smallmatrix}\right],\\
 &X_+=\frac12\left[\begin{smallmatrix}1&0&i&0\\0&0&0&0\\i&0&-1&0\\
  0&0&0&0\end{smallmatrix}\right]
  ,\qquad\qquad&&X_-=\frac12\left[\begin{smallmatrix}
  1&0&-i&0\\0&0&0&0\\-i&0&-1&0\\0&0&0&0\end{smallmatrix}\right],\\
 &P_{1+}=\frac12\left[\begin{smallmatrix}0&1&0&i\\1&0&i&0\\
  0&i&0&-1\\i&0&-1&0\end{smallmatrix}\right]
  ,\qquad\qquad&&P_{1-}=\frac12\left[\begin{smallmatrix}
  0&1&0&-i\\1&0&-i&0\\0&-i&0&-1\\-i&0&-1&0\end{smallmatrix}\right],\\
 &P_{0+}=\frac12\left[\begin{smallmatrix}0&0&0&0\\0&1&0&i\\
  0&0&0&0\\0&i&0&-1\end{smallmatrix}\right]
  ,\qquad\qquad&&P_{0-}=\frac12\left[\begin{smallmatrix}
  0&0&0&0\\0&1&0&-i\\0&0&0&0\\0&-i&0&-1\end{smallmatrix}\right].
\end{alignat*}

 \item For all smooth functions $f : \Sp_4(\R) \rightarrow \C$,  $X \in \g$, define $(Xf)(g)=\frac{d}{dt}\big|_0f(\exp(tX))$. This action is extended $\C$-linearly to $\g_\C$. Further, it is extended to all elements $X \in \mathcal{U}(\mathfrak{g}_\C)$ in the usual manner.

 \end{enumerate}

\section{Lowest weight representations}\label{s:lowest}
In this section we study the lowest weight representations of the Hermitian symmetric pair $(G,K)$, where $G=\Sp_4(\R)$ and $K$ is its maximal compact subgroup. We will determine composition series and $K$-types for each parabolic Verma module. Of course, lowest weight representations have been extensively studied in the literature, in the more general context of semisimple Lie groups. Much of our exposition will consist in making the general theorems explicit in our low-rank case.
\subsection{Set-up and basic facts}\label{setupsec}
The subgroup $K$ of $\Sp_4(\R)$ consisting of all elements of the form $\left[\begin{smallmatrix}A&B\\-B&A\end{smallmatrix}\right]$ is a maximal compact subgroup. It is isomorphic to $U(2)$ via the map $\left[\begin{smallmatrix}A&B\\-B&A\end{smallmatrix}\right]\mapsto A+iB$.

Let $\mathfrak{g}=\mathfrak{sp}_4(\R)$ be the Lie algebra of $\Sp_4(\R)$, which we think of as a $10$-dimensional space of $4\times4$ matrices. Let $\mathfrak{k}$ be the Lie algebra of $K$; it is a four-dimensional subspace of $\mathfrak{g}$. Let $\mathfrak{g}_\C$ (resp.\ $\mathfrak{k}_\C$) be the complexification of $\mathfrak{g}$ (resp.\ $\mathfrak{k}$). A Cartan subalgebra $\mathfrak{h}_\C$ of $\mathfrak{k}_\C$ (and of $\mathfrak{g}_\C$) is spanned by the two elements $Z$ and $Z'$. If $\lambda$ is in the dual space $\mathfrak{h}_\C^*$, we identify $\lambda$ with the element $(\lambda(Z),\lambda(Z'))$ of $\C^2$. The root system of $\mathfrak{g}_\C$ is $\Phi=\{(\pm2,0),(0,\pm2),(\pm1,\pm1),(\pm1,\mp1)\}$. These vectors lie in the subspace $E:=\R^2$ of $\C^2$, which we think of as a Euclidean plane. The
analytically integral elements of $\mathfrak{h}_\C^*$ are those that identify with points of $\Z^2$. These are exactly the points of the weight lattice $\Lambda$. The following diagram indicates the weight lattice, as well as the roots and the elements of the Lie algebra spanning the corresponding root spaces.
\begin{equation}\label{rootdiagrameq}
\begin{minipage}{47ex}
\setlength{\unitlength}{0.70pt}
\begin{picture}(200, 200)(-100, -100)
\put(-100,0){\line(1,0){200}}
\put(0,100){\line(0,-1){200}}

\thicklines
\put(0,0){\vector(1,0){60}}
\put(0,0){\vector(-1,0){60}}
\put(0,0){\vector(0,1){60}}
\put(0,0){\vector(0,-1){60}}
\put(0,0){\vector(1,1){30}}
\put(0,0){\vector(-1,-1){30}}
\put(0,0){\vector(1,-1){30}}
\put(0,0){\vector(-1,1){30}}

\put(55,-12){$\scriptstyle X_+$}
\put(-64,-12){$\scriptstyle X_-$}
\put(32,-40){$\scriptstyle N_+$}
\put(-40,35){$\scriptstyle N_-$}
\put(3,58){$\scriptstyle P_{0+}$}
\put(3,-62){$\scriptstyle P_{0-}$}
\put(32,35){$\scriptstyle P_{1+}$}
\put(-40,-40){$\scriptstyle P_{1-}$}

\put(-90,90){\circle*{3}}
\put(-60,90){\circle*{3}}
\put(-30,90){\circle*{3}}
\put(0,90){\circle*{3}}
\put(30,90){\circle*{3}}
\put(60,90){\circle*{3}}
\put(90,90){\circle*{3}}

\put(-90,60){\circle*{3}}
\put(-60,60){\circle*{3}}
\put(-30,60){\circle*{3}}
\put(0,60){\circle*{3}}
\put(30,60){\circle*{3}}
\put(60,60){\circle*{3}}
\put(90,60){\circle*{3}}

\put(-90,30){\circle*{3}}
\put(-60,30){\circle*{3}}
\put(-30,30){\circle*{3}}
\put(0,30){\circle*{3}}
\put(30,30){\circle*{3}}
\put(60,30){\circle*{3}}
\put(90,30){\circle*{3}}

\put(-90,0){\circle*{3}}
\put(-60,0){\circle*{3}}
\put(-30,0){\circle*{3}}
\put(0,0){\circle*{3}}
\put(30,0){\circle*{3}}
\put(60,0){\circle*{3}}
\put(90,0){\circle*{3}}

\put(-90,-30){\circle*{3}}
\put(-60,-30){\circle*{3}}
\put(-30,-30){\circle*{3}}
\put(0,-30){\circle*{3}}
\put(30,-30){\circle*{3}}
\put(60,-30){\circle*{3}}
\put(90,-30){\circle*{3}}

\put(-90,-60){\circle*{3}}
\put(-60,-60){\circle*{3}}
\put(-30,-60){\circle*{3}}
\put(0,-60){\circle*{3}}
\put(30,-60){\circle*{3}}
\put(60,-60){\circle*{3}}
\put(90,-60){\circle*{3}}

\put(-90,-90){\circle*{3}}
\put(-60,-90){\circle*{3}}
\put(-30,-90){\circle*{3}}
\put(0,-90){\circle*{3}}
\put(30,-90){\circle*{3}}
\put(60,-90){\circle*{3}}
\put(90,-90){\circle*{3}}

\end{picture}
\end{minipage}
\end{equation}
Here, $(1,-1)$ and $(-1,1)$ are the compact roots, with the corresponding root spaces being spanned by $N_+$ and $N_-$. We declare the set
$$
 \Phi^+=\{(-2,0),(-1,-1),(0,-2),(1,-1)\}
$$
to be a positive system of roots. We define an ordering on $\Lambda$ by
\begin{equation}\label{dominanceordereq2}
 \mu\preccurlyeq\lambda\quad\Longleftrightarrow\quad\lambda\in\mu+\Upsilon,
\end{equation}
where $\Upsilon$ is the set of all $\Z_{\geq0}$-linear combinations of elements of $\Phi^+$. Hence, under this ordering, $(0,-2)$ is maximal among the non-compact positive roots.

Let $\mathcal{Z}$ be the center of the universal enveloping algebra $\mathcal{U}(\mathfrak{g}_\C)$. A particular element in $\mathcal{Z}$ is the Casimir element
\begin{align}\label{casimireq1}
 \Omega_2&=\frac12Z^2+\frac12Z'^2-\frac12(N_+N_-+N_-N_+)+X_+X_-+X_-X_+\nonumber\\
  &\hspace{10ex}+\frac12(P_{1+}P_{1-}+P_{1-}P_{1+})+P_{0+}P_{0-}+P_{0-}P_{0+}.
\end{align}
Using the commutation relations, an alternative form is
\begin{align}\label{casimireq2}
 \Omega_2&=\frac12Z^2+\frac12Z'^2-Z-2Z'-N_-N_++2X_+X_-+P_{1+}P_{1-}+2P_{0+}P_{0-}.
\end{align}
The characters of $\mathcal{Z}$ are indexed by elements of $\mathfrak{h}_\C^*$ modulo Weyl group action; see Sects.\ 1.7--1.10 of \cite{Humphreys2008}. Let $\chi_\lambda$ be the character of $\mathcal{Z}$ corresponding to $\lambda\in\mathfrak{h}_\C^*$. We normalize this correspondence such that $\chi_\varrho$ is the trivial character (i.e., the central character of the trivial representation of $\mathcal{U}(\mathfrak{g}_\C)$); here, $\varrho=(-1,-2)$ is half the sum of the positive roots. Note that Humphrey's $\chi_\lambda$ is our $\chi_{\lambda+\varrho}$.

If $\mathfrak{k}_\C$ acts on a space $V$, and $v\in V$ satisfies $Zv=kv$ and $Z'v=\ell v$ for $k,\ell\in\C$, then we say that $v$ has \emph{weight} $(k,\ell)$. If the weight lies in $E$, we indicate it as a point in this Euclidean plane. Let $V$ be a finite-dimensional $\mathfrak{k}_\C$-module. Then this representation of $\mathfrak{k}_\C$ can be integrated to a representation of $K$ if and only if all occurring weights are analytically integral. The isomorphism classes of irreducible such $\mathfrak{k}_\C$-modules, or the corresponding irreducible representations of $K$, are called \emph{$K$-types}.

Let $V$ be a $K$-type. A non-zero vector $v\in V$ is called a \emph{highest weight vector} if $N_+v=0$. Such a vector $v$ is unique up to scalars. Let $(k,\ell)$ be its weight. Then the weights occurring in $V$ are $(k-j,\ell+j)$ for $j=0,1,\ldots,k-\ell$. In particular, the dimension of $V$ is $k-\ell+1$. If we associate with each $K$-type its highest weight, then we obtain a bijection between $K$-types and analytically integral elements $(k,\ell)$ with $k\geq\ell$.

\begin{definition}We let $\Lambdap$ denote the subset of $\Lambda$ consisting of pairs of integers $(k, \ell)$ with $k \geq \ell$. If $\lambda \in \Lambdap$, we denote by $\rho_\lambda$ the corresponding $K$-type.\end{definition}

Let $\mathfrak{p}_\pm=\langle X_\pm,P_{1\pm},P_{0\pm}\rangle$. Then $\mathfrak{p}_+$ and $\mathfrak{p}_-$ are commutative subalgebras of $\mathfrak{g}_\C$, and we have $[\mathfrak{k}_\C,\mathfrak{p}_\pm]\subset\mathfrak{p}_\pm$. Let $\rho_\lambda$ be a $K$-type. Let $F(\lambda)$ be any model for $\rho_\lambda$. We consider $F(\lambda)$ a module for $\mathfrak{k}_\C+\mathfrak{p}_-$ by letting $\mathfrak{p}_-$ act trivially. Let
\begin{equation}\label{Mlambdadefeq}
 N(\lambda):=\mathcal{U}(\mathfrak{g}_\C)\otimes_{\mathfrak{k}_\C+\mathfrak{p}_-}F(\lambda).
\end{equation}
Then $N(\lambda)$ is a $\mathfrak{g}_\C$-module in the obvious way. It also is a $(\mathfrak{g},K)$-module, with $K$-action given by $g.(X\otimes v)={\rm Ad}(g)(X)\otimes\rho_\lambda(g)v$ for $g\in K$, $X\in\mathcal{U}(\mathfrak{g}_\C)$ and $v\in F(\lambda)$. The modules $N(\lambda)$ are often called \emph{highest weight modules} in the literature. However, when we think of the $K$-type $\rho_\lambda$ as the weight of a modular form, it will be more natural to think of the $N(\lambda)$ as \emph{lowest} weight modules.

As vector spaces, we have
\begin{equation}\label{tildeEpsppluseq}
 N(\lambda)=\mathcal{U}(\mathfrak{p}_+)\otimes_\C F(\lambda).
\end{equation}
Since $\mathcal{U}(\mathfrak{p}_+)$ is simply a polynomial algebra in $X_+,P_{1+},P_{0+}$, it follows that $N(\lambda)$ is spanned by the vectors
\begin{equation}\label{tildeEpsspaneq}
 X_+^\alpha\,P_{1+}^\beta\,P_{0+}^\gamma\,N_-^\delta\,w_0,\qquad\alpha,\beta,\gamma,\delta\geq0,\;\delta\leq k-\ell,
\end{equation}
where $\lambda=(k,\ell)$, and these vectors are linearly independent. Here, $w_0$ is a highest weight vector in $F(\lambda)$ (identified with the element $1\otimes w_0$ in the tensor product \eqref{Mlambdadefeq}). Alternatively, $N(\lambda)$ is spanned by the vectors
\begin{equation}\label{tildeEpsspaneq2}
 N_-^\delta\,X_+^\alpha\,P_{1+}^\beta\,P_{0+}^\gamma\,w_0,\qquad\alpha,\beta,\gamma,\delta\geq0,
\end{equation}
but these are not linearly independent.

It will be convenient to work in a parabolic version of category $\mathcal{O}$; see Sect.~9 of \cite{Humphreys2008}. Let $\mathfrak{n}=\langle X_-,P_{1-},P_{0-}\rangle$; this is the same as $\mathfrak{p}_-$, but we will use the symbol $\mathfrak{n}$ henceforth. Let $M$ be a $\mathfrak{g}_\C$-module. We say $M$ lies in category $\mathcal{O}^{\mathfrak{p}}$ if it satisfies the following conditions:
\begin{enumerate}
 \item[($\mathcal{O}^{\mathfrak{p}}$1)] $M$ is a finitely generated $\mathcal{U}(\mathfrak{g}_\C)$-module.
 \item[($\mathcal{O}^{\mathfrak{p}}$2)] $M$ is the direct sum of $K$-types.
 \item[($\mathcal{O}^{\mathfrak{p}}$3)] $M$ is locally $\mathfrak{n}$-finite, meaning: For each $v\in M$ the subspace $\mathcal{U}(\mathfrak{n})v$ is finite-dimensional.
\end{enumerate}

Recall that, by definition, all the weights occurring in a $K$-type are analytically integral. It follows that all the weights occurring in any module in category $\mathcal{O}^{\mathfrak{p}}$ are integral.

Evidently, the modules $N(\lambda)$ defined in \eqref{Mlambdadefeq} satisfy these conditions. In fact, they are nothing but the \emph{parabolic Verma modules} defined in Sect.~9.4 of \cite{Humphreys2008}. From the theory developed there, we have the following basic properties of the modules $N(\lambda)$.

\begin{enumerate}
 \item Each weight of $N(\lambda)$ occurs with finite multiplicity. These multiplicities can be determined from \eqref{tildeEpsspaneq}.
 \item $N(\lambda)$ contains the $K$-type $\rho_\lambda$ with multiplicity one.
 \item The module $N(\lambda)$ has the following universal property: Let $M$ be a $(\mathfrak{g},K)$-module which contains a vector $v$ such that:
  \begin{itemize}
   \item $M=\mathcal{U}(\mathfrak{g}_\C)v$;
   \item $v$ has weight $\lambda$;
   \item $v$ is annihilated by $\langle X_-, P_{1-},P_{0-},N_+\rangle$.
  \end{itemize}
  Then there is a surjection $N(\lambda)\to M$ mapping a highest weight vector in $N(\lambda)$ to $v$.
 \item $N(\lambda)$ admits a unique irreducible submodule, and a unique irreducible quotient $L(\lambda)$. In particular, $N(\lambda)$ is indecomposable.
 \item $N(\lambda)$ has finite length. Each factor in a composition series is of the form $L(\mu)$ for some $\mu\preccurlyeq\lambda$.
 \item $N(\lambda)$ admits a central character, given by $\chi_{\lambda+\varrho}$.  Here, as before, $\varrho=(-1,-2)$ is half the sum of the positive roots.
 \item $L(\lambda)$ is finite-dimensional if and only if $\lambda=(k,\ell)$ with $0\geq k\geq\ell$.
\end{enumerate}
The modules $M$ in $\mathcal{O}^{\mathfrak{p}}$ enjoy properties analogous to those in category $\mathcal{O}$. In particular:
\begin{itemize}
 \item $M$ has finite length, and admits a filtration
  \begin{equation}\label{categoryOpeq1}
   0=V_0\subset V_1\subset\ldots\subset V_n\subset M,
  \end{equation}
   with $V_i/V_{i-1}\cong L(\lambda)$ for some $\lambda\in\Lambdap$.
 \item $M$ can be written as a finite direct sum of indecomposable modules.
 \item If $M$ is an indecomposable module, then there exists a character $\chi$ of $\mathcal{Z}$ such that $M=M(\chi)$. Here,
  \begin{equation}\label{Mchidefeq}
   M(\chi)=\{v\in M\:|\:(z-\chi(z))^nv=0\text{ for some $n$ depending on $z$}\}.
  \end{equation}

\end{itemize}

The following result, which is deeper, follows from standard classification theorems. Its last part will imply that cusp forms must have positive weight (see Proposition \ref{AA0nfindecomp2prop}).

\begin{proposition}\label{Mlambdaunitaryprop}
 Let $\lambda=(k,\ell)\in\Lambdap.$
 \begin{enumerate}
  \item $L(\lambda)$ is square-integrable if and only if $\ell\geq3$.
  \item $L(\lambda)$ is tempered if and only if $\ell\geq2$.
  \item $L(\lambda)$ is unitarizable if and only if $\ell\geq1$ or $(k,\ell)=(0,0)$.
 \end{enumerate}
\end{proposition}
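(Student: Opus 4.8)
The plan is to deduce all three statements from the standard classification theorems for lowest weight modules of Hermitian Lie groups, using the explicit data on the parabolic Verma modules $N(\lambda)$ collected above (in particular Propositions~\ref{Mlambdairredprop} and~\ref{Mlambdaexactsequencesprop}) to pin down $L(\lambda)$ whenever $N(\lambda)$ is reducible. Two remarks organize the argument. First, by property~(6) of the $N(\lambda)$ listed above, $L(\lambda)$ has infinitesimal character $\chi_{\lambda+\varrho}$ with $\varrho=(-1,-2)$. Second, since $w_0$ is annihilated by $\langle X_-,P_{1-},P_{0-},N_+\rangle$, the module $N(\lambda)$ is a genuine highest weight module for the positive system $\Phi^+$ with highest weight $\lambda$, and all its $K$-types lie in $\lambda+\Upsilon$; consequently, among lowest weight modules the only discrete series (resp.\ limits of discrete series) are the holomorphic-type ones, since the $K$-types of any other discrete series or limit are unbounded downward in the order $\preccurlyeq$.

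For part~(1) I would invoke Harish-Chandra's criterion for the holomorphic discrete series: $L(k,\ell)$ is square-integrable if and only if $\langle\lambda+\varrho,\beta^\vee\rangle<0$ for every noncompact $\beta\in\Phi^+$. Evaluating $\lambda+\varrho=(k-1,\ell-2)$ on the coroots of the noncompact positive roots $(-2,0)$, $(-1,-1)$, $(0,-2)$ yields the inequalities $k>1$, $k+\ell>3$, $\ell>2$, which, given $k\geq\ell$, hold simultaneously exactly when $\ell\geq3$. For part~(2) the analogous tool is the Knapp-Zuckerman classification of limits of discrete series. By part~(1) the holomorphic discrete series are the $L(k,\ell)$ with $k\geq\ell\geq3$; and the three pairings $\langle\lambda+\varrho,\beta^\vee\rangle$ over the noncompact $\beta\in\Phi^+$ are all $\leq0$ with exactly one equal to zero precisely when $\ell=2$, in which case $L(k,2)$ is a nonzero limit of discrete series for every $k\geq2$, whereas for $\ell\leq1$ one has $\langle\lambda+\varrho,(0,-2)^\vee\rangle=2-\ell>0$, so $L(k,\ell)$ is neither a discrete series nor a limit of one. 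Hence $L(\lambda)$ is tempered if and only if $\ell\geq2$.

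For part~(3), if $0\geq k\geq\ell$ then $L(\lambda)$ is finite-dimensional by property~(7) above, hence unitarizable if and only if it is the trivial representation, i.e.\ $(k,\ell)=(0,0)$; so assume $L(\lambda)$ is infinite-dimensional. The range $\ell\geq2$ is already handled, as tempered representations are unitary. For the two remaining boundary ranges I would appeal to the Enright-Howe-Wallach and Jakobsen classification of unitarizable highest weight modules, specialized to $\mathfrak{sp}_4(\R)$: the invariant Hermitian form carried by $L(k,\ell)$ is positive definite when $\ell=1$ (the last unitarizable point, the $\Sp_4$-analogue of the smallest nonzero member of the Wallach set of the genus-$2$ tube domain) and is indefinite once $\ell\leq0$ with $k\geq1$. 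Combining the three ranges gives unitarizability exactly for $\ell\geq1$ or $(k,\ell)=(0,0)$.

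I expect the main difficulty to be bookkeeping rather than conceptual. One must be careful to translate the quoted theorems, usually phrased for a positive system containing the positive noncompact roots, into the convention used here, where $\Phi^+$ places the noncompact roots in $\mathfrak{n}=\mathfrak{p}_-$ and the ``lowest weight'' terminology is borrowed from the modular-forms picture, so that no sign gets flipped. The other delicate point is the two boundary weights $\ell=2$ and $\ell=1$: there one should use Propositions~\ref{Mlambdairredprop} and~\ref{Mlambdaexactsequencesprop} to be certain that it is genuinely $L(\lambda)$, and not merely a subquotient of a possibly reducible $N(\lambda)$, that is tempered, respectively unitarizable.
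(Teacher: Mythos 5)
Your proposal is correct and takes essentially the same route as the paper: part (1) is reduced to the classification of (holomorphic) discrete series, part (2) to the Knapp--Zuckerman classification of tempered representations and limits of discrete series, and part (3) to the Enright--Howe--Wallach/Jakobsen classification of unitarizable highest weight modules, exactly the references the paper cites. The only difference is that you spell out the Harish-Chandra parameter pairings $\langle\lambda+\varrho,\beta^\vee\rangle$ explicitly, which the paper leaves to the cited classification theorems.
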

\begin{proof}
(1) follows from the classification of discrete series representations; see Theorem 12.21 of \cite{Knapp1986}. (The $L(\lambda)$ with $\ell\geq3$ are precisely the holomorphic discrete series representations.)

(2) follows from the classification of tempered representations; see Theorem 8.5.3 of \cite{Knapp1986}. (The $L(\lambda)$ with $\ell=2$ are precisely the limits of holomorphic discrete series representations.)

For a more explicit description of these classifications in the case of $\Sp_4(\R)$, see
\cite{Muic2009}.

(3) follows from the classification of unitary highest weight modules; see \cite{Jakobsen1983}, \cite{EnrightHoweWallach1983} or \cite{EnrightJoseph1990}. We omit the details.
\end{proof}

\begin{lemma}\label{locnfinLlambdalemma}
 The only irreducible, locally $\mathfrak{n}$-finite $(\mathfrak{g},K)$-modules are the $L(\lambda)$ for $\lambda\in\Lambdap.$
\end{lemma}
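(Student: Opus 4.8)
The plan is to show that an irreducible, locally $\mathfrak{n}$-finite $(\mathfrak{g},K)$-module $M$ must contain a vector of weight $\lambda\in\Lambdap$ annihilated by $\langle X_-,P_{1-},P_{0-},N_+\rangle$, and then invoke the universal property (item (3) in the list of properties of the $N(\lambda)$) together with irreducibility. First I would observe that since $M$ is a $(\mathfrak{g},K)$-module it is the direct sum of $K$-types, so it contains some $K$-type $\rho_\mu$; pick a highest weight vector $w$ in it, so $N_+w=0$ and $w$ has some weight $\mu=(k,\ell)\in\Lambdap$. The module $M$ need not be in category $\mathcal{O}^{\mathfrak{p}}$ a priori (it is irreducible, hence certainly admissible and locally $\mathfrak{n}$-finite, but finite generation as a $\mathcal{U}(\mathfrak{g}_\C)$-module is automatic from irreducibility once it is nonzero), so in fact $M$ \emph{does} satisfy ($\mathcal{O}^{\mathfrak{p}}$1)–($\mathcal{O}^{\mathfrak{p}}$3) and lies in $\mathcal{O}^{\mathfrak{p}}$.

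Next I would use local $\mathfrak{n}$-finiteness to produce a vector killed by all of $\mathfrak{n}=\langle X_-,P_{1-},P_{0-}\rangle$. Since $\mathcal{U}(\mathfrak{n})w$ is finite-dimensional and $\mathfrak{n}$ is a nilpotent (here even abelian) Lie algebra acting on it, by Engel's theorem (or Lie's theorem, since $\mathfrak{n}$ is abelian) there is a nonzero vector $v_0\in\mathcal{U}(\mathfrak{n})w$ with $\mathfrak{n}v_0=0$. The issue is that $v_0$ need not be $N_+$-fixed, nor a weight vector. To fix this: $\mathcal{U}(\mathfrak{n})w$ is a finite-dimensional $\mathfrak{h}_\C$-stable subspace (as $\mathfrak{n}$ is a sum of root spaces and $w$ is a weight vector), hence decomposes into weight spaces; moreover it is $\mathfrak{k}_\C$-stable is \emph{not} quite true, so instead I would argue as follows. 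Consider the subspace $M^{\mathfrak{n}}=\{v\in M:\mathfrak{n}v=0\}$, which is nonzero by the previous step. Because $[\mathfrak{k}_\C,\mathfrak{n}]\subseteq\mathfrak{n}$ (indeed $[\mathfrak{k}_\C,\mathfrak{p}_-]\subseteq\mathfrak{p}_-$ from the set-up), the space $M^{\mathfrak{n}}$ is $\mathfrak{k}_\C$-stable, hence a direct sum of $K$-types; pick a highest weight vector $v$ in one of them. Then $v$ has some weight $\lambda=(a,b)\in\Lambdap$, satisfies $N_+v=0$, and is annihilated by $\langle X_-,P_{1-},P_{0-},N_+\rangle$.

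Finally, $\mathcal{U}(\mathfrak{g}_\C)v$ is a nonzero submodule of the irreducible module $M$, hence equals $M$, so $v$ is a cyclic vector of the required type. By the universal property of $N(\lambda)$ there is a surjection $N(\lambda)\twoheadrightarrow M$. Since $M$ is irreducible, it is the unique irreducible quotient $L(\lambda)$ of $N(\lambda)$, with $\lambda\in\Lambdap$. This establishes that every irreducible locally $\mathfrak{n}$-finite $(\mathfrak{g},K)$-module is of the form $L(\lambda)$; conversely each $L(\lambda)$ is irreducible and, being a quotient of $N(\lambda)\in\mathcal{O}^{\mathfrak{p}}$, is locally $\mathfrak{n}$-finite. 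The main obstacle is the middle step: passing from ``$M$ has a highest weight vector'' and ``$M$ is locally $\mathfrak{n}$-finite'' to ``$M$ has a single vector simultaneously killed by $\mathfrak{n}$ and by $N_+$.'' The clean way, as above, is to note that $M^{\mathfrak{n}}$ is a nonzero $\mathfrak{k}_\C$-submodule and extract a highest weight vector there; one should double-check that $M^{\mathfrak{n}}$ is a \emph{direct sum of $K$-types} (not merely a $\mathfrak{k}_\C$-module), which follows because it is a $\mathfrak{k}_\C$-stable subspace of the $(\mathfrak{g},K)$-module $M$ and all weights are automatically analytically integral.
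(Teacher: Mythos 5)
Your proof is correct, but it takes a genuinely different route from the paper. The paper's argument is a three-line appeal to category $\mathcal{O}^{\mathfrak{p}}$: an irreducible locally $\mathfrak{n}$-finite $(\mathfrak{g},K)$-module lies in $\mathcal{O}^{\mathfrak{p}}$ (finite generation being automatic from irreducibility), hence by \eqref{categoryOpeq1} it has a finite composition series with all quotients of the form $L(\lambda)$, so irreducibility forces it to equal some $L(\lambda)$. You instead construct an explicit cyclic vector annihilated by $\langle X_-,P_{1-},P_{0-},N_+\rangle$ and invoke the universal property of $N(\lambda)$ together with the uniqueness of its irreducible quotient. Your route is more self-contained --- it avoids the finite-length theorem for $\mathcal{O}^{\mathfrak{p}}$, which is itself a nontrivial imported result --- and it identifies $\lambda$ concretely as the weight of a lowest weight vector of $M$; the paper's route is shorter and reuses machinery already set up for the rest of Section 2. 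The key structural observation in your argument, that $M^{\mathfrak{n}}$ is a nonzero $\mathfrak{k}_\C$-stable (hence $K$-type-decomposable) subspace because $[\mathfrak{k}_\C,\mathfrak{n}]\subseteq\mathfrak{n}$, is exactly right and is the standard way to manufacture a simultaneous highest weight vector.

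One small repair: your appeal to ``Engel's theorem (or Lie's theorem, since $\mathfrak{n}$ is abelian)'' to get a vector with $\mathfrak{n}v_0=0$ is slightly off as stated. Lie's theorem only produces a common eigenvector, with no guarantee that the eigenvalue character is zero, and Engel's theorem requires the operators to act nilpotently, which is not a formal consequence of $\mathfrak{n}$ being nilpotent as a Lie algebra. The correct justification here is that $\mathcal{U}(\mathfrak{n})w$ is a finite-dimensional weight-graded space and each of $X_-,P_{1-},P_{0-}$ strictly shifts weights (each root in $\mathfrak{n}$ lowers $k+\ell$ by $2$), so they act nilpotently and Engel's theorem applies with eigenvalue $0$; equivalently, a vector of extremal weight in $\mathcal{U}(\mathfrak{n})w$ is killed by $\mathfrak{n}$. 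This is a one-line fix and does not affect the rest of your argument.
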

\begin{proof}
Let $R$ be a locally $\mathfrak{n}$-finite $(\mathfrak{g},K)$-module. Then  $R$ lies in category $\mathcal{O}^\p$. By~\eqref{categoryOpeq1}, $R$ has a finite composition series with the quotients being $L(\lambda)$'s. So if $R$ is irreducible, it must be an $L(\lambda)$.
\end{proof}

\begin{lemma}\label{casimirNlambdalemma}
 Let $\lambda=(k,\ell)\in\Lambdap$. The Casimir operator $\Omega_2$ defined in \eqref{casimireq1} acts on $N(\lambda)$, and hence on $L(\lambda)$, by the scalar $\frac12(k(k-2)+\ell(\ell-4))$.
\end{lemma}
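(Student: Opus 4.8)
The plan is to compute the scalar by which $\Omega_2$ acts on the lowest weight vector $w_0$ of $N(\lambda)$, using the second form \eqref{casimireq2} of the Casimir, and then to invoke the fact that $N(\lambda)$ admits a central character to conclude that the same scalar governs the action on all of $N(\lambda)$ (hence on $L(\lambda)$). Concretely, I would first recall that $w_0$ is the highest weight vector of $F(\lambda)\subset N(\lambda)$; it has weight $\lambda=(k,\ell)$, so $Zw_0=kw_0$ and $Z'w_0=\ell w_0$, and moreover $w_0$ is annihilated by $N_+$ (since it is the $K$-highest weight vector) and by $X_-,P_{1-},P_{0-}$ (since $\mathfrak{n}=\mathfrak{p}_-$ acts trivially on $F(\lambda)$ inside $N(\lambda)$). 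Applying \eqref{casimireq2} termwise: $\frac12 Z^2$ gives $\frac12 k^2$, $\frac12 Z'^2$ gives $\frac12\ell^2$, $-Z$ gives $-k$, $-2Z'$ gives $-2\ell$, and each of $N_-N_+$, $X_+X_-$, $P_{1+}P_{1-}$, $P_{0+}P_{0-}$ kills $w_0$ because the right-hand factor ($N_+$, $X_-$, $P_{1-}$, $P_{0-}$ respectively) annihilates $w_0$. Summing gives $\frac12 k^2+\frac12\ell^2-k-2\ell=\frac12\bigl(k(k-2)+\ell(\ell-4)\bigr)$, which is the asserted eigenvalue.

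Next I would argue that this computation on a single vector suffices. By property (6) in the list of basic properties, $N(\lambda)$ admits the central character $\chi_{\lambda+\varrho}$; since $\Omega_2\in\mathcal{Z}$, it acts on all of $N(\lambda)$ by the single scalar $\chi_{\lambda+\varrho}(\Omega_2)$, and we have just evaluated this scalar by looking at its action on $w_0$. Because $L(\lambda)$ is a quotient of $N(\lambda)$, $\Omega_2$ acts on $L(\lambda)$ by the same scalar. Alternatively, one can bypass the citation to property (6) and observe directly that $N(\lambda)=\mathcal{U}(\mathfrak{p}_+)w_0$ with $\mathcal{U}(\mathfrak{p}_+)$ commuting with $\Omega_2$ up to lower-order corrections that vanish on $w_0$; but invoking the central character is cleaner.

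There is essentially no obstacle here: the only thing to be careful about is the bookkeeping in passing from the symmetric form \eqref{casimireq1} to the ``normal-ordered'' form \eqref{casimireq2}, and the excerpt has already supplied \eqref{casimireq2}, so the commutation-relation rearrangement is a given. The one sanity check worth performing is consistency with the normalization $\chi_\varrho=$ trivial character: plugging $\lambda=\varrho=(-1,-2)$ into $\frac12\bigl(k(k-2)+\ell(\ell-4)\bigr)$ should reproduce the value $\Omega_2$ takes on the trivial representation. With $k=-1$: $k(k-2)=3$; with $\ell=-2$: $\ell(\ell-4)=12$; the sum is $\frac12(3+12)=\frac{15}{2}$. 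This is $\langle\varrho,\varrho\rangle$ up to the usual shift, consistent with the Harish-Chandra parametrization (Humphrey's $\chi_\lambda$ is our $\chi_{\lambda+\varrho}$, as noted in the excerpt), so the formula and normalization agree. Including this remark would reassure the reader, but it is not logically necessary for the proof.
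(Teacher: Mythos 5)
Your proof is correct and is essentially the paper's own argument: evaluate \eqref{casimireq2} on the lowest weight vector $w_0$ (which is killed by $N_+$, $X_-$, $P_{1-}$, $P_{0-}$ and has $Zw_0=kw_0$, $Z'w_0=\ell w_0$) and then use centrality of $\Omega_2$ to propagate the scalar to all of $N(\lambda)$ and its quotient $L(\lambda)$. One small caveat about your optional sanity check: the trivial representation is $L(0,0)$, not $N(\varrho)$, so the correct consistency test is that the formula vanishes at $\lambda=(0,0)$ (it does, since $\frac12(k(k-2)+\ell(\ell-4))=\frac12\bigl((k-1)^2+(\ell-2)^2\bigr)-\frac52$ and $\frac12(1+4)=\frac52$); plugging in $\lambda=\varrho$ gives $\frac{15}{2}$, which is not the Casimir eigenvalue of the trivial module, though as you note this aside is not needed for the proof.
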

\begin{proof}
Since $\Omega_2$ lies in the center of $\mathcal{U}(\mathfrak{g}_\C)$, it is enough to prove that $\Omega_2w_0=\frac12(k(k-2)+\ell(\ell-4))w_0$, where $w_0$ is a vector of weight $(k,\ell)$. This follows from \eqref{casimireq2}.
\end{proof}

\subsection{Reducibilities and \texorpdfstring{$K$}{}-types}
In this section we will determine composition series for each of the modules $N(\lambda)$, and determine the $K$-types of each $N(\lambda)$ and $L(\lambda)$.
\begin{proposition}\label{Mlambdairredprop}
 Let $\lambda=(k,\ell)\in\Lambdap$. Then $N(\lambda)$ is irreducible if and only if one of the following conditions is satisfied:
 \begin{enumerate}
  \item $\ell\geq2$.
  \item $k=1$.
  \item $k+\ell=3$.
 \end{enumerate}
 Hence $N(\lambda)$ is irreducible if and only if $\lambda$ corresponds to one of the blackened points in the following diagram:
\setlength{\unitlength}{0.5px}
\begin{equation}\label{Mlambdairredproppic}
\begin{minipage}{60ex}
\begin{center}
\begin{picture}(290, 200)(-100, -100)
\put(-100,0){\line(1,0){300}}
\put(0,100){\line(0,-1){200}}

\put(90,90){\circle*{6}}
\put(120,90){\circle*{6}}
\put(150,90){\circle*{6}}
\put(180,90){\circle*{6}}

\put(60,60){\circle*{6}}
\put(90,60){\circle*{6}}
\put(120,60){\circle*{6}}
\put(150,60){\circle*{6}}
\put(180,60){\circle*{6}}

\put(30,30){\circle*{6}}
\put(60,30){\circle*{6}}
\put(90,30){\circle{6}}
\put(120,30){\circle{6}}
\put(150,30){\circle{6}}
\put(180,30){\circle{6}}

\put(0,0){\circle{6}}
\put(30,0){\circle*{6}}
\put(60,0){\circle{6}}
\put(90,0){\circle*{6}}
\put(120,0){\circle{6}}
\put(150,0){\circle{6}}
\put(180,0){\circle{6}}

\put(-30,-30){\circle{6}}
\put(0,-30){\circle{6}}
\put(30,-30){\circle*{6}}
\put(60,-30){\circle{6}}
\put(90,-30){\circle{6}}
\put(120,-30){\circle*{6}}
\put(150,-30){\circle{6}}
\put(180,-30){\circle{6}}

\put(-60,-60){\circle{6}}
\put(-30,-60){\circle{6}}
\put(0,-60){\circle{6}}
\put(30,-60){\circle*{6}}
\put(60,-60){\circle{6}}
\put(90,-60){\circle{6}}
\put(120,-60){\circle{6}}
\put(150,-60){\circle*{6}}
\put(180,-60){\circle{6}}

\put(-90,-90){\circle{6}}
\put(-60,-90){\circle{6}}
\put(-30,-90){\circle{6}}
\put(0,-90){\circle{6}}
\put(30,-90){\circle*{6}}
\put(60,-90){\circle{6}}
\put(90,-90){\circle{6}}
\put(120,-90){\circle{6}}
\put(150,-90){\circle{6}}
\put(180,-90){\circle*{6}}
\end{picture}
\end{center}
\end{minipage}
\end{equation}

\end{proposition}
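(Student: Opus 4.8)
The plan is to specialize the standard structure theory of parabolic Verma modules (\cite[Ch.~9]{Humphreys2008}) to $\mathfrak{g}_\C=\mathfrak{sp}_4(\C)$ with the Siegel parabolic, whose Levi contributes the single compact root $(1,-1)$; because the root system is of type $C_2$, the general theorems reduce to a short finite computation. First I would cut down the possible composition factors. By the linkage and central-character facts already recorded above, every composition factor of $N(\lambda)$ is an $L(\mu)$ with $\mu\in\Lambdap$, $\mu\preccurlyeq\lambda$ and $\chi_{\mu+\varrho}=\chi_{\lambda+\varrho}$ (equivalently, $\mu+\varrho$ lies in the Weyl orbit of $\lambda+\varrho$); together with the fact that $N(\lambda)$ contains $\rho_\lambda$ with multiplicity one, this means $N(\lambda)$ is reducible exactly when some such $\mu\neq\lambda$ actually supports a composition factor. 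Writing $\lambda=(k,\ell)$, so $\lambda+\varrho=(k-1,\ell-2)$, and using that the Weyl group acts by signed permutations of $\R^2$, an inspection of the (at most eight element) orbit intersected with $\{\mu\in\Lambdap:\mu\prec\lambda\}$ shows that the only candidates $\mu$ are of the form $s_\alpha\cdot\lambda$, where $\alpha$ runs over the positive non-compact roots $(-2,0),(0,-2),(-1,-1)$ with $n_\alpha:=\langle\lambda+\varrho,\alpha^\vee\rangle$ a positive integer \emph{and} $s_\alpha\cdot\lambda\in\Lambdap$; here $n_{(-2,0)}=1-k$, $n_{(0,-2)}=2-\ell$, $n_{(-1,-1)}=3-k-\ell$.

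With this in hand, the cases in which there is no candidate at all give irreducibility immediately. If $\ell\ge2$ then $k\ge\ell\ge2$, so all three $n_\alpha\le0$. If $k+\ell=3$ then $k\ge2$, $n_{(-1,-1)}=0$, $n_{(-2,0)}<0$, and $\alpha=(0,-2)$ is excluded because $s_{(0,-2)}\cdot\lambda=(k,4-\ell)=(k,k+1)\notin\Lambdap$. So $N(\lambda)$ is irreducible whenever $\ell\ge2$ or $k+\ell=3$.

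For the reducibility statement, suppose $\ell\le1$ and $\lambda$ lies on neither of the lines $k=1$, $k+\ell=3$. Then at least one of the three roots $\alpha$ yields an admissible candidate $\mu=s_\alpha\cdot\lambda\in\Lambdap\setminus\{\lambda\}$ with $n_\alpha\ge1$: take $\alpha=(-2,0)$ if $k\le0$, $\alpha=(-1,-1)$ if $k\ge1$ and $k+\ell\le2$ (note $k\neq1$ here), and $\alpha=(0,-2)$ if $k+\ell\ge4$. For each such $\lambda$ I would then exhibit an explicit nonzero \emph{singular vector} of weight $\mu$ in $N(\lambda)$ — a vector annihilated by $\mathfrak{n}=\langle X_-,P_{1-},P_{0-}\rangle$ and by $N_+$ — using the PBW basis \eqref{tildeEpsspaneq}: the weight-$\mu$ subspace annihilated by $N_+$ has small, explicitly computable dimension, and the commutation relations of the chosen basis of $\mathfrak{g}_\C$ reduce $X_-v=P_{1-}v=P_{0-}v=0$ to a linear system possessing a nonzero solution. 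By the universal property of $N(\mu)$ recalled above, such a vector generates a proper nonzero submodule of $N(\lambda)$, so $N(\lambda)$ is reducible. (One could alternatively invoke Jantzen's sum formula, whose single surviving term is visibly nonzero in these ranges.)

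The genuinely delicate case — and the one I expect to be the main obstacle — is the line $k=1$, $\ell\le1$, where the naive count overpredicts reducibility: here $s_{(0,-2)}\cdot\lambda\notin\Lambdap$ and $n_{(-2,0)}=0$, so the sole candidate is $\mu=s_{(-1,-1)}\cdot\lambda=(3-\ell,2)\in\Lambdap$, and yet $N(1,\ell)$ is irreducible. The reason is that $\lambda+\varrho=(0,\ell-2)$ is fixed by the reflection $s_{(-2,0)}$ in the long root, so the infinitesimal character $\chi_{\lambda+\varrho}$ is \emph{singular}, and the conjugate $s_{(-2,0)}s_{(-1,-1)}s_{(-2,0)}=s_{(1,-1)}$ of $s_{(-1,-1)}$ by the nontrivial stabilizer element is the \emph{compact} reflection. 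Hence, passing from the ordinary Verma module $M(\lambda)$ to $N(\lambda)$ (the quotient of $M(\lambda)$ by the submodule generated by the weight $s_{(1,-1)}\cdot\lambda$ singular vector), the weight-$\mu$ singular vector that exists in $M(\lambda)$ is absorbed into that quotiented-out submodule and vanishes in $N(\lambda)$; equivalently $[N(\lambda):L(\mu)]=[M(\lambda):L(\mu)]-[M(s_{(1,-1)}\cdot\lambda):L(\mu)]=0$, which one reads off either from the Kazhdan--Lusztig combinatorics of this singular $C_2$-block or, most concretely, from the computation of the previous paragraph, which here shows that the unique (up to scalar) weight-$(3-\ell,2)$ vector of $N(1,\ell)$ killed by $N_+$ is not killed by all of $X_-,P_{1-},P_{0-}$. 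Thus $N(1,\ell)$ has no proper submodule, and assembling the three cases yields the proposition and the picture \eqref{Mlambdairredproppic}.
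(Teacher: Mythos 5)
Your overall architecture --- use linkage and central characters to restrict the possible composition factors, conclude irreducibility when there are no candidates, produce singular vectors (or invoke Jantzen) for reducibility, and treat the singular line $k=1$ separately --- is sound and in fact parallels the paper's proof, which disposes of all regular weights by citing Theorem 9.12 of Humphreys, handles $k+\ell=3$ by the same central-character count you use, and handles $k=1$ by citing Jantzen's simplicity criterion (Theorem 9.13 of Humphreys). But two of your concrete claims are wrong. First, the assertion that the only linkage candidates $\mu\prec\lambda$ are of the form $s_\alpha\cdot\lambda$ for a non-compact reflection is false: for $\lambda$ in Region A (i.e.\ $k\le 0$) the rotation $s_2s_1$ produces the additional candidate $s_2s_1\cdot\lambda=(3-\ell,k+1)\in\Lambdap$, which satisfies $\mu\prec\lambda$ and has the same central character (this is precisely why the resolution \eqref{Mlambdaexactsequencespropeq1} has four Verma terms). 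This does not damage your conclusions --- for reducibility one candidate suffices, and in the ranges $\ell\ge2$ and $k+\ell=3$ a direct check of the full Weyl orbit shows there really are no candidates --- but the ``inspection'' you appeal to must actually be carried out region by region, and as stated it is incorrect. Second, and more seriously, your ``most concrete'' treatment of the case $k=1$ rests on the claim that the weight-$(3-\ell,2)$ vector of $N(1,\ell)$ killed by $N_+$ is unique up to scalar. By Lemma \ref{MlambdaKtypeslemma}, the multiplicity of the $K$-type $\rho_{(3-\ell,2)}$ in $N(1,\ell)$ is $\lfloor(2-\ell)/2\rfloor$, which is at least $2$ as soon as $\ell\le-2$; so that verification does not go through, and your alternative justifications (the identity $[N(\lambda):L(\mu)]=[M(\lambda):L(\mu)]-[M(s_{(1,-1)}\cdot\lambda):L(\mu)]=0$, or the Kazhdan--Lusztig combinatorics of the singular block) are asserted rather than proved.

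A related gap sits in the reducibility half. Exhibiting an explicit singular vector of weight $s_\alpha\cdot\lambda$ is possible in principle, but in Regions A and C the required vector has unbounded degree in $\mathcal{U}(\mathfrak{p}_+)$ as $k\to-\infty$ or $\ell\to-\infty$, so ``the linear system possesses a nonzero solution'' is the statement to be proved, not an argument; no uniform construction is given. Your parenthetical fallback to the parabolic Jantzen sum formula is the correct and essentially unavoidable tool here --- it is what Humphreys' Theorem 9.12(b), cited in the paper, encapsulates --- and if you promote it from an aside to the main argument, and replace the $k=1$ computation by Jantzen's simplicity criterion (for which your sign computations $n_{(-2,0)}=1-k$, $n_{(0,-2)}=2-\ell$, $n_{(-1,-1)}=3-k-\ell$ are exactly the needed input), the proof closes up and essentially coincides with the paper's.
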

\begin{proof}
Most cases can be handled by Theorem 9.12 in \cite{Humphreys2008}. The condition (*) in this theorem translates into $\ell\geq2$. Hence, by part a) of the theorem, $N(\lambda)$ is irreducible if $\ell\geq2$, and by part b) of the theorem, $N(\lambda)$ is reducible if $\ell\leq1$ and $\lambda+\varrho$ is regular (does not lie on a wall).

Hence consider $\lambda$ with $\ell\leq1$ and $\lambda+\varrho$ singular. Then either $\lambda=(1,\ell)$ with $\ell\leq1$ or $\lambda=(x+1,-x+2)$ with $x\geq1$. In the second case it is clear that no $L(\lambda')$ with $\lambda'\neq\lambda$ has the same central character as $N(\lambda)$; thus $N(\lambda)$ is irreducible. In the case that $\lambda=(1,\ell)$ with $\ell\leq1$ we may use Theorem 9.13 in \cite{Humphreys2008} (Jantzen's simplicity criterion) to see that $N(\lambda)$ is irreducible, as follows. The set of roots $\Psi_\lambda^+$ defined on page 198 of \cite{Humphreys2008} is easily calculated to be $\{(-1,-1),(0,-2)\}$. Hence, by Corollary 9.13 a) of \cite{Humphreys2008}, $N(\lambda)$ is irreducible if and only if
\begin{equation}\label{Mlambdairredpropeq1}
 \theta(s_{(-1,-1)}\cdot\lambda)+\theta(s_{(0,-2)}\cdot\lambda)=0.
\end{equation}
Here, the dot action of the Weyl group is defined in Sect.~1.8 of \cite{Humphreys2008}, and
\begin{equation}\label{Mlambdairredpropeq2}
 \theta(\mu)=\text{ch}(M(\mu))-\text{ch}(M(s_{(1,-1)}\cdot\mu))
\end{equation}
for any $\mu\in\Lambda^+$. It is straightforward to calculate that the characters involved on the left hand side of \eqref{Mlambdairredpropeq1} cancel each other out, so that \eqref{Mlambdairredpropeq1} is satisfied.
\end{proof}

We see from \eqref{Mlambdairredproppic} that the $\lambda=(k,\ell)$, $k\geq\ell$, for which $N(\lambda)$ is reducible fall into one of three regions:
\begin{itemize}
 \item Region A: $k\leq0$; these are the dominant integral weights.
 \item Region B: $k\geq2$ and $k+\ell\leq2$.
 \item Region C: $\ell\leq1$ and $k+\ell\geq4$.
\end{itemize}
In addition, we will consider
\begin{itemize}
 \item Region D: $\ell\geq3$.
\end{itemize}
Note that the disjoint union of Regions A -- D comprises precisely the regular integral weights with $k\geq\ell$.

The \emph{dot action} of an element $w$ of the Weyl group $W$ on $\lambda\in\mathfrak{h}_\C^*$ is defined by $w\cdot\lambda=w(\lambda+\varrho)-\varrho$, where on the right side we have the usual action of $W$ via reflections, and where $\varrho=(-1,-2)$ is half the sum of the positive roots. Let $s_1\in W$ be the reflection corresponding to the short simple root, and let $s_2\in W$ be the reflection corresponding to the long simple root. Explicitly, $s_1(x,y)=(y,x)$ and $s_2(x,y)=(-x,y)$. Under the dot action, we have
\begin{equation}\label{w1w2w3ABCDeq}
 s_2\cdot\text{A}=\text{B},\qquad s_2s_1\cdot\text{A}=\text{C},\qquad s_2s_1s_2\cdot\text{A}=\text{D},
\end{equation}
 where we wrote ``A'' for ``Region A'', etc. Consequently, $s_2s_1s_2\cdot\text{B}=\text{C}$ and $s_1s_2s_1\cdot\text{C}=\text{D}$.
\begin{proposition}\label{Mlambdaexactsequencesprop}
 Let $\lambda=(k,\ell)\in \Lambdap$.
 \begin{enumerate}
  \item Assume that $\lambda$ is in Region A. Then there is an exact sequence
   $$
    0\longrightarrow L(s_2\cdot\lambda)\longrightarrow N(\lambda)\longrightarrow L(\lambda)\longrightarrow0.
   $$
   The weight $s_2\cdot\lambda=(-k+2,\ell)$ is in Region B.
  \item Assume that $\lambda$ is in Region B. Then there is an exact sequence
   $$
    0\longrightarrow L(s_2s_1s_2\cdot\lambda)\longrightarrow N(\lambda)\longrightarrow L(\lambda)\longrightarrow0.
   $$
   The weight $s_2s_1s_2\cdot\lambda=(-\ell+3,-k+3)$ is in Region C.
  \item Assume that $\lambda$ is in Region C. Then there is an exact sequence
   $$
    0\longrightarrow L(s_1s_2s_1\cdot\lambda)\longrightarrow N(\lambda)\longrightarrow L(\lambda)\longrightarrow0,
   $$
   The weight $s_1s_2s_1\cdot\lambda=(k,-\ell+4)$ is in Region D.
 \end{enumerate}
\end{proposition}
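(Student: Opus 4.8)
\emph{Overall approach.} The statement is equivalent to saying that, for $\lambda$ in Region A, B, or C, the module $N(\lambda)$ has length exactly two, with simple head $L(\lambda)$ and simple socle $L(\mu)$, where $\mu$ equals $s_2\cdot\lambda$, $s_2s_1s_2\cdot\lambda$, $s_1s_2s_1\cdot\lambda$ respectively. The purely computational part — that with $s_1(x,y)=(y,x)$, $s_2(x,y)=(-x,y)$ and $\varrho=(-1,-2)$ the weight $\mu$ has the stated coordinates, lies in $\Lambdap$, satisfies $\mu\preccurlyeq\lambda$, and falls in the indicated region — I would simply carry out directly. From the listed properties of $N(\lambda)$, it has finite length, a simple head $L(\lambda)$, and a simple socle; since $\lambda$ lies in Region A, B, or C it is reducible by Proposition~\ref{Mlambdairredprop}, so the socle is $L(\mu')$ for some $\mu'\neq\lambda$ with $\mu'\preccurlyeq\lambda$, and because $N(\lambda)$ has central character $\chi_{\lambda+\varrho}$, Harish-Chandra's theorem forces $\mu'+\varrho\in W(\lambda+\varrho)$, i.e.\ $\mu'=w\cdot\lambda$. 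Everything then comes down to pinning down which $L(w\cdot\lambda)$ occurs and showing all composition multiplicities equal one.

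\emph{Scaffolding via the parabolic BGG resolution.} I would organize the three cases through the linkage class. Every weight in Regions A--D equals $w^{(i)}\cdot\lambda_A$ for a unique $\lambda_A$ in Region A and a unique minimal-length coset representative $w^{(i)}$ of $W_{\mathfrak p}\backslash W$, where $w^{(0)}=1$, $w^{(1)}=s_2$, $w^{(2)}=s_2s_1$, $w^{(3)}=s_2s_1s_2$; by \eqref{w1w2w3ABCDeq}, $i=0,1,2,3$ according as the weight lies in A, B, C, D. Since $L(\lambda_A)$ is finite-dimensional, it admits the parabolic (Lepowsky) BGG resolution
\[
 0\longrightarrow N(w^{(3)}\cdot\lambda_A)\xrightarrow{\,d_3\,}N(w^{(2)}\cdot\lambda_A)\xrightarrow{\,d_2\,}N(w^{(1)}\cdot\lambda_A)\xrightarrow{\,d_1\,}N(\lambda_A)\xrightarrow{\,d_0\,}L(\lambda_A)\longrightarrow0,
\]
which has a single parabolic Verma module in each homological degree because $W_{\mathfrak p}\backslash W$ has exactly one element of each length $0,1,2,3$. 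Truncating from the left and using $\operatorname{im}(d_{j+1})=\ker(d_j)$, so that $d_j$ factors as a surjection followed by an injection $\operatorname{coker}(d_{j+1})\hookrightarrow N(w^{(j-1)}\cdot\lambda_A)$, I would extract short exact sequences $0\to Q_{i+1}\to N(w^{(i)}\cdot\lambda_A)\to Q_i\to0$ for $i=0,1,2$, where $Q_i:=\operatorname{coker}(d_{i+1})$, $Q_3=N(w^{(3)}\cdot\lambda_A)$, and $Q_0=L(\lambda_A)$.

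\emph{Downward induction.} The target is $Q_i=L(w^{(i)}\cdot\lambda_A)$ for $i=0,1,2$, which, fed into the short exact sequences, gives exactly the three asserted sequences (and identifies $\mu$ with $w^{(i+1)}\cdot\lambda_A$, matching the stated formulas after a short calculation with $s_1,s_2$). The base case $Q_3=N(w^{(3)}\cdot\lambda_A)=L(w^{(3)}\cdot\lambda_A)$ is Proposition~\ref{Mlambdairredprop}, since $w^{(3)}\cdot\lambda_A$ lies in Region D. For the inductive step, assume $Q_{i+1}=L(w^{(i+1)}\cdot\lambda_A)$; the short exact sequence then exhibits it as the (unique) irreducible submodule of $N(w^{(i)}\cdot\lambda_A)$, which is reducible, and whose head is $L(w^{(i)}\cdot\lambda_A)$, occurring once because the $K$-type $\rho_{w^{(i)}\cdot\lambda_A}$ occurs once in $N(w^{(i)}\cdot\lambda_A)$ and not at all in $L(w^{(i+1)}\cdot\lambda_A)$ (a quick check using $\preccurlyeq$). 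What remains is to show that $N(w^{(i)}\cdot\lambda_A)$ has length exactly two.

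\emph{The main obstacle.} By linkage, any further composition factor is some $L(w^{(j)}\cdot\lambda_A)$; those with $w^{(j)}\cdot\lambda_A\not\preccurlyeq w^{(i)}\cdot\lambda_A$ are excluded outright, and a repeated copy of the socle $L(w^{(i+1)}\cdot\lambda_A)$ is excluded by $\operatorname{Ext}^1_{\mathcal O^{\mathfrak p}}(L,L)=0$ together with simplicity of the socle. In Region C this suffices, since $\preccurlyeq$ leaves only $L(w^{(2)}\cdot\lambda_A)$ and $L(w^{(3)}\cdot\lambda_A)$; and once Region B is known, Region A follows formally from the sequence $0\to Q_1\to N(\lambda_A)\to Q_0\to0$. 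The one genuine difficulty is Region B: there $\preccurlyeq$ still permits $L(w^{(3)}\cdot\lambda_A)$ inside $N(w^{(1)}\cdot\lambda_A)$, and the $\operatorname{Ext}$-argument cannot exclude it, since $\operatorname{Ext}^1(L(w^{(3)}\cdot\lambda_A),L(w^{(2)}\cdot\lambda_A))\neq0$. To settle this I would invoke the explicit determination of composition series of generalized Verma modules for the Hermitian symmetric pair $(\mathfrak{sp}_4,\mathfrak{gl}_2)$ (Boe; Enright--Shelton), equivalently the triviality of the relevant parabolic Kazhdan--Lusztig polynomials for $C_2$, or else the parabolic Jantzen sum formula — any of these yields length two for every reducible $N(\lambda)$ in this setting. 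Alternatively, since everything is explicit in rank two, one can substitute a bare-hands verification: compute the $K$-type multiplicities in each $N(w^{(j)}\cdot\lambda_A)$ from \eqref{tildeEpsspaneq}, track them through the short exact sequences, and check that no $K$-types are left over with which to assemble a third constituent. This length-exactly-two input is the heart of the matter; granted it, the downward induction, and hence the proposition, goes through.
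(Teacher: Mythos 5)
Your scaffolding coincides with the paper's: Region C is settled exactly as in the paper (central character plus $\preccurlyeq$ leave only one candidate factor, which must occur once since $N(\lambda)$ is reducible), and Regions A and B are attacked through the four-term exact sequence coming from the parabolic BGG resolution of the finite-dimensional $L(\lambda_A)$ (the paper's citation of Theorem 9.16 of \cite{Humphreys2008} is precisely your Lepowsky resolution, truncated as you describe). You also correctly isolate the genuine difficulty: after the resolution reduces everything to showing that $N(s_2\cdot\lambda_A)$ has length exactly two, linkage and $\mathrm{Ext}$-vanishing cannot exclude an extra copy of $L(s_2s_1s_2\cdot\lambda_A)$, since $\mathrm{Ext}^1(L(s_2s_1s_2\cdot\lambda_A),L(s_2s_1\cdot\lambda_A))\neq0$ (the paper's modules $N(k,1)^\vee$ realize exactly this nonsplit extension). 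Where you diverge is in closing this last step: the paper converts length to Loewy length via simplicity of the socle and then applies Irving's theorem on socular weights to bound the Loewy length by $2$, whereas you propose either citing the known composition series for Hermitian symmetric pairs (Boe, Enright--Shelton) or a direct $K$-type multiplicity computation. The Irving route is what lets the paper avoid any case-by-case character work; your character-count fallback does work (e.g.\ for $\lambda_A=(0,0)$ one finds $\mathrm{mult}_{(5,5)}N(2,0)=0<1=\mathrm{mult}_{(5,5)}N(3,1)$, which by positivity of the character of $L(2,0)$ forces $L(3,3)$ not to occur in $N(2,0)$; the alternation of multiplicities along the top boundary of the $K$-type support makes such a weight available for every $\lambda_A$), but it is an actual computation with the formulas of Lemma~\ref{MlambdaKtypeslemma} that you have sketched rather than carried out.

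Two smaller caveats. First, your phrase ``triviality of the relevant parabolic Kazhdan--Lusztig polynomials'' is misleading as stated: if all multiplicities $[N(w^{(i)}\cdot\lambda_A):L(w^{(j)}\cdot\lambda_A)]$ for $j\geq i$ were equal to $1$, then $N(\lambda_A)$ would have length four, contradicting the proposition; the content of the Boe/Enright--Shelton computation is precisely the \emph{vanishing} of certain of these multiplicities, so the citation is legitimate but the gloss is not. Second, your exclusion of a repeated copy of the socle via ``$\mathrm{Ext}^1(L,L)=0$ plus simplicity of the socle'' needs one more sentence (a second copy of the socle adjacent to it in the socle filtration would produce a length-two submodule with simple socle and both factors isomorphic, hence a nonsplit self-extension; non-adjacent configurations are excluded because every proper quotient of $N(\lambda)$ must have $L(\lambda)$ as a factor); the paper instead simply invokes the multiplicity-freeness theorem of Boe--Collingwood for Hermitian symmetric pairs, which you could cite as well. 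With these points tightened, your argument goes through.
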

\begin{proof}
In this proof we will make use of the fact that a composition series for any $N(\lambda)$ is multiplicity free, i.e., each $L(\mu)$ can occur at most once as a subquotient in such a series. This fact is generally true for Hermitian symmetric pairs $(\mathfrak{g},\mathfrak{p})$ and other pairs for which $\mathfrak{p}$ is maximal parabolic; see \cite{BoeCollingwood1986} or \cite{BoeCollingwood1990}.

We first prove (3). Thus, assume that $\lambda$ is in Region C. By general properties, each factor in a composition series of $N(\lambda)$ is of the form $L(\mu)$ for some $\mu\preccurlyeq\lambda$. Also, $N(\lambda)$ and $L(\mu)$ have the same central character, which is equivalent to $\lambda$ and $\mu$ being in the same $W$-orbit under the dot action. The only $\mu$ satisfying these properties, other than $\lambda$ itself, is $s_1s_2s_1\cdot\lambda=(k,-\ell+4)$. Since $N(\lambda)$ is reducible by Proposition \ref{Mlambdairredprop}, the module $L(s_1s_2s_1\cdot\lambda)$ occurs at least once in a composition series for $N(\lambda)$. By multiplicity one, $L(s_1s_2s_1\cdot\lambda)$ occurs exactly once. The assertion follows.

To prove (1) and (2), assume that $\lambda$ is in Region A. By Theorem 9.16 of \cite{Humphreys2008}, there is an exact sequence
\begin{align}\label{Mlambdaexactsequencespropeq1}
  &0\longrightarrow N(s_2s_1s_2\cdot\lambda)\longrightarrow N(s_2s_1\cdot\lambda)\nonumber\\
   &\hspace{10ex}\longrightarrow N(s_2\cdot\lambda)\longrightarrow N(\lambda)\longrightarrow L(\lambda)\longrightarrow0.
\end{align}
Note that $N(s_2s_1s_2\cdot\lambda)=L(s_2s_1s_2\cdot\lambda)$ by Proposition \ref{Mlambdairredprop}. By the already proven part (3), we get an exact sequence
\begin{equation}\label{Mlambdaexactsequencespropeq2}
  0\longrightarrow L(s_2s_1\cdot\lambda)\longrightarrow N(s_2\cdot\lambda)\longrightarrow N(\lambda)\longrightarrow L(\lambda)\longrightarrow0.
\end{equation}
It follows that $N(\lambda)$ and $N(s_2\cdot\lambda)$ have the same length. By central character considerations and multiplicity one, the length of $N(s_2\cdot\lambda)$ can be at most $3$. Hence the common length of $N(\lambda)$ and $N(s_2\cdot\lambda)$ is $2$ or $3$, and our proof, of both (1) and (2), will be complete if we can show this length is $2$.

By Proposition 9.14 of \cite{Humphreys2008}, the socle of $N(\lambda)$ is simple. It follows that the length of $N(\lambda)$ coincides with its Loewy length. We are thus reduced to showing that the Loewy length of $N(\lambda)$ is $2$.

For this we will employ Theorem 4.3 of \cite{Irving1985}. In the notation of this paper we have $^SW_\lambda=\{1,s_2,s_2s_1,s_2s_1s_2\}$. From \eqref{Mlambdaexactsequencespropeq1} and \eqref{Mlambdaexactsequencespropeq2} we conclude that the set $^SX_\lambda$ determining the \emph{socular weights} contains at least $s_2s_1$ and $s_2s_1s_2$. It follows from \eqref{Mlambdaexactsequencespropeq2} that $N(\lambda)$ does not contain $L(s_2s_1\cdot\lambda)$ in its composition series. Hence $(s_2s_1)^\vee=s_2$, where $w^\vee$ for $w\in\,\!^SX_\lambda$ is defined on p.~734 of \cite{Irving1985}. A computation of the elements $\overline{w}$, defined on p.~743 of \cite{Irving1985} for $w\in\,^SW_\lambda$, yields
$$
 \overline{1}=1,\qquad\overline{s_2}=s_2,\qquad\overline{s_2s_1}=s_1,\qquad\overline{s_2s_1s_2}=s_2.
$$
Thus the number $t$ appearing in Theorem 4.3 of \cite{Irving1985}, defined as the maximal length of any $\overline{w}$, is $1$. The hypothesis of this theorem is satisfied (set $x=s_2s_1$). The theorem implies that the Loewy length of any $N(w\cdot\lambda)$ for $w\in\,\! ^SW_\lambda$ is at most $2$. In particular, the Loewy length of $N(\lambda)$ is $2$, concluding our proof.
\end{proof}

We will next determine the $K$-types in $N(\lambda)$. Let $V$ be any admissible $(\mathfrak{g},K)$-module. For a weight $\lambda\in\Lambda$, let $V_\lambda$ be the corresponding weight space. We denote by
\begin{equation}\label{Epsweightseq1}
 m_\lambda(V)=\dim V_\lambda
\end{equation}
the multiplicity of the weight $\lambda$ in $V$. Let
\begin{equation}\label{Epsweightseq2}
 {\rm mult}_\lambda(V)=\text{ the multiplicity of the $K$-type $\rho_\lambda$ in V}.
\end{equation}
It follows from the weight structure of the $K$-types that
\begin{equation}\label{Epsweightseq3}
 {\rm mult}_\lambda(V)=m_\lambda(V)-m_{\lambda+(1,-1)}(V).
\end{equation}
Let $Q(\lambda)$ be the number of ways to write $\lambda\in\Lambda$ as a $\Z_{\geq0}$ linear combination of $(2,0)$, $(1,1)$ and $(0,2)$. It is easy to see that, for $\lambda=(x,y)$ with integers $x,y$,
\begin{equation}\label{Qfunctioneq2}
 Q(x,y)=\begin{cases}
                  \displaystyle\Big\lfloor\frac{\min(x,y)+2}2\Big\rfloor&\text{if $x,y\geq0$ and $x\equiv y$ mod $2$},\\[2ex]
                  0&\text{otherwise}.
                 \end{cases}
\end{equation}
\begin{lemma}\label{MlambdaKtypeslemma}
 Let $\lambda=(k,\ell)\in \Lambdap$. Let $x,y$ be integers with $x\geq y$. Then
   $$
    {\rm mult}_{(x,y)}(N(\lambda))=0\quad\text{if $x<k$, or $y<\ell$, or $x-y\not\equiv k-\ell$ mod $2$}.
   $$
   If $x\geq k$ and $y\geq\ell$ and $x-y\equiv k-\ell$ mod $2$, then
   \begin{align*}
    &{\rm mult}_{(x,y)}(N(\lambda))=\begin{cases}
      \displaystyle\Big\lfloor\frac{\min(x-k,y-\ell)+2}2\Big\rfloor&\text{if }y\leq k,\\[2ex]
      \displaystyle\Big\lfloor\frac{\min(x-k,y-\ell)}2\Big\rfloor-\Big\lfloor\frac{y-k-1}2\Big\rfloor&\text{if }y>k.
     \end{cases}
   \end{align*}
\end{lemma}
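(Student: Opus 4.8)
The plan is to reduce the statement to a computation of ordinary weight multiplicities of $N(\lambda)$, which are transparent from the tensor-product description, and then to read off the $K$-type multiplicities via the elementary identity \eqref{Epsweightseq3}.

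First I would record the weight multiplicities of $N(\lambda)$. By \eqref{tildeEpsppluseq} we have $N(\lambda)=\mathcal{U}(\mathfrak{p}_+)\otimes_\C F(\lambda)$ as $\mathfrak{h}_\C$-modules. The algebra $\mathcal{U}(\mathfrak{p}_+)$ is the polynomial ring on $X_+,P_{1+},P_{0+}$, which have weights $(2,0),(1,1),(0,2)$; hence a weight $\mu$ occurs in $\mathcal{U}(\mathfrak{p}_+)$ with multiplicity $Q(\mu)$, the function of \eqref{Qfunctioneq2}. Since $F(\lambda)=\rho_\lambda$ has weights $(k-j,\ell+j)$ for $0\le j\le k-\ell$, each with multiplicity one, convolving the two weight-multiplicity functions gives
$$ m_{(x,y)}(N(\lambda))=\sum_{j=0}^{k-\ell}Q(x-k+j,\;y-\ell-j). $$

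Next I would pass to $K$-types. By \eqref{Epsweightseq3}, $\mathrm{mult}_{(x,y)}(N(\lambda))=m_{(x,y)}(N(\lambda))-m_{(x+1,y-1)}(N(\lambda))$. Substituting the formula above and reindexing the second sum by $j\mapsto j+1$, all interior terms cancel, leaving the clean identity
$$ \mathrm{mult}_{(x,y)}(N(\lambda))=Q(x-k,\;y-\ell)-Q(x-\ell+1,\;y-k-1). $$
Everything then follows from the explicit formula \eqref{Qfunctioneq2} together with a short case analysis. If $x-y\not\equiv k-\ell\pmod{2}$, the congruence condition in \eqref{Qfunctioneq2} kills both terms; if $y<\ell$ then $y-\ell<0$ and also $y<k+1$, so both $Q$'s vanish; if $x<k$ then $x-k<0$, and $x\ge y$ forces $y\le x<k$, so the second $Q$ vanishes too — this gives the three vanishing assertions. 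When $x\ge k$, $y\ge\ell$ and $x-y\equiv k-\ell\pmod{2}$: for $y\le k$ the second $Q$ vanishes and the first equals $\lfloor(\min(x-k,y-\ell)+2)/2\rfloor$; for $y>k$ both survive, and $x\ge y$ gives $\min(x-\ell+1,y-k-1)=y-k-1$, so the second $Q$ equals $\lfloor(y-k+1)/2\rfloor$; applying $\lfloor(n+2)/2\rfloor=\lfloor n/2\rfloor+1$ turns the difference into $\lfloor\min(x-k,y-\ell)/2\rfloor-\lfloor(y-k-1)/2\rfloor$.

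I do not expect a genuine obstacle here. The conceptual content is the telescoping identity, which collapses the double sum to its two boundary contributions $j=0$ and $j=k-\ell+1$; what remains is elementary bookkeeping in the case analysis — chiefly tracking which of $x-k$ and $y-\ell$ realizes the minimum, and matching the floor expressions via $\lfloor(n+2)/2\rfloor=\lfloor n/2\rfloor+1$.
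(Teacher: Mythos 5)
Your proof is correct and follows essentially the same route as the paper: the paper likewise derives $m_{(x,y)}(N(\lambda))=\sum_{n=0}^{k-\ell}Q(x-k+n,y-\ell-n)$ from the spanning set \eqref{tildeEpsspaneq} and then invokes \eqref{Epsweightseq3} with "some simplification." Your telescoping identity ${\rm mult}_{(x,y)}(N(\lambda))=Q(x-k,y-\ell)-Q(x-\ell+1,y-k-1)$ and the ensuing case analysis are exactly that simplification, carried out correctly.
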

\begin{proof}
It follows from \eqref{tildeEpsspaneq} that
\begin{equation}\label{Epsweightseq4}
 m_{(x,y)}(N(\lambda))=\sum_{n=0}^{k-\ell}Q(x-k+n,y-\ell-n).
\end{equation}
The result now follows from \eqref{Epsweightseq3} and some simplification.
\end{proof}

Proposition \ref{Mlambdaexactsequencesprop} combined with Lemma \ref{MlambdaKtypeslemma} allows us to calculate the multiplicities of the $K$-types of any $L(\lambda)$. Note also that if $\lambda$ is not in Region A, B or C, then $L(\lambda)=N(\lambda)$ by Proposition \ref{Mlambdairredprop}, so that Lemma \ref{MlambdaKtypeslemma} can be used directly to calculate the multiplicities. We record a few special cases in the following result; the details of the elementary proofs are omitted.

\begin{proposition}\label{EpsKtypesprop}
 Let $x\geq y$ be integers.
 \begin{enumerate}
  \item Assume that $\lambda=(\ell,\ell)$ with an integer $\ell\geq1$. Then
   $$
    {\rm mult}_{(x,y)}(L(\lambda))=0\quad\text{if $x<\ell$, or $y<\ell$, or $x\not\equiv y$ mod $2$}.
   $$
   If $x\geq\ell$ and $y\geq\ell$ and $x\equiv y$ mod $2$, then
   $$
    {\rm mult}_{(x,y)}(L(\lambda))=
     \begin{cases}
      1&\text{if }y\equiv\ell\;{\rm mod}\;2,\\
      0&\text{if }y\not\equiv\ell\;{\rm mod}\;2.
     \end{cases}
   $$
  \item Assume that $\lambda=(k,1)$ with an integer $k\geq2$. Then
   $$
    {\rm mult}_{(x,y)}(L(\lambda))=0\quad\text{if $x<k$, or $y<1$, or $x-y\not\equiv k-1$ mod $2$}.
   $$
   If $x\geq k$ and $y\geq 1$ and $x-y\equiv k-1$ mod $2$, then
   $$
    {\rm mult}_{(x,y)}(L(\lambda))
     =\begin{cases}
       1&\text{if }y\leq x-k+1,\\
       0&\text{if }y>x-k+1.
      \end{cases}
   $$
  \item Assume that $\lambda=(k,\ell)$ is in Region C. Then ${\rm mult}_{(x,y)}(L(\lambda))=0$ if $y\geq x-k-\ell+4$. Hence, all the $K$-types of $L(\lambda)$ are strictly below the diagonal line running through the point $(k,-\ell+4)$.
  \item Assume that $\lambda=(2,0)$. Then
   $$
    {\rm mult}_{(x,y)}(L(\lambda))
     =\begin{cases}
       1&\text{if $x\geq2$, $y\geq0$ and $x\equiv y\equiv0$ mod $2$},\\
       0&\text{otherwise}.
      \end{cases}
   $$
 \end{enumerate}
\end{proposition}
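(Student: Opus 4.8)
The plan is to derive all four statements from two inputs already in hand: Lemma~\ref{MlambdaKtypeslemma}, which gives closed formulas for ${\rm mult}_{(x,y)}(N(\lambda))$, and Proposition~\ref{Mlambdaexactsequencesprop}, which in Regions A, B, C presents $N(\lambda)$ as a length-two module and hence yields
$$
 {\rm mult}_{(x,y)}(L(\lambda)) = {\rm mult}_{(x,y)}(N(\lambda)) - {\rm mult}_{(x,y)}(L(\mu)),
$$
where $L(\mu)$ is the unique proper submodule. Whenever $\mu$ has second coordinate $\geq2$, Proposition~\ref{Mlambdairredprop} gives $N(\mu)=L(\mu)$, so the right-hand side becomes a difference of two instances of the Lemma~\ref{MlambdaKtypeslemma} formula. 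Thus each part reduces to evaluating the floor-function formula once or twice and simplifying; one should also note throughout that the relevant parity conditions match, since $k-\ell\equiv k+\ell\pmod 2$.

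For part (1), $\lambda=(\ell,\ell)$ with $\ell\geq1$: since either $\ell\geq2$ or $k=1$, Proposition~\ref{Mlambdairredprop} gives $N(\lambda)=L(\lambda)$, and the claim is Lemma~\ref{MlambdaKtypeslemma} with $k=\ell$ — in the range $x\geq\ell$, $y\geq\ell$, $x\equiv y$, both branches of that formula collapse to $1$ if $y\equiv\ell$ and to $0$ otherwise. For part (2), $\lambda=(k,1)$ with $k\geq2$: if $k=2$ then $k+\ell=3$, so again $N(\lambda)=L(\lambda)$ and the formula is immediate; if $k\geq3$ then $\lambda$ is in Region C, Proposition~\ref{Mlambdaexactsequencesprop}(3) gives $\mu=(k,3)$ with $N(\mu)=L(\mu)$, and the difference of the two Lemma~\ref{MlambdaKtypeslemma} formulas telescopes to the asserted step function with cutoff $y=x-k+1$. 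For part (4), $\lambda=(2,0)$ lies in Region B, and Proposition~\ref{Mlambdaexactsequencesprop}(2) gives $\mu=s_2s_1s_2\cdot(2,0)=(3,1)$; since the multiplicities of $L(3,1)$ are exactly the $k=3$ case of part (2), one subtracts those from ${\rm mult}_{(x,y)}(N(2,0))$, again a short floor computation that leaves $1$ precisely on the even-even lattice with $x\geq2$.

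Part (3) is the case that needs genuine care, being a statement about the whole of Region C rather than a single $\lambda$. Here $\lambda=(k,\ell)$ is in Region C, Proposition~\ref{Mlambdaexactsequencesprop}(3) gives $\mu=(k,-\ell+4)$ in Region D (so $N(\mu)=L(\mu)$ is irreducible), and one must show that ${\rm mult}_{(x,y)}(N(\lambda))={\rm mult}_{(x,y)}(N(\mu))$ for every $(x,y)$ with $x\geq y$ and $y\geq x-k-\ell+4$; the consequence about the diagonal is then immediate, since that line is exactly $y=x-(k+\ell-4)$ through $(k,-\ell+4)$. Both sides are given by Lemma~\ref{MlambdaKtypeslemma}, and the verification splits according to whether $y\leq k$ or $y>k$ (the branch point in the lemma, applied to each of $N(\lambda)$ and $N(\mu)$); in each resulting subregion one uses the hypothesis $x\geq y\geq x-k-\ell+4$ to resolve the minima inside the floor functions and checks that the resulting expressions literally agree. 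The combinatorial bookkeeping of these floor functions — getting the nested case analysis on $y$ versus $k$ and $y$ versus $x$ exactly right — is the only real obstacle; there is no conceptual difficulty, which is why the paper leaves the details to the reader.
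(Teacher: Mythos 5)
Your proposal is correct and follows exactly the route the paper indicates in the paragraph preceding the proposition (whose "elementary details" it omits): reduce each ${\rm mult}_{(x,y)}(L(\lambda))$ to ${\rm mult}_{(x,y)}(N(\lambda))-{\rm mult}_{(x,y)}(N(\mu))$ via Propositions~\ref{Mlambdairredprop} and \ref{Mlambdaexactsequencesprop}, then evaluate the floor-function formula of Lemma~\ref{MlambdaKtypeslemma}. Your identification of the relevant $\mu$ in each part is right, and in part (3) the key observation that the hypothesis $y\geq x-k-\ell+4$ forces both minima to equal $x-k$ (since $\ell\leq1$ gives $y+\ell-4\leq y-\ell$) does make the two branches agree, so the verification goes through as you describe.
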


Finally, we consider the location of the boundary $K$-types in the modules $N(\lambda)$ for any $\lambda=(k,\ell)$ with $k\geq\ell$. By Lemma \ref{MlambdaKtypeslemma}, all the boundary $K$-types occur with multiplicity one. There are no $K$-types $\rho_{(x,y)}$ for $x<k$ or $y<\ell$. For $x=k$ or $y=\ell$ the $K$-types occur in steps of $2$. The top boundary is provided by the line $y=x$ if $k\equiv\ell$ mod $2$, or the line $y=x-1$ if $k\not\equiv\ell$ mod $2$. In the first case, the $K$-types on this line occur in steps of $2$, in the second case in steps of $1$. The following diagrams illustrate these two cases.

\begin{equation}\label{boundaryKtypesdiagram}
\begin{minipage}{37ex}
\setlength{\unitlength}{0.6px}
\begin{picture}(240, 240)(-30,-30)
\put(-30,0){\line(1,0){240}}
\put(0,-30){\line(0,1){240}}
\put(-20,-20){\line(1,1){210}}

\put(180,180){\circle*{3}}

\put(150,150){\circle*{3}}

\put(120,120){\circle*{3}}

\put(120,90){\circle*{3}}

\put(120,60){\circle*{3}}

\put(120,30){\circle*{3}}
\put(150,30){\circle*{3}}
\put(180,30){\circle*{3}}

\put(75,25){\tiny$(k,\ell)$}
\end{picture}
$$
 k\equiv\ell\mod2
$$
\end{minipage}
\begin{minipage}{35ex}
\setlength{\unitlength}{0.6px}
\begin{picture}(240, 240)(-30,-30)
\put(-30,0){\line(1,0){240}}
\put(0,-30){\line(0,1){240}}
\put(-20,-20){\line(1,1){210}}

\put(180,165){\circle*{3}}

\put(165,150){\circle*{3}}

\put(150,135){\circle*{3}}

\put(135,120){\circle*{3}}

\put(120,105){\circle*{3}}

\put(120,75){\circle*{3}}

\put(120,45){\circle*{3}}
\put(150,45){\circle*{3}}
\put(180,45){\circle*{3}}

\put(75,40){\tiny$(k,\ell)$}
\end{picture}
$$
 k\not\equiv\ell\mod2
$$
\end{minipage}
\end{equation}
\subsection{Navigating the highest weight vectors}
Let $V$ be a $(\mathfrak{g},K)$-module. In this section we will investigate a collection of elements of $\mathcal{U}(\mathfrak{g}_\C)$ that preserve the property of being a highest weight vector in some $K$-type. In other words, these elements $X$ will have the property that $N_+Xv=0$ if $N_+v=0$. Evidently, elements that commute with $N_+$, like $X_+$ and $P_{0-}$, have this property.

\begin{table}[h]
\caption{Some elements of $\mathcal{U}(\mathfrak{g}_\C)$ that take highest weight vectors to highest weight vectors. The last column shows the resulting weight after applying an operator to a vector of weight $(\ell+m,\ell)$.}\label{Nplusoperatorstable}
$$\renewcommand{\arraystretch}{1.5}\renewcommand{\arraycolsep}{0.3cm}
 \begin{array}{cccc}
  \toprule
   \text{name}&\text{definition}&\text{new weight}\\
  \toprule
   P_{0-}&&(\ell+m,\ell-2)\\
  \midrule
   L&m(m-1)X_--(m-1)P_{1-}N_-+P_{0-}N_-^2&(\ell+m-2,\ell)\\
  \midrule
   U&m(m-1)P_{0+}+(m-1)P_{1+}N_-+X_+N_-^2&(\ell+m,\ell+2)\\
  \midrule
   X_+&&(\ell+m+2,\ell)\\
  \midrule
   E_{+}&(m+2)P_{1+}+2N_-X_+&(\ell+m+1,\ell+1)\\
  \midrule
   E_{-}&(m+2)P_{1-}-2N_-P_{0-}&(\ell+m-1,\ell-1)\\
  \midrule
   D_+&P_{1+}^2-4X_+P_{0+}&(\ell+m+2,\ell+2)\\
  \midrule
   D_-&P_{1-}^2-4X_-P_{0-}&(\ell+m-2,\ell-2)\\
  \bottomrule
 \end{array}
$$
\end{table}

More specifically, we consider a vector $v \in V$ of weight $(\ell+m,\ell)$ for $m \ge 0$. The new elements of $\mathcal{U}(\mathfrak{g}_\C)$  that we introduce are called $U$, $L$, $E_{+}$, $E_{-}$, $D_{+}$, and $D_{-}$. Their definitions appear in Table \ref{Nplusoperatorstable}. These operators take $v$ to another vector in $V$ of the weight indicated in the ``new weight" column. Note that the operators  $U$, $L$, $E_{+}$ and $E_{-}$ depend on $m$. However, for brevity, our notation will not reflect this dependence. The formulas for the operators $U$ and $L$ are given only for $m\ge 2$; we adopt the convention that $U=L=0$ if $m<2$.

\begin{remark}As was pointed out by the referee, the element $Z-Z'$ of $\mathcal{U}(\mathfrak{g}_\C)$ acts on a vector of weight $(\ell+m,\ell)$ by multiplication by $m$; therefore the operators in Table \ref{Nplusoperatorstable} can be defined as single elements of $\mathcal{U}(\mathfrak{g}_\C)$ without needing to involve the integer $m$. This leads to the alternate definitions below, which are equivalent to the ones given in Table \ref{Nplusoperatorstable}:
\begin{align*}L &= X_{-}(Z-Z')(Z-Z'-1) - P_{1-}N_-(Z-Z'-1) + P_{0-}N_-^2,\\
U &= P_{0+}(Z-Z')(Z-Z'-1)+P_{1+}N_-(Z-Z'-1)+X_+N_-^2,\\
E_{+} &= P_{1+}(Z-Z'+2)+2N_-X_+,\\
E_{-} &= P_{1-}(Z-Z'+2)-2N_-P_{0-}.
\end{align*}
\end{remark}

\begin{lemma}\label{Nplusoperatorslemma}
 Let $\ell$ be an integer, and $m$ a non-negative integer. Let $v$ be a vector of weight $(\ell+m,\ell)$ in some $(\mathfrak{g},K)$-module $V$. Let $X\in\mathcal{U}(\mathfrak{g}_\C)$ be one of the elements in Table \ref{Nplusoperatorstable}. Then $N_+Xv=0$ if $N_+v=0$. The weight of $Xv$ is indicated in the last column of Table \ref{Nplusoperatorstable}. For the $U$ and $L$ operators we assume $m\geq2$.
\end{lemma}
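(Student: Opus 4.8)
The plan is to verify both claims by a direct, if slightly lengthy, computation inside $\mathcal{U}(\g_\C)$, using the commutation relations among the basis elements of $\g_\C$; these follow from the explicit matrices in the Notation section, and are the same relations that produce the second form \eqref{casimireq2} of the Casimir. The weight assertion is pure bookkeeping. Reading off the root diagram \eqref{rootdiagrameq}, each of the eight operators in Table~\ref{Nplusoperatorstable} is a linear combination of monomials in $X_\pm$, $P_{1\pm}$, $P_{0\pm}$, $N_-$ all of which carry the same weight $\mu$, namely the sum of the roots of the factors. For example, every monomial in $L=m(m-1)X_--(m-1)P_{1-}N_-+P_{0-}N_-^2$ has weight $(-2,0)$, since $(-2,0)=(-1,-1)+(-1,1)=(0,-2)+2(-1,1)$; similarly one finds $\mu=(0,-2)$ for $P_{0-}$, $\mu=(2,0)$ for $X_+$, $\mu=(0,2)$ for $U$, $\mu=(\pm1,\pm1)$ for $E_\pm$, and $\mu=(\pm2,\pm2)$ for $D_\pm$. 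Since $v$ has weight $(\ell+m,\ell)$, the vector $Xv$ has weight $(\ell+m,\ell)+\mu$, which is exactly the entry in the last column.

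For the highest-weight assertion, the cases $X=X_+$ and $X=P_{0-}$ are trivial: $[N_+,X_+]$ and $[N_+,P_{0-}]$ have weights $(3,-1)$ and $(1,-3)$, neither a root, so $N_+$ commutes with both operators and $N_+Xv=XN_+v=0$. For the remaining six operators, the plan is to move $N_+$ to the right using $ab=ba+[a,b]$ repeatedly, together with the facts that $\mathfrak{p}_+$ is abelian and that the only nonvanishing commutators of $N_+$ with our basis vectors are $[N_+,N_-]\in\mathfrak{h}_\C$ (a scalar multiple $d$ of $Z-Z'$), $[N_+,P_{1+}]\in\C X_+$, $[N_+,P_{0+}]\in\C P_{1+}$, $[N_+,P_{1-}]\in\C P_{0-}$, and $[N_+,X_-]\in\C P_{1-}$. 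This rewrites $N_+X$ as a sum of terms ending in $N_+$, plus a remainder $R_X$ in which every monomial is a product of $\mathfrak{p}_+$-vectors (with at most one extra $P_{0-}$, in the $E_-$ case) times either a scalar or a single element of $\mathfrak{h}_\C$. Applying this to $v$ annihilates the terms ending in $N_+$, and on the rest one evaluates the Cartan factors: any vector of weight $(a,b)$ is a $(Z-Z')$-eigenvector with eigenvalue $a-b$, so $N_-^{j}v$ has $(Z-Z')$-eigenvalue $m-2j$. The coefficients $m(m-1)$, $m-1$, $m+2$ in the table are exactly those for which $R_Xv$ then vanishes.

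To illustrate, for $E_+=(m+2)P_{1+}+2N_-X_+$ one has, using $[N_+,X_+]=0$,
\[
 N_+E_+=(m+2)\big(P_{1+}N_++[N_+,P_{1+}]\big)+2\big(N_-X_+N_++[N_+,N_-]X_+\big),
\]
so applying this to $v$, using $N_+v=0$ and that $X_+v$ has weight $(\ell+m+2,\ell)$ (hence $(Z-Z')$-eigenvalue $m+2$), one gets $N_+E_+v=(m+2)(c+2d)X_+v$, where $[N_+,P_{1+}]=cX_+$; the basis normalization gives $c+2d=0$, so $N_+E_+v=0$. The cases $L$, $U$, $E_-$, $D_+$, $D_-$ are handled in the same way. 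For $L$ and $U$ one also needs $[N_+,N_-^2]=[N_+,N_-]N_-+N_-[N_+,N_-]$, whose contribution on $v$ is the multiple $d\big((m-2)+m\big)=2d(m-1)$ of $N_-v$, matching the coefficient $m-1$; for $D_\pm$ one uses the relation $c=2c'$ among structure constants, where $[N_+,P_{1+}]=cX_+$ and $[N_+,P_{0+}]=c'P_{1+}$.

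The main obstacle is not conceptual. The real work lies in the careful tracking, in the $L$ and $U$ cases, of the several $m$-dependent scalars produced when $N_+$ is commuted past $N_-^{2}$ (one must keep the $(Z-Z')$-eigenvalues $m$ of $v$ and $m-2$ of $N_-v$ simultaneously in play), and in the preliminary bookkeeping of pinning down the exact numerical values of $d$, $c$, $c'$, $[N_+,X_-]$ and $[N_+,P_{1-}]$ from the matrices in the Notation section, so that the required cancellations are exact rather than merely up to a constant.
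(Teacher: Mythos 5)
Your proposal is correct and takes exactly the route the paper intends: the paper's entire proof is the one-line remark that ``all the assertions are easily verified using the commutation relations,'' and your computation (weight bookkeeping from the root diagram, then commuting $N_+$ to the right and checking that the $m$-dependent coefficients in Table \ref{Nplusoperatorstable} make the remainder vanish, using $[N_+,P_{1+}]=2X_+$, $[N_+,P_{0+}]=P_{1+}$, $[N_+,N_-]=-(Z-Z')$ and their minus counterparts) is precisely that verification spelled out, with the key cancellations ($c+2d=0$, $c=2c'$, the $2d(m-1)$ contribution from $N_-^2$) all checking out against the explicit matrices. The only blemish is the parenthetical describing the remainder $R_X$ as built from $\mathfrak{p}_+$-vectors, which should read $\mathfrak{p}_-$-vectors for the operators $L$, $E_-$, $D_-$; this is a slip of exposition, not of the argument.
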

\begin{proof}
 In order to show that $N_+Xv=0$ if $N_+v=0$, it suffices to show that $N_+ X \in \mathcal{U}(\mathfrak{g}_\C) N_+$. This is easily verified using the commutation relations. The weight  of $Xv$ as indicated in the last column of Table \ref{Nplusoperatorstable} is also immediate from the commutation relations.
\end{proof}

As we already mentioned, $[N_+,X_+]=[N_+,P_{0-}]=0$. The two-step diagonal operators $D_\pm$ have in fact the property that $[N_+,D_\pm]=[N_-,D_\pm]=0$. The other operators in Table \ref{Nplusoperatorstable} do not universally commute with $N_+$. Using the commutation relations, one may further verify that
\begin{align}
 \label{Empluscommuteeq1}X_+E_{+}&=E_{+}X_+,\\
 \label{Empluscommuteeq2}UE_{+}&=E_{+}U,\\
 \label{Empluscommuteeq3}D_+E_{+}&=E_{+}D_+,\\
 \label{Empluscommuteeq4}UD_+&=D_+U,\\
 \label{Empluscommuteeq5}X_+U-UX_+&=(m+1)D_+.
\end{align}
We remind the reader that the particular element in $\mathcal{U}(\mathfrak{g}_\C)$ that each operator appearing in the above equations corresponds to, depends on the weight of the vector that the operator will act on. For instance, consider both sides of~\eqref{Empluscommuteeq1} acting on a vector of weight $(\ell+m, \ell)$. Then the $E_{+}$ on the left side is given by the formula in Table \ref{Nplusoperatorstable} while the $E_{+}$ on the right side is obtained by the substitution $m \mapsto m+2$ in the same formula.

Now consider a weight $\lambda=(\ell+m,\ell)$ with $\ell\in\Z$ and $m\geq0$. By Lemma \ref{MlambdaKtypeslemma}, if a $K$-type $\rho_{(x,y)}$ occurs in $N(\lambda)$, then $x\geq\ell+m$ and $y\geq\ell$. We may therefore hope to generate all highest weight vectors in the $K$-types of $N(\lambda)$ by applying appropriate powers of the operators $X_+$, $D_+$, $U$ and $E_+$ to the lowest weight vector $w_0$ of $N(\lambda)$. We will see below that this is indeed the case.

As a first step in this direction, consider the $K$-types $\rho_{(x,y)}$ with $x=\ell+m$; these are the ones that are straight above the minimal weight. By Lemma \ref{MlambdaKtypeslemma}, these are exactly the $K$-types $(\ell+m,\ell+2i)$, $i\in\{0,1,\ldots,\lfloor\frac m2\rfloor\}$, and each of these occurs with multiplicity $1$ in $N(\lambda)$. Let $w_0$ be a lowest weight vector (i.e., a highest weight vector in the minimal $K$-type of $N(\lambda)$); thus, $w_0$ has weight $(\ell+m,\ell)$, and $N_+w_0=0$. For $i\in\{0,1,\ldots,\lfloor\frac m2\rfloor\}$, let\footnote{Once again, we remind the reader that the operator $U^i$ in \eqref{P0minusUmlemmaeq0} is really shorthand for $U_{m+2-2i}\,\ldots\,U_{m-2}\,U_m$; i.e., the integer $m$ appearing in the definition of $U$ changes at each step.}
\begin{equation}\label{P0minusUmlemmaeq0}
 w_i=U^iw_0.
\end{equation}
Then $w_i$ has weight $(\ell+m,\ell+2i)$, and $N_+w_i=0$. If $w_i\neq0$, then it is a highest weight vector in the $K$-type $\rho_{(\ell+m,\ell+2i)}$ of $N(\lambda)$.
\begin{lemma}\label{P0minusUmlemma}
 With the above notations,
 $$
  P_{0-}w_{i+1}=-(i+1)(\ell+i-1)(m-2i)(m-2i-1)\,w_i.
 $$
 for $i\in\{0,1,\ldots,\lfloor\frac m2\rfloor-1\}$. In particular, if $\ell\geq2$, then $w_i\neq0$ for all $i\in\{0,1,\ldots,\lfloor\frac m2\rfloor\}$.
\end{lemma}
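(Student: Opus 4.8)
The plan is to first use multiplicity one to pin $P_{0-}w_{i+1}$ down to a scalar multiple of $w_i$, and then to determine that scalar by an explicit computation in $\mathcal U(\mathfrak g_\C)$.

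\emph{Step 1 (reduction to a scalar).} By iterating Lemma~\ref{Nplusoperatorslemma}, each $w_j$ satisfies $N_+w_j=0$ and has weight $(\ell+m,\ell+2j)$; in particular, when $w_j\ne0$ it is a highest weight vector in a copy of the $K$-type $\rho_{(\ell+m,\ell+2j)}$. Since $\mathrm{wt}(N_+)+\mathrm{wt}(P_{0-})=(1,-3)$ is neither a root nor $0$, we have $[N_+,P_{0-}]=0$, so $P_{0-}w_{i+1}$ is again a highest weight vector (or $0$), of weight $(\ell+m,\ell+2i)$. Applying Lemma~\ref{MlambdaKtypeslemma} with $\lambda=(\ell+m,\ell)$ and $(x,y)=(\ell+m,\ell+2i)$ (note $y\le k$ and $i\le\lfloor m/2\rfloor$), one obtains ${\rm mult}_{(\ell+m,\ell+2i)}(N(\lambda))=1$, so the corresponding highest weight space is one-dimensional. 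Hence $P_{0-}w_{i+1}=c_i\,w_i$ for some scalar $c_i$ whenever $w_i\ne0$; and if $w_i=0$ then $w_{i+1}=U^{i+1}w_0=0$ as well, so the asserted identity holds trivially. It remains to identify $c_i$.

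\emph{Step 2 (the scalar).} I would compute $P_{0-}U_{m-2i}w_i$ directly, expanding $U_{m-2i}$ as in Table~\ref{Nplusoperatorstable} and pushing $P_{0-}$ to the right via the commutation relations — concretely one needs $[P_{0-},P_{0+}]=-Z'$, $[P_{0-},P_{1+}]=-N_+$, $[P_{0-},X_+]=0$, $[P_{0-},N_-]\in\C P_{1-}$, $[N_+,N_-]=Z'-Z$ and $[P_{1-},N_-]\in\C X_-$ — and then use $N_+w_i=0$ together with $Zw_i=(\ell+m)w_i$, $Z'w_i=(\ell+2i)w_i$. This is cleanest as an induction on $i$: the base case $i=0$ is short because $w_0$ is annihilated by all of $\mathfrak n=\langle X_-,P_{1-},P_{0-}\rangle$, and a direct check gives $P_{0-}Uw_0=-m(m-1)(\ell-1)\,w_0$, matching the formula; in the inductive step one substitutes the induction hypothesis $P_{0-}w_i=-i(\ell+i-2)(m-2i+2)(m-2i+1)\,w_{i-1}$ wherever $P_{0-}w_i$ occurs. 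The genuine obstacle is that moving $P_{0-}$ past $P_{1+}N_-$ and $X_+N_-^2$ also produces the vectors $P_{1-}w_i$ and $X_-w_i$, which are neither $0$ nor proportional to $w_i$; to close the loop one should carry along, and establish by the \emph{same} simultaneous induction, companion formulas expressing $X_-w_i$, $P_{1-}w_i$ (and, if needed, $P_{0+}w_{i-1}$, $P_{1+}N_-w_{i-1}$) in terms of $N_-w_{i-1}$, $N_-^2w_{i-1}$ and lower data. Since Step 1 already guarantees that the whole expression collapses to $c_iw_i$, in practice it suffices to track the coefficient of a single PBW basis vector, e.g.\ $P_{0+}^{i}w_0$, which streamlines the bookkeeping. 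I expect this accounting of which monomials $X_+^\alpha P_{1+}^\beta P_{0+}^\gamma N_-^\delta w_0$ the various terms land on to be the main difficulty; everything else is routine manipulation with the commutation relations.

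\emph{Step 3 (the ``in particular'' clause).} Here $w_0\ne0$ because it equals $1\otimes w_0$ in $N(\lambda)=\mathcal U(\mathfrak p_+)\otimes_\C F(\lambda)$; and if $\ell\ge2$, then for $0\le i\le\lfloor m/2\rfloor-1$ every factor of $-(i+1)(\ell+i-1)(m-2i)(m-2i-1)$ is a positive integer, so $P_{0-}w_{i+1}\ne0$ whenever $w_i\ne0$, and the non-vanishing of all $w_i$ follows by induction.
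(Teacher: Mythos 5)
The paper does not actually record a proof of this lemma: immediately after Lemma~\ref{Dminuslemma} it says only that ``the above two lemmas can be proved by routine calculation,'' so there is no argument of record to compare yours against. Your Steps 1 and 3 are complete and correct: $[N_+,P_{0-}]=0$ since $(1,-1)+(0,-2)$ is neither a root nor zero, the weight of $P_{0-}w_{i+1}$ is $(\ell+m,\ell+2i)$, and Lemma~\ref{MlambdaKtypeslemma} does give ${\rm mult}_{(\ell+m,\ell+2i)}(N(\lambda))=1$, so $P_{0-}w_{i+1}\in\C w_i$ whenever $w_i\neq0$ (and the case $w_i=0$ is trivial, as you say); the non-vanishing for $\ell\ge2$ then follows by induction exactly as you argue. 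Your base case is also correct: with $[P_{0-},P_{0+}]=-Z'$, $[P_{0-},P_{1+}]=-N_+$, $[P_{0-},X_+]=0$, $[P_{0-},N_-]=-P_{1-}$, $[P_{1-},N_-]=-2X_-$ and $[N_+,N_-]=Z'-Z$, one indeed finds $P_{0-}Uw_0=-m(m-1)(\ell-1)\,w_0$.

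The gap is the inductive step of Step 2, which is the actual content of the ``routine calculation'' and which you describe rather than perform. Pushing $P_{0-}$ through $U_{m-2i}$ applied to $w_i$ produces, besides $P_{0-}w_i$ (covered by the inductive hypothesis), the terms $P_{1+}P_{1-}w_i$, $X_+N_-P_{1-}w_i$ and $X_+X_-w_i$; for $i\ge1$ the vectors $P_{1-}w_i$ and $X_-w_i$ are nonzero and $P_{1-}w_i$ is not even a highest weight vector (since $[N_+,P_{1-}]$ is a nonzero multiple of $P_{0-}$), so the multiplicity-one device of Step 1 cannot dispose of them, and the promised companion formulas are never stated, let alone proved. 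The alternative you offer --- tracking only the coefficient of $P_{0+}^iw_0$ --- is legitimate in principle but is itself a second unexecuted computation: one must determine that coefficient both in $w_i=U^iw_0$, where the commutators $[N_-,P_{1+}]\in\C P_{0+}$ and $[N_-,X_+]\in\C P_{1+}$ feed cross-terms into the pure $P_{0+}$-monomial, and in $P_{0-}w_{i+1}$. So the architecture of your proof is right and nothing in it would fail, but the identity for general $i$ --- the one thing the lemma asserts beyond the base case --- is not actually established.
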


\begin{lemma}\label{Dminuslemma}
 Suppose $\lambda = (\ell+m, \ell)$ with $m\ge 0$, $m$ even, and $\ell\ge 1$. If $\ell=1$, assume further that  $m=0$. Let $w_0$ be a non-zero vector of weight $(\ell+m, \ell)$ in $N(\lambda)$ such that $N_{+}w_0 = 0$. Then, for all $\beta \ge 0$, $P_{0-}^{m/2}D_{-}^\beta D_{+}^\beta U^{m/2} w_0$ is a non-zero multiple of $w_0$.
\end{lemma}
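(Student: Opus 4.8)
The plan is to show that the composite operator $P_{0-}^{m/2}D_-^\beta D_+^\beta U^{m/2}$ acts on $w_0$ by a nonzero scalar, by tracking the image of $w_0$ through each stage and proving that at no stage do we hit zero. Set $\lambda=(\ell+m,\ell)$ with $m$ even, $\ell\ge 1$, and (if $\ell=1$) $m=0$. First I would dispose of the case $m=0$: then $U^{m/2}=P_{0-}^{m/2}=\mathrm{id}$, so the claim reduces to showing $D_-^\beta D_+^\beta w_0$ is a nonzero multiple of $w_0$ for all $\beta\ge0$, which I will prove by induction on $\beta$ using that $D_\pm$ commute with both $N_+$ and $N_-$ (so $D_+^\beta w_0$ is again a highest weight vector, of weight $(\ell+2\beta,\ell+2\beta)$, and the $K$-type $\rho_{(\ell+2\beta,\ell+2\beta)}$ occurs with multiplicity one in $N(\lambda)$ by Lemma~\ref{MlambdaKtypeslemma}), together with an explicit computation of the scalar $P_{0-}\cdots$ — actually, here one wants the scalar $\mu_\beta$ with $D_-D_+ (D_+^{\beta-1}w_0) = \mu_\beta\, (D_+^{\beta-1}w_0)$, obtained from the commutation relations, and one checks $\mu_\beta\ne0$ when $\ell\ge1$. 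Now assume $m\ge2$, so $\ell\ge2$.

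The key structural point is that $U^{m/2}w_0 = w_{m/2}$ in the notation of \eqref{P0minusUmlemmaeq0}, which is nonzero since $\ell\ge2$ by Lemma~\ref{P0minusUmlemma}, and it is a highest weight vector of weight $(\ell+m,\ell+m)$ — i.e., it lies in a $K$-type on the diagonal. Because $D_\pm$ commute with $N_-$ as well as $N_+$, the vector $D_+^\beta U^{m/2}w_0$ is a highest weight vector of weight $(\ell+m+2\beta,\ell+m+2\beta)$, again on the diagonal; and since on the diagonal line every $K$-type of $N(\lambda)$ that occurs has multiplicity one (Lemma~\ref{MlambdaKtypeslemma}, the $k\equiv\ell$ case of the boundary description), the vector $D_+^\beta U^{m/2}w_0$, once shown nonzero, spans the highest weight line of $\rho_{(\ell+m+2\beta,\ell+m+2\beta)}$. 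Thus $D_-^\beta D_+^\beta U^{m/2}w_0$ is automatically a scalar multiple of $U^{m/2}w_0$ (same $K$-type, multiplicity one, both highest weight vectors), so the only thing to check along the $D$-stages is that this scalar is nonzero; and then $P_{0-}^{m/2}(D_-^\beta D_+^\beta U^{m/2}w_0)$ is a scalar multiple of $P_{0-}^{m/2}w_{m/2} = P_{0-}^{m/2}U^{m/2}w_0$, which is a nonzero multiple of $w_0$ by iterating Lemma~\ref{P0minusUmlemma} (using $\ell\ge2$ so that each factor $-(i+1)(\ell+i-1)(m-2i)(m-2i-1)$ is nonzero for $0\le i\le m/2-1$).

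So the proof reduces to the single scalar computation: find $c_\beta\in\C$ with $D_-^\beta D_+^\beta\, (U^{m/2}w_0) = c_\beta\, (U^{m/2}w_0)$ and show $c_\beta\ne0$. I would do this by induction on $\beta$. For the inductive step, write $D_-^\beta D_+^\beta U^{m/2}w_0 = D_-^{\beta-1}\big(D_- D_+\, (D_+^{\beta-1}U^{m/2}w_0)\big)$; the inner vector $u_{\beta-1}:=D_+^{\beta-1}U^{m/2}w_0$ is a highest weight vector of diagonal weight $(\ell+m+2(\beta-1),\ell+m+2(\beta-1))$, and using the explicit formulas $D_+ = P_{1+}^2 - 4X_+P_{0+}$, $D_- = P_{1-}^2 - 4X_-P_{0-}$ from Table~\ref{Nplusoperatorstable} together with the $\mathfrak{sp}_4$ commutation relations, one computes $D_-D_+ u = q(\text{weight of }u)\, u$ for a highest weight vector $u$, where $q$ is an explicit polynomial in the weight; evaluating and checking positivity/nonvanishing for $\ell\ge1$ (equivalently for the diagonal weight coordinate $\ge 1$) finishes it. The main obstacle is precisely this commutator computation — getting the constant $q$ right and verifying it does not vanish in the stated range; everything else is bookkeeping with multiplicity-one $K$-types and the already-established Lemma~\ref{P0minusUmlemma}. (One can shortcut the $D_-D_+$ computation by using the relation \eqref{Empluscommuteeq5} and the known action of the Casimir $\Omega_2$ from Lemma~\ref{casimirNlambdalemma} to pin down $q$ up to a check at one weight, but a direct computation is cleanest.)
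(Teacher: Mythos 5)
The paper offers no proof of this lemma beyond the remark that it ``can be proved by routine calculation,'' so there is nothing concrete to compare against; your reduction is almost certainly the intended one, and most of it is correct and well supported by the surrounding results. In particular: $v_0:=U^{m/2}w_0$ is nonzero by Lemma~\ref{P0minusUmlemma} (for $m\ge2$, hence $\ell\ge2$), it is annihilated by both $N_+$ and $N_-$ because it spans the one-dimensional $K$-type $\rho_{(\ell+m,\ell+m)}$; $D_\pm$ commute with $N_\pm$, so the vectors $D_+^\beta v_0$ stay on the diagonal, are nonzero by Lemma~\ref{plusoperatorsinjectivelemma}~(1), and span the multiplicity-one diagonal $K$-types, so every $D_-^jD_+^\beta v_0$ is automatically proportional to $D_+^{\beta-j}v_0$; and the final $P_{0-}^{m/2}$ step is exactly the iterated Lemma~\ref{P0minusUmlemma}, whose factors are all nonzero for $\ell\ge2$. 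So the lemma correctly reduces to showing $c_\beta\ne0$ where $D_-^\beta D_+^\beta v_0=c_\beta v_0$.

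The one step that is not right as stated is the claim that $D_-D_+u=q(\text{weight of }u)\,u$ for a \emph{highest weight} vector $u$, with $q$ a universal polynomial in the weight. That identity is automatic only when $u$ is annihilated by $\mathfrak{n}=\p_-$: then, writing $D_-D_+$ in PBW order $\p_+\,\mathfrak{k}_\C\,\p_-$, only the purely $\mathcal{U}(\mathfrak{k}_\C)$ terms survive and act by a character of the one-dimensional $K$-type. But for $\beta\ge2$ the vector $u_{\beta-1}=D_+^{\beta-1}v_0$ is \emph{not} annihilated by $\mathfrak{n}$, and the terms of $[D_-,D_+]$ with nontrivial $\p_\pm$-parts contribute; the eigenvalue of $D_-D_+$ on $u_{\beta-1}$ (which exists by multiplicity one) then depends on the scalars $d_j$ defined by $D_-D_+^{j}v_0=d_jD_+^{j-1}v_0$ and must be computed by a genuine recursion, not read off from the weight. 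This is repairable (it is the Capelli/$b$-function computation for $\det$ on $2\times2$ symmetric matrices), but as written your inductive step would compute the wrong scalar. A cleaner way to close the gap, staying entirely within the paper's toolkit: under the hypotheses of the lemma $N(\lambda)=L(\lambda)$ is irreducible (Proposition~\ref{Mlambdairredprop}) and unitarizable (Proposition~\ref{Mlambdaunitaryprop}~(3)), and with respect to the invariant inner product one has $X_\pm^*=-X_\mp$, $P_{j\pm}^*=-P_{j\mp}$, hence $D_+^*=D_-$; therefore $c_\beta\,\langle v_0,v_0\rangle=\langle D_-^\beta D_+^\beta v_0,v_0\rangle=\langle D_+^\beta v_0,D_+^\beta v_0\rangle>0$, since $D_+^\beta v_0\ne0$. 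This disposes of the nonvanishing in one line, covers the $\ell=1$, $m=0$ case as well, and avoids the commutator computation entirely.
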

The above two lemmas can be proved by routine calculation.

Before stating the next result, it will be convenient to introduce the concept of \emph{$N_-$-layers}. Let $\lambda=(\ell+m,\ell)\in\Lambda$ with $\ell\in\Z$ and $m\geq0$. Given a non-negative integer $\delta$, the $\delta$-th $N_-$-layer of $N(\lambda)$, denoted by $N(\lambda)^\delta$, is defined as the subspace spanned by all vectors of the form
\begin{equation}\label{tildeEpsspaneq4}
 X_+^\alpha\,P_{1+}^\beta\,P_{0+}^\gamma\,N_-^\delta\,w_0,\qquad\alpha,\beta,\gamma\geq0.
\end{equation}
Here, as before, $w_0$ is a fixed non-zero vector of weight $\lambda$. Note that $N(\lambda)^\delta=0$ for $\delta>m$. By \eqref{tildeEpsspaneq}, we have $N(\lambda)=N(\lambda)^0\oplus\ldots\oplus N(\lambda)^m$. We also introduce the notation $N(\lambda)^{\leq\delta}=N(\lambda)^0\oplus\ldots\oplus N(\lambda)^\delta$. Observe that, since $N_-$ normalizes $\p_+=\langle P_{0+},P_{1+},X_+\rangle$, in any expression involving these four operators we may always move the $N_-$'s to the right. In fact,
\begin{equation}\label{Nminuslayerseq}
 N_-Yw_0\in YN_-w_0+N(\lambda)^\delta\qquad\text{for }Y\in\mathcal{U}(\mathfrak{p}_+)N_-^\delta.
\end{equation}
It follows that the operator $N_-$ maps $N(\lambda)^\delta$ to $N(\lambda)^\delta\oplus N(\lambda)^{\delta+1}$. In particular, $N_-$ induces an endomorphism of the top layer $N(\lambda)^m$.

\begin{lemma}\label{plusoperatorsinjectivelemma}
 Let $\lambda=(\ell+m,\ell)\in\Lambdap$.
 \begin{enumerate}
  \item Let $f\in\C[X,Y,Z]$ be a non-zero polynomial. Then the element $f(X_+,P_{1+},P_{0+})$ of $\mathcal{U}(\mathfrak{g}_\C)$ acts injectively on $N(\lambda)$, and it preserves $N_-$-layers.
  \item The restriction of ${E}_+$ to $N(\lambda)^{\leq(m-1)}$ is injective.
 \end{enumerate}
\end{lemma}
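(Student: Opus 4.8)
\textbf{Proof proposal for Lemma~\ref{plusoperatorsinjectivelemma}.}

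The plan is to prove part (1) first, then deduce part (2) from it. For part (1), the key is the PBW-type basis \eqref{tildeEpsspaneq}: as a vector space $N(\lambda)=\mathcal{U}(\mathfrak{p}_+)\otimes_\C F(\lambda)$ with $F(\lambda)$ spanned by $N_-^\delta w_0$ for $0\le\delta\le k-\ell=m$, and the monomials $X_+^\alpha P_{1+}^\beta P_{0+}^\gamma N_-^\delta w_0$ are linearly independent. Since $\mathfrak{p}_+$ is commutative, $\mathcal{U}(\mathfrak{p}_+)\cong\C[X,Y,Z]$ is an integral domain, and the map $g\mapsto g\cdot(N_-^\delta w_0)$ from $\mathcal{U}(\mathfrak{p}_+)$ into the $\delta$-th layer $N(\lambda)^\delta$ is injective for each fixed $\delta$ (this is exactly the linear independence just quoted). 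Now I would observe that multiplication by $f(X_+,P_{1+},P_{0+})$ respects the layer decomposition $N(\lambda)=\bigoplus_\delta N(\lambda)^\delta$, because $X_+,P_{1+},P_{0+}$ commute with each other (so they can be moved past each other freely) and the definition \eqref{tildeEpsspaneq4} of $N(\lambda)^\delta$ only involves these operators times $N_-^\delta w_0$. Hence it suffices to check injectivity on each layer separately. On $N(\lambda)^\delta$, writing an element as $g\cdot(N_-^\delta w_0)$ with $g\in\C[X,Y,Z]$, applying $f(X_+,P_{1+},P_{0+})$ gives $(fg)\cdot(N_-^\delta w_0)$; since $\C[X,Y,Z]$ is a domain and $f\ne0$, $fg\ne0$ whenever $g\ne0$, and injectivity on the layer follows from the linear independence of the monomial basis. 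This proves (1); preservation of $N_-$-layers was established along the way.

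For part (2), the strategy is to reduce to part (1) by leveraging the structure of $E_+=(m+2)P_{1+}+2N_-X_+$ together with the layer filtration. The point is that $E_+$ shifts $N_-$-layers upward by at most one: $P_{1+}$ preserves layers, and $N_-X_+$ maps $N(\lambda)^\delta$ into $N(\lambda)^\delta\oplus N(\lambda)^{\delta+1}$ by \eqref{Nminuslayerseq} (since $X_+$ preserves layers and $N_-$ raises by at most one). Concretely, for a vector $v\in N(\lambda)^\delta$ with $\delta\le m-1$, write $v=g(X_+,P_{1+},P_{0+})N_-^\delta w_0$; then the component of $E_+v$ in the layer $N(\lambda)^{\delta+1}$ equals (up to nonzero scalar, using the explicit form of \eqref{Nminuslayerseq}) a nonzero multiple of $X_+\,g(X_+,P_{1+},P_{0+})\,N_-^{\delta+1}w_0$, which is nonzero by part (1) applied with $f=Xg$. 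For a general nonzero $v\in N(\lambda)^{\le(m-1)}$, let $\delta_0$ be the top layer in which $v$ has a nonzero component $v_{\delta_0}\in N(\lambda)^{\delta_0}$, $\delta_0\le m-1$; then the layer-$(\delta_0+1)$ component of $E_+v$ is determined entirely by $v_{\delta_0}$ (lower layers of $v$ contribute only to layers $\le\delta_0$ from $P_{1+}$ and $\le\delta_0+1$ from $N_-X_+$, but the layer-$(\delta_0+1)$ contribution from those lower pieces vanishes since they sit in layers $<\delta_0$), and by the computation just given this component is nonzero. Hence $E_+v\ne0$.

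I expect the main obstacle to be bookkeeping the precise behavior of $N_-$ on the $\mathcal{U}(\mathfrak{p}_+)$-part — that is, making \eqref{Nminuslayerseq} quantitatively precise enough to guarantee that the leading-layer coefficient of $X_+g$ genuinely survives and does not get cancelled by cross-terms coming from the commutator $[N_-,X_+]$ (which lands in $\mathfrak{k}_\C+\mathfrak{n}$ and acts within the same layer). One should verify using the commutation relations that $[N_-,X_+]$ and the $\mathfrak{k}_\C$-action on $w_0$ only produce terms in layers $\le\delta$, so the layer-$(\delta+1)$ part of $N_-X_+\,g\,N_-^\delta w_0$ is exactly $X_+\,g\,N_-^{\delta+1}w_0$ with no correction. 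Once this is nailed down, everything else is formal. Alternatively, one can avoid the explicit computation by noting that $N_-$ induces a \emph{nilpotent} operator on the graded pieces and arguing with the associated graded of the $N_-$-filtration, where $E_+$ acts as multiplication by $2X_+$ followed by the layer-shift; but the direct computation is probably cleaner here.
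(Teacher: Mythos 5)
Your proof is correct and follows exactly the route the paper intends: part (1) from the identification $N(\lambda)=\mathcal{U}(\mathfrak{p}_+)\otimes_\C F(\lambda)$ with $\mathcal{U}(\mathfrak{p}_+)\cong\C[X,Y,Z]$ an integral domain, and part (2) by extracting the layer-$(\delta_0+1)$ component of $E_+v$ via \eqref{Nminuslayerseq} and invoking part (1). The "obstacle" you flag at the end is not really one, since \eqref{Nminuslayerseq} already asserts that the layer-$(\delta+1)$ part of $N_-Yw_0$ is exactly $YN_-w_0$ with all correction terms confined to layer $\delta$.
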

\begin{proof}
(1) is immediate from \eqref{tildeEpsppluseq}. (2) follows easily from \eqref{Nminuslayerseq} and the defining formula $E_{+}=(m+2)P_{1+}+2N_-X_+$.
\end{proof}
\begin{lemma}\label{wiDplusXpluslemma}
 Let $\lambda=(\ell+m,\ell)\in\Lambda$ with $\ell\geq2$ and $m\geq0$. Let the vectors $w_i\in N(\lambda)$ be defined as in \eqref{P0minusUmlemmaeq0}. Then the vectors
 \begin{equation}\label{wiDplusXpluslemmaeq1}
  X_+^\alpha D_+^\beta w_i,\qquad\alpha,\beta\geq0,\:i\in\Big\{0,1,\ldots,\Big\lfloor\frac m2\Big\rfloor\Big\},
 \end{equation}
 are linearly independent.
\end{lemma}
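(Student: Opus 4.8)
The plan is to work in the $N_-$-layer decomposition $N(\lambda)=\bigoplus_{\delta\ge0}N(\lambda)^\delta$ and, for each vector in \eqref{wiDplusXpluslemmaeq1}, to pin down the highest layer in which it has a nonzero component; the indices will then be separated by projecting onto a suitable layer.

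\emph{Reduction to a single weight space.} Since $X_+$ has weight $(2,0)$, $D_+$ has weight $(2,2)$, and $w_i$ has weight $(\ell+m,\ell+2i)$, the vector $X_+^\alpha D_+^\beta w_i$ has weight $(\ell+m+2\alpha+2\beta,\ \ell+2i+2\beta)$. As $N(\lambda)$ is the direct sum of its weight spaces, any linear relation among the vectors in \eqref{wiDplusXpluslemmaeq1} splits into relations inside fixed weight spaces. Fixing a weight $(x,y)$, one reads off from the weight formula that the triples $(\alpha,\beta,i)$ occurring with that weight are exactly those with $\alpha=\alpha_0+i$ and $\beta=\beta_0-i$ for fixed integers $\alpha_0,\beta_0$ depending only on $x,y,\ell,m$; hence the vectors in \eqref{wiDplusXpluslemmaeq1} of weight $(x,y)$ are precisely the $X_+^{\alpha_0+i}D_+^{\beta_0-i}w_i$, one for each admissible value of $i$.

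\emph{The key computation.} I claim that for admissible $a,b\ge0$ and $0\le i\le\lfloor m/2\rfloor$ one has $X_+^aD_+^bw_i\in N(\lambda)^{\le 2i}$, while the component of $X_+^aD_+^bw_i$ in the layer $N(\lambda)^{2i}$ is a \emph{nonzero} scalar multiple of $X_+^{a+i}D_+^bN_-^{2i}w_0$. For the containment, each monomial of $U$ (namely $P_{0+}$, $P_{1+}N_-$, $X_+N_-^2$) raises the $N_-$-layer by $0$, by at most $1$, by at most $2$ respectively, so $U$ raises layers by at most $2$ and $w_i=U^iw_0\in N(\lambda)^{\le 2i}$; and $X_+,D_+\in\mathcal{U}(\mathfrak{p}_+)$ preserve every layer. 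For the leading term, the only way a layer-$2i$ contribution can arise from $U^iw_0$ is to select the monomial $X_+N_-^2$ at each of the $i$ applications of $U$; this monomial is the same for all $m\ge2$, and all the $m$-values occurring in $U^i=U_{m-2i+2}\cdots U_{m-2}U_m$ are $\ge2$ when $i\le\lfloor m/2\rfloor$, so its coefficient $1$ is unaffected by the shifts $m\mapsto m-2$ implicit in $U^i$. Moving all the $N_-$'s to the right past the $X_+$'s modulo lower layers, using \eqref{Nminuslayerseq} (whose leading term has coefficient $1$, so no scalar is lost), gives $(X_+N_-^2)^iw_0\equiv X_+^iN_-^{2i}w_0$ in $N(\lambda)^{2i}$. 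Applying $X_+^aD_+^b$, which commutes with projection onto $N(\lambda)^{2i}$, yields the stated leading term. Finally $X_+^{a+i}D_+^bN_-^{2i}w_0=f(X_+,P_{1+},P_{0+})N_-^{2i}w_0$ with $f(X,Y,Z)=X^{a+i}(Y^2-4XZ)^b$ a nonzero polynomial, hence nonzero by Lemma \ref{plusoperatorsinjectivelemma}(1) together with $N_-^{2i}w_0\ne0$, the latter because $2i\le m=k-\ell$ and the vectors in \eqref{tildeEpsspaneq} are linearly independent.

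\emph{Conclusion.} Suppose $\sum_i c_iX_+^{\alpha_0+i}D_+^{\beta_0-i}w_i=0$ inside the weight space $(x,y)$ with not all $c_i$ zero, and let $i^*$ be the largest index with $c_{i^*}\ne0$. Project onto $N(\lambda)^{2i^*}$: every term with $i<i^*$ lies in $N(\lambda)^{\le 2i^*-2}$ and is killed, while the $i=i^*$ term becomes a nonzero multiple of $X_+^{\alpha_0+2i^*}D_+^{\beta_0-i^*}N_-^{2i^*}w_0\ne0$ (the exponents are $\ge0$ because $(\alpha_0+i^*,\beta_0-i^*,i^*)$ indexes a vector in \eqref{wiDplusXpluslemmaeq1}, and $2i^*\le m$). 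Hence $c_{i^*}=0$, a contradiction, proving the vectors linearly independent. The one genuinely delicate point is the identification of the leading $N_-$-layer term of $w_i=U^iw_0$: one must verify that the coefficient of the layer-raising monomial $X_+N_-^2$ survives intact through the iterated composition with shifting $m$, and that the reordering $(X_+N_-^2)^iw_0\equiv X_+^iN_-^{2i}w_0$ modulo lower layers introduces no vanishing scalar. Everything else is routine weight bookkeeping together with appeals to the already-established structure of $N(\lambda)$.
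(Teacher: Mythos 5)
Your proof is correct and follows essentially the same route as the paper's: both arguments rest on the $N_-$-layer filtration (each $X_+^\alpha D_+^\beta w_i$ lives in $N(\lambda)^{\le 2i}$ with nonzero top-layer component) combined with the observation that for fixed $i$ the weights separate the remaining vectors. You simply carry out in full the step the paper asserts in one line, by identifying the leading layer-$2i$ term of $U^i w_0$ as $X_+^i N_-^{2i} w_0$ — a worthwhile verification, and one that incidentally shows $w_i\neq 0$ in $N(\lambda)$ without invoking Lemma~\ref{P0minusUmlemma}.
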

\begin{proof}
First note that the $w_i$ are non-zero by Lemma \ref{P0minusUmlemma}. By Lemma \ref{plusoperatorsinjectivelemma} (1), all the vectors \eqref{wiDplusXpluslemmaeq1} are non-zero. We see from the defining formula for the $U$ operator in Table \ref{Nplusoperatorstable} that $X_+^\alpha D_+^\beta w_i$ lies in $N(\lambda)^{\leq2i}$, but not in $N(\lambda)^{\leq(2i-1)}$. It follows that any linear combination between the vectors \eqref{wiDplusXpluslemmaeq1} can only involve a single $i$. But for fixed $i$ the vectors \eqref{wiDplusXpluslemmaeq1} have distinct weights as $\alpha$ and $\beta$ vary. Our assertion follows.
\end{proof}

Recall from Lemma \ref{MlambdaKtypeslemma} that if a $K$-type $\rho_{(x,y)}$ occurs in $N(\lambda)$, where $\lambda=(\ell+m,\ell)$, then $x-y\equiv m$ mod $2$. We say that such a $K$-type is of \emph{parity $0$} if $x\equiv\ell+m$ mod $2$ and $y\equiv\ell$ mod $2$. Otherwise, if $x\not\equiv\ell+m$ mod $2$ and $y\not\equiv\ell$ mod $2$, we say the $K$-type is of \emph{parity $1$}. We apply the same terminology to the highest weight vectors of such $K$-types. Clearly, the operators $X_+$, $P_{0-}$, $ {U}$, $ {L}$ and $D_\pm$ preserve the parity, while $ {E}_\pm$ change the parity. Let $N(\lambda)_{\text{par}(0)}$ (resp.\ $N(\lambda)_{\text{par}(1)}$) be the subspace of $N(\lambda)$ spanned by highest weight vectors of parity $0$ (resp.\ parity $1$). We now state the main result of this section.
\begin{proposition}\label{navigateKtypesprop}
 Let $\lambda=(\ell+m,\ell)\in\Lambdap$ with $\ell\geq2$ and $m\geq0$.
 \begin{enumerate}
  \item $N(\lambda)_{\text{\rm par}(0)}$ is precisely the space spanned by the vectors \eqref{wiDplusXpluslemmaeq1}.
  \item If $m$ is odd, then the map $ {E}_+:\:N(\lambda)_{\text{\rm par}(0)}\to N(\lambda)_{\text{\rm par}(1)}$ is an isomorphism.
  \item If $m$ is even, then the map $ {E}_+:\:N(\lambda)_{\text{\rm par}(0)}\to N(\lambda)_{\text{\rm par}(1)}$ is surjective, and its kernel is spanned by the vectors \eqref{wiDplusXpluslemmaeq1} with $i=m/2$.
 \end{enumerate}
\end{proposition}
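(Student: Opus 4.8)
The plan is to reduce everything to the explicit multiplicity formula of Lemma~\ref{MlambdaKtypeslemma}, together with the $N_-$-layer filtration $N(\lambda)=\bigoplus_\delta N(\lambda)^\delta$ and the injectivity statements of Lemma~\ref{plusoperatorsinjectivelemma}. The basic principle is that for any $(x,y)\in\Lambdap$ the space of highest weight vectors of weight $(x,y)$ inside $N(\lambda)$ has dimension exactly $\mathrm{mult}_{(x,y)}(N(\lambda))$: such a vector generates a copy of the $K$-type $\rho_{(x,y)}$, and each copy of $\rho_{(x,y)}$ contributes exactly one highest weight vector. Since every weight occurring in $N(\lambda)$ has a well-defined parity, this space is precisely $(N(\lambda)_{\mathrm{par}(0)})_{(x,y)}$ when $(x,y)$ has parity $0$, and $(N(\lambda)_{\mathrm{par}(1)})_{(x,y)}$ when it has parity $1$. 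For part~(1), I would first record, using Lemma~\ref{Nplusoperatorslemma} and $w_i=U^iw_0$, that each $X_+^\alpha D_+^\beta w_i$ is a highest weight vector of parity $0$ and weight $(\ell+m+2\alpha+2\beta,\,\ell+2i+2\beta)$; by Lemma~\ref{wiDplusXpluslemma} these vectors are linearly independent (in particular non-zero, using $\ell\geq2$ via Lemma~\ref{P0minusUmlemma}). It then suffices to count: for a fixed parity-$0$ weight $(x,y)$, the number of triples $(\alpha,\beta,i)$ with $\alpha,\beta\geq0$ and $i\leq\lfloor m/2\rfloor$ giving this weight is an elementary computation, and one checks case-by-case, according to whether $y\leq\ell+m$ or $y>\ell+m$, that it equals $\mathrm{mult}_{(x,y)}(N(\lambda))$ as given by Lemma~\ref{MlambdaKtypeslemma}. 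Hence these vectors form a basis of $(N(\lambda)_{\mathrm{par}(0)})_{(x,y)}$ for every $(x,y)$, proving~(1).

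For~(2) and~(3) I would use that $E_+$ has weight $(1,1)$ and carries highest weight vectors to highest weight vectors, hence maps $(N(\lambda)_{\mathrm{par}(0)})_{(x,y)}$ into $(N(\lambda)_{\mathrm{par}(1)})_{(x+1,y+1)}$; conversely, every parity-$1$ weight with non-zero multiplicity is of the form $(x+1,y+1)$ for a parity-$0$ weight $(x,y)$ satisfying the relevant constraints. Two structural inputs drive the argument. First, by part~(1) and the shape of the operator $U$ in Table~\ref{Nplusoperatorstable}, the vector $X_+^\alpha D_+^\beta w_i$ lies in $N(\lambda)^{\leq2i}$; so when $m$ is odd the whole of $N(\lambda)_{\mathrm{par}(0)}$ lies in $N(\lambda)^{\leq m-1}$, on which $E_+$ is injective by Lemma~\ref{plusoperatorsinjectivelemma}(2). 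Second, when $m$ is even one has $E_+w_{m/2}=0$: indeed $E_+w_{m/2}$ is a highest weight vector of weight $(\ell+m+1,\ell+m+1)$, while $\mathrm{mult}_{(\ell+m+1,\ell+m+1)}(N(\lambda))=0$ by Lemma~\ref{MlambdaKtypeslemma}.

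Granting this, the odd case~(2) follows by comparing $\mathrm{mult}_{(x,y)}(N(\lambda))$ with $\mathrm{mult}_{(x+1,y+1)}(N(\lambda))$ through Lemma~\ref{MlambdaKtypeslemma} and finding them equal in every case; since $E_+$ is injective on $N(\lambda)_{\mathrm{par}(0)}$ and matches dimensions weight space by weight space, it is an isomorphism. For the even case~(3), part~(1) gives an internal direct sum $N(\lambda)_{\mathrm{par}(0)}=V_0\oplus V_1$, where $V_0$ is spanned by the $X_+^\alpha D_+^\beta w_i$ with $i<m/2$ and $V_1$ by those with $i=m/2$. Here $V_0\subseteq N(\lambda)^{\leq m-2}$, so $E_+|_{V_0}$ is injective, while $E_+V_1=0$ because one can push $E_+$ past the $X_+$'s and $D_+$'s using the commutation relations~\eqref{Empluscommuteeq1} and~\eqref{Empluscommuteeq3} and then invoke $E_+w_{m/2}=0$. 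Injectivity on $V_0$ then forces $\ker E_+=V_1$, and a last dimension count from Lemma~\ref{MlambdaKtypeslemma}, now showing $\dim(V_0)_{(x,y)}=\mathrm{mult}_{(x+1,y+1)}(N(\lambda))$, gives that $E_+$ maps $V_0$ onto $N(\lambda)_{\mathrm{par}(1)}$, hence is surjective. The work is essentially bookkeeping; the only point that is not purely formal is the vanishing $E_+w_{m/2}=0$, which is precisely what makes the even case behave differently from the odd one, and the main care needed is in the multiplicity comparisons across the split $y\leq\ell+m$ versus $y>\ell+m$ and in tracking which value of $m$ each occurrence of the weight-dependent operators $U$ and $E_+$ refers to.
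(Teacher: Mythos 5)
Your proposal is correct and follows essentially the same route as the paper's own proof: part (1) by matching the count of vectors $X_+^\alpha D_+^\beta w_i$ of a given weight against the multiplicity formula of Lemma~\ref{MlambdaKtypeslemma}, and parts (2), (3) by combining the $N_-$-layer injectivity of $E_+$ from Lemma~\ref{plusoperatorsinjectivelemma}(2) with the vanishing $E_+w_{m/2}=0$ (forced by the absence of the $K$-type $\rho_{(\ell+m+1,\ell+m+1)}$), the commutation relations \eqref{Empluscommuteeq1}--\eqref{Empluscommuteeq3}, and a weight-by-weight multiplicity comparison. No gaps.
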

\begin{proof}
(1) Clearly, the highest weight vectors \eqref{wiDplusXpluslemmaeq1} all have parity $0$. By easy combinatorics we can determine the number of vectors \eqref{wiDplusXpluslemmaeq1} of a fixed weight $(x,y)$. Comparing with the formula from Lemma \ref{MlambdaKtypeslemma}, we see that this number coincides with ${\rm mult}_{(x,y)}(N(\lambda))$. This proves (1) in view of the linear independence of the vectors \eqref{wiDplusXpluslemmaeq1}.

(2) If $m$ is odd, then the vectors \eqref{wiDplusXpluslemmaeq1} are all contained in $N(\lambda)^{\leq(m-1)}$. Hence $ {E}_+:\:N(\lambda)_{\text{par}(0)}\to N(\lambda)_{\text{par}(1)}$ is injective by part (1) and Lemma \ref{plusoperatorsinjectivelemma} (2). To prove surjectivity, it is enough to show that ${\rm mult}_{(x,y)}(N(\lambda))={\rm mult}_{(x-1,y-1)}(N(\lambda))$ for all $(x,y)$ of parity $1$. This follows from the formula in Lemma \ref{MlambdaKtypeslemma}.

(3) Assume that $m$ is even. The vector $w_{m/2}$ has weight $(\ell+m,\ell+m)$. By Lemma \ref{MlambdaKtypeslemma}, the $K$-type $\rho_{(\ell+m+1,\ell+m+1)}$ is not contained in $N(\lambda)$; see also \eqref{boundaryKtypesdiagram}. Hence $ {E}_+w_{m/2}=0$. By \eqref{Empluscommuteeq1} - \eqref{Empluscommuteeq3} , $ {E}_+$ annihilates all vectors \eqref{wiDplusXpluslemmaeq1} with $i=m/2$. The vectors \eqref{wiDplusXpluslemmaeq1} with $i<m/2$ are all contained in $N(\lambda)^{\leq(m-1)}$. Therefore, the assertion about the kernel of $ {E}_+$ follows from part (1) and Lemma \ref{plusoperatorsinjectivelemma} (2).

To prove the surjectivity assertion, first note that, by Lemma \ref{MlambdaKtypeslemma},
$$
 {\rm mult}_{(x,y)}(N(\lambda))=
  \begin{cases}
   {\rm mult}_{(x-1,y-1)}(N(\lambda))&\text{if }y\leq\ell+m,\\
   {\rm mult}_{(x-1,y-1)}(N(\lambda))-1&\text{if }y>\ell+m,
  \end{cases}
$$
for all $K$-types $\rho_{(x,y)}$ of parity $1$. The $K$-type $\rho_{(x-1,y-1)}$ of parity $0$ receives a contribution from a vector \eqref{wiDplusXpluslemmaeq1} with $i=m/2$ if and only if $y>\ell+m$. The surjectivity therefore follows by what we already proved about the kernel of $ {E}_+$.
\end{proof}

\begin{remark}
Let $N(\lambda)_{\rm hw}=N(\lambda)_{\text{\rm par}(0)}\oplus N(\lambda)_{\text{\rm par}(1)}$ be the subspace of highest weight vectors, and let $\mathcal{I}$ be the subalgebra of $\mathcal{U}(\mathfrak{p}_+)$ generated by $X_+$ and $D_+$. Then Proposition 2.14 implies that $N(\lambda)_{\rm hw}$ is a free $\mathcal{I}$-module of rank $m+1$, the dimension of the minimal $K$-type. For holomorphic discrete series representations, i.e., for $\ell\geq3$, this also follows from the main result of \cite{HoweKraft1998}.
\end{remark}

\subsubsection*{The case of lowest weight $(1+m,1)$}
In Proposition \ref{navigateKtypesprop} we assumed $\ell\geq2$ since otherwise some of the vectors $w_i$ might be zero; see Lemma \ref{P0minusUmlemma}. However, for later applications we also require the following analogous result for the $L(\lambda)$ with $\lambda=(1+m,1)$.
\begin{proposition}\label{navigateKtypesell1prop}
 Let $\lambda=(1+m,1)$ with $m\geq0$. Let $w_0$ be a non-zero vector of weight $(1+m,1)$ in $L(\lambda)$.
 \begin{enumerate}
  \item $L(\lambda)_{\text{\rm par}(0)}$ is precisely the space spanned by the vectors
   \begin{equation}\label{navigateKtypesell1propeq1}
    X_+^\alpha D_+^\beta w_0,\qquad \alpha,\beta\geq0.
   \end{equation}
  \item If $m\geq1$, then the map $ {E}_+:\:L(\lambda)_{\text{\rm par}(0)}\to L(\lambda)_{\text{\rm par}(1)}$ is an isomorphism. If $m=0$, then $L(\lambda)_{\text{\rm par}(1)}=0$.
 \end{enumerate}
\end{proposition}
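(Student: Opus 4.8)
The plan is to avoid working directly in the (proper) quotient $L(\lambda)$, and instead to establish everything first in the parabolic Verma module $N(\lambda)$, $\lambda=(1+m,1)$, and then transport it along the quotient map $\pi\colon N(\lambda)\to L(\lambda)$. The first step is to observe that Proposition~\ref{navigateKtypesprop} holds verbatim for $\lambda=(1+m,1)$, with $N(\lambda)$ in place of $L(\lambda)$. Inspecting its proof (and those of Lemmas~\ref{MlambdaKtypeslemma}, \ref{plusoperatorsinjectivelemma}, \ref{wiDplusXpluslemma}), the hypothesis $\ell\ge2$ enters only to guarantee that the vectors $w_i=U^iw_0$ of~\eqref{P0minusUmlemmaeq0} are nonzero. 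For $\ell=1$ one checks this by hand: if $m<2$ then $U=0$ and only $w_0$ occurs; if $m\ge2$, then $w_1=Uw_0$ has nonzero $N_-$-layer-$0$ component $m(m-1)P_{0+}w_0$ (by the formula for $U$ in Table~\ref{Nplusoperatorstable} and the linear independence~\eqref{tildeEpsppluseq}), so $w_1\ne0$, and Lemma~\ref{P0minusUmlemma}, whose scalar $-(i+1)i(m-2i)(m-2i-1)$ is nonzero for $1\le i\le\lfloor m/2\rfloor-1$, then gives $w_i\ne0$ for all $0\le i\le\lfloor m/2\rfloor$ by induction. Granting this, parts (1)--(3) of Proposition~\ref{navigateKtypesprop} hold for $N(\lambda)$; in particular $N(\lambda)_{\text{\rm par}(0)}$ is the span of the vectors $X_+^\alpha D_+^\beta w_i$ ($\alpha,\beta\ge0$, $0\le i\le\lfloor m/2\rfloor$) and $N(\lambda)_{\text{\rm par}(1)}=E_+(N(\lambda)_{\text{\rm par}(0)})$.

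Next I would analyze $\pi$. If $m\le1$ then $N(\lambda)=L(\lambda)$ is irreducible by Proposition~\ref{Mlambdairredprop} and we are done by the previous step, so assume $m\ge2$. By Proposition~\ref{Mlambdaexactsequencesprop}(3) the maximal submodule $M=\ker\pi$ equals $L(1+m,3)=N(1+m,3)$, which is simple, and every weight of $M$ has second coordinate $\ge3$ (by the span~\eqref{tildeEpsspaneq} for $N(1+m,3)$). Since $Uw_0$ is a highest weight vector of weight $(1+m,3)$ (Lemma~\ref{Nplusoperatorslemma}), nonzero as above, and since ${\rm mult}_{(1+m,3)}(N(\lambda))=1$ while ${\rm mult}_{(1+m,3)}(L(\lambda))=0$ (Lemma~\ref{MlambdaKtypeslemma} and Proposition~\ref{EpsKtypesprop}(2)), the unique highest weight vector of that weight in $N(\lambda)$ lies in $M$; hence $\pi(w_i)=U^{i-1}\pi(Uw_0)=0$ for all $i\ge1$, whereas $\bar w_0:=\pi(w_0)\ne0$ because $M$ does not contain the minimal $K$-type $\rho_{(1+m,1)}$. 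Finally, $\pi$ maps highest weight vectors onto highest weight vectors weight by weight: for each weight $\mu$, applying $N_+$ to the exact sequence $0\to M_\mu\to N(\lambda)_\mu\to L(\lambda)_\mu\to0$ of weight spaces and comparing dimensions of $N_+$-kernels via ${\rm mult}_\mu(N(\lambda))={\rm mult}_\mu(M)+{\rm mult}_\mu(L(\lambda))$ forces the induced map on $N_+$-kernels to be surjective. Consequently $L(\lambda)_{\text{\rm par}(\varepsilon)}=\pi(N(\lambda)_{\text{\rm par}(\varepsilon)})$ for $\varepsilon=0,1$.

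Part (1) then follows: $L(\lambda)_{\text{\rm par}(0)}=\pi(N(\lambda)_{\text{\rm par}(0)})$ is the span of the $X_+^\alpha D_+^\beta\pi(w_i)$, which equals the span of the $X_+^\alpha D_+^\beta\bar w_0$ since $\pi(w_i)=0$ for $i\ge1$; and by Proposition~\ref{EpsKtypesprop}(2) the weights $(1+m+2\alpha+2\beta,1+2\beta)$ are pairwise distinct, each is a parity-$0$ $K$-type weight of $L(\lambda)$ of multiplicity one, and they exhaust those weights, so each $X_+^\alpha D_+^\beta\bar w_0$ is nonzero (being the unique, up to scalar, weight vector in $L(\lambda)_{\text{\rm par}(0)}$ of its weight) and they form a basis.

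For part (2): when $m=0$, every $K$-type of $L(1,1)$ has odd second coordinate by Proposition~\ref{EpsKtypesprop}(1), so $L(1,1)_{\text{\rm par}(1)}=0$ and $E_+$ is zero on $L(1,1)_{\text{\rm par}(0)}$. When $m\ge1$, equations~\eqref{Empluscommuteeq1} and~\eqref{Empluscommuteeq3} give $E_+(X_+^\alpha D_+^\beta\bar w_0)=X_+^\alpha D_+^\beta E_+\bar w_0$, whence $E_+(L(\lambda)_{\text{\rm par}(0)})=\pi(E_+(N(\lambda)_{\text{\rm par}(0)}))=\pi(N(\lambda)_{\text{\rm par}(1)})=L(\lambda)_{\text{\rm par}(1)}$, so $E_+$ is onto; and, exactly as in part (1) but now with the parity-$1$ $K$-type weights $(2+m+2\alpha+2\beta,2+2\beta)$ of Proposition~\ref{EpsKtypesprop}(2), the vectors $X_+^\alpha D_+^\beta E_+\bar w_0$ are nonzero, distinct in weight, and span $L(\lambda)_{\text{\rm par}(1)}$, so $E_+$ carries the basis $\{X_+^\alpha D_+^\beta\bar w_0\}$ onto a basis and is therefore a bijection. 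The step I expect to require the most care is the one in the second paragraph: identifying $M$, verifying $Uw_0\in M$, and verifying that $\pi$ is surjective on highest weight vectors of each weight — this is precisely what replaces the awkward direct assertion ``$X_+^\alpha D_+^\beta\bar w_0\ne0$ in $L(\lambda)$'' by bookkeeping of $K$-type multiplicities. (In principle one could instead try to prove that nonvanishing directly using the lowering operators $D_-$, $E_-$, as in Lemma~\ref{Dminuslemma}, but the relevant structure constants degenerate when $\ell=1$, which is exactly why the detour through $N(\lambda)$ is preferable.)
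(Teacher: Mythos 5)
Your proposal is correct, and it supplies details the paper deliberately omits (the paper's proof of this proposition is just ``similar to the previous case''). The two genuine degeneracies at $\ell=1$ are exactly the ones you isolate: the structure constant $-(i+1)(\ell+i-1)(m-2i)(m-2i-1)$ of Lemma~\ref{P0minusUmlemma} vanishes at $i=0$, so the nonvanishing of $w_1=Uw_0$ needs a separate argument (your $N_-$-layer-$0$ observation $m(m-1)P_{0+}w_0\neq0$, legitimate by the PBW basis \eqref{tildeEpsspaneq}, does the job); and for $m\ge2$ the module $N(1+m,1)$ is reducible (Region C of Proposition~\ref{Mlambdairredprop}), so nonvanishing of $X_+^\alpha D_+^\beta\bar w_0$ in the quotient $L(\lambda)$ is not automatic. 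Your detour — prove everything in $N(\lambda)$, identify $\ker\pi\cong L(1+m,3)=N(1+m,3)$ via Proposition~\ref{Mlambdaexactsequencesprop}(3), kill the $w_i$ for $i\ge1$ by the multiplicity drop ${\rm mult}_{(1+m,3)}$ from $1$ to $0$, and show $\pi$ is surjective on $N_+$-kernels weight by weight by additivity of $K$-type multiplicities — is sound, and the final nonvanishing of the $X_+^\alpha D_+^\beta\bar w_0$ by matching the index set $(\alpha,\beta)$ bijectively against the multiplicity-one parity-$0$ $K$-types of Proposition~\ref{EpsKtypesprop}(2) is exactly the right counting. The presumably intended ``direct'' proof would run the argument of Proposition~\ref{navigateKtypesprop} inside $L(\lambda)$ itself, comparing against Proposition~\ref{EpsKtypesprop}(2) instead of Lemma~\ref{MlambdaKtypeslemma}; your version buys a cleaner justification of nonvanishing (no lowering-operator computation, whose constants indeed degenerate at $\ell=1$) at the cost of the extra bookkeeping with the quotient map. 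Both work; no gaps.
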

\begin{proof}
The proof is similar to the previous case, and is omitted.
\end{proof}
\section{Differential operators}\label{s:diff}
\subsection{Functions on the group and functions on \texorpdfstring{$\H_2$}{}}\label{fctsgrpH2sec}
Recall that $K\cong U(2)$ via $\left[\begin{smallmatrix}A&B\\-B&A\end{smallmatrix}\right]\mapsto A+iB$. On the Lie algebra level, this map induces an isomorphism $\mathfrak{k}\cong\mathfrak{u}(2)$ given by the same formula. Extending this map $\C$-linearly, we get an isomorphism $\mathfrak{k}_\C\cong\mathfrak{gl}_2(\C)$. Under this isomorphism,
\begin{equation}\label{ku2eq}
 Z\longmapsto\mat{1}{0}{0}{0},\quad
 Z'\longmapsto\mat{0}{0}{0}{1},\quad
 N_+\longmapsto\mat{0}{1}{0}{0},\quad
 N_-\longmapsto\mat{0}{0}{-1}{0}.
\end{equation}
Let $\ell$ be an integer, and $m$ a non-negative integer. Let $W_m \simeq \sym^m(\C^2)$ be the space of all complex homogeneous polynomials of total degree $m$ in the two variables $S$ and $T$. For any $g \in \GL_2(\C)$, and $P(S,T) \in W_m$, define $\eta_{\ell,m}(g)P(S,T) = \det(g)^\ell P((S,T)g)$. Then $(\eta_{\ell,m}, W_m)$ gives a concrete realization of the irreducible representation $\det^\ell\sym^m$  of $\GL_2(\C)$. We will denote the derived representation of $\mathfrak{gl}_2(\C)$ by the same symbol $\eta_{\ell,m}$. Easy calculations show that, under the identification \eqref{ku2eq},
\begin{align}
 \label{etalmeq1}\eta_{\ell,m}(Z)S^{m-j}T^j&=(\ell+m-j)S^{m-j}T^j,\\
 \label{etalmeq2}\eta_{\ell,m}(Z')S^{m-j}T^j&=(\ell+j)S^{m-j}T^j,\\
 \label{etalmeq3}\eta_{\ell,m}(N_+)S^{m-j}T^j&=jS^{m-j+1}T^{j-1},\\
 \label{etalmeq4}\eta_{\ell,m}(N_-)S^{m-j}T^j&=-(m-j)S^{m-j-1}T^{j+1}.
\end{align}
In particular, $\eta_{\ell,m}(N_+)S^m=0$ and $\eta_{\ell,m}(N_-)T^m=0$. Since the vector $S^m$ is a highest weight vector of weight $(\ell+m,\ell)$, we see that
\begin{equation}\label{rhohatrhoeq1}
 \text{The restriction of $\eta_{\ell,m}$ to $U(2)$ is $\rho_{(\ell+m,\ell)}$}.
\end{equation}

For a smooth function $\Phi$ on $\Sp_4(\R)$ of weight $(\ell+m,\ell)$ satisfying $N_+ \Phi=0$, we define a function $\vec\Phi$ taking values in the polynomial ring $\C[S,T]$ by
\begin{equation}\label{vecPhidefeq}
 \vec\Phi(g)=\sum_{j=0}^m\frac{(-1)^j}{j!}(N_-^j\Phi)(g)S^{m-j}T^j,\qquad g\in\Sp_4(\R).
\end{equation}
Evidently, $\vec\Phi$ takes values in the space $W_m\subset\C[S,T]$ of the representation $\eta_{\ell,m}$. Hence, an expression like $\eta_{\ell,m}(h)(\vec\Phi(g))$ makes sense, for any $h\in\GL_2(\C)$.

In the following lemma, for clarity of notation, we let $\iota$ be the transposition map on $2\times2$ complex matrices. We may interpret $\iota$ as an anti-involution of $\GL_2(\C)$. The derived map, also given by transposition and also denoted by $\iota$, is an anti-involution of $\mathfrak{gl}_2(\C)$. It extends to an anti-involution of the algebra $\mathcal{U}(\mathfrak{gl}_2(\C))$. When we write $\iota(h)$ for $h\in K$, we mean $\iota$ applied to the element of $U(2)$ corresponding to $h\in K$ via the map $\left[\begin{smallmatrix}A&B\\-B&A\end{smallmatrix}\right]\mapsto A+iB$.
\begin{lemma}\label{vecPhitransformlemma}
 Let $\ell$ be any integer, and $m$ a non-negative integer. Let $\Phi$ be a $K$-finite function on $\Sp_4(\R)$ of weight $(\ell+m,\ell)$ satisfying $N_+\Phi=0$ (right translation action). Let $\vec\Phi$ be the polynomial-valued function defined in \eqref{vecPhidefeq}. Then
 \begin{equation}\label{vecPhitransformlemmaeq0}
  \vec\Phi(gh)=\eta_{\ell,m}(\iota(h))(\vec\Phi(g)),\qquad\text{for }h\in K
 \end{equation}
 and $g\in\Sp_4(\R)$. On the Lie algebra level,
 \begin{equation}\label{vecPhitransformlemmaeq1}
  (X\vec\Phi)(g)=\eta_{\ell,m}(\iota(X))(\vec\Phi(g))
 \end{equation}
 for $X\in\mathcal{U}(\mathfrak{k}_\C)$ and $g\in\Sp_4(\R)$. More generally,
 \begin{equation}\label{vecPhitransformlemmaeq2}
  (YX\vec\Phi)(g)=\eta_{\ell,m}(\iota(X))((Y\vec\Phi)(g))
 \end{equation}
 for $X\in\mathcal{U}(\mathfrak{k}_\C)$, $Y\in\mathcal{U}(\mathfrak{g}_\C)$ and $g\in\Sp_4(\R)$.
\end{lemma}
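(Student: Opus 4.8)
The plan is to establish the three transformation formulas \eqref{vecPhitransformlemmaeq0}, \eqref{vecPhitransformlemmaeq1}, \eqref{vecPhitransformlemmaeq2} in that order, each one feeding into the next. First I would prove the group-level identity \eqref{vecPhitransformlemmaeq0}. Fix $g$ and consider both sides as functions of $h\in K\cong U(2)$. The key observation is that $\vec\Phi(g\cdot)$, viewed as a $W_m$-valued function on $K$, is determined by its restriction to a neighborhood of the identity together with the (right-regular) $\mathfrak{k}_\C$-action; so it suffices to check that $h\mapsto\vec\Phi(gh)$ and $h\mapsto\eta_{\ell,m}(\iota(h))(\vec\Phi(g))$ agree at $h=1$ (trivially, both equal $\vec\Phi(g)$) and have the same derivative in every direction $X\in\mathfrak{k}_\C$. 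Concretely, I would compute $N_-(gh)|_{h=1}$-type derivatives: the hypotheses $N_+\Phi=0$ and that $\Phi$ has weight $(\ell+m,\ell)$ pin down exactly how $Z,Z',N_+,N_-$ act on the components $(N_-^j\Phi)(g)$, and comparing with the explicit action \eqref{etalmeq1}--\eqref{etalmeq4} of $\eta_{\ell,m}$ (where the anti-involution $\iota$ interchanges the roles of $N_+$ and $N_-$, i.e.\ of raising and lowering) shows the two sides satisfy the same first-order ODE system along one-parameter subgroups. Since $K$ is connected, equality at the identity plus matching derivatives yields \eqref{vecPhitransformlemmaeq0} on all of $K$.

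Next, \eqref{vecPhitransformlemmaeq1} for $X\in\mathcal{U}(\mathfrak{k}_\C)$ is simply the infinitesimal form of \eqref{vecPhitransformlemmaeq0}: differentiating the identity $\vec\Phi(g\exp(tX))=\eta_{\ell,m}(\iota(\exp(tX)))(\vec\Phi(g))=\eta_{\ell,m}(\exp(t\,\iota(X)))(\vec\Phi(g))$ at $t=0$ gives $(X\vec\Phi)(g)=\eta_{\ell,m}(\iota(X))(\vec\Phi(g))$ for $X\in\mathfrak{k}_\C$, and one extends to all of $\mathcal{U}(\mathfrak{k}_\C)$ by induction on the degree of $X$: if the formula holds for $X$, then applying a further $X'\in\mathfrak{k}_\C$ on the left and using that $X\vec\Phi=\eta_{\ell,m}(\iota(X))\circ\vec\Phi$ is again a $W_m$-valued function (to which the already-established first-order case applies, noting $\eta_{\ell,m}(\iota(X'))$ commutes with nothing in particular but is just a fixed linear endomorphism of $W_m$) yields $(X'X\vec\Phi)(g)=\eta_{\ell,m}(\iota(X'))\eta_{\ell,m}(\iota(X))(\vec\Phi(g))=\eta_{\ell,m}(\iota(X'X))(\vec\Phi(g))$, using that $\iota$ is an anti-homomorphism so $\iota(X'X)=\iota(X)\iota(X')$ — wait, I must be careful: $\eta_{\ell,m}$ is a homomorphism, so $\eta_{\ell,m}(\iota(X'))\eta_{\ell,m}(\iota(X))=\eta_{\ell,m}(\iota(X')\iota(X))=\eta_{\ell,m}(\iota(X X'))$; one then checks the bookkeeping of left-multiplication orders works out consistently, which it does precisely because $\iota$ reverses order.

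Finally, \eqref{vecPhitransformlemmaeq2}: for $X\in\mathcal{U}(\mathfrak{k}_\C)$ and $Y\in\mathcal{U}(\mathfrak{g}_\C)$, I would apply \eqref{vecPhitransformlemmaeq1} not to $\Phi$ directly but to the function $Y\Phi$. The point is that $Y\Phi$ is still a smooth function on $\Sp_4(\R)$, but it need not have weight $(\ell+m,\ell)$ nor satisfy $N_+(Y\Phi)=0$, so \eqref{vecPhitransformlemmaeq1} does not apply verbatim. The clean way around this is to observe that the operation $\Phi\mapsto\vec\Phi$ and the formula $(X\vec\Phi)(g)=\eta_{\ell,m}(\iota(X))((\vec\Phi)(g))$ only used the $\mathfrak{k}_\C$-equivariance structure near the minimal $K$-type; more robustly, one notes $\vec{(Y\Phi)}(g) = (Y\vec\Phi)(g)$ when $Y$ acts on the left (left- and right-invariant vector fields commute), so $(YX\vec\Phi)(g)$ can be rewritten by first moving $X$ past $Y$ — but $X$ and $Y$ are both in the enveloping algebra of left-invariant fields and do not commute. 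The correct resolution: apply \eqref{vecPhitransformlemmaeq1} with $\Phi$ replaced by the right translate, i.e.\ use that \eqref{vecPhitransformlemmaeq1} holds pointwise in $g$ and that $(Y\vec\Phi)(g)$ is obtained by applying $Y$ (a differential operator in $g$) and then evaluating; since $X\in\mathcal{U}(\mathfrak{k}_\C)$ acts through the $\mathfrak{k}_\C$-module structure on the $W_m$-valued function $g\mapsto(Y\vec\Phi)(g)$ in exactly the way \eqref{vecPhitransformlemmaeq1} describes for $\vec\Phi$ itself, one gets \eqref{vecPhitransformlemmaeq2} by the same inductive argument applied to the function $Y\vec\Phi$ in place of $\vec\Phi$. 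The main obstacle is this last bookkeeping: keeping straight that $X$ acts "after" $Y$ (i.e.\ closer to the function) so that the $\mathfrak{k}_\C$-equivariance computation for $\vec\Phi$ transfers unchanged to $Y\vec\Phi$, and verifying the order-reversal of $\iota$ is consistent with left-multiplication order in $\mathcal{U}(\mathfrak{g}_\C)$; I expect this is precisely why the authors isolate the three formulas as a lemma rather than leaving it to the reader.
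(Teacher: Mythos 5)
Your proposal is correct and follows essentially the same route as the paper: the core step in both is the direct verification of the infinitesimal identity \eqref{vecPhitransformlemmaeq1} for $X\in\{Z,Z',N_\pm\}$ using \eqref{etalmeq1}--\eqref{etalmeq4} together with the weight hypothesis and $N_+\Phi=0$ (the paper supplies the commutation identity $N_+N_-^j=N_-^jN_++jN_-^{j-1}(Z'-Z)+j(j-1)N_-^{j-1}$ for the $N_+$ case), after which the group-level identity, the extension to all of $\mathcal{U}(\mathfrak{k}_\C)$, and \eqref{vecPhitransformlemmaeq2} follow by integrating and differentiating exactly as you describe. One small correction to your degree-two bookkeeping: since $(X'X\vec\Phi)(g)=\eta_{\ell,m}(\iota(X))\bigl((X'\vec\Phi)(g)\bigr)$ (the constant operator $\eta_{\ell,m}(\iota(X))$ passes through the differentiation by $X'$), the result is $\eta_{\ell,m}(\iota(X)\iota(X'))=\eta_{\ell,m}(\iota(X'X))$ rather than $\eta_{\ell,m}(\iota(XX'))$ --- which is precisely the statement of the lemma for the product $X'X$, so the anti-homomorphism property of $\iota$ does make the induction consistent, as you anticipated.
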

\begin{proof}
Fixing $g\in\Sp_4(\R)$, we first claim that \eqref{vecPhitransformlemmaeq1} holds for $X\in\mathfrak{k}_\C$. In fact, this assertion is easily verified using the formulas \eqref{etalmeq1} -- \eqref{etalmeq4}. For $X=N_+$ the identity
$$
 N_+N_-^j=N_-^jN_++jN_-^{j-1}(Z'-Z)+j(j-1)N_-^{j-1}
$$
is helpful.

Replacing $g$ by $g\exp(tY)$ and taking $\frac{d}{dt}\big|_0$ on both sides, one proves that \eqref{vecPhitransformlemmaeq1} also holds for elements of degree $2$ in $\mathcal{U}(\mathfrak{k}_\C)$. Continuing in this manner, we see that \eqref{vecPhitransformlemmaeq1} holds for any element $X\in\mathcal{U}(\mathfrak{k}_\C)$. Now using that $\exp((d\eta)(X))=\eta(\exp(X))$ for any representation $\eta$ and $X\in\mathfrak{k}$, one can derive the identity \eqref{vecPhitransformlemmaeq0}.

To prove \eqref{vecPhitransformlemmaeq2}, replace $g$ by $g\exp(tY)$ in \eqref{vecPhitransformlemmaeq1} for some $Y\in\mathfrak{g}$. Taking $\frac{d}{dt}\big|_0$ on both sides, we see that \eqref{vecPhitransformlemmaeq2} holds for $Y\in\mathfrak{g}$, and then also for $Y\in\mathfrak{g}_\C$. Continuing in this manner, we conclude that \eqref{vecPhitransformlemmaeq2} holds for $Y\in\mathcal{U}(\mathfrak{g}_\C)$ of any degree.
\end{proof}
Evidently, the function $\Phi$ in Lemma \ref{vecPhitransformlemma} can be recovered as the $S^m$-compo\-nent of $\vec\Phi$. It is easy to see that the map $\Phi\mapsto\vec\Phi$ establishes an isomorphism between the space of $K$-finite functions of weight $(\ell+m,\ell)$ satisfying $N_+\Phi=0$, and the space of smooth functions $\vec\Phi:\:\Sp_4(\R)\to W_m$ satisfying \eqref{vecPhitransformlemmaeq0}.

For later use, we make the following observation. Recall from Sect.~\ref{setupsec} that we have $\mathfrak{n}=\langle X_-,P_{1-},P_{0-}\rangle$, and that this commutative Lie algebra is normalized by $\mathfrak{k}_\C$. For a smooth function $\Phi$ of weight $(\ell+m,\ell)$ satisfying $N_+\Phi=0$, we then have
\begin{equation}\label{PhivecPhiholeq}
 \mathfrak{n}\Phi=0\qquad\Longleftrightarrow\qquad\mathfrak{n}\vec\Phi=0.
\end{equation}
(on both sides we mean the right translation action of $\mathfrak{n}$ on smooth functions on the group). This follows from the definition \eqref{vecPhidefeq}, and the fact that $N_-$ normalizes $\mathfrak{n}$.
\subsubsection*{Descending to the Siegel upper half space}
From the vector-valued function $\vec\Phi$ we can construct a vector-valued function on $\HH_2$, as follows. For $g\in\Sp_4(\R)$ and $Z\in\HH_2$, let
\begin{equation}\label{Jdefeq}
 J(g,Z)=CZ+D,\qquad g=\mat{A}{B}{C}{D}.
\end{equation}
Then $J(g_1g_2,Z)=J(g_1,g_2Z)J(g_2,Z)$. Since $\iota(h)=\bar h^{-1}$ for $h\in U(2)$, the transformation property \eqref{vecPhitransformlemmaeq0} can be rewritten as $\vec\Phi(gh)=\eta_{\ell,m}(J(h,I))^{-1}\vec\Phi(g)$ for $h\in K$; recall that $I=\left[\begin{smallmatrix}i&\\&i\end{smallmatrix}\right]$. It follows that the $W_m$-valued function $g\mapsto\eta_{\ell,m}(J(g,I))\vec\Phi(g)$ is right $K$-invariant. Hence, this function descends to a function $F$ on $\HH_2\cong\Sp_4(\R)/K$. Explicitly, we define $F$ by
\begin{equation}\label{FvecPhieq}
 F(Z)=\eta_{\ell,m}(J(g,I))\vec\Phi(g),
\end{equation}
where $g$ is any element of $\Sp_4(\R)$ satisfying $gI=Z$. Conversely, if $F$ is a smooth $W_m$-valued function on $\HH_2$, then we can define a smooth function $\vec\Phi$ on $\Sp_4(\R)$ by the formula  $\vec\Phi(g)=\eta_{\ell,m}(J(g,I))^{-1}F(gI)$. Clearly, $\vec\Phi$ satisfies the transformation property \eqref{vecPhitransformlemmaeq0}. Combining the maps $\Phi\mapsto\vec\Phi$ and $\vec\Phi\mapsto F$, we obtain the following result.

\begin{lemma}\label{FPhilemma}
 Let $\ell$ be any integer, and $m$ a non-negative integer. Let $\mathcal{V}_{\ell,m}$ be the space of smooth $K$-finite functions $\Phi:\:\Sp_4(\R)\to\C$ of weight $(\ell+m,\ell)$ satisfying $N_+\Phi=0$. Then $\mathcal{V}_{\ell,m}$ is isomorphic to the space of smooth functions $F:\:\HH_2\to W_m$. If $\Phi\in\mathcal{V}_{\ell,m}$, then the corresponding function $F$ is given by \eqref{FvecPhieq}, where $\vec\Phi$ is defined in \eqref{vecPhidefeq}.
\end{lemma}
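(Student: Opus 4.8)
The plan is to realize the asserted isomorphism as a composition of two isomorphisms, both of which are essentially already set up in the discussion preceding the lemma. Write $\mathcal{W}_{\ell,m}$ for the space of smooth functions $\vec\Phi\colon\Sp_4(\R)\to W_m$ satisfying the transformation law \eqref{vecPhitransformlemmaeq0}. First I would check that $\Phi\mapsto\vec\Phi$ is a linear isomorphism $\mathcal{V}_{\ell,m}\xrightarrow{\ \sim\ }\mathcal{W}_{\ell,m}$, and then that $\vec\Phi\mapsto F$, with $F$ defined by \eqref{FvecPhieq}, is a linear isomorphism $\mathcal{W}_{\ell,m}\xrightarrow{\ \sim\ }\{\,F\colon\HH_2\to W_m\text{ smooth}\,\}$. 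Composing these two maps gives both the claimed isomorphism and the explicit formula \eqref{FvecPhieq} relating $\Phi$ and $F$.

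For the first isomorphism, linearity and injectivity are immediate: the map is linear in $\Phi$ by \eqref{vecPhidefeq}, and $\Phi$ is recovered as the $S^m$-component of $\vec\Phi$. For surjectivity, given $\vec\Phi\in\mathcal{W}_{\ell,m}$ I would set $\Phi$ equal to its $S^m$-component and verify that $\Phi\in\mathcal{V}_{\ell,m}$, with $\vec\Phi$ reconstructed from $\Phi$ by \eqref{vecPhidefeq}. The tool here is the differentiated transformation law \eqref{vecPhitransformlemmaeq1}: applying it with $X=Z,Z',N_+,N_-\in\mathfrak{k}_\C$, using $\iota(Z)=Z$, $\iota(Z')=Z'$ and $\iota(N_\pm)=-N_\mp$ (as elements of $\mathfrak{gl}_2(\C)$), and reading off the relevant polynomial components via \eqref{etalmeq1}--\eqref{etalmeq4}, one gets $Z\Phi=(\ell+m)\Phi$, $Z'\Phi=\ell\Phi$ and $N_+\Phi=0$, and one sees that the component of $\vec\Phi$ along $S^{m-j}T^j$ must equal $\frac{(-1)^j}{j!}N_-^j\Phi$. $K$-finiteness of $\Phi$ is automatic, since the right $K$-translates of $\Phi$ lie in the finite-dimensional span of the components of the $W_m$-valued function $\vec\Phi$. (This isomorphism is what is asserted, without proof, in the paragraph following Lemma \ref{vecPhitransformlemma}, so in the write-up I would either fill in these few lines or simply cite that assertion.)

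For the second isomorphism the key elementary inputs are the cocycle identity $J(g_1g_2,Z)=J(g_1,g_2Z)J(g_2,Z)$ together with the facts that, for $h\in K$ identified with $A+iB\in U(2)$, one has $hI=I$, $J(h,I)=\overline{A+iB}$, and $\iota(h)=(A+iB)^{t}=\overline{A+iB}^{\,-1}$, whence $\eta_{\ell,m}(\iota(h))=\eta_{\ell,m}(J(h,I))^{-1}$. These show that $g\mapsto\eta_{\ell,m}(J(g,I))(\vec\Phi(g))$ is right $K$-invariant and hence descends, via $\HH_2\cong\Sp_4(\R)/K$ and a local smooth section, to a smooth $F\colon\HH_2\to W_m$ given by \eqref{FvecPhieq}; conversely $F\mapsto\bigl(g\mapsto\eta_{\ell,m}(J(g,I))^{-1}F(gI)\bigr)$ lands in $\mathcal{W}_{\ell,m}$ by the same identities. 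That these two assignments are mutually inverse is a direct computation in each direction, where the only subtlety is checking independence of the auxiliary choice of $g$ with $gI=Z$, handled again by $hI=I$ and $\eta_{\ell,m}(\iota(h))=\eta_{\ell,m}(J(h,I))^{-1}$.

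None of this is deep; the step requiring the most care is the bookkeeping around the anti-involution $\iota$ — keeping straight $\eta_{\ell,m}(\iota(X))$ versus $\eta_{\ell,m}(X)$ for the specific elements $Z,Z',N_\pm$, and the chain of identifications $\iota(h)=\overline{h}^{-1}=J(h,I)^{-1}$ for $h\in K$ — together with correctly passing between the group-level law \eqref{vecPhitransformlemmaeq0} and its Lie-algebra forms \eqref{vecPhitransformlemmaeq1}--\eqref{vecPhitransformlemmaeq2}. Once these are pinned down, the lemma is obtained simply by composing the two isomorphisms.
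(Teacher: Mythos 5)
Your proposal is correct and follows essentially the same route as the paper, which obtains the lemma precisely by composing the isomorphism $\Phi\mapsto\vec\Phi$ (asserted after Lemma \ref{vecPhitransformlemma}) with the descent $\vec\Phi\mapsto F$ via the identity $\eta_{\ell,m}(\iota(h))=\eta_{\ell,m}(J(h,I))^{-1}$ for $h\in K$. You have merely supplied the details the paper labels ``evidently'' and ``it is easy to see,'' and your bookkeeping with $\iota$, the weights, and the recovery of the components $\frac{(-1)^j}{j!}N_-^j\Phi$ is accurate.
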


Given any function $F:\:\HH_2\to W_m$, we will write $F$ in the form
$$
 F(Z)=\sum_{j=0}^mF_j(Z)S^{m-j}T^j,
$$
and call the complex-valued functions $F_j$ the \emph{component functions} of $F$. The component $F_0$ is obtained from $F$ by setting $(S,T)=(1,0)$. The component $F_1$ is obtained by taking $\frac{\partial}{\partial T}$ and then setting $(S,T)=(1,0)$. In general,
\begin{equation}\label{Fjformulaeq}
 F_j(Z)=\frac1{j!}\,\frac{\partial^j}{\partial T^j}F(Z)\big|_{(S,T)=(1,0)}.
\end{equation}

Next, we introduce coordinates on $\HH_2$, as follows. Let us write an element $Z\in\mathbb{H}_2$ as
\begin{equation}\label{H2coordinateseq}
 Z=\mat{\tau}{z}{z}{\tau'},\qquad\tau=x+iy,\qquad z=u+iv,\qquad\tau'=x'+iy',
\end{equation}
where $x,y,u,v,x',y'$ are real numbers, $y,y'>0$, and $yy'-v^2>0$.
We set
\begin{equation}\label{bZdefeq}
 b_Z=\begin{bmatrix}1&&x&u\\&1&u&x'\\&&1\\&&&1\end{bmatrix}\begin{bmatrix}1&v/y'\\&1\\&&1\\&&-v/y'&1\end{bmatrix}\begin{bmatrix}a\\&b\\&&a^{-1}\\&&&b^{-1}\end{bmatrix}
\end{equation}
with
\begin{equation}\label{abdefeq}
 a=\sqrt{y-\frac{v^2}{y'}}\quad\text{and}\quad b=\sqrt{y'}.
\end{equation}
Then $b_Z$ is an element of the Borel subgroup of $\Sp_4(\R)$, and $b_ZI=Z$. Every element of $\Sp_4(\R)$ can be written as $b_Zh$ for a uniquely determined $Z\in\HH_2$ and a uniquely determined $h\in K$.

If $F$, $\Phi$, $\vec\Phi$ are as above, then the following relation is immediate from~\eqref{FvecPhieq}. \begin{equation}\label{Frhotilde2eq}
 F(Z)=\eta_{l,m}(J(b_Z, I))\vec\Phi(b_Z).
\end{equation}

%
%
\subsection{The action of the root vectors}
Let $\Phi$, $\vec\Phi$ and $F$ be as in Lemma \ref{FPhilemma}. In this section we will calculate $(X\vec\Phi)(b_Z)$, where $X$ is any of the root vectors $X_\pm,P_{1\pm},P_{0\pm},N_\pm$, and where $b_Z$ is the element defined in \eqref{bZdefeq}. The result will be expressed in terms of differential operators applied to the function $F$. As a consequence, we will prove that $F$ is holomorphic if and only if $\mathfrak{n}\Phi=0$.

For $Z\in\HH_2$, let $D_Z=J(b_Z,I)$. Then $D_Z$ is simply the lower right $2\times2$-block of $b_Z$, explicitly,
\begin{equation}\label{DZdefeq}
 D_Z=\mat{1}{}{-v/y'}{1}\mat{a^{-1}}{}{}{b^{-1}},\qquad a=\sqrt{y-\frac{v^2}{y'}},\; b=\sqrt{y'}.
\end{equation}
\begin{proposition}\label{rootvectorsactionprop}
 Let $(\eta,W)$ be a finite-dimensional holomorphic representation of $\GL_2(\C)$.
 Let $F$ be a $W$-valued smooth function on $\HH_2$, and let $\vec\Phi$ be the corresponding $W$-valued function on $\Sp_4(\R)$, i.e.,
 $$
  \vec\Phi(g)=\eta(J(g,I))^{-1}F(gI).
 $$
 Let $b_Z$ be as in \eqref{bZdefeq}, and $D_Z$ as in \eqref{DZdefeq}. Then the following formulas hold. \begin{align}
  \eta(D_Z)(N_+\vec\Phi)(b_Z)
    &=\eta(D_Z)\eta(\mat{0}{0}{1}{0})\eta(D_Z)^{-1}F(Z).\label{opfourierlemmaeq1}\\
  \eta(D_Z)(N_-\vec\Phi)(b_Z)
    &=-\eta(D_Z)\eta(\mat{0}{1}{0}{0})\eta(D_Z)^{-1}F(Z).\label{opfourierlemmaeq2}\\
  \eta(D_Z)(P_{0+}\vec\Phi)(b_Z)
    &=\eta(D_Z)\eta(\mat{0}{0}{0}{1})\eta(D_Z)^{-1}F(Z).\nonumber\\
     &\hspace{10ex}+\frac {2i}{y'}\Big(v^2\frac{\partial F}{\partial \tau}+vy'\frac{\partial F}{\partial z}+y'{}^2\frac{\partial F}{\partial \tau'}\Big)(Z).\label{opfourierlemmaeq3}\\
  \eta(D_Z)(P_{0-}\vec\Phi)(b_Z)
    &=-\frac {2i}{y'}\Big(v^2\frac{\partial F}{\partial \bar\tau}+vy'\frac{\partial F}{\partial \bar{z}}+y'{}^2\frac{\partial F}{\partial \bar\tau'}\Big)(Z).\label{opfourierlemmaeq4}\\
  \eta(D_Z)(P_{1+}\vec\Phi)(b_Z)
    &=\eta(D_Z)\eta(\mat{0}{1}{1}{0})\eta(D_Z)^{-1}F(Z)\nonumber\\
     &\hspace{10ex}+\frac{2i}{y'}\sqrt{\Delta}\Big(2v\frac{\partial F}{\partial\tau}+y'\frac{\partial F}{\partial z}\Big)(Z).\label{opfourierlemmaeq5}\\
  \eta(D_Z)(P_{1-}\vec\Phi)(b_Z)
    &=-\frac{2i}{y'}\sqrt{\Delta}\Big(2v\frac{\partial F}{\partial\bar\tau}+y'\frac{\partial F}{\partial \bar{z}}\Big)(Z).\label{opfourierlemmaeq6}\\
  \eta(D_Z)(X_+\vec\Phi)(b_Z)
   &=\eta(D_Z)\eta(\mat{1}{0}{0}{0})\eta(D_Z)^{-1}F(Z)+\frac{2i}{y'}\Delta\frac{\partial F}{\partial\tau}(Z).\label{opfourierlemmaeq7}\\
  \eta(D_Z)(X_-\vec\Phi)(b_Z)
    &=-\frac{2i}{y'}\Delta\frac{\partial F}{\partial\bar\tau}(Z).\label{opfourierlemmaeq8}
 \end{align}
 Here, we used the abbreviation $\Delta=yy'-v^2$.
\end{proposition}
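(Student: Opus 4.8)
\medskip
The plan is to compute $(X\vec\Phi)(b_Z)=\frac{d}{dt}\big|_0\vec\Phi(b_Z\exp(tX))$ directly from $\vec\Phi(g)=\eta(J(g,I))^{-1}F(gI)$, first for a real vector $X\in\mathfrak g$ and then by $\C$-linear extension to $\mathfrak g_\C$. Writing $b_Z\exp(tX)\cdot I=b_Z\cdot(\exp(tX)\cdot I)$ and invoking the cocycle identity $J(g_1g_2,Z)=J(g_1,g_2Z)J(g_2,Z)$, the structural point is that $b_Z$ lies in the Siegel parabolic: its lower-left $2\times2$ block is zero, so $J(b_Z,W)=D_Z$ is \emph{independent} of $W\in\H_2$. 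Hence
$$
 \vec\Phi(b_Z\exp(tX))=\eta\big(J(\exp(tX),I)\big)^{-1}\,\eta(D_Z)^{-1}\,F\big(b_Z\cdot(\exp(tX)\cdot I)\big),
$$
and differentiating at $t=0$ and multiplying on the left by $\eta(D_Z)$ leaves exactly two terms,
$$
 \eta(D_Z)(X\vec\Phi)(b_Z)=-\eta(D_Z)\,\eta(iC_X+D_X)\,\eta(D_Z)^{-1}F(Z)+\frac{d}{dt}\Big|_0F\big(b_Z\cdot(\exp(tX)\cdot I)\big),
$$
where $X=\left[\begin{smallmatrix}\ast&\ast\\C_X&D_X\end{smallmatrix}\right]$; here $\frac{d}{dt}\big|_0J(\exp(tX),I)=iC_X+D_X$ (recall $I=\left[\begin{smallmatrix}i&0\\0&i\end{smallmatrix}\right]$), while the middle automorphy factor is constant. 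Both terms are $\C$-linear in $X$, so it suffices to evaluate them on the eight root vectors.

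For the first, ``automorphy'' term, one reads off $iC_X+D_X$ from the explicit matrices: it vanishes for $X\in\{X_-,P_{1-},P_{0-}\}$, and for $X_+,P_{1+},P_{0+},N_+,N_-$ it equals $-\left[\begin{smallmatrix}1&0\\0&0\end{smallmatrix}\right]$, $-\left[\begin{smallmatrix}0&1\\1&0\end{smallmatrix}\right]$, $-\left[\begin{smallmatrix}0&0\\0&1\end{smallmatrix}\right]$, $\left[\begin{smallmatrix}0&0\\-1&0\end{smallmatrix}\right]$, $\left[\begin{smallmatrix}0&1\\0&0\end{smallmatrix}\right]$ respectively; thus $-\eta(D_Z)\,\eta(iC_X+D_X)\,\eta(D_Z)^{-1}F$ reproduces precisely the $\eta(D_Z)\,\eta(\,\cdot\,)\,\eta(D_Z)^{-1}F$ summands occurring in \eqref{opfourierlemmaeq1}, \eqref{opfourierlemmaeq2}, \eqref{opfourierlemmaeq3}, \eqref{opfourierlemmaeq5} and \eqref{opfourierlemmaeq7}. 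For the second, ``point'' term, I would exponentiate each generator: every $X_\pm,P_{1\pm},P_{0\pm}$ is two-step nilpotent and each $N_\pm$ acts on a coordinate plane by a rank-one matrix, so $\exp(tX)$ is immediate. A short computation gives $\exp(tX)\cdot I=I$ for every $X\in\mathfrak n\oplus\mathfrak k_\C$ (this is the Lie algebra of the stabilizer of $I$ for the complexified fractional-linear action), whereas for the $\mathfrak p_+$-generators
$$
 \tfrac{d}{dt}\big|_0\big(\exp(tX_+)\cdot I\big)=2i\left[\begin{smallmatrix}1&0\\0&0\end{smallmatrix}\right],\qquad\tfrac{d}{dt}\big|_0\big(\exp(tP_{1+})\cdot I\big)=2i\left[\begin{smallmatrix}0&1\\1&0\end{smallmatrix}\right],\qquad\tfrac{d}{dt}\big|_0\big(\exp(tP_{0+})\cdot I\big)=2i\left[\begin{smallmatrix}0&0\\0&1\end{smallmatrix}\right].
$$
Since $b_Z$ has block shape $\left[\begin{smallmatrix}A_1&\ast\\0&{}^tA_1^{-1}\end{smallmatrix}\right]$ with $A_1=\left[\begin{smallmatrix}a&bv/y'\\0&b\end{smallmatrix}\right]$ and hence acts on $\H_2$ by the affine map $W\mapsto A_1W\,{}^tA_1+\left[\begin{smallmatrix}x&u\\u&x'\end{smallmatrix}\right]$, I would push these velocities forward through $v\mapsto A_1v\,{}^tA_1$ (using $ab=\sqrt\Delta$, $a^2=\Delta/y'$, $b^2=y'$) and contract the resulting symmetric matrix against the Wirtinger derivatives $(\partial_\tau,\partial_z,\partial_{\tau'})$ of $F$; this produces the holomorphic-derivative terms of \eqref{opfourierlemmaeq3}, \eqref{opfourierlemmaeq5} and \eqref{opfourierlemmaeq7}. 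The $\mathfrak n$-generators $X_-,P_{1-},P_{0-}$ then contribute the complex conjugates of these same three expressions, contracted instead against $(\partial_{\bar\tau},\partial_{\bar z},\partial_{\bar\tau'})$ — the overall sign flipping because the three velocities above are purely imaginary — which gives \eqref{opfourierlemmaeq4}, \eqref{opfourierlemmaeq6} and \eqref{opfourierlemmaeq8}, while $N_\pm$ contribute nothing to the point term and leave only the automorphy term in \eqref{opfourierlemmaeq1}, \eqref{opfourierlemmaeq2}.

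The genuinely delicate point is this last dichotomy: the root vectors lie in $\mathfrak g_\C$, not in $\mathfrak g$, whereas $X\mapsto\frac{d}{dt}\big|_0F(b_Z\cdot(\exp(tX)\cdot I))$ is \emph{a priori} only $\R$-linear on $\mathfrak g$. I would justify it by noting that, under the $\C$-linear extension of this map from $\mathfrak g$ to $\mathfrak g_\C$, the antiholomorphic part of the contribution of $X$ equals the complex conjugate of the holomorphic part of the contribution of $\bar X$, where the bar denotes conjugation relative to the real form $\mathfrak g$; since $\overline{\mathfrak p_+}=\mathfrak n$ and $\overline{\mathfrak k_\C}=\mathfrak k_\C$, and both $\mathfrak n$ and $\mathfrak k_\C$ fix $I$, the cross terms drop out, leaving $\mathfrak p_+$ purely holomorphic and $\mathfrak n$ purely antiholomorphic. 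Alternatively, and more transparently for a write-up, one can run the whole computation through the genuine real generators $X_++X_-$, $-i(X_+-X_-)$ and their $P$- and $N$-analogues and reassemble at the end, where the cancellation between the $\partial$- and $\bar\partial$-contributions is manifest; the explicit values of $iC_X+D_X$ and of the velocities are then as indicated, and the recombination costs only bookkeeping, which I expect to be the main nuisance rather than any single calculation. Finally, reading off \eqref{opfourierlemmaeq4}, \eqref{opfourierlemmaeq6} and \eqref{opfourierlemmaeq8} shows $\mathfrak n\vec\Phi=0$ exactly when $\partial_{\bar\tau}F=\partial_{\bar z}F=\partial_{\bar\tau'}F=0$, i.e.\ exactly when $F$ is holomorphic; combined with \eqref{PhivecPhiholeq} this gives the equivalence $\mathfrak n\Phi=0$ if and only if $F$ is holomorphic, as announced before the Proposition.
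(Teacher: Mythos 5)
Your proposal is correct and follows essentially the same route the paper indicates (the paper's proof is only a one-line sketch: compute the action of a real basis of $\mathfrak g$ directly from the definitions, then extend $\C$-linearly to the root vectors). Your computations of the automorphy factors $iC_X+D_X$ and of the pushed-forward velocities $A_1v_X\,{}^tA_1$ all check out against \eqref{opfourierlemmaeq1}--\eqref{opfourierlemmaeq8}, and you correctly identify and resolve the only genuinely delicate point, namely that the ``point term'' is a priori only $\R$-linear and must be split into holomorphic and antiholomorphic parts governed by $X$ and $\bar X$ respectively.
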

\begin{proof}To prove these formulas, one has to first compute the action of a basis of root vectors in the uncomplexified Lie algebra. This is relatively straightforward using the definitions, though somewhat tedious. Once that is done, the action of the root vectors above lying in the complexified Lie algebra follows by linearity. We omit the details.
\end{proof}
\begin{corollary}\label{PhiFholomorphiclemma}
 Let $\ell$ be any integer, and $m$ a non-negative integer. Let $\Phi:\:\Sp_4(\R)\to\C$ be a $K$-finite function of weight $(\ell+m,\ell)$ satisfying $N_+\Phi=0$. Let $F:\:\HH_2\to W_m$ be the function corresponding to $\Phi$ according to Lemma \ref{FPhilemma}. Then $F$ is holomorphic if and only if $\mathfrak{n}\Phi=0$.
\end{corollary}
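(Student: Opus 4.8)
The plan is to reduce everything, via the dictionary between functions on the group and functions on $\HH_2$, to the explicit formulas of Proposition~\ref{rootvectorsactionprop}. First I would use \eqref{PhivecPhiholeq}, which says $\mathfrak{n}\Phi=0$ if and only if $\mathfrak{n}\vec\Phi=0$, where $\vec\Phi$ is the $W_m$-valued function attached to $\Phi$ in \eqref{vecPhidefeq}. I would then reduce the condition $\mathfrak{n}\vec\Phi=0$ on all of $\Sp_4(\R)$ to the condition $(X_-\vec\Phi)(b_Z)=(P_{1-}\vec\Phi)(b_Z)=(P_{0-}\vec\Phi)(b_Z)=0$ for every $Z\in\HH_2$: every element of $\Sp_4(\R)$ is of the form $b_Zh$ with $h\in K$, the algebra $\mathfrak{n}=\mathfrak{p}_-$ is normalized by $K$ (because $[\mathfrak{k}_\C,\mathfrak{p}_-]\subseteq\mathfrak{p}_-$ and $K$ is connected), and the transformation law \eqref{vecPhitransformlemmaeq0} shows that for $Y\in\mathfrak{n}$ the value $(Y\vec\Phi)(b_Zh)$ equals $\eta_{\ell,m}(\iota(h))$ applied to $(Y'\vec\Phi)(b_Z)$ for a suitable $Y'\in\mathfrak{n}$, with $\eta_{\ell,m}(\iota(h))$ invertible.

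Next I would invoke Proposition~\ref{rootvectorsactionprop} with $(\eta,W)=(\eta_{\ell,m},W_m)$; the relevant formulas are \eqref{opfourierlemmaeq8}, \eqref{opfourierlemmaeq6}, \eqref{opfourierlemmaeq4}, which read
\begin{align*}
 \eta_{\ell,m}(D_Z)(X_-\vec\Phi)(b_Z)&=-\tfrac{2i}{y'}\Delta\,\tfrac{\partial F}{\partial\bar\tau}(Z),\\
 \eta_{\ell,m}(D_Z)(P_{1-}\vec\Phi)(b_Z)&=-\tfrac{2i}{y'}\sqrt{\Delta}\Big(2v\tfrac{\partial F}{\partial\bar\tau}+y'\tfrac{\partial F}{\partial\bar z}\Big)(Z),\\
 \eta_{\ell,m}(D_Z)(P_{0-}\vec\Phi)(b_Z)&=-\tfrac{2i}{y'}\Big(v^2\tfrac{\partial F}{\partial\bar\tau}+vy'\tfrac{\partial F}{\partial\bar z}+y'^2\tfrac{\partial F}{\partial\bar\tau'}\Big)(Z),
\end{align*}
with $\Delta=yy'-v^2$. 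Since $D_Z=J(b_Z,I)$ is invertible and $y',\Delta>0$, for each fixed $Z$ the three right-hand sides form an invertible (lower-triangular, with nonvanishing diagonal entries) linear system in the Wirtinger derivatives $\partial F/\partial\bar\tau$, $\partial F/\partial\bar z$, $\partial F/\partial\bar\tau'$ at $Z$. Hence the three conditions $(X_-\vec\Phi)(b_Z)=(P_{1-}\vec\Phi)(b_Z)=(P_{0-}\vec\Phi)(b_Z)=0$, for all $Z$, are jointly equivalent to $\partial F/\partial\bar\tau\equiv\partial F/\partial\bar z\equiv\partial F/\partial\bar\tau'\equiv0$ on $\HH_2$.

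Finally I would note that, in the coordinates \eqref{H2coordinateseq}, $\tau,z,\tau'$ are holomorphic coordinates on the open subset $\HH_2\subseteq\C^3$, so a smooth $W_m$-valued $F$ is holomorphic (equivalently, every component function $F_j$ is holomorphic) precisely when those three antiholomorphic partials vanish. Chaining the equivalences yields $\mathfrak{n}\Phi=0\iff F$ holomorphic. I do not expect a real obstacle here: the substantive input, Proposition~\ref{rootvectorsactionprop}, is already available, and the only points needing a sentence of care are the passage from $\Sp_4(\R)$ to the slice $\{b_Z\}_{Z\in\HH_2}$ using $K$-normality of $\mathfrak{n}$, and the identification of holomorphy of $F$ with the vanishing of the three Wirtinger derivatives.
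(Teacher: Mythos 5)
Your proposal is correct and follows exactly the paper's route: reduce $\mathfrak{n}\Phi=0$ to $\mathfrak{n}\vec\Phi=0$ via \eqref{PhivecPhiholeq}, then read off from \eqref{opfourierlemmaeq4}, \eqref{opfourierlemmaeq6}, \eqref{opfourierlemmaeq8} that this is equivalent to the vanishing of the three antiholomorphic Wirtinger derivatives of $F$. The only difference is that you spell out the two details the paper leaves implicit (the passage from all of $\Sp_4(\R)$ to the slice $\{b_Z\}$ via $K$-normality of $\mathfrak{n}$, and the invertibility of the triangular linear system), and both are handled correctly.
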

\begin{proof}
It follows from \eqref{opfourierlemmaeq4}, \eqref{opfourierlemmaeq6} and \eqref{opfourierlemmaeq8} that $F$ is holomorphic if and only if $\mathfrak{n}\vec\Phi=0$. Now use \eqref{PhivecPhiholeq}.
\end{proof}
\subsection{The differential operators in classical language}
Let $\ell$ be any integer, and $m$ a non-negative integer. Let $\Phi$ be a $K$-finite complex-valued function on $\Sp_4(\R)$ of weight $(\ell+m,\ell)$ satisfying $N_+\Phi=0$. Let $F:\:\HH_2\to W_m$ be the function corresponding to $\Phi$ according to Lemma \ref{FPhilemma}. Let $X$ be one of the operators defined in Table \ref{Nplusoperatorstable}, and set $\Psi=X\Phi$. Then $\Psi$ is a $K$-finite function satisfying $N_+\Psi=0$, of weight indicated in Table \ref{Nplusoperatorstable}. Hence, according to Lemma \ref{FPhilemma}, there exists a vector-valued function $G$ corresponding to $\Psi$. In this section, we write down $G$ in terms of $F$ for all elements $X$ defined in Table \ref{Nplusoperatorstable}. The proofs rely on our formulas for the action of the root vectors (Proposition~\ref{rootvectorsactionprop}). The calculations involved, while long and detailed, are essentially routine, and therefore omitted.
\subsubsection*{Going down}
We start with $X=P_{0-}$. Hence, let $\Psi=P_{0-}\Phi$. Then $\Psi$ has weight $(\ell+m,\ell-2)$ and satisfies $N_+\Psi=0$. Let $G:\:\HH_2\to W_{m+2}$ be the function corresponding to $\Psi$ according to Lemma \ref{FPhilemma}. The following diagram illustrates the situation.
\begin{equation}\label{P0minusPhidiagram}
\begin{CD}
 \text{(weight $(\ell+m,\ell)$)}\qquad&&&\Phi@>>>\vec\Phi@>>>F&&&\qquad\text{(values in $W_m$)}\\
 &&&@V{P_{0-}}VV\\
 \text{(weight $(\ell+m,\ell-2)$)}\qquad&&&\Psi@>>>\vec\Psi@>>>G&&&\qquad\text{(values in $W_{m+2}$)}
\end{CD}
\end{equation}
Let $F_0,\ldots,F_m$ be the component functions of $F$, and let $G_0,\ldots,G_{m+2}$ be the component functions of $G$; see \eqref{Fjformulaeq}. We define three differential operators on $\HH_2$,

\begin{align}
 \label{del0eq}\bar\partial_0&=2i\Big(v^2 \frac{\partial}{\partial \bar \tau} + vy' \frac{\partial}{\partial \bar z}  + y'^2 \frac{\partial}{\partial \bar\tau'}\Big),\\
 \label{del1eq}\bar\partial_1&=-2i\Big(2vy \frac{\partial}{\partial \bar \tau} + (yy' + v^2) \frac{\partial}{\partial \bar z}  + 2vy' \frac{\partial}{\partial \bar\tau'}\Big),\\
 \label{del2eq}\bar\partial_2&=2i\Big(y^2 \frac{\partial}{\partial \bar \tau} + vy \frac{\partial}{\partial \bar z}  + v^2 \frac{\partial}{\partial \bar\tau'}\Big).
\end{align}
The following result expresses the $G_j$ in terms of the $F_i$.
\begin{proposition}\label{P0minusprop}
 With the above notations,
 \begin{equation}\label{P0minuspropeq1}
   G_j = -\big(\bar\partial_2F_{j-2} + \bar\partial_1F_{j-1} + \bar\partial_0F_j\big)
 \end{equation}
 for $j=0,\ldots,m+2$. (We understand $F_i=0$ for $i<0$ or $i>m$.)
\end{proposition}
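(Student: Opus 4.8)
The plan is to transport $\Psi=P_{0-}\Phi$ through the dictionary of Lemma~\ref{FPhilemma}, using the commutation relations together with the root-vector formulas of Proposition~\ref{rootvectorsactionprop}. As already noted in the text, $\Psi$ is $K$-finite of weight $(\ell+m,\ell-2)$ with $N_+\Psi=0$, so Lemma~\ref{FPhilemma} applies with $(\ell',m')=(\ell-2,m+2)$ and $G(Z)=\eta_{\ell-2,m+2}(D_Z)\,\vec\Psi(b_Z)$, where $D_Z=J(b_Z,I)$ is the lower-triangular matrix of \eqref{DZdefeq} and $\vec\Psi(g)=\sum_{j=0}^{m+2}\frac{(-1)^j}{j!}(N_-^jP_{0-}\Phi)(g)\,S^{m+2-j}T^j$.

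The first step is to re-express $\vec\Psi$ in terms of $\vec\Phi$. From the explicit basis one checks $[N_-,P_{0-}]=P_{1-}$, $[N_-,P_{1-}]=2X_-$, $[N_-,X_-]=0$, hence $N_-^jP_{0-}=P_{0-}N_-^j+jP_{1-}N_-^{j-1}+j(j-1)X_-N_-^{j-2}$. Also, $\Phi$ generates a finite-dimensional $\mathfrak{k}_\C\cong\mathfrak{gl}_2(\C)$-module (it is $K$-finite and $K\cong U(2)$ is connected), which is therefore the irreducible $(m+1)$-dimensional representation of highest weight $(\ell+m,\ell)$; in particular $N_-^{m+1}\Phi=0$. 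Feeding these two facts into the defining sum for $\vec\Psi$ and shifting the summation index collapses it to the clean identity
\[
 \vec\Psi(g)=S^2\,(P_{0-}\vec\Phi)(g)-ST\,(P_{1-}\vec\Phi)(g)+T^2\,(X_-\vec\Phi)(g),\qquad g\in\Sp_4(\R),
\]
where $X\vec\Phi$ denotes the root vector $X$ acting componentwise on the $W_m$-valued function $\vec\Phi$, and multiplication by $S^2,ST,T^2$ raises $W_m$-valued data to $W_{m+2}$-valued data.

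Next, set $g=b_Z$ and apply $\eta_{\ell-2,m+2}(D_Z)$ to the identity above. Proposition~\ref{rootvectorsactionprop}, specifically \eqref{opfourierlemmaeq4}, \eqref{opfourierlemmaeq6} and \eqref{opfourierlemmaeq8}, evaluates $\eta_{\ell,m}(D_Z)(P_{0-}\vec\Phi)(b_Z)$, $\eta_{\ell,m}(D_Z)(P_{1-}\vec\Phi)(b_Z)$ and $\eta_{\ell,m}(D_Z)(X_-\vec\Phi)(b_Z)$ in terms of Wirtinger derivatives of $F$; for $P_{0-}$ the operator produced is precisely $-\frac1{y'}\bar\partial_0$. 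Because $D_Z$ is lower triangular, the substitution $(S,T)\mapsto(S,T)D_Z$ undoes, up to scalars, the one built into $\eta_{\ell,m}(D_Z)^{-1}$: for a scalar function $h$ and exponents $a+b=2$ one verifies
\[
 \eta_{\ell-2,m+2}(D_Z)\Big(S^aT^b\cdot\eta_{\ell,m}(D_Z)^{-1}\big(h\,S^{m-i}T^i\big)\Big)=\kappa_{a,b}\,h\,\Big(S-\tfrac v{y'}T\Big)^aT^b\,S^{m-i}T^i,
\]
with $\kappa_{2,0}=b^2=y'$, $\kappa_{1,1}=ab=\sqrt\Delta$, $\kappa_{0,2}=a^2=\Delta/y'$ (here $a,b$ are as in \eqref{abdefeq} and $\Delta=yy'-v^2=a^2b^2$). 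Applying this to each of the three terms turns $G=\eta_{\ell-2,m+2}(D_Z)\vec\Psi(b_Z)$ into an explicit $W_{m+2}$-valued polynomial in $S,T$.

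Finally, expand $(S-\tfrac v{y'}T)^a$, collect the coefficient of $S^{m+2-j}T^j$, and identify it. The $F_j$-part comes only from $S^2(P_{0-}\vec\Phi)$ and equals $-\bar\partial_0F_j$; the $F_{j-1}$-part collects one piece from $S^2(P_{0-}\vec\Phi)$ and one from $ST(P_{1-}\vec\Phi)$, and the $F_{j-2}$-part collects pieces from all three terms. Using the definitions \eqref{del1eq}, \eqref{del2eq} of $\bar\partial_1,\bar\partial_2$, the values of the $\kappa_{a,b}$, and $\Delta=yy'-v^2$, these collapse to $-\bar\partial_1F_{j-1}$ and $-\bar\partial_2F_{j-2}$, yielding \eqref{P0minuspropeq1}; the same cancellations are what force the commutator constants $[N_-,P_{0-}]=P_{1-}$, $[N_-,P_{1-}]=2X_-$ used above, which is a useful internal check. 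I expect essentially all the work to be in this last bookkeeping — tracking the interplay of $\eta_{\ell,m}$, $\eta_{\ell-2,m+2}$ and the degree-raising multiplications, and then reorganizing the Wirtinger derivatives into $\bar\partial_0,\bar\partial_1,\bar\partial_2$ — which is long but, consistently with the paper's own remark about such calculations, entirely mechanical and free of conceptual difficulty.
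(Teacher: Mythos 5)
Your proposal is correct and follows essentially the same route the paper indicates for these results: use the commutation relations $[N_-,P_{0-}]=P_{1-}$, $[N_-,P_{1-}]=2X_-$ to collapse $\vec\Psi$ into $S^2(P_{0-}\vec\Phi)-ST(P_{1-}\vec\Phi)+T^2(X_-\vec\Phi)$, then feed in the root-vector formulas \eqref{opfourierlemmaeq4}, \eqref{opfourierlemmaeq6}, \eqref{opfourierlemmaeq8} and track the change of twist from $\eta_{\ell,m}$ to $\eta_{\ell-2,m+2}$. I checked the $\kappa_{a,b}$ values and the final collection of coefficients of $F_j$, $F_{j-1}$, $F_{j-2}$ against \eqref{del0eq}--\eqref{del2eq}, and they come out exactly as claimed.
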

\subsubsection*{Going left}
Next we write down the effect of the operator $L$, whose defining formula is given in Table \ref{Nplusoperatorstable}. In order for $L$ to be defined, we assume $m\geq2$. Let $\Psi=L\Phi$. Then $\Psi$ has weight $(\ell+m-2,\ell)$, and $N_+\Psi=0$. Let $G:\:\HH_2\to W_{m-2}$ be the function corresponding to $\Psi$. Let $F_0,\ldots,F_m$ be the component functions of $F$, and let $G_0,\ldots,G_{m-2}$ be the component functions of $G$; see \eqref{Fjformulaeq}.

\begin{proposition}\label{Lmprop}
 With the above notations,
 \begin{align}\label{Lmpropeq1}
  G_j&=-(m-j)(m-j-1)\bar\partial_2F_j\nonumber\\
   &\qquad+(m-j-1)(j+1)\bar\partial_1F_{j+1}\nonumber\\
   &\qquad-(j+2)(j+1)\bar\partial_0F_{j+2}.
 \end{align}
 for $j=0,\ldots,m-2$.
\end{proposition}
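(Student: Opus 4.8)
The plan is to run the same machinery as for Proposition~\ref{P0minusprop}, the one new ingredient being the commutation of powers of $N_-$ past the three summands of $L$. By Lemma~\ref{Nplusoperatorslemma}, $\Psi:=L\Phi$ has weight $(\ell+m-2,\ell)$ and satisfies $N_+\Psi=0$, so $\Psi$ lies in the space $\mathcal{V}_{\ell,m-2}$ of Lemma~\ref{FPhilemma}, which produces the associated $W_{m-2}$-valued function $G$ with $G(Z)=\eta_{\ell,m-2}(D_Z)\vec\Psi(b_Z)$ and $\vec\Psi(g)=\sum_{j=0}^{m-2}\frac{(-1)^j}{j!}(N_-^j\Psi)(g)\,S^{m-2-j}T^j$. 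Thus $G_j$ is extracted as the coefficient of $S^{m-2-j}T^j$ after applying $\eta_{\ell,m-2}(D_Z)$, and everything is reduced to a formula for $(N_-^jL\Phi)(b_Z)$, $0\le j\le m-2$.

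First I would do the Lie-algebra bookkeeping. Reading off the root diagram~\eqref{rootdiagrameq} (or computing directly from the matrices in the Notation section) gives $[N_-,X_-]=0$, $[N_-,P_{1-}]\in\C X_-$ and $[N_-,P_{0-}]\in\C P_{1-}$; iterating, $N_-^j$ moves past $X_-$, $P_{1-}$, $P_{0-}$ at the cost of lower-order terms, and using the coefficients $m(m-1)$, $-(m-1)$, $1$ from the definition of $L$ in Table~\ref{Nplusoperatorstable} one obtains an identity of the shape
$$
 N_-^j(L\Phi)=p_0(j)\,X_-N_-^j\Phi+p_1(j)\,P_{1-}N_-^{j+1}\Phi+P_{0-}N_-^{j+2}\Phi,
$$
with $p_0,p_1$ explicit polynomials in $j$ and $m$ built from the structure constants. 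Next, the quantities $(X\,N_-^k\Phi)(b_Z)$ for $X\in\{X_-,P_{1-},P_{0-}\}$ are, up to the factor $(-1)^kk!$, the coefficients of $S^{m-k}T^k$ in $(X\vec\Phi)(b_Z)$, and Proposition~\ref{rootvectorsactionprop}, equations~\eqref{opfourierlemmaeq4}, \eqref{opfourierlemmaeq6}, \eqref{opfourierlemmaeq8}, expresses $\eta_{\ell,m}(D_Z)(X\vec\Phi)(b_Z)$ as a first-order anti-holomorphic operator (with polynomial coefficients in $y,y',v,\Delta$) applied to $F$. Substituting these into the displayed identity, multiplying by $\eta_{\ell,m-2}(D_Z)$, and tracking the conjugations by $\eta(D_Z)^{\pm1}$ — which are triangular on the monomial basis since $D_Z$ is lower triangular by~\eqref{DZdefeq}, with diagonal entries $a^{-1},b^{-1}$ — should, after collecting the coefficient of $S^{m-2-j}T^j$, collapse all occurrences of $a$, $b$, $v/y'$ and $\Delta$ into the operators $\bar\partial_0,\bar\partial_1,\bar\partial_2$ of~\eqref{del0eq}--\eqref{del2eq} and leave exactly the polynomial coefficients $-(m-j)(m-j-1)$, $(m-j-1)(j+1)$, $-(j+2)(j+1)$ of~\eqref{Lmpropeq1}.

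The main obstacle is purely computational, namely the last assembly step: the shift in component index caused by each factor of $N_-$ (via~\eqref{etalmeq4}), the triangular conjugations by $\eta(D_Z)^{\pm1}$, the drop in symmetric-power degree from $W_m$ to $W_{m-2}$, and the $\sqrt{\Delta}$ and $v/y'$ factors in Proposition~\ref{rootvectorsactionprop} must all cancel precisely so that only the clean operators $\bar\partial_i$ and the stated coefficients survive. As in Proposition~\ref{P0minusprop} this is long and somewhat delicate but entirely mechanical, and I would organize it either by first verifying the formula on the top component $G_{m-2}$ (where the triangular conjugations act trivially) and then propagating downward in $j$, or by computing $\eta_{\ell,m-2}(D_Z)\vec\Psi(b_Z)$ in closed form before extracting components. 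The known commutation relations~\eqref{Empluscommuteeq1}--\eqref{Empluscommuteeq5} and the structural agreement with the $P_{0-}$-case of Proposition~\ref{P0minusprop} (same triple $\bar\partial_0,\bar\partial_1,\bar\partial_2$) provide useful consistency checks.
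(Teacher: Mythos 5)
Your proposal follows exactly the route the paper takes (and then omits as routine): commute $N_-^j$ past the three summands of $L$ using $[N_-,X_-]=0$, $[N_-,P_{1-}]\in\C X_-$, $[N_-,P_{0-}]\in\C P_{1-}$, identify the resulting terms as components of $(X_-\vec\Phi)(b_Z)$, $(P_{1-}\vec\Phi)(b_Z)$, $(P_{0-}\vec\Phi)(b_Z)$, and substitute the formulas of Proposition~\ref{rootvectorsactionprop} while tracking the $\eta(D_Z)$-conjugations. This matches the paper's (unwritten) computation, so the proposal is correct in approach and the remaining work is the mechanical assembly you describe.
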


\subsubsection*{Going up}
Next, we consider $U$, whose defining formula is given in Table \ref{Nplusoperatorstable}. We will assume $m\geq2$, so that $U$ is well-defined. Let $\Psi=U\Phi$. Then $\Psi$ has weight $(\ell+m,\ell+2)$, and $N_+\Psi=0$. Let $G:\:\HH_2\to W_{m-2}$ be the function corresponding to $\Psi$.
Let $F_0,\ldots,F_m$ be the component functions of $F$, and let $G_0,\ldots,G_{m-2}$ be the component functions of $G$; see \eqref{Fjformulaeq}. The following result expresses the $G_j$ in terms of the $F_i$.
\begin{proposition}\label{Umprop}
 With the above notations,
 \begin{align}\label{Umpropeq1}
  G_j=(m-j)(m-j-1)&\Big((\ell-1)\frac y\Delta+2i\frac{\partial}{\partial\tau'}\Big)F_j\nonumber\\
   +(m-j-1)(j+1)&\Big((\ell-1)\frac{2v}\Delta-2i\frac{\partial}{\partial z}\Big)F_{j+1}\nonumber\\
   +(j+2)(j+1)&\Big((\ell-1)\frac{y'}\Delta+2i\frac{\partial}{\partial\tau}\Big)F_{j+2}
 \end{align}
 for $j=0,\ldots,m-2$.
\end{proposition}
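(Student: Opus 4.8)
The plan is to reduce the claim to Proposition~\ref{rootvectorsactionprop} together with the linear dictionary between a function $\Phi$, its $W_m$-valued companion $\vec\Phi$, and the function $F$. Put $\Psi=U\Phi$; by Lemma~\ref{Nplusoperatorslemma} (which requires $m\ge2$) this is a $K$-finite function on $\Sp_4(\R)$ of weight $(\ell+m,\ell+2)$ with $N_+\Psi=0$, so by Lemma~\ref{FPhilemma} and \eqref{Frhotilde2eq} the associated $W_{m-2}$-valued function is $G(Z)=\eta_{\ell+2,m-2}(D_Z)\,\vec\Psi(b_Z)$, with $\vec\Psi(b_Z)=\sum_{j=0}^{m-2}\frac{(-1)^j}{j!}(N_-^jU\Phi)(b_Z)\,S^{m-2-j}T^j$ by \eqref{vecPhidefeq}. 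Since $D_Z$ is the explicit lower-triangular matrix of \eqref{DZdefeq}, applying $\eta_{\ell+2,m-2}(D_Z)$ and then reading off the coefficient of $S^{m-2-j}T^j$ as in \eqref{Fjformulaeq} exhibits each $G_j(Z)$ as a fixed linear combination --- with coefficients built from $a=\sqrt{y-v^2/y'}$, $b=\sqrt{y'}$ and $v/y'$ --- of the numbers $(N_-^iU\Phi)(b_Z)$ with $0\le i\le j$. So everything comes down to computing $(N_-^iU\Phi)(b_Z)$ for $0\le i\le m-2$.

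To compute these, expand $U=m(m-1)P_{0+}+(m-1)P_{1+}N_-+X_+N_-^2$ and commute the powers of $N_-$ past the $\mathfrak{p}_+$-elements to the right. The relevant commutators are governed by the fact that $\mathfrak{p}_+=\langle X_+,P_{1+},P_{0+}\rangle$ is a copy of $\sym^2$ for the compact $\mathfrak{sl}_2$ spanned by $N_\pm$: one has $[N_-,X_+]\in\C P_{1+}$, $[N_-,P_{1+}]\in\C P_{0+}$ and $[N_-,P_{0+}]=0$, the scalars being read off from the commutation relations. Hence $N_-^iU$ is an explicit $\C$-linear combination, with coefficients polynomial in $m$ and $i$, of operators $X_+N_-^a$, $P_{1+}N_-^b$ and $P_{0+}N_-^c$, so it remains to evaluate $(X_+N_-^a\Phi)(b_Z)$, $(P_{1+}N_-^b\Phi)(b_Z)$ and $(P_{0+}N_-^c\Phi)(b_Z)$. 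Since an element of $\mathfrak{g}_\C$ acts on the $W_m$-valued function $\vec\Phi$ componentwise, $(X_+\vec\Phi)(g)=\sum_a\frac{(-1)^a}{a!}(X_+N_-^a\Phi)(g)S^{m-a}T^a$ and similarly for $P_{1+}$, $P_{0+}$; formulas \eqref{opfourierlemmaeq7}, \eqref{opfourierlemmaeq5} and \eqref{opfourierlemmaeq3} give $\eta_{\ell,m}(D_Z)$ applied to $(X_+\vec\Phi)(b_Z)$, $(P_{1+}\vec\Phi)(b_Z)$, $(P_{0+}\vec\Phi)(b_Z)$ in terms of $F$ and its holomorphic partials $\frac{\partial F}{\partial\tau}$, $\frac{\partial F}{\partial z}$, $\frac{\partial F}{\partial\tau'}$. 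Applying $\eta_{\ell,m}(D_Z)^{-1}$ and extracting the coefficient of $S^{m-a}T^a$ then expresses each of these quantities in terms of the component functions $F_i$, their holomorphic partials, and the real coordinates $y,y',v,\Delta$; feeding this back through the two reductions above and collecting terms yields \eqref{Umpropeq1}.

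The one genuine difficulty is the bookkeeping in this last collection. Each of the three substitution steps injects its own combinatorial factors --- binomials from the conjugations by $\eta(D_Z)$, which are lower-triangular and so mix component indices; coefficients polynomial in $i$ and $m$ from commuting the $N_-$'s past $\mathfrak{p}_+$; and further binomials from reading off the components of $G$ --- and one must verify that the resulting large sum collapses to the clean three-term expression \eqref{Umpropeq1}, built from the first-order operators $(\ell-1)\frac{y}{\Delta}+2i\frac{\partial}{\partial\tau'}$, $(\ell-1)\frac{2v}{\Delta}-2i\frac{\partial}{\partial z}$ and $(\ell-1)\frac{y'}{\Delta}+2i\frac{\partial}{\partial\tau}$, with precisely the multiplicities $(m-j)(m-j-1)$, $(m-j-1)(j+1)$, $(j+2)(j+1)$. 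Several features serve as running checks: only $F_j,F_{j+1},F_{j+2}$ can appear, since $U$ contains at most two copies of $N_-$ and $X_+,P_{1+},P_{0+}$ act componentwise; the weight of $G$ must be the one recorded in Table~\ref{Nplusoperatorstable}; and, when $\Phi$ is holomorphic, the formula must produce a vector-valued operator raising the nearly holomorphic degree by exactly one, which ties the normalization of the $\frac1\Delta$-terms to that of the differential part.
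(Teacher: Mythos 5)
Your proposal is correct and follows essentially the same route as the paper's (omitted) computation: commuting the $N_-$'s in $U=m(m-1)P_{0+}+(m-1)P_{1+}N_-+X_+N_-^2$ to the right so that $\frac{(-1)^j}{j!}(N_-^j\Psi)(b_Z)$ becomes a combination of components of $(P_{0+}\vec\Phi)(b_Z)$, $(P_{1+}\vec\Phi)(b_Z)$, $(X_+\vec\Phi)(b_Z)$, converting the $\eta_{\ell+2,m-2}(D_Z)$ factor into $\eta_{\ell,m}(D_Z)$, and then substituting \eqref{opfourierlemmaeq3}, \eqref{opfourierlemmaeq5}, \eqref{opfourierlemmaeq7}. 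The remaining work is exactly the bookkeeping you identify, which the paper likewise treats as routine and omits.
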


\subsubsection*{Going right}
Next we calculate the effect of the operator $X_+$. Let $\Psi=X_+\Phi$. Then $\Psi$ has weight $(\ell+m+2,\ell)$, and $N_+\Psi=0$. Let $G:\:\HH_2\to W_{m+2}$ be the function corresponding to $\Psi$. Let $F_0,\ldots,F_m$ be the component functions of $F$, and let $G_0,\ldots,G_{m+2}$ be the component functions of $G$; see \eqref{Fjformulaeq}.
\begin{proposition}\label{Xplusprop}
 With the above notations,
 \begin{align}\label{Xpluspropeq1}
  G_j=\;&\Big((\ell+m)\frac{y}{\Delta}+2i\frac{\partial}
  {\partial\tau'}\Big)F_{j-2}\nonumber\\
   -&\Big((\ell+m)\frac{2v}{\Delta}-2i\frac{\partial}{\partial z}\Big)F_{j-1}\nonumber  \\ +&\Big((\ell+m)\frac{y'}{\Delta}+2i\frac{\partial}{\partial\tau}\Big)F_j   \end{align}
 for $j=0,\ldots,m+2$.
\end{proposition}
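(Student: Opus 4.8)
The plan is to reuse the reduction already applied to $P_{0-}$, $L$ and $U$ in Propositions~\ref{P0minusprop}, \ref{Lmprop}, \ref{Umprop}: first express $\vec\Psi$, where $\Psi=X_+\Phi$, purely in terms of the vector-valued functions $X_+\vec\Phi$, $P_{1+}\vec\Phi$ and $P_{0+}\vec\Phi$ on $\Sp_4(\R)$; then evaluate at $b_Z$, apply $\eta_{\ell,m+2}(D_Z)$ (which produces $G$ by \eqref{FvecPhieq}), and push this factor past the relevant multiplication operators so as to turn it into $\eta_{\ell,m}(D_Z)$; then substitute Proposition~\ref{rootvectorsactionprop}; and finally read off the component functions using \eqref{Fjformulaeq}.

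The one step with genuine content is the computation of $\vec\Psi$. From the matrix forms of the root vectors (or, equivalently, the root diagram \eqref{rootdiagrameq}) one checks $[N_-,X_+]=-P_{1+}$, $[N_-,P_{1+}]=-2P_{0+}$, $[N_-,P_{0+}]=0$, hence $\mathrm{ad}(N_-)^2X_+=2P_{0+}$ and $\mathrm{ad}(N_-)^3X_+=0$. The standard identity $N_-^jX_+=\sum_{k\ge0}\binom{j}{k}\big(\mathrm{ad}(N_-)^kX_+\big)N_-^{j-k}$ in $\mathcal U(\g_\C)$ then gives $N_-^jX_+=X_+N_-^j-jP_{1+}N_-^{j-1}+j(j-1)P_{0+}N_-^{j-2}$. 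Substituting into the defining sum \eqref{vecPhidefeq} for $\vec\Psi$ (with $m$ there replaced by $m+2$), reindexing the three resulting sums, and using that $N_-^j\Phi=0$ for $j>m$ — which holds because $N_+\Phi=0$ together with the weight $(\ell+m,\ell)$ forces $\Phi$ into the $\rho_{(\ell+m,\ell)}$-isotypic subspace, on which $N_-^{m+1}$ vanishes — one arrives at the clean identity
$$
 \vec\Psi(g)=S^2\,(X_+\vec\Phi)(g)+ST\,(P_{1+}\vec\Phi)(g)+T^2\,(P_{0+}\vec\Phi)(g).
$$

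Next, put $g=b_Z$ and apply $\eta_{\ell,m+2}(D_Z)$, giving $G(Z)$. Using $\eta_{\ell,m+2}(A)(S\cdot P)=\big((S,T)A\big)_1\,\eta_{\ell,m}(A)P$ and $\eta_{\ell,m+2}(A)(T\cdot P)=\big((S,T)A\big)_2\,\eta_{\ell,m}(A)P$ for $P\in W_m$, together with $(S,T)D_Z=\big(a^{-1}(S-\tfrac{v}{y'}T),\,b^{-1}T\big)$ and $a^2=\Delta/y'$, $b^2=y'$ from \eqref{DZdefeq}, one rewrites $G(Z)$ as a combination of $\eta_{\ell,m}(D_Z)(X_+\vec\Phi)(b_Z)$, $\eta_{\ell,m}(D_Z)(P_{1+}\vec\Phi)(b_Z)$ and $\eta_{\ell,m}(D_Z)(P_{0+}\vec\Phi)(b_Z)$, with coefficients that are scalars times monomials in $S-\tfrac{v}{y'}T$ and $T$. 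Feeding in \eqref{opfourierlemmaeq7}, \eqref{opfourierlemmaeq5} and \eqref{opfourierlemmaeq3}, the derivative parts collapse pleasantly: the $\partial/\partial\tau$-contributions recombine via $\big((S-\tfrac{v}{y'}T)+\tfrac{v}{y'}T\big)^2=S^2$ into $2i\,S^2\,\partial F/\partial\tau$, and likewise one gets $2i\,ST\,\partial F/\partial z$ and $2i\,T^2\,\partial F/\partial\tau'$; the remaining algebraic parts, coming from the conjugated matrices $\eta_{\ell,m}(D_Z)\,\eta_{\ell,m}\!\big(\mat{1}{0}{0}{0}\big)\,\eta_{\ell,m}(D_Z)^{-1}$ and its two analogues, reassemble into the $(\ell+m)$-multiples of $y/\Delta$, $2v/\Delta$, $y'/\Delta$. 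Taking the coefficient of $S^{m+2-j}T^j$ via \eqref{Fjformulaeq} then yields \eqref{Xpluspropeq1}, with the convention $F_i=0$ for $i\notin\{0,\dots,m\}$ being exactly the one forced by the reindexing.

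The main obstacle is organizational rather than conceptual: one must carefully compute the conjugations $D_Z\,\mat{1}{0}{0}{0}\,D_Z^{-1}$, $D_Z\,\mat{0}{1}{1}{0}\,D_Z^{-1}$, $D_Z\,\mat{0}{0}{0}{1}\,D_Z^{-1}$, and then verify that, after multiplying by the appropriate powers of $S-\tfrac{v}{y'}T$ and $T$ and summing the three contributions, the algebraic part collapses to exactly $(\ell+m)\big(\tfrac{y}{\Delta}F_{j-2}-\tfrac{2v}{\Delta}F_{j-1}+\tfrac{y'}{\Delta}F_j\big)$ with no residual derivative cross-terms. This is the same long-but-routine verification already suppressed in Propositions~\ref{P0minusprop}, \ref{Lmprop} and \ref{Umprop}, so in the write-up we would again record only the final formula.
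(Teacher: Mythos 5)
Your proposal is correct and follows essentially the route the paper intends: reduce to the identity $\vec\Psi=S^2\,(X_+\vec\Phi)+ST\,(P_{1+}\vec\Phi)+T^2\,(P_{0+}\vec\Phi)$ via the commutators $[N_-,X_+]=-P_{1+}$, $[N_-,P_{1+}]=-2P_{0+}$, convert $\eta_{\ell,m+2}(D_Z)$ to $\eta_{\ell,m}(D_Z)$, and substitute Proposition~\ref{rootvectorsactionprop}; the algebraic part indeed collapses to $(\ell+m)\bigl(\tfrac{y'}{\Delta}S^2-\tfrac{2v}{\Delta}ST+\tfrac{y}{\Delta}T^2\bigr)F$ using the Euler identity $S\partial_S+T\partial_T=m$ on $W_m$. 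This is the same computation the authors carry out (and suppress) for the other operators, so your write-up is a faithful reconstruction of the omitted proof.
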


\begin{remark}
The operator $X_+$ is the same as the operator $\delta_{\ell+m}$ occurring in \cite{BochererSatohYamazaki1992}.
\end{remark}

\subsubsection*{The degree $1$ diagonal operators}
 Let $\Psi^\pm=E_{\pm}\Phi$. Then $\Psi^\pm$ has weight $(\ell+m\pm1,\ell\pm1)$, and $N_+\Psi^\pm=0$. Let  $G^\pm:\:\HH_2\to W_m$ be the function corresponding to $\Psi^\pm$. Let $F_0,\ldots,F_m$ be the component functions of $F$, and let $G^\pm_0,\ldots,G^\pm_m$ be the component functions of $G^\pm$; see \eqref{Fjformulaeq}. The following result expresses the $G^\pm_j$ in terms of the $F_i$.

\begin{proposition}\label{Dmminusprop}
 With the above notations,
 \begin{align}
  G^+_j&=(m-j+1)\Big((2\ell+m-2)\frac{y}{\Delta}+4i\frac{\partial}{\partial \tau'}\Big)F_{j-1}\nonumber\\
  &\qquad+(m-2j)\Big(-(2\ell+m-2)\frac{v}{\Delta}+2i\frac{\partial}{\partial z}\Big)F_j\nonumber\\
  &\qquad-(j+1)\Big((2\ell+m-2)\frac{y'}{\Delta}+4i\frac{\partial}{\partial\tau}\Big)F_{j+1},\label{Dmminuspropeq1b}\\
  G^-_j&=2(m+1-j)\bar\partial_2F_{j-1}+(m-2j)\bar\partial_1F_j-2(j+1)\bar\partial_0F_{j+1},\label{Dmminuspropeq1a}
 \end{align}
 for $j=0,\ldots,m$. (We understand $F_i=0$ for $i<0$ or $i>m$.) The differential operators $\bar\partial_0,\bar\partial_1,\bar\partial_2$ are the ones defined in \eqref{del0eq} -- \eqref{del2eq}.
\end{proposition}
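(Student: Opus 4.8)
The plan is to run the same computation that underlies Propositions~\ref{Xplusprop} and~\ref{Umprop}, reducing everything to the root-vector action formulas of Proposition~\ref{rootvectorsactionprop}. Write $\Psi^\pm=E_\pm\Phi$, with $E_+=(m+2)P_{1+}+2N_-X_+$ and $E_-=(m+2)P_{1-}-2N_-P_{0-}$ as in Table~\ref{Nplusoperatorstable}. Since $\Psi^\pm$ has weight $(\ell+m\pm1,\ell\pm1)$, its sym-degree is again $m$, so $G^\pm$ is $W_m$-valued; this is slightly different from the $X_+$, $U$, $P_{0-}$ cases (where the sym-degree changes), but the method is identical.

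First I would \emph{vectorize}. By~\eqref{vecPhidefeq}, $\vec\Psi^\pm(b_Z)=\sum_{j=0}^m\frac{(-1)^j}{j!}(N_-^j\Psi^\pm)(b_Z)S^{m-j}T^j$, so one must compute $N_-^j\Psi^\pm$, that is $(m+2)N_-^jP_{1\pm}\Phi$ together with $+2\,N_-^{j+1}X_+\Phi$ for $E_+$ (resp.\ $-2\,N_-^{j+1}P_{0-}\Phi$ for $E_-$), and then move every $N_-$ through the relevant $\mathfrak{p}_+$- or $\mathfrak{n}$-generator until it acts directly on $\Phi$. Since $[N_-,X_+]$ is a multiple of $P_{1+}$, $[N_-,P_{1+}]$ a multiple of $P_{0+}$, $[N_-,P_{0+}]=0$, $[N_-,X_-]=0$, $[N_-,P_{1-}]$ a multiple of $X_-$, and $[N_-,P_{0-}]$ a multiple of $P_{1-}$, at most two layers of commutators arise, and $N_-^j\Psi^\pm$ is rewritten as a linear combination, with coefficients polynomial in $j,m,\ell$, of terms $Y\,N_-^i\Phi$ with $Y\in\{X_\pm,P_{1\pm},P_{0\pm}\}$. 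Each such term equals $(-1)^i\,i!$ times the $S^{m-i}T^i$-component of the $W_m$-valued function $Y\vec\Phi$. Reassembling these sums as low-order differential operators in $S,T$ applied to $X_\pm\vec\Phi$, $P_{1\pm}\vec\Phi$ and $P_{0-}\vec\Phi$ evaluated at $b_Z$, one obtains $\vec\Psi^\pm(b_Z)$ entirely in terms of those objects.

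Next I would \emph{descend to $\HH_2$}. By~\eqref{FvecPhieq}--\eqref{Frhotilde2eq}, $G^\pm(Z)=\eta_{\ell\pm1,m}(D_Z)\vec\Psi^\pm(b_Z)$. Using the matrix identity that re-expresses $\eta_{\ell\pm1,m}(A)\,\partial_S^a\partial_T^b$ through $\partial_S^{a'}\partial_T^{b'}\eta_{\ell,m}(A)$ — the device already used in the computation for $U$ — one converts $\eta_{\ell\pm1,m}(D_Z)$ into $\eta_{\ell,m}(D_Z)$ and commutes it past the $S,T$-derivatives. Substituting the explicit formulas~\eqref{opfourierlemmaeq3}--\eqref{opfourierlemmaeq8} for the root-vector actions and carrying out the conjugations $D_Z\mat{\ast}{\ast}{\ast}{\ast}D_Z^{-1}$, one finds: for $E_+$, the ``$\eta(D_Z)\eta(\cdot)\eta(D_Z)^{-1}F$'' pieces of~\eqref{opfourierlemmaeq5} and~\eqref{opfourierlemmaeq7} combine, together with the combinatorial factors from the vectorization step, into the connection terms $(2\ell+m-2)\frac{y}{\Delta}$, $-(2\ell+m-2)\frac{v}{\Delta}$, $(2\ell+m-2)\frac{y'}{\Delta}$, while the derivative pieces assemble into the $4i\,\partial_{\tau'}$, $2i\,\partial_z$, $4i\,\partial_\tau$ terms of~\eqref{Dmminuspropeq1b}; for $E_-$ only the anti-holomorphic pieces~\eqref{opfourierlemmaeq4}, \eqref{opfourierlemmaeq6}, \eqref{opfourierlemmaeq8} survive (the ``$-$'' root vectors carry no conjugation term), and they reorganize precisely into $\bar\partial_0,\bar\partial_1,\bar\partial_2$ of~\eqref{del0eq}--\eqref{del2eq}, giving~\eqref{Dmminuspropeq1a}. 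Finally, reading off the coefficient of $S^{m-j}T^j$ via~\eqref{Fjformulaeq} yields the two asserted formulas.

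I expect the only genuine difficulty to be bookkeeping rather than ideas. The commutator corrections in the vectorization step are exactly what produce the polynomial coefficients $(m-j+1)$, $(m-2j)$, $-(j+1)$ in the final formulas, and conjugation by the non-unitary matrix $D_Z$ mixes the components in a way that must cancel in order to leave the clean expressions above; verifying these cancellations is long but, just as for Propositions~\ref{Xplusprop} and~\ref{Umprop}, entirely mechanical once the setup is in place.
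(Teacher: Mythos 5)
Your proposal is correct and follows essentially the same route as the paper's (omitted) computation: vectorize via \eqref{vecPhidefeq}, commute the powers of $N_-$ through $P_{1\pm}$ and $X_+$ (resp.\ $P_{0-}$) to express $N_-^j\Psi^\pm$ in terms of components of $Y\vec\Phi$ for root vectors $Y$, convert $\eta_{\ell\pm1,m}(D_Z)$ back to $\eta_{\ell,m}(D_Z)$, and substitute the formulas of Proposition~\ref{rootvectorsactionprop}. The intermediate identity $\frac{(-1)^j}{j!}(N_-^j\Psi^-)(b_Z)=2(j+1)(P_{0-}\vec\Phi)(b_Z)_{j+1}+(m-2j)(P_{1-}\vec\Phi)(b_Z)_j-2(m-j+1)(X_-\vec\Phi)(b_Z)_{j-1}$ that your commutator bookkeeping produces is exactly the one underlying \eqref{Dmminuspropeq1a}, so no gap remains beyond the mechanical verification you describe.
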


\subsubsection*{The degree 2 diagonal operators}
Let $\Psi^\pm=D_\pm\Phi$. Then $\Psi$ has weight $(\ell+m\pm2,\ell\pm2)$, and $N_+\Psi^\pm=0$. Let  $G^\pm:\:\HH_2\to W_m$ be the function corresponding to $\Psi^\pm$. Let $F_0,\ldots,F_m$ be the component functions of $F$, and let $G^\pm_0,\ldots,G^\pm_m$ be the component functions of $G^\pm$; see \eqref{Fjformulaeq}. For scalar-valued or vector-valued functions on $\HH_2$, we define the differential operator
\begin{equation}
 \label{del3eq}\bar\partial_3=2i\Big(y \frac{\partial}{\partial \bar \tau}+v\frac{\partial}{\partial \bar z}+y'\frac{\partial}{\partial \bar\tau'}\Big).
\end{equation}
The following result expresses the $G^\pm_j$ in terms of the $F_i$.
\begin{proposition}\label{Dplusminusprop}
 With the above notations,
 \begin{align}
 \label{Dplusminuspropeq1} G^+_j&= (m-j+1)(m-j+2)\frac{y^2}{\Delta^2} F_{j-2}\nonumber \\&+ \bigg( 4i(m-j+1)\big(\frac{y}{\Delta}  \frac{\partial}{\partial z} + \frac{2v}{\Delta}  \frac{\partial}{\partial \tau'}\big)  - 2 (m-2j+1)(m-j+1) \frac{vy}{\Delta^2} \bigg)F_{j-1} \nonumber\\
 & + \bigg[(4j^2+4l^2-4jm+m(m-3)+l(4m-2))\frac{v^2}{\Delta^2} \nonumber\\
 & \quad + (j^2-jm-(2l-1)(l+m))\frac{2yy'}{\Delta^2} +16 \frac{\partial^2}{\partial \tau \partial \tau'} -4 \frac{\partial^2}{\partial z^2}\nonumber \\ & \quad -4i\Big((2j+2l-1) \frac{y}{\Delta}  \frac{\partial}{\partial \tau} + (m+2l-1) \frac{v}{\Delta}  \frac{\partial}{\partial z} + (2m-2j+2l-1) \frac{y'}{\Delta}  \frac{\partial}{\partial \tau'}\Big) \bigg]F_j \nonumber\\ & +\bigg( 4i(j+1) \big(\frac{2v}{\Delta}  \frac{\partial}{\partial \tau} + \frac{y'}{\Delta}  \frac{\partial}{\partial z}\big)  + 2(m-2j-1)(j+1)\frac{vy'}{\Delta^2}  \bigg)F_{j+1} \nonumber\\
 &  + (j+1)(j+2) \frac{y'^2}{\Delta^2} F_{j+2},\\
  \label{Dplusminuspropeq2}G^-_j&=\Big(4\Delta^2\Big(4\frac{\partial}{\partial\bar\tau}\frac{\partial}{\partial \bar\tau'}-\frac{\partial^2}{\partial \bar z^2}\Big)-2\Delta\bar\partial_3\Big) F_j,
 \end{align}
 for $j=0,\ldots,m$.
\end{proposition}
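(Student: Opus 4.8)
The plan is to follow the template of the proofs of Propositions~\ref{P0minusprop}--\ref{Dmminusprop}: express $\vec\Psi^\pm$ through $\vec\Phi$, set $g=b_Z$, and feed the result into the root--vector formulas of Proposition~\ref{rootvectorsactionprop}. The one genuinely new feature is that $D_\pm=P_{1\pm}^2-4X_\pm P_{0\pm}$ is a \emph{second}--order element of $\mathcal{U}(\mathfrak{g}_\C)$, so the first--order formulas have to be applied twice.

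First I would reduce to a computation purely with $\vec\Phi$. Since $[N_-,D_\pm]=0$ (noted just after Lemma~\ref{Nplusoperatorslemma}), the definition \eqref{vecPhidefeq} yields $\vec\Psi^\pm=D_\pm\vec\Phi$, where $D_\pm$ now denotes the right--translation action of that element of $\mathcal{U}(\mathfrak{g}_\C)$ on the $W_m$--valued function $\vec\Phi$, leaving $S,T$ alone; likewise $[N_+,D_\pm]=0$ gives $N_+\Psi^\pm=0$, so $G^\pm$ is well defined by Lemma~\ref{FPhilemma}. By \eqref{FvecPhieq} one then has
\[
 G^\pm(Z)=\eta_{\ell\pm2,m}(D_Z)\,\big((P_{1\pm}^2-4X_\pm P_{0\pm})\vec\Phi\big)(b_Z),
\]
and the task becomes the evaluation of $(P_{1\pm}^2\vec\Phi)(b_Z)$ and $(X_\pm P_{0\pm}\vec\Phi)(b_Z)$. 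For these I would differentiate the defining relation $\vec\Phi(g)=\eta_{\ell,m}(J(g,I))^{-1}F(gI)$ twice along the appropriate one--parameter subgroups --- equivalently, apply the first--order identities of Proposition~\ref{rootvectorsactionprop} and then once more to the resulting expressions, keeping track of the Leibniz terms produced when a derivative hits a coefficient (a polynomial in $y,v,y'$) or one of the conjugation matrices $\eta_{\ell,m}(D_Z)\eta_{\ell,m}(M)\eta_{\ell,m}(D_Z)^{-1}$.

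For $D_-$ the picture is simple: by \eqref{opfourierlemmaeq4}, \eqref{opfourierlemmaeq6} and \eqref{opfourierlemmaeq8}, each of $X_-,P_{1-},P_{0-}\in\mathfrak{n}$ acts, after $\eta_{\ell,m}(D_Z)$--conjugation, as a \emph{scalar} first--order operator (no matrix part), so $D_-$ too is scalar; this explains why \eqref{Dplusminuspropeq2} involves only $F_j$ and is independent of $j$. Its leading part is the ``discriminant'' $4\frac{\partial}{\partial\bar\tau}\frac{\partial}{\partial\bar\tau'}-\frac{\partial^2}{\partial\bar z^2}$ (matching $P_{1-}^2-4X_-P_{0-}$), carrying a factor $4\Delta^2$ because each of the three vectors comes with a $\Delta$ or $\sqrt\Delta$; the first--order remainder $-2\Delta\bar\partial_3$ is the Leibniz correction. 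For $D_+$ the three vectors $X_+,P_{1+},P_{0+}\in\mathfrak{p}_+$ act by \eqref{opfourierlemmaeq3}, \eqref{opfourierlemmaeq5} and \eqref{opfourierlemmaeq7} \emph{with} a matrix part $\eta_{\ell,m}(D_Z)\eta_{\ell,m}(M)\eta_{\ell,m}(D_Z)^{-1}$, so $D_+$ mixes neighbouring $W_m$--components and the answer \eqref{Dplusminuspropeq1} spreads over $F_{j-2},\ldots,F_{j+2}$. After rewriting every $\eta_{\ell+2,m}(D_Z)$ in terms of $\eta_{\ell,m}(D_Z)$ by an explicit $3\times3$ matrix identity and commuting the conjugations past the derivatives, the second--order holomorphic part should collapse to $16\frac{\partial}{\partial\tau}\frac{\partial}{\partial\tau'}-4\frac{\partial^2}{\partial z^2}$, the first--order part should collect the terms in $\frac{y}{\Delta}\frac{\partial}{\partial\tau}$, $\frac{v}{\Delta}\frac{\partial}{\partial z}$, etc., and the product of two conjugation matrices should produce the rational coefficients $\frac{y^2}{\Delta^2},\frac{v^2}{\Delta^2},\frac{yy'}{\Delta^2}$ with their $j$-- and $\ell$--dependent multipliers.

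The hard part will be purely the bookkeeping: ensuring that the numerous first-- and zeroth--order contributions recombine into exactly the stated closed forms, in particular the coefficient $4j^2+4\ell^2-4jm+m(m-3)+\ell(4m-2)$ of $\frac{v^2}{\Delta^2}F_j$ in \eqref{Dplusminuspropeq1}. Nothing beyond Proposition~\ref{rootvectorsactionprop} and the commutation relations is needed conceptually, so, as with the surrounding propositions, I would carry out the long but routine verification and then omit the details.
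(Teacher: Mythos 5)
Your proposal matches the paper's (omitted) proof exactly: the paper states only that these formulas follow from Proposition~\ref{rootvectorsactionprop} by a long but routine calculation, and your reduction via $[N_\pm,D_\pm]=0$ to a double application of the root-vector formulas, with Leibniz corrections from coefficients and conjugation matrices, is precisely that calculation. Your structural observations (the scalar nature of the $\mathfrak{n}$-action explaining why $G^-_j$ involves only $F_j$, versus the matrix parts for $\mathfrak{p}_+$ spreading $G^+_j$ over $F_{j-2},\ldots,F_{j+2}$) are correct.
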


\begin{remark} The formula for $D_+$ in the special case that $m=0$ (the scalar valued case) is given by
\begin{equation}\label{D+scalar-eqn}
D_+F = \Big(-\frac{2l(2l-1)}{\Delta} - \frac{4i(2l-1)}{\Delta}\big(y\frac{\partial}{\partial\tau}+v\frac{\partial}{\partial z}+y'\frac{\partial}{\partial\tau'}\big)+4\big(4\frac{\partial^2}{\partial \tau \partial \tau'}-\frac{\partial^2}{\partial z^2}\big)\Big)F.
\end{equation}
In this case, the operator $D_+$ was originally defined by Maass in his book~\cite{maassbook}.
\end{remark}
\subsection{Nearly holomorphic functions}\label{s:nearholofn}
Let $p$ be a non-negative integer. We will write elements $Z \in \HH_2$ as $Z=X+iY$ with real $X$ and $Y$. By definition of $\HH_2$, the real symmetric matrix $Y$ is positive definite. We let $N^p(\HH_2)$ denote the space of all polynomials of degree $\le p$ in the entries of $Y^{-1}$ with holomorphic functions on $\HH_2$ as coefficients. The space
$$
 N(\HH_2)=\bigcup_{p\geq0}N^p(\HH_2)
$$
is the space of \emph{nearly holomorphic functions} on $\H_2$. Evidently, $N(\H_2)$ is a ring, and we have $N^p(\HH_2)N^q(\HH_2)\subset N^{p+q}(\HH_2)$. For convenience, we let $N^p(\HH_2)=0$ for negative $p$. If $f\in N(\HH_2)$ lies in $N^p(\HH_2)$ but not in $N^{p-1}(\HH_2)$, we say that $f$ has \emph{nearly holomorphic degree $p$}. Note that $N^0(\HH_2)$ is the space of holomorphic functions on $\HH_2$.

As before, we will use the coordinates \eqref{H2coordinateseq} on $\HH_2$, and set $\Delta=yy'-v^2$. The entries of $Y^{-1}$ are then $y/\Delta$, $v/\Delta$ and $y'/\Delta$. Since $\frac{y}{\Delta}\frac{y'}{\Delta}-\frac{v^2}{\Delta^2}=\frac1{\Delta}$, the function $\frac1{\Delta}$ is a nearly holomorphic function. For a typical nearly holomorphic monomial we will use the notation
\begin{equation}\label{nearholomonomialeq}
 \big[\alpha,\beta,\gamma\big]:=\Big(\frac{y}{\Delta}\Big)^\alpha\Big(\frac{v}{\Delta}\Big)^\beta\Big(\frac{y'}{\Delta}\Big)^\gamma;
\end{equation}
here, $\alpha,\beta,\gamma$ are non-negative integers.

We may ask how the various differential operators we defined in previous sections behave with respect to nearly holomorphic functions. It is easy to see that the basic partial derivatives
$$
    \frac{\partial}{\partial\tau},\quad\frac{\partial}{\partial z},\quad\frac{\partial}{\partial\tau'},\quad\frac{\partial}{\partial\bar\tau},\quad\frac{\partial}{\partial\bar z},\quad\frac{\partial}{\partial\bar\tau'}
$$
map $N^p(\H_2)$ to $N^{p+1}(\H_2)$. The following lemma gives the action of differential operators, including those defined in \eqref{del0eq} -- \eqref{del2eq} and \eqref{del3eq}, on a nearly holomorphic monomial. In particular, the lemma shows that the operators $\bar\partial_0,\bar\partial_1,\bar\partial_2$ act as ``nearly holomorphic derivatives''.
\begin{lemma}\label{nearholodiffoplemma}
 The following formulas hold for all non-negative integers $\alpha,\beta,\gamma$.
   \begin{align}
    \label{nearholodiffoplemmaeq1}\bar\partial_0\big[\alpha,\beta,\gamma\big]&=\alpha\big[\alpha-1,\beta,\gamma\big],\\
    \label{nearholodiffoplemmaeq2}\bar\partial_1\big[\alpha,\beta,\gamma\big]&=\beta\big[\alpha,\beta-1,\gamma\big],\\
    \label{nearholodiffoplemmaeq3}\bar\partial_2\big[\alpha,\beta,\gamma\big]&=\gamma\big[\alpha,\beta,\gamma-1\big],\\
    \label{nearholodiffoplemmaeq5}\bar\partial_3\big[\alpha,\beta,\gamma\big]&=(\alpha+\beta+\gamma)\big[\alpha,\beta,\gamma\big],\\
    \label{nearholodiffoplemmaeq7}D_{-}\big[\alpha,\beta,\gamma\big]&=\beta(\beta-1)\big[\alpha,\beta-2,\gamma\big]-4\alpha\gamma\big[\alpha-1,\beta,\gamma-1\big].
   \end{align}
\end{lemma}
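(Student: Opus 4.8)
The plan is to establish all five identities by direct computation, organized so the bookkeeping stays under control. The starting observation is that the monomial $[\alpha,\beta,\gamma]=(y/\Delta)^\alpha(v/\Delta)^\beta(y'/\Delta)^\gamma$ depends only on the imaginary parts $y,v,y'$ of the coordinates \eqref{H2coordinateseq} (with $\Delta=yy'-v^2$), so on such functions the antiholomorphic Wirtinger derivatives reduce to ordinary partials, $\frac{\partial}{\partial\bar\tau}=\frac i2\frac{\partial}{\partial y}$, $\frac{\partial}{\partial\bar z}=\frac i2\frac{\partial}{\partial v}$, $\frac{\partial}{\partial\bar\tau'}=\frac i2\frac{\partial}{\partial y'}$, while every holomorphic function is annihilated. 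Substituting this into the definitions \eqref{del0eq}--\eqref{del3eq} rewrites each $\bar\partial_k$ as a genuine vector field in $\partial/\partial y,\partial/\partial v,\partial/\partial y'$; e.g.\ $\bar\partial_0=-(v^2\partial_y+vy'\partial_v+y'^2\partial_{y'})$ and $\bar\partial_3=-(y\partial_y+v\partial_v+y'\partial_{y'})$. The only elementary inputs needed from here on are $\partial_y\Delta=y'$, $\partial_v\Delta=-2v$, $\partial_{y'}\Delta=y$, and the defining relation $yy'-v^2=\Delta$.

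For $\bar\partial_0,\bar\partial_1,\bar\partial_2$ I would use that each is a derivation killing holomorphic functions, hence by the Leibniz rule is completely determined by its values on the three generators $u_1=y/\Delta$, $u_2=v/\Delta$, $u_3=y'/\Delta$. A short computation --- the decisive cancellation being $(yy'-v^2)^2=\Delta^2$ --- yields the clean table $\bar\partial_i(u_j)=\delta_{i+1,j}$ for $i\in\{0,1,2\}$ and $j\in\{1,2,3\}$; that is, $\bar\partial_0,\bar\partial_1,\bar\partial_2$ act on nearly holomorphic functions exactly like the formal partial derivatives $\partial/\partial u_1,\partial/\partial u_2,\partial/\partial u_3$, and applying this to $u_1^\alpha u_2^\beta u_3^\gamma$ gives \eqref{nearholodiffoplemmaeq1}--\eqref{nearholodiffoplemmaeq3} at once. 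For $\bar\partial_3$, note that $\Delta$ is homogeneous of degree $2$ in $(y,v,y')$, so $[\alpha,\beta,\gamma]$ is homogeneous of degree $-(\alpha+\beta+\gamma)$; since $\bar\partial_3=-(y\partial_y+v\partial_v+y'\partial_{y'})$ is minus the Euler vector field, \eqref{nearholodiffoplemmaeq5} is simply Euler's identity.

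It remains to treat $D_-$, which is the one genuinely laborious step and the main obstacle. I would take the scalar case $m=0$ of \eqref{Dplusminuspropeq2}, namely $D_-=4\Delta^2(4\,\partial_{\bar\tau}\partial_{\bar\tau'}-\partial_{\bar z}^2)-2\Delta\,\bar\partial_3$, and reduce it via the first paragraph to $D_-=\Delta^2(\partial_v^2-4\partial_y\partial_{y'})+2\Delta(y\partial_y+v\partial_v+y'\partial_{y'})$ on functions of $y,v,y'$. Writing $[\alpha,\beta,\gamma]=y^\alpha v^\beta y'^\gamma\,\Delta^{-(\alpha+\beta+\gamma)}$ and expanding with the product and chain rules produces contributions of three different orders in powers of $\Delta^{-1}$; using $\partial_y\Delta=y'$ etc.\ together with $yy'-v^2=\Delta$, one checks that the Euler term exactly cancels the spurious pieces coming from differentiating the powers of $\Delta$ inside the second-order part, leaving precisely $\beta(\beta-1)[\alpha,\beta-2,\gamma]-4\alpha\gamma[\alpha-1,\beta,\gamma-1]$, which is \eqref{nearholodiffoplemmaeq7}. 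The only real difficulty is tracking these competing $\Delta$-power terms and confirming that everything outside the two expected monomials cancels. (Conceptually, what is being proved here is the operator identity $D_-=\bar\partial_1^2-4\bar\partial_0\bar\partial_2$, i.e.\ $\partial_{u_2}^2-4\partial_{u_1}\partial_{u_3}$, on the ring $N(\H_2)$; given the first three identities and the fact that $D_-$ is linear over the holomorphic functions, it suffices to verify it on the generators $u_i$ and their pairwise products, which one can do instead with the same computations but on fewer inputs.)
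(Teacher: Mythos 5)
Your proposal is correct and is essentially the paper's proof, which simply states that everything follows from direct calculation: your reductions (rewriting the $\bar\partial_k$ as vector fields in $y,v,y'$, the table $\bar\partial_i(u_j)=\delta_{i+1,j}$ with $u_1=y/\Delta$, $u_2=v/\Delta$, $u_3=y'/\Delta$, the Euler identity for $\bar\partial_3$, and the verification of $D_-=\bar\partial_1^2-4\bar\partial_0\bar\partial_2$ on generators and their pairwise products) are a sound and efficient way to organize that calculation, and the sign conventions and test cases all check out.
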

\begin{proof}
Everything follows from direct calculations.
\end{proof}
As a consequence, we note that the operators $\bar\partial_0,\bar\partial_1,\bar\partial_2$ commute on $N(\HH_2)$ (they do not commute on all of $C^\infty(\HH_2)$).
\begin{lemma}\label{nearholouniquelemma}
 Assume that $F=\sum_{\alpha,\beta,\gamma\geq0}[\alpha,\beta,\gamma]F_{\alpha,\beta,\gamma}$ is a nearly holomorphic function, where the $F_{\alpha,\beta,\gamma}$ are holomorphic. Then $F$ is zero if and only if all $F_{\alpha,\beta,\gamma}$ are zero.
\end{lemma}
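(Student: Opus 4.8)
The plan is to derive the uniqueness statement — equivalently, the $\mathcal{O}(\HH_2)$-linear independence of the monomials $[\alpha,\beta,\gamma]$, i.e.\ the fact that $N(\HH_2)$ is freely generated over the ring of holomorphic functions by $y/\Delta$, $v/\Delta$, $y'/\Delta$ — directly from Lemma~\ref{nearholodiffoplemma}, which tells us that $\bar\partial_0,\bar\partial_1,\bar\partial_2$ act on the $[\alpha,\beta,\gamma]$ exactly as the three coordinate partial derivatives act on a monomial in three variables. One implication is trivial, so I would concentrate on the converse: if $F=\sum_{\alpha,\beta,\gamma\ge0}[\alpha,\beta,\gamma]F_{\alpha,\beta,\gamma}\equiv0$ with all $F_{\alpha,\beta,\gamma}$ holomorphic, then every $F_{\alpha,\beta,\gamma}$ vanishes. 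I would prove this by induction on the smallest integer $p$ with $F_{\alpha,\beta,\gamma}=0$ whenever $\alpha+\beta+\gamma>p$.

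The base case $p=0$ is immediate, since then $F=F_{0,0,0}$ is itself holomorphic. For the inductive step, assume the assertion for all such representations of degree at most $p-1$, and let $F\equiv0$ be as above of degree at most $p$. Apply the operator $\bar\partial_0$ of~\eqref{del0eq}: because each $F_{\alpha,\beta,\gamma}$ is holomorphic and $\bar\partial_0$ is a first-order operator built only from $\partial/\partial\bar\tau$, $\partial/\partial\bar z$, $\partial/\partial\bar\tau'$, the Leibniz rule gives $0=\bar\partial_0F=\sum_{\alpha,\beta,\gamma}(\bar\partial_0[\alpha,\beta,\gamma])F_{\alpha,\beta,\gamma}=\sum_{\alpha\ge1}\alpha\,[\alpha-1,\beta,\gamma]F_{\alpha,\beta,\gamma}$ by~\eqref{nearholodiffoplemmaeq1}. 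After re-indexing, this is a nearly holomorphic expression of degree at most $p-1$ with holomorphic coefficients that vanishes identically, so the inductive hypothesis forces $(\alpha+1)F_{\alpha+1,\beta,\gamma}=0$ for all $\alpha,\beta,\gamma$, i.e.\ $F_{\alpha,\beta,\gamma}=0$ whenever $\alpha\ge1$. Running the same argument with $\bar\partial_1$ and $\bar\partial_2$ (using~\eqref{del1eq},~\eqref{del2eq} together with~\eqref{nearholodiffoplemmaeq2} and~\eqref{nearholodiffoplemmaeq3}) yields $F_{\alpha,\beta,\gamma}=0$ whenever $\beta\ge1$ or $\gamma\ge1$. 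Hence the only term that can survive is $F_{0,0,0}$, and then $F=F_{0,0,0}\equiv0$ forces $F_{0,0,0}=0$, completing the induction.

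I do not expect a genuine obstacle here: all the substance is already packaged in Lemma~\ref{nearholodiffoplemma}, which converts the desired linear independence into the obvious fact that iterating $\bar\partial_0,\bar\partial_1,\bar\partial_2$ strips off exponents one at a time while strictly lowering the nearly holomorphic degree. The one point that needs a word of justification is pulling the holomorphic coefficients $F_{\alpha,\beta,\gamma}$ out past the operators $\bar\partial_i$, which is legitimate precisely because each $\bar\partial_i$ annihilates holomorphic functions. One could instead try to extract $F_{a,b,c}$ by applying $\tfrac1{a!\,b!\,c!}\bar\partial_0^{\,a}\bar\partial_1^{\,b}\bar\partial_2^{\,c}$ to $F$ and then taking a ``constant term'', but making sense of that constant term essentially presupposes the present statement, so the induction above is the cleanest route.
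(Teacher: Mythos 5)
Your proof is correct and is exactly the argument the paper intends: the paper's proof of this lemma is a one-line remark that it follows "by induction on the nearly holomorphic degree, using the formulas \eqref{nearholodiffoplemmaeq1} -- \eqref{nearholodiffoplemmaeq3}", and your induction on $p$ with the operators $\bar\partial_0,\bar\partial_1,\bar\partial_2$ (pulling the holomorphic coefficients through via Leibniz) is precisely that argument, spelled out. No gaps.
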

\begin{proof}
This can be proved by induction on the nearly holomorphic degree, using the formulas \eqref{nearholodiffoplemmaeq1} -- \eqref{nearholodiffoplemmaeq3}.
\end{proof}

\subsubsection*{Operators on vector-valued functions}
Let $\ell$ be any integer, and $m$ a non-negative integer. Let $C^\infty_{\ell,m}(\HH_2)$ be the space of smooth functions $F:\:\HH_2\to W_m$. Note that this space does not actually depend on $\ell$; nevertheless, it will be useful to carry this subindex along (the significance of this subindex will be seen in the next chapter, when we will restrict to the subspace of $C^\infty_{\ell,m}(\HH_2)$ consisting of forms $F$ which transform via $\eta_{\ell,m}$ with respect to some congruence subgroup).

For each of the operators $X$ appearing in Table~\ref{Nplusoperatorstable} we will define a linear map $X:\:C^\infty_{\ell,m}(\HH_2)\to C^\infty_{\ell_1,m_1}(\HH_2)$, where $(\ell_1+m_1,\ell_1)$ is the ``new weight'' given in Table \ref{Nplusoperatorstable}. Some of the operators $X$ will depend on $\ell$, (or $m$, or both) but, as before, our notation will not reflect this dependence.

If $m<2$, we set $U=L=0$. In all other cases, the definitions will be in terms of the component functions $F_0,\ldots,F_m$ of $F$ given by $
 F(Z)=\sum_{j=0}^mF_j(Z)S^{m-j}T^j,
$ and are as follows.
\begin{align}
  \label{operatorsonfunctions1}(P_{0-}F)_j&=\text{right hand side of \eqref{P0minuspropeq1}},\\
  \label{operatorsonfunctions2}(LF)_j&=\text{right hand side of \eqref{Lmpropeq1}},\\
  \label{operatorsonfunctions3}(UF)_j&=\text{right hand side of \eqref{Umpropeq1}},\\
  \label{operatorsonfunctions4}(X_+F)_j&=\text{right hand side of \eqref{Xpluspropeq1}},\\
  \label{operatorsonfunctions5}(E_{+}F)_j&=\text{right hand side of \eqref{Dmminuspropeq1b}},\\
  \label{operatorsonfunctions6}(E_{-}F)_j&=\text{right hand side of \eqref{Dmminuspropeq1a}},\\
  \label{operatorsonfunctions7}(D_+F)_j&=\text{right hand side of \eqref{Dplusminuspropeq1}},\\
  \label{operatorsonfunctions8}(D_-F)_j&=\text{right hand side of \eqref{Dplusminuspropeq2}}.
\end{align}
These formulas hold for \emph{all} $j\in\Z$, but the expressions on the right hand sides are automatically zero if $j<0$ or $j>m_1$.

For a non-negative integer $p$, let $N^p_{\ell,m}(\HH_2)$ be the subspace of $C^\infty_{\ell,m}(\HH_2)$ consisting of those $F$ for which all component functions $F_j$ are in $N^p(\HH_2)$. Hence, these are nearly holomorphic $W_m$-valued functions. The space $N^0_{\ell,m}(\HH_2)$ consists of the holomorphic $W_m$-valued functions.
\begin{table}[!h]
\caption{Let $X$ be one of the operators given in the first column. Let $F\in N^p_{\ell,m}(\HH_2)$. Then $XF\in N^{p_1}_{\ell_1,m_1}(\HH_2)$, with $\ell_1,m_1,p_1$ given in the last three columns of the table. The second column indicates the direction from the old weight $(\ell+m,\ell)$ to the new weight $(\ell_1+m_1,\ell_1)$, assuming $F$ corresponds to the $K$-finite function $\Phi:\:\Sp_4(\R)\to\C$ of weight $(\ell+m,\ell)$. If $m<2$, then by definition, $U=L=0$.
} \label{Nplusoperatorsnearholotable}
$$\renewcommand{\arraystretch}{1.5}\renewcommand{\arraycolsep}{0.4cm}
 \begin{array}{ccccc}
  \toprule
   \text{operator}&\text{direction}&\text{new $\ell$}&\text{new $m$}&\text{new $p$}\\
  \toprule
   P_{0-}&\downarrow&\ell-2&m+2&p-1\\
  \midrule
   L&\leftarrow&\ell&m-2&p-1\\
  \midrule
   U&\uparrow&\ell+2&m-2&p+1\\
  \midrule
   X_+&\rightarrow&\ell&m+2&p+1\\
  \midrule
   E_{+}&\nearrow&\ell+1&m&p+1\\
  \midrule
   E_{-}&\swarrow&\ell-1&m&p-1\\
  \midrule
   D_+&\nearrow&\ell+2&m&p+2\\
  \midrule
   D_-&\swarrow&\ell-2&m&p-2\\
  \bottomrule
 \end{array}
$$
\end{table}
\begin{proposition}\label{Nplusoperatorsnearholoprop}
 Let $\ell$ be any integer, and $m$ a non-negative integer. Let $X$ be one of the operators in Table \ref{Nplusoperatorstable}. Let $F\in C^\infty_{\ell,m}(\HH_2)$.
 \begin{enumerate}
  \item Assume that $F$ corresponds, via Lemma \ref{FPhilemma}, to the $K$-finite function $\Phi$ on $\Sp_4(\R)$ of weight $(\ell+m,\ell)$ satisfying $N_+\Phi=0$. Then $XF$ corresponds to $X\Phi$. In other words, the diagram
   \begin{equation}\label{Nplusoperatorsnearholopropeq1}
    \begin{CD}
     \mathcal{V}_{\ell,m}@>\sim>>C^\infty_{\ell,m}(\HH_2)\\
     @V{X}VV @VV{X}V\\
     \mathcal{V}_{\ell_1,m_1}@>\sim>>C^\infty_{\ell_1,m_1}(\HH_2)
    \end{CD}
   \end{equation}
   is commutative. Here, $\ell_1,m_1$ are given in Table \ref{Nplusoperatorsnearholotable}, and the horizontal isomorphisms are those from Lemma \ref{FPhilemma}.
  \item If $F\in N^p_{\ell,m}(\HH_2)$, then $XF\in N^{p_1}_{\ell_1,m_1}(\HH_2)$, where $\ell_1,m_1,p_1$ are given in the last three columns of Table \ref{Nplusoperatorsnearholotable}.
 \end{enumerate}
\end{proposition}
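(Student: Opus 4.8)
The plan is to prove the two parts in order, since part (2) rests on part (1) together with the explicit formulas of Section~\ref{s:nearholofn}. For part~(1), the point is that the operator $X$ on $\mathcal{V}_{\ell,m}$ was defined Lie-theoretically (as an element of $\mathcal{U}(\mathfrak{g}_\C)$ acting on functions on the group) whereas the operator $X$ on $C^\infty_{\ell,m}(\HH_2)$ was defined classically by the formulas \eqref{operatorsonfunctions1}--\eqref{operatorsonfunctions8}; the claim is exactly that these two match under the isomorphism of Lemma~\ref{FPhilemma}. But this is, in effect, a tautology by the way those classical formulas were obtained: in each of Propositions~\ref{P0minusprop}, \ref{Lmprop}, \ref{Umprop}, \ref{Xplusprop}, \ref{Dmminusprop}, \ref{Dplusminusprop} we computed, for $\Psi=X\Phi$ with $X$ the relevant element of $\mathcal{U}(\mathfrak{g}_\C)$, the component functions of the vector-valued function $G$ on $\HH_2$ corresponding to $\Psi$, and found them to be precisely the right-hand sides now taken as the \emph{definition} of the classical operator $X$. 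So for part~(1) I would simply invoke those propositions: given $F\leftrightarrow\Phi$ via Lemma~\ref{FPhilemma}, set $\Psi=X\Phi$; then $\Psi$ has weight $(\ell_1+m_1,\ell_1)$ and satisfies $N_+\Psi=0$ (by Lemma~\ref{Nplusoperatorslemma}), so it corresponds to some $G\in C^\infty_{\ell_1,m_1}(\HH_2)$, and the cited proposition identifies the components of $G$ with $(XF)_j$. Commutativity of \eqref{Nplusoperatorsnearholopropeq1} follows.

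For part~(2), I would fix $X$ and argue case by case, in each case just reading off the right-hand side of the relevant formula and checking that it sends $N^p$-valued functions to $N^{p_1}$-valued functions with the stated $p_1$. The mechanism is uniform: every coefficient appearing is either a constant, or a nearly holomorphic monomial $[\alpha,\beta,\gamma]$ (lying in $N^{\alpha+\beta+\gamma}(\HH_2)$), possibly multiplied by a holomorphic derivative $\partial/\partial\tau,\partial/\partial z,\partial/\partial\tau'$ (which preserves $N^q(\HH_2)$) or by one of the antiholomorphic ``nearly holomorphic derivative'' operators $\bar\partial_0,\bar\partial_1,\bar\partial_2,\bar\partial_3$, whose effect on monomials is given by Lemma~\ref{nearholodiffoplemma}. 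Concretely: for $P_{0-}$ (formula \eqref{P0minuspropeq1}) the operators $\bar\partial_0,\bar\partial_1,\bar\partial_2$ \emph{lower} the degree of a monomial by one, so $N^p\to N^{p-1}$, matching $p_1=p-1$; the same holds for $L$ \eqref{Lmpropeq1} and $E_-$ \eqref{Dmminuspropeq1a}. For $U$ \eqref{Umpropeq1} and $X_+$ \eqref{Xpluspropeq1} and $E_+$ \eqref{Dmminuspropeq1b}, each term is either $[1,0,0]$, $[0,1,0]$ or $[0,0,1]$ times $F_i$, or a holomorphic derivative of $F_i$ — so we gain at most one in degree, giving $p_1=p+1$. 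For $D_+$ \eqref{Dplusminuspropeq1} the coefficients are monomials of the shape $[\cdot,\cdot,\cdot]/\Delta^2$-type terms, i.e. total monomial degree $\le2$, and the second-order holomorphic derivatives $\partial^2/\partial\tau\partial\tau'$, $\partial^2/\partial z^2$, together with the mixed first-order terms $[\cdot]\partial/\partial(\cdot)$ of combined degree $\le 2$; hence $N^p\to N^{p+2}$. For $D_-$ \eqref{Dplusminuspropeq2}, using \eqref{nearholodiffoplemmaeq7} and the fact that $\Delta^2\partial_{\bar\tau}\partial_{\bar\tau'}$ and $\Delta^2\partial_{\bar z}^2$ and $\Delta\bar\partial_3$ all lower monomial degree by two (this is essentially the content of $D_-[\alpha,\beta,\gamma]=\beta(\beta-1)[\alpha,\beta-2,\gamma]-4\alpha\gamma[\alpha-1,\beta,\gamma-1]$), we get $N^p\to N^{p-2}$. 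Finally for $P_{0-}$ I should double-check the boundary bookkeeping: if $F$ is holomorphic ($p=0$) then $P_{0-}F$ has ``$p-1$'' $=-1$, i.e. $P_{0-}F=0$, consistent with the convention $N^{p}(\HH_2)=0$ for $p<0$ and with \eqref{opfourierlemmaeq4} showing $P_{0-}$ is built from antiholomorphic derivatives, which annihilate holomorphic functions.

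The main obstacle is not conceptual but the verification burden: one must confirm that \emph{every} coefficient in the six classical formulas really is of the restricted form described above (a constant, or a monomial $[\alpha,\beta,\gamma]$ of controlled total degree, times at most the indicated holomorphic or nearly-holomorphic derivative). This is routine inspection of the (admittedly lengthy) expressions, so I would simply assert it after the case analysis, as the authors do elsewhere in the paper (``the calculations, while long, are routine, and are omitted''). A minor subtlety worth flagging explicitly is that the $\bar\partial_i$ genuinely lower monomial degree \emph{only because} we apply them to nearly holomorphic inputs — on arbitrary $C^\infty$ functions they raise degree — but since in part~(2) we are given $F\in N^p_{\ell,m}(\HH_2)$ this is exactly the setting of Lemma~\ref{nearholodiffoplemma}, and moreover the holomorphic coefficients $F_i$ are untouched by the $\bar\partial_i$ up to the constant multipliers inside the monomial, so the degree accounting is clean.
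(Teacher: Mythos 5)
Your proposal is correct and follows exactly the paper's own (very terse) proof: part (1) is just the assembled content of Propositions \ref{P0minusprop}, \ref{Lmprop}, \ref{Umprop}, \ref{Xplusprop}, \ref{Dmminusprop} and \ref{Dplusminusprop}, and part (2) is the degree bookkeeping from the formulas \eqref{operatorsonfunctions1}--\eqref{operatorsonfunctions8} combined with Lemma \ref{nearholodiffoplemma}. The only nitpick is that for $D_-$ the three individual pieces $\Delta^2\partial_{\bar\tau}\partial_{\bar\tau'}$, $\Delta^2\partial_{\bar z}^2$, $\Delta\bar\partial_3$ do not each separately send monomials to nearly holomorphic monomials of degree two less (e.g.\ $\Delta\,[1,0,0]=y$ is not nearly holomorphic); it is only the combination, as recorded in \eqref{nearholodiffoplemmaeq7}, that does, and since you cite that identity your argument is fine.
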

\begin{proof}
(1) simply summarizes the content of Propositions \ref{P0minusprop}, \ref{Lmprop}, \ref{Umprop}, \ref{Xplusprop}, \ref{Dmminusprop} and \ref{Dplusminusprop}.

(2) follows from the formulas \eqref{operatorsonfunctions1} -- \eqref{operatorsonfunctions8}, together with Lemma \ref{nearholodiffoplemma}.
\end{proof}

We see from (2) of this result that if we walk in the direction of one of the roots in $\mathfrak{n}$, then the nearly holomorphic degree decreases, while if we walk in the direction of one of the roots in $\p_+$, then the nearly holomorphic degree (potentially) increases. In the next section, we will use the following holomorphy criterion to prove that spaces of nearly holomorphic modular forms are finite-dimensional.
\begin{lemma}\label{holomorphyconditionlemma}
 Let $\ell$ be any integer, and $m$ a non-negative integer. Let $F\in C_{\ell,m}^\infty(\HH_2)$. Let $p\in\{0,1\}$.
 \begin{enumerate}
  \item If $m=0$, then the following are equivalent:
   \begin{enumerate}
    \item $F\in N^p(\HH_2)$.
    \item $P_{0-}F\in N^{p-1}(\HH_2)$.
   \end{enumerate}
   In particular, $F$ is holomorphic if and only if $P_{0-}F=0$.
  \item If $m=1$, then the following are equivalent:
   \begin{enumerate}
    \item $F\in N^p(\HH_2)$.
    \item $P_{0-}F,E_{-}F\in N^{p-1}(\HH_2)$.
   \end{enumerate}
   In particular, $F$ is holomorphic if and only if $P_{0-}F=E_{-}F=0$.
  \item If $m\geq2$, then the following are equivalent:
   \begin{enumerate}
    \item $F\in N^p(\HH_2)$.
    \item $P_{0-}F,E_{-}F,LF\in N^{p-1}(\HH_2)$.
   \end{enumerate}
   In particular, $F$ is holomorphic if and only if $P_{0-}F=E_{-}F=LF=0$.
 \end{enumerate}
\end{lemma}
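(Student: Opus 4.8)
\textbf{Proof plan for Lemma~\ref{holomorphyconditionlemma}.}

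The plan is to reduce everything to the basic fact (Lemma~\ref{nearholouniquelemma}) that a nearly holomorphic function $F$ has a unique expansion $F=\sum_{\alpha,\beta,\gamma\geq0}[\alpha,\beta,\gamma]F_{\alpha,\beta,\gamma}$ with holomorphic coefficients, together with the explicit action of $\bar\partial_0,\bar\partial_1,\bar\partial_2,\bar\partial_3$ and $D_-$ on the monomials $[\alpha,\beta,\gamma]$ given in Lemma~\ref{nearholodiffoplemma}. In each of the three cases the direction ``(a)$\Rightarrow$(b)'' is immediate: if $F\in N^p(\HH_2)$ (for $m=0$) or all component functions $F_j\in N^p(\HH_2)$ (for $m\geq1$), then Proposition~\ref{Nplusoperatorsnearholoprop}(2) together with the nearly-holomorphic-degree drop recorded in Table~\ref{Nplusoperatorsnearholotable} shows that $P_{0-}F$, $E_-F$, and (when $m\geq2$) $LF$ lie in $N^{p-1}$. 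So the whole content is the converse, and it suffices to treat it for each fixed component function; I will organize the argument around peeling off the top nearly holomorphic layer.

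For the scalar case $m=0$: by the explicit formula \eqref{P0minuspropeq1} we have $P_{0-}F=-\bar\partial_0 F$ (the terms $F_{-1},F_{-2}$ vanish), and by \eqref{del0eq}, $\bar\partial_0=2i(v^2\partial_{\bar\tau}+vy'\partial_{\bar z}+y'^2\partial_{\bar\tau'})$. The key computation is that $\bar\partial_0$ acts on a general smooth function by ``differentiating in the $y/\Delta$ direction'': writing any $F\in C^\infty_{\ell,0}(\HH_2)$ formally, I would argue that if $\bar\partial_0 F$ lies in $N^{p-1}(\HH_2)$ then $F$ lies in $N^p(\HH_2)$. The cleanest route is: first establish that the kernel of $\bar\partial_0$ on $C^\infty(\HH_2)$ is exactly the holomorphic functions (this is essentially Corollary~\ref{PhiFholomorphiclemma} in the scalar case, since $P_{0-}F=0\iff \mathfrak n\vec\Phi=0\iff F$ holomorphic), and then run an induction on $p$: given $\bar\partial_0 F\in N^{p-1}$, expand $\bar\partial_0 F=\sum[\alpha,\beta,\gamma]G_{\alpha,\beta,\gamma}$ with $\alpha\le p-1$, use \eqref{nearholodiffoplemmaeq1} to write down a candidate nearly-holomorphic $\tilde F$ with $\bar\partial_0\tilde F=\bar\partial_0 F$ (integrating the $\alpha$-index up by one), and conclude $F-\tilde F$ is holomorphic, hence $F\in N^p$. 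This gives case (1).

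For $m=1$: here $F=F_0 S+F_1 T$, and from \eqref{P0minuspropeq1} and \eqref{Dmminuspropeq1a} the hypotheses $P_{0-}F,E_-F\in N^{p-1}$ translate (after a small linear-algebra unscrambling of the $2\times 2$ system in $F_0,F_1$) into statements that suitable combinations of $\bar\partial_0,\bar\partial_1,\bar\partial_2$ applied to $F_0,F_1$ are nearly holomorphic of degree $\le p-1$; since the three operators $\bar\partial_0,\bar\partial_1,\bar\partial_2$ together with their commuting behavior on $N(\HH_2)$ detect all three indices $\alpha,\beta,\gamma$, the same layer-peeling induction forces $F_0,F_1\in N^p$. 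The case $m\geq2$ is analogous but now one genuinely needs all three operators $P_{0-},E_-,L$: by \eqref{P0minuspropeq1}, \eqref{Dmminuspropeq1a}, \eqref{Lmpropeq1} these involve only $\bar\partial_0,\bar\partial_1,\bar\partial_2$ acting on the $F_j$, and the (generically invertible) coefficient matrices let one solve for $\bar\partial_i F_j\in N^{p-1}$ for all $i,j$; one must check the low-$j$ or boundary cases where a coefficient like $(m-j)(m-j-1)$ or $(j+1)(j+2)$ vanishes, but there are always enough of the three relations at each $j$ to recover all of $\bar\partial_0 F_j,\bar\partial_1 F_j,\bar\partial_2 F_j$, after which Lemma~\ref{nearholouniquelemma} and the induction finish the proof. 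The final ``in particular'' assertions are the special case $p=0$, where $N^{-1}=0$.

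\textbf{Main obstacle.} The real work is the inductive ``one can integrate a nearly holomorphic derivative'' step, i.e.\ showing that $\bar\partial_i F\in N^{p-1}(\HH_2)$ forces $F\in N^p(\HH_2)$, and the accompanying bookkeeping that the three operators $\bar\partial_0,\bar\partial_1,\bar\partial_2$ jointly see all three coordinates $\alpha,\beta,\gamma$ of a monomial $[\alpha,\beta,\gamma]$. Once that is in place, verifying that the linear systems coming from \eqref{P0minuspropeq1}, \eqref{Dmminuspropeq1a}, \eqref{Lmpropeq1} are solvable for the relevant $\bar\partial_i F_j$ — including at the handful of boundary indices $j$ — is routine but is where care is needed; I expect the $m\geq2$ case to require checking that at each $j$ at least one of the three relations survives with an invertible coefficient.
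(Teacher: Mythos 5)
Your overall strategy --- translate the hypotheses into statements about $\bar\partial_0,\bar\partial_1,\bar\partial_2$ applied to the component functions, invert the resulting constant-coefficient linear system, and then ``integrate'' using Lemmas \ref{nearholodiffoplemma} and \ref{nearholouniquelemma} --- is the right one, and is essentially the verification the paper has in mind (the paper omits all details). The counting even comes out exactly: $P_{0-}F$, $E_-F$, $LF$ have $m+3$, $m+1$, $m-1$ components respectively, i.e.\ $3(m+1)$ equations for the $3(m+1)$ unknowns $\bar\partial_iF_j$. But your case (1) contains a genuine error. For $m=0$ the operator $P_{0-}$ sends $F$ to a $W_2$-valued function, and by \eqref{P0minuspropeq1} its \emph{three} components are $-\bar\partial_0F$, $-\bar\partial_1F$ and $-\bar\partial_2F$ (the indices $j=0,1,2$ all contribute, with $F_0=F$); it is not the single function $-\bar\partial_0F$. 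Correspondingly, your key claim that the kernel of $\bar\partial_0$ on $C^\infty(\HH_2)$ consists exactly of the holomorphic functions is false: by \eqref{nearholodiffoplemmaeq1} every monomial $[0,\beta,\gamma]$ is killed by $\bar\partial_0$ (e.g.\ $y'/\Delta$), and so is the non-nearly-holomorphic function $e^{y'/\Delta}$, since $\bar\partial_0$ is a first-order operator with no zeroth-order term. In particular $\bar\partial_0F\in N^{p-1}(\HH_2)$ does \emph{not} force $F\in N^p(\HH_2)$. Corollary \ref{PhiFholomorphiclemma} does not rescue this: the equivalence $P_{0-}\Phi=0\iff\mathfrak n\Phi=0$ for $m=0$ uses the vanishing of the entire weight-$(\ell,\ell-2)$ $K$-type generated by $P_{0-}\Phi$, which corresponds to all three components $\bar\partial_0F,\bar\partial_1F,\bar\partial_2F$, not just the first.

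The fix is cheap and brings case (1) in line with your treatment of (2) and (3): the hypothesis $P_{0-}F\in N^{p-1}(\HH_2)$ gives $\bar\partial_0F,\bar\partial_1F,\bar\partial_2F\in N^{p-1}(\HH_2)$ simultaneously, and since the matrix expressing $(\bar\partial_0,\bar\partial_1,\bar\partial_2)$ in terms of $(\partial_{\bar\tau},\partial_{\bar z},\partial_{\bar\tau'})$ has determinant a nonzero constant times $\Delta^3$, this controls all three antiholomorphic derivatives of $F$; for $p=1$ one then sets $\tilde F=[1,0,0]\,\bar\partial_0F+[0,1,0]\,\bar\partial_1F+[0,0,1]\,\bar\partial_2F$, checks $\bar\partial_i(F-\tilde F)=0$ for all $i$, and concludes $F-\tilde F$ is holomorphic. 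Two smaller points. First, your phrase ``induction on $p$'' suggests you intend arbitrary $p$, but the lemma deliberately restricts to $p\in\{0,1\}$, and this matters: for larger $p$ the integration step needs compatibility conditions among the $\bar\partial_iF_j$, and the paper notes that $\bar\partial_0,\bar\partial_1,\bar\partial_2$ commute only on $N(\HH_2)$, not on all of $C^\infty(\HH_2)$. Second, the ``invertibility at boundary indices'' you defer does need to be checked, but it works out; e.g.\ for $m=1$ the unknowns pair off into the $2\times2$ systems $\{G_1,G^-_0\}$ and $\{G_2,G^-_1\}$, each with determinant $\pm3$.
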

\begin{proof}
These properties can be verified from the explicit formulas for the operators written above. The details are omitted.
\end{proof}
\section{The structure theorems}\label{s:structure}
\subsection{Modular forms}
Recall that, for a positive integer $N$, the \emph{principal congruence subgroup} $\Gamma(N)$ consists of all elements of $\Sp_4(\Z)$ that are congruent to the identity matrix modulo $N$. A \emph{congruence subgroup} of $\Sp_4(\Q)$  is a subgroup that, for some $N$, contains $\Gamma(N)$ with finite index. The reason that we do not restrict ourselves to subgroups of $\Sp_4(\Z)$ is that we would like to include groups like the paramodular group.

Let $\ell$ be an integer, and $m$ a non-negative integer. Recall from Sect.\ \ref{fctsgrpH2sec} that $\eta_{\ell,m}$ denotes the $(m+1)$-dimensional representation $\det^\ell{\rm sym}^m$ of $\GL_2(\C)$. As before, let $C^\infty_{\ell,m}(\HH_2)$ be the space of smooth $W_m$-valued functions on $\HH_2$. We define a right action of $\Sp_4(\R)$ on $C^\infty_{\ell,m}(\HH_2)$ by
\begin{equation}\label{slashoperatoreq}
 (F\big|_{\ell,m}g)(Z)=\eta_{\ell,m}(J(g,Z))^{-1}F(gZ)\qquad\text{for }g\in\Sp_4(\R),\;Z\in\HH_2.
\end{equation}
In the following we fix a congruence subgroup $\Gamma$ of $\Sp_4(\Q)$. Let $C^\infty_{\ell,m}(\Gamma)$ be the space of smooth functions $F:\:\HH_2\to W_m$ satisfying
\begin{equation}\label{modformeq1}
 F\big|_{\ell,m}\gamma=F\qquad\text{for all }\gamma\in\Gamma.
\end{equation}
It is easy to see that $F\in C^\infty_{\ell,m}(\HH_2)$ has this transformation property if and only if the function $\Phi\in\mathcal{V}_{\ell,m}$ corresponding to $F$ via Lemma \ref{FPhilemma} satisfies $\Phi(\gamma g)=\Phi(g)$ for all $g\in\Sp_4(\R)$ and $\gamma\in\Gamma$. Let $\mathcal{V}_{\ell,m}(\Gamma)$ be the subspace of $\mathcal{V}_{\ell,m}$ consisting of $\Phi$ with this transformation property. If $X$ is one of the operators in Table \ref{Nplusoperatorstable}, then it follows from Proposition \ref{Nplusoperatorsnearholoprop} that there is a commutative diagram
\begin{equation}\label{Nplusoperatorsnearholopropeq2}
    \begin{CD}
     \mathcal{V}_{\ell,m}(\Gamma)@>\sim>>C^\infty_{\ell,m}(\Gamma)\\
     @V{X}VV @VV{X}V\\
     \mathcal{V}_{\ell_1,m_1}(\Gamma)@>\sim>>C^\infty_{\ell_1,m_1}(\Gamma)
    \end{CD}
\end{equation}
Here, $\ell_1,m_1$ are the integers given in Table \ref{Nplusoperatorsnearholotable}. (One could verify directly that if $F$ satisfies \eqref{modformeq1}, then $XF$ satisfies $(XF)\big|_{\ell_1,m_1}\gamma=F$ for all $\gamma\in\Gamma$, but the use of the diagrams is much easier.)

More generally, one has the following basic commutation relation.

\begin{lemma}\label{slashliecommute}
 Let $\mathcal{X}$ be the free monoid consisting of all (finite) strings of the symbols in the left column of Table~\ref{Nplusoperatorstable}. Suppose that $X$ is an element of $\mathcal{X}$ and let $(\ell_1, m_1)$ be the integers (uniquely determined by $\ell$, $m$ and $X$) such that $X$ takes $C_{\ell, m}^\infty(\Gamma)$ to $C_{\ell_1, m_1}^\infty(\Gamma)$. Let $\gamma \in \Sp_4(\R)$. Then, for all $F \in C_{\ell, m}^\infty(\H_2)$, we have $(XF)|_{\ell_1, m_1}\gamma = X(F|_{\ell,m}\gamma)$.
\end{lemma}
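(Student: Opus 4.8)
The plan is to reduce Lemma~\ref{slashliecommute} to the case where $X$ is one of the eight basic operators from Table~\ref{Nplusoperatorstable}, and then to the case of a single root vector, where the commutation with the $|_{\ell,m}$-action is essentially the chain rule. First I would observe that, since an arbitrary element of the free monoid $\mathcal{X}$ is a composition of the basic operators, and since the slash action is an action (so $(F|_{\ell,m}\gamma_1)|_{\ell,m}\gamma_2$-style associativity holds), it suffices to prove the identity $(XF)|_{\ell_1,m_1}\gamma = X(F|_{\ell,m}\gamma)$ for each single basic operator $X$; the general statement then follows by an immediate induction on the length of the string, composing the commutative squares.

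Next I would transport the problem to the group. By Lemma~\ref{FPhilemma}, $F \in C^\infty_{\ell,m}(\HH_2)$ corresponds to $\Phi \in \mathcal{V}_{\ell,m}$ via $\vec\Phi(g) = \eta_{\ell,m}(J(g,I))^{-1}F(gI)$, and by Proposition~\ref{Nplusoperatorsnearholoprop}(1) the operator $X$ on functions on $\HH_2$ corresponds to the Lie-algebra element $X \in \mathcal{U}(\mathfrak{g}_\C)$ acting by right translation on $\Phi$. The key point is that the right translation action of $\mathcal{U}(\mathfrak{g}_\C)$ commutes with the \emph{left} translation action of $\Sp_4(\R)$ on smooth functions on the group: if $(L_\gamma \Phi)(g) = \Phi(\gamma^{-1}g)$ then $X(L_\gamma\Phi) = L_\gamma(X\Phi)$ for any $X \in \mathcal{U}(\mathfrak{g}_\C)$, simply because $X$ is defined through $\frac{d}{dt}|_0 \Phi(g\exp(tX))$ and left translation does not interact with the right argument. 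So the heart of the matter is the compatibility of the classical slash action on $C^\infty_{\ell,m}(\HH_2)$ with left translation on $\mathcal{V}_{\ell,m}$ under the correspondence of Lemma~\ref{FPhilemma}.

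The remaining step is therefore to check: if $\Phi$ corresponds to $F$, then for $\gamma \in \Sp_4(\R)$, the function $\gamma \cdot \Phi$ defined by $(\gamma\cdot\Phi)(g) = \Phi(\gamma^{-1}g)$ corresponds to $F|_{\ell,m}\gamma$. This is a direct computation from the cocycle relation $J(g_1g_2,Z) = J(g_1,g_2Z)J(g_2,Z)$: writing $\vec\Phi(g) = \eta_{\ell,m}(J(g,I))^{-1}F(gI)$ and substituting $\gamma^{-1}g$, one gets $\eta_{\ell,m}(J(\gamma^{-1}g,I))^{-1}F(\gamma^{-1}gI)$, and factoring $J(\gamma^{-1}g,I) = J(\gamma^{-1}, gI)J(g,I)$ together with $\gamma^{-1}gI = \gamma^{-1}(gI)$ lets one recognize the right-hand side as $\eta_{\ell,m}(J(g,I))^{-1}(F|_{\ell,m}\gamma)(gI)$, as desired. (One should be slightly careful that $\gamma \in \Sp_4(\R)$ rather than $\GSp_4(\R)^+$, but the definitions in \eqref{slashoperatoreq} and \eqref{Jdefeq} are set up for exactly this; if one wants the statement for $\GSp_4(\R)^+$ one uses the standard extension of the slash action by the similitude factor, which does not affect the argument.) Combining the three steps: $(XF)|_{\ell_1,m_1}\gamma$ corresponds under Lemma~\ref{FPhilemma} to $\gamma\cdot(X\Phi) = X(\gamma\cdot\Phi)$, which corresponds to $X(F|_{\ell,m}\gamma)$, and since the correspondence of Lemma~\ref{FPhilemma} is a bijection this proves the identity.

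I expect the only genuine subtlety — hardly an obstacle — to be bookkeeping: making sure the representation $\eta_{\ell,m}$ is being applied to the correct argument ($J(h,I)$ versus $\iota(h)$, cf.\ the discussion around \eqref{vecPhitransformlemmaeq0}) and that the cocycle manipulation is done with the correct order of factors, since $\eta_{\ell,m}$ is not abelian. Everything else is formal: the monoid reduction is a one-line induction, the left/right commutation is immediate from the definition of the $\mathcal{U}(\mathfrak{g}_\C)$-action, and the Proposition~\ref{Nplusoperatorsnearholoprop} already packages the identification of the classical operators with Lie-algebra elements.
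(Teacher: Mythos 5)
Your proposal is correct and follows essentially the same route as the paper: pass to the group via Lemma~\ref{FPhilemma} and Proposition~\ref{Nplusoperatorsnearholoprop}, observe that the slash action corresponds to translating the argument of $\Phi$ on the left, and conclude from the fact that the right-translation action of $\mathcal{U}(\mathfrak{g}_\C)$ commutes with left translation. One small bookkeeping slip: with the convention $\vec\Phi(g)=\eta_{\ell,m}(J(g,I))^{-1}F(gI)$, the cocycle computation shows that $F|_{\ell,m}\gamma$ corresponds to $g\mapsto\Phi(\gamma g)$ rather than to $g\mapsto\Phi(\gamma^{-1}g)$ (the latter corresponds to $F|_{\ell,m}\gamma^{-1}$), but this does not affect the argument since left and right translations commute either way.
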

\begin{proof}
Let $\Phi$ be the function corresponding to $F$  via Lemma \ref{FPhilemma}. Then it follows from Proposition \ref{Nplusoperatorsnearholoprop} that $X\Phi$ corresponds to $XF$. On the other hand, the operation $|_{\ell_1, m_1}\gamma$ corresponds to left multiplication of the argument by $\gamma$. Define a function $\Phi_1$ on $\Sp_4(\R)$ by $\Phi_1(g) = \Phi(\gamma g)$. Now the proof follows from the obvious identity $(X\Phi)(\gamma g) = (X\Phi_1)(g)$.\end{proof}
\subsubsection*{Fourier expansions}
Now let $F\in C^\infty_{\ell,m}(\Gamma)\cap N^p_{\ell,m}(\HH_2)$. Hence, $F$ is nearly holomorphic and satisfies \eqref{modformeq1}. Let $F_0,\ldots,F_m$ be the component functions of $F$, as defined in \eqref{Fjformulaeq}. Suppose $F_j$ is written as $F_j=\sum_{\alpha,\beta,\gamma}[\alpha,\beta,\gamma]F_{j,\alpha,\beta,\gamma}$ with holomorphic functions $F_{j,\alpha,\beta,\gamma}$; see \eqref{nearholomonomialeq} for notation. Since $F$ is invariant under the translations $\tau\mapsto\tau+N$, $z\mapsto z+N$ and $\tau'\mapsto\tau'+N$ for some positive integer $N$, the same is true for $F_j$ and each $F_{j,\alpha,\beta,\gamma}$; observe here Lemma \ref{nearholouniquelemma}. Thus $F_{j,\alpha,\beta,\gamma}$ admits a Fourier expansion
\begin{equation}\label{Fourierexpansioneq1}
 F_{j,\alpha,\beta,\gamma}(Z)=\sum_{Q}a_{j,\alpha,\beta,\gamma}(Q)e^{2\pi i\,{\rm Tr}(QZ)},
\end{equation}
where $Q$ runs over matrices $\big[\begin{smallmatrix}a&b/2\\b/2&c\end{smallmatrix}\big]$ with $a,b,c\in\frac1N\Z$. It follows that $F_j$ admits a Fourier expansion
\begin{equation}\label{Fourierexpansioneq2}
 F_j(Z)=\sum_{Q}a_j(Q)e^{2\pi i\,{\rm Tr}(QZ)},\quad a_j(Q):=\sum_{\alpha,\beta,\gamma}a_{j,\alpha,\beta,\gamma}(Q)\big[\alpha,\beta,\gamma\big],
\end{equation}
and that $F$ admits a Fourier expansion
\begin{equation}\label{Fourierexpansioneq3}
 F(Z)=\sum_{Q \in M_2^{\sym}(\Q)}a(Q)e^{2\pi i\,{\rm Tr}(QZ)},
\end{equation}
where
\begin{equation}\label{Fourierexpansioneq4}
 a(Q)=\sum_{j=0}^m\:\sum_{\alpha,\beta,\gamma}a_{j,\alpha,\beta,\gamma}(Q)\big[\alpha,\beta,\gamma\big]S^{m-j}T^j.
\end{equation}
Thus, the Fourier coefficients of $F$ are polynomial functions in the entries of $Y^{-1}$ taking values in $W_m$. For fixed $Q$, the complex-valued functions $a_j(Q)$ in \eqref{Fourierexpansioneq2} are nothing but the component functions of $a(Q)$. If $X$ is one of the operators defined in \eqref{operatorsonfunctions1}, \eqref{operatorsonfunctions2}, \eqref{operatorsonfunctions6} or \eqref{operatorsonfunctions8}, and if $F$ has Fourier expansion \eqref{Fourierexpansioneq3}, then $XF$ has Fourier expansion
\begin{equation}\label{Fourierexpansioneq5}
 (XF)(Z)=\sum_{Q}(Xa(Q))e^{2\pi i\,{\rm Tr}(QZ)}.
\end{equation}
This follows directly from the definitions and the fact that $e^{2\pi i\,{\rm Tr}(QZ)}$ is holomorphic for all matrices $Q$. If $X$ is one of the operators defined in \eqref{operatorsonfunctions3}, \eqref{operatorsonfunctions4}, \eqref{operatorsonfunctions5} or \eqref{operatorsonfunctions7}, then the Fourier expansion of $XF$ is more complicated. However, it is easy to see that
\begin{equation}\label{Fourierexpansioneq6}
 (XF)(Z)=\sum_{Q}b(Q)e^{2\pi i\,{\rm Tr}(QZ)},\qquad\text{with $b(Q)=0$ if $a(Q)=0$}.
\end{equation}
Hence, none of the eight operators introduces any ``new'' Fourier coefficients.
\subsubsection*{Nearly holomorphic modular forms}
Let $\ell$ be an integer, and $m,p$ be non-negative integers. For a congruence subgroup $\Gamma$, let $N^p_{\ell,m}(\Gamma)$ be the space of all functions $F:\:\HH_2\to W_m$ with the following properties.
\begin{enumerate}
 \item $F\in N^p_{\ell,m}(\HH_2)$.
 \item $F$ satisfies the transformation property \eqref{modformeq1}.
 \item $F$ satisfies the ``no poles at cusps" condition. This means: For any $g\in\Sp_4(\Q)$ the function $F\big|_{\ell,m}g$ admits a Fourier expansion of the form \eqref{Fourierexpansioneq3} such that $a(Q)=0$ unless $Q$ is positive semidefinite.
\end{enumerate}
Let $N_{\ell,m}(\Gamma)=\bigcup_{p\geq0}N^p_{\ell,m}(\Gamma)$. We refer to $N_{\ell,m}(\Gamma)$ as the space of \emph{nearly holomorphic Siegel modular forms} of weight $\det^{\ell}\sym^m$ with respect to $\Gamma$. We sometimes write $M_{\ell,m}(\Gamma)$ for $N^0_{\ell,m}(\Gamma)$; this is the usual space of holomorphic vector-valued Siegel modular forms taking values in $\eta_{\ell,m}$.

An element $F\in N_{\ell,m}(\Gamma)$ is called a \emph{cusp form} if it vanishes at all cusps. By definition, this means: For any $g\in\Sp_4(\Q)$ the function $F\big|_{\ell,m}g$ admits a Fourier expansion of the form \eqref{Fourierexpansioneq3}, for some $N$, such that $a(Q)=0$ unless $Q$ is positive definite. We write $N_{\ell,m}(\Gamma)^\circ$ for the subspace of cusp forms. Let $N^p_{\ell,m}(\Gamma)^\circ=N_{\ell,m}(\Gamma)^\circ\cap N^p_{\ell,m}(\Gamma)$. We sometimes write $S_{\ell,m}(\Gamma)$ for $N^0_{\ell,m}(\Gamma)^\circ$; this is the usual space of holomorphic vector-valued Siegel cusp forms taking values in $\eta_{\ell,m}$.

\begin{lemma}\label{Nlmpfindimlemma}
 The spaces $N^p_{\ell,m}(\Gamma)$ and $N^p_{\ell,m}(\Gamma)^\circ$ are finite-dimensional.
\end{lemma}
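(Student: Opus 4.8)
The plan is to argue by induction on the nearly holomorphic degree $p$, proving the assertion ``$N^p_{\ell,m}(\Gamma)$ is finite-dimensional'' simultaneously for all integers $\ell$ and all non-negative integers $m$. The base case $p=0$ is exactly the finite-dimensionality of $M_{\ell,m}(\Gamma)=N^0_{\ell,m}(\Gamma)$, the space of holomorphic vector-valued Siegel modular forms of weight $\det^\ell\sym^m$ for $\Gamma$; this is classical, and I would simply invoke it (it holds for an arbitrary congruence subgroup, since such a subgroup contains some $\Gamma(N)$ of finite index and hence $M_{\ell,m}(\Gamma)\subseteq M_{\ell,m}(\Gamma(N))$).

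For the inductive step, fix $p\geq1$ and assume the assertion for $p-1$ and all weights. The main tool is the holomorphy criterion of Lemma~\ref{holomorphyconditionlemma}, which characterizes holomorphy by the vanishing of the lowering operators $P_{0-}$, $E_{-}$ and $L$. Given $F\in N^p_{\ell,m}(\Gamma)$, I would form the triple $(LF,E_{-}F,P_{0-}F)$, dropping the first entry when $m<2$ (where $L=0$ by convention). By Proposition~\ref{Nplusoperatorsnearholoprop}(2) together with Table~\ref{Nplusoperatorsnearholotable}, each of these functions lies in the appropriate space $N^{p-1}_{\bullet,\bullet}(\HH_2)$; by the commutative diagram \eqref{Nplusoperatorsnearholopropeq2} (equivalently Lemma~\ref{slashliecommute}) each again satisfies the transformation law \eqref{modformeq1}; and since $P_{0-}$, $E_{-}$, $L$ are precisely among the operators for which the Fourier expansion transforms coefficient-by-coefficient as in \eqref{Fourierexpansioneq5}--\eqref{Fourierexpansioneq6}, each again satisfies the ``no poles at cusps'' condition (apply this after slashing by an arbitrary $g\in\Sp_4(\Q)$, using Lemma~\ref{slashliecommute}). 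Thus one obtains a well-defined linear map
$$
 \Theta\colon\ N^p_{\ell,m}(\Gamma)\ \longrightarrow\ N^{p-1}_{\ell,m-2}(\Gamma)\oplus N^{p-1}_{\ell-1,m}(\Gamma)\oplus N^{p-1}_{\ell-2,m+2}(\Gamma),\qquad F\mapsto(LF,E_{-}F,P_{0-}F),
$$
whose target is finite-dimensional by the inductive hypothesis.

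It then remains to bound $\ker\Theta$. If $\Theta F=0$ then $P_{0-}F=E_{-}F=LF=0$, so $F$ is holomorphic by Lemma~\ref{holomorphyconditionlemma}; since $F$ still satisfies \eqref{modformeq1} and the no-poles condition, and $N^0_{\ell,m}(\Gamma)\subseteq N^p_{\ell,m}(\Gamma)$, we get $F\in M_{\ell,m}(\Gamma)$. Hence
$$
 \dim N^p_{\ell,m}(\Gamma)\ \leq\ \dim M_{\ell,m}(\Gamma)+\dim N^{p-1}_{\ell,m-2}(\Gamma)+\dim N^{p-1}_{\ell-1,m}(\Gamma)+\dim N^{p-1}_{\ell-2,m+2}(\Gamma)\ <\ \infty,
$$
closing the induction. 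The cusp-form statement follows at once, since $N^p_{\ell,m}(\Gamma)^\circ=N^p_{\ell,m}(\Gamma)\cap N_{\ell,m}(\Gamma)^\circ$ is a subspace of $N^p_{\ell,m}(\Gamma)$. I do not expect any substantive obstacle here: the only external ingredient is the classical finite-dimensionality of spaces of holomorphic Siegel modular forms, and the only point needing a moment's care is verifying that the three chosen operators preserve all three defining conditions of $N^p_{\ell,m}(\Gamma)$ — in particular the cusp condition, which is exactly where \eqref{Fourierexpansioneq5}--\eqref{Fourierexpansioneq6} enter.
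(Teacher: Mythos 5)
Your proof is correct and follows essentially the same route as the paper: induction on $p$ with base case the classical finite-dimensionality of $M_{\ell,m}(\Gamma)$, and inductive step via the map $F\mapsto(LF,E_-F,P_{0-}F)$ whose kernel is $M_{\ell,m}(\Gamma)$ by Lemma~\ref{holomorphyconditionlemma}. The only difference is that you spell out the verification that the image lands in the spaces $N^{p-1}_{\bullet,\bullet}(\Gamma)$ (transformation law and cusp conditions), which the paper leaves implicit.
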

\begin{proof}
Obviously, we only need to prove this for $N^p_{\ell,m}(\Gamma)$. It is well known, and can be proved using Harish-Chandra's general finiteness result stated as Theorem 1.7 in \cite{BorelJacquet1979}, that the statement is true for $p=0$, i.e., for holomorphic modular forms. Assume that $p>0$. If $m=0$, then, by (1) of Lemma \ref{holomorphyconditionlemma}, the map $F\mapsto P_{0-}F$ gives rise to an exact sequence
$$
 0\longrightarrow M_{\ell,m}(\Gamma)\longrightarrow N^p_{\ell,m}(\Gamma)\longrightarrow N^{p-1}_{\ell-2,m+2}(\Gamma).
$$
If $m=1$, then, by (2) of Lemma \ref{holomorphyconditionlemma}, the map $F\mapsto(P_{0-}F,E_{-}F)$ gives rise to an exact sequence
$$
 0\longrightarrow M_{\ell,m}(\Gamma)\longrightarrow N^p_{\ell,m}(\Gamma)\longrightarrow N^{p-1}_{\ell-2,m+2}(\Gamma)\oplus N^{p-1}_{\ell-1,m}(\Gamma).
$$
If $m\geq2$, then, by (3) of Lemma \ref{holomorphyconditionlemma}, the map $F\mapsto(P_{0-}F,E_{-}F,LF)$ gives rise to an exact sequence
$$
 0\longrightarrow M_{\ell,m}(\Gamma)\longrightarrow N^p_{\ell,m}(\Gamma)\longrightarrow N^{p-1}_{\ell-2,m+2}(\Gamma)\oplus N^{p-1}_{\ell-1,m}(\Gamma)\oplus N^{p-1}_{\ell,m-2}(\Gamma).
$$
Hence our assertion follows by induction on $p$.
\end{proof}
\subsection{Automorphic forms}
Let $\Gamma$ be a congruence subgroup of $\Sp_4(\Q)$. We denote by $\AA(\Gamma)$ the space of automorphic forms on $\Sp_4(\R)$ with respect to $\Gamma$. Recall that an automorphic form is a smooth function on $\Sp_4(\R)$ that is left $\Gamma$-invariant, $\mathcal{Z}$-finite, $K$-finite and slowly increasing; here $\mathcal{Z}$ is the center of $\mathcal{U}(\mathfrak{g}_\C)$. Let $\AA(\Gamma)^\circ$ be the subspace of cuspidal automorphic forms. We refer to \cite{BorelJacquet1979} for precise definitions of these notions. The spaces $\AA(\Gamma)$ and $\AA(\Gamma)^\circ$ are $(\mathfrak{g},K)$-modules under right translation.

Let $dg$ be any Haar measure on $\Sp_4(\R)$. For $\Phi_1$ and $\Phi_2$ in $\AA(\Gamma)$, we define the integral
\begin{equation}\label{AA0innerproducteq}
 \langle\Phi_1,\Phi_2\rangle:=\frac1{{\rm vol}(\Gamma\backslash\Sp_4(\R))}\:\int\limits_{\Gamma\backslash\Sp_4(\R)}\Phi_1(g)\overline{\Phi_2(g)}\,dg
\end{equation}
whenever it is \emph{absolutely convergent}. This happens, for example, whenever at least one of $\Phi_1$ and $\Phi_2$ lies in $\AA(\Gamma)^\circ$. In particular, $\langle\,,\rangle$ defines an inner product on $\AA(\Gamma)^\circ$ invariant under right translations by $\Sp_4(\R)$. For an element $X \in \g$, we have $$\langle X\Phi_1, \Phi_2 \rangle + \langle \Phi_1, X\Phi_2 \rangle = 0.$$  By general principles (see \cite{BorelJacquet1979} and the references therein) $\AA(\Gamma)^\circ$ decomposes into an orthogonal direct sum of irreducible $(\mathfrak{g},K)$-modules, each occurring with finite multiplicity.

Let $\lambda=(k,\ell)$ be an element of the weight lattice $\Lambda$. We say that $\Phi\in\AA(\Gamma)$ has weight $\lambda$ if $Z\Phi=k\Phi$ and $Z'\Phi=\ell\Phi$ (right translation action). Let $\AA_\lambda(\Gamma)$ be the subspace of $\AA(\Gamma)$ consisting of elements of weight $\lambda$, and let $\AA_\lambda(\Gamma)^\circ$ be similarly defined.

Let $\mathfrak{n}\subset\mathfrak{g}$ be the span of the root vectors $X_-$, $P_{1-}$ and $P_{0-}$. Then $\mathcal{U}(\mathfrak{n})$ is the polynomial algebra in the three variables $X_-$, $P_{1-}$ and $P_{0-}$. An automorphic form $\Phi$ is called \emph{$\mathfrak{n}$-finite} if the space $\mathcal{U}(\mathfrak{n})\Phi$ is finite-dimensional. We denote by $\AA(\Gamma)_{\mathfrak{n}\text{-fin}}$ the space of $\mathfrak{n}$-finite automorphic forms, and by $\AA(\Gamma)^\circ_{\mathfrak{n}\text{-fin}}$ the subspace of cusp forms. The following properties are easy to verify:
\begin{itemize}
 \item $\AA(\Gamma)_{\mathfrak{n}\text{-fin}}$ is a $(\mathfrak{g},K)$-submodule of $\AA(\Gamma)$.
 \item $\AA(\Gamma)_{\mathfrak{n}\text{-fin}}$ is the direct sum of its $K$-types, i.e.: If $\Phi\in\AA(\Gamma)_{\mathfrak{n}\text{-fin}}$ and $\Phi=\Phi_1+\ldots+\Phi_m$, where $\Phi_i$ lies in the $\rho_i$-isotypical component of $\AA(\Gamma)$ for different $K$-types $\rho_i$, then $\Phi_i\in\AA(\Gamma)_{\mathfrak{n}\text{-fin}}$ for each $i$.

\end{itemize}
Analogous statements hold for cusp forms.
\begin{lemma}\label{admissibilitylemma}
 $\AA(\Gamma)_{\mathfrak{n}\text{-fin}}$ is an admissible $(\mathfrak{g},K)$-module.
\end{lemma}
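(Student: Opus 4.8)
The plan is to show that for each $K$-type $\rho_\lambda$, the $\rho_\lambda$-isotypical component of $\AA(\Gamma)_{\mathfrak{n}\text{-fin}}$ is finite-dimensional. Fix $\lambda=(k,\ell)\in\Lambdap$. Since $\AA(\Gamma)_{\mathfrak{n}\text{-fin}}$ is the direct sum of its $K$-types, it suffices to bound the multiplicity of each $\rho_\lambda$. An element of the $\rho_\lambda$-isotypical component is determined by its highest weight vectors, so I would look at functions $\Phi\in\AA(\Gamma)$ of weight $(k,\ell)$ with $N_+\Phi=0$ that are $\mathfrak{n}$-finite; equivalently, writing $m=k-\ell\geq0$, these are functions in $\mathcal{V}_{\ell,m}(\Gamma)$ with $\mathcal{U}(\mathfrak{n})\Phi$ finite-dimensional. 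By Lemma~\ref{FPhilemma} such a $\Phi$ corresponds to a smooth function $F:\HH_2\to W_m$ satisfying the transformation property \eqref{modformeq1}.

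The key step is to translate $\mathfrak{n}$-finiteness into near-holomorphy. Since $\mathcal{U}(\mathfrak{n})$ is a polynomial ring in $X_-,P_{1-},P_{0-}$, the space $\mathcal{U}(\mathfrak{n})\Phi$ being finite-dimensional means that sufficiently high powers of these operators (and products thereof) annihilate $\Phi$. Using \eqref{PhivecPhiholeq} this transfers to $\vec\Phi$, and then via Proposition~\ref{rootvectorsactionprop} — specifically the formulas \eqref{opfourierlemmaeq4}, \eqref{opfourierlemmaeq6}, \eqref{opfourierlemmaeq8} expressing the action of $P_{0-},P_{1-},X_-$ in terms of the anti-holomorphic Wirtinger derivatives — one sees that $\mathfrak{n}$-finiteness of $\Phi$ forces the component functions $F_j$ of $F$ to be polynomials of bounded degree in $y/\Delta, v/\Delta, y'/\Delta$ with holomorphic coefficients; i.e., $F\in N^p_{\ell,m}(\HH_2)$ for some $p$ bounded in terms of the dimension of $\mathcal{U}(\mathfrak{n})\Phi$. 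The slow-increase condition on automorphic forms then yields the ``no poles at cusps'' condition, so $F\in N^p_{\ell,m}(\Gamma)$.

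However, a uniform bound on $p$ across the whole isotypical component is exactly what we do not a priori have — different $\Phi$'s could have larger and larger $\mathfrak{n}$-finite spans. To get around this, I would argue as follows: the Casimir $\Omega_2$ acts on the finite-dimensional space $\mathcal{U}(\mathfrak{n})\Phi+\C\Phi$, and by $\mathcal{Z}$-finiteness of automorphic forms $\Phi$ satisfies a fixed polynomial in $\Omega_2$; combined with the weight being fixed at $(k,\ell)$ and with the fact that the possible central characters correspond (via Lemma~\ref{casimirNlambdalemma} applied to the finitely many $L(\mu)$ with $\mu\succcurlyeq$ something of bounded size) to only finitely many eigenvalues, the $(\mathfrak{g},K)$-module generated by $\Phi$ lies in category $\mathcal{O}^\p$ with a uniformly bounded set of composition factors $L(\mu)$, hence $p$ is uniformly bounded. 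Then the multiplicity of $\rho_\lambda$ in $\AA(\Gamma)_{\mathfrak{n}\text{-fin}}$ is at most $\dim N^p_{\ell,m}(\Gamma)$ for that fixed $p$, which is finite by Lemma~\ref{Nlmpfindimlemma}. The main obstacle is precisely establishing this uniform bound on the nearly holomorphic degree — i.e., ruling out an infinite tower of isotypic vectors of ever-increasing $\mathfrak{n}$-finite span within a single $K$-type; once that is in hand, the finiteness reduces cleanly to the already-proven Lemma~\ref{Nlmpfindimlemma}.
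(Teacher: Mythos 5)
Your overall strategy---bound each $K$-type multiplicity by the dimension of a space of modular forms---is the right instinct, but the route you choose runs through the two hardest facts in the whole theory, and neither is actually established in your argument. First, the implication ``$\mathcal{U}(\mathfrak{n})\Phi$ finite-dimensional $\Rightarrow$ the corresponding $F$ lies in $N^p_{\ell,m}(\Gamma)$ for some $p$'' is not something you can wave through Proposition~\ref{rootvectorsactionprop}: the formulas there compute $\eta(D_Z)(X\vec\Phi)(b_Z)$ for single root vectors, the holomorphy criterion (Lemma~\ref{holomorphyconditionlemma}) is only stated for $p\in\{0,1\}$ and for the three specific operators $P_{0-},E_-,L$ rather than for arbitrary elements of $\mathcal{U}(\mathfrak{n})$, and the ``no poles at cusps'' condition needs its own argument. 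In the paper this implication is only obtained downstream, as a consequence of the decomposition of $\AA(\Gamma)_{\mathfrak{n}\text{-fin}}$ in Proposition~\ref{AAnfindecomp2prop}, which rests on the admissibility you are trying to prove; unless you supply an independent proof, your argument is circular. Second, your mechanism for the uniform bound on $p$ does not work as written: $\Omega_2$ does not act on $\mathcal{U}(\mathfrak{n})\Phi+\C\Phi$ (being central it sends this space to $\mathcal{U}(\mathfrak{n})\Omega_2\Phi+\C\,\Omega_2\Phi$, which need not be contained in it), and the finiteness of the set of central characters that can contribute a fixed $K$-type $\rho_{(k,\ell)}$ requires ruling out subquotients $L(k',\ell')$ with $\ell'<0$ --- that is Lemma~\ref{nonegelllemma}, which rests on Weissauer's vanishing theorem and is not available to you at this stage.

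The paper's proof avoids all of this with a short contradiction argument. Suppose some $\rho_\lambda$ occurs with infinite multiplicity, and take $\lambda=(\ell+m,\ell)$ maximal in the order \eqref{dominanceordereq2} with this property. Let $W$ be the (infinite-dimensional) space of highest weight vectors in the $\rho_\lambda$-isotypic component. Since $P_{0-}$, $P_{1-}$, $X_-$ each raise the weight in the order $\preccurlyeq$ and commute appropriately with $N_+$, maximality forces each of their images to land in finitely many $K$-type multiplicities; taking successive kernels produces an infinite-dimensional space of vectors of weight $\lambda$ annihilated by $N_+$ and all of $\mathfrak{n}$. By Corollary~\ref{PhiFholomorphiclemma} these correspond to elements of $M_{\ell,m}(\Gamma)$, contradicting the finite-dimensionality of that space. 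Note that this uses only $\dim M_{\ell,m}(\Gamma)<\infty$, not the identification of $\mathfrak{n}$-finite forms with nearly holomorphic forms nor any bound on the nearly holomorphic degree. I would recommend you adopt this kernel argument, or else be prepared to prove your two missing steps from scratch.
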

\begin{proof}
Assume that a $K$-type $\rho_\lambda$ occurs infinitely often in $\AA(\Gamma)_{\mathfrak{n}\text{-fin}}$ for some $\lambda=(\ell+m,\ell)$. We may assume that $\lambda$ is maximal in the order \eqref{dominanceordereq2}. Let $W$ be the space of highest weight vectors in the $\rho_\lambda$-isotypical component; by assumption, $W$ is infinite-dimensional. By our maximality assumption, the kernel $W_1$ of $P_{0-}$ on $W$ is infinite-dimensional; note that $N_+$ commutes with $P_{0-}$. Similarly, the kernel $W_2$ of $P_{1-}$ on $W_1$ is infinite-dimensional. Finally, the kernel $W_3$ of $X_-$ on $W_2$ is infinite-dimensional. The vectors in $W_3$ correspond to holomorphic modular forms in $M_{\ell,m}(\Gamma)$. Since this space is finite-dimensional, we obtain a contradiction.
\end{proof}

\subsubsection*{Modular forms and automorphic forms}
We are going to prove that nearly holomorphic modular forms generate $\mathfrak{n}$-finite automorphic forms. The following lemma will be useful.
\begin{lemma}\label{nfinitenesslemma}
 Let $V$ be a $\mathfrak{g}_\C$-module, and $v_0\in V$ a vector with the following properties:
 \begin{itemize}
  \item $V=\mathcal{U}(\mathfrak{g}_\C)v_0$.
  \item $v_0$ has weight $(\ell+m,\ell)$ for some integer $\ell$ and non-negative integer $m$.
  \item $N_+v_0=0$.
  \item $N_-^rv_0=0$ for some $r>0$.
  \item $P_{0-}^sv_0=0$ for some $s>0$.
  \item $D_-^tv_0=0$ for some $t>0$.
 \end{itemize}
 Then $v_0$ is $\mathfrak{n}$-finite, and $V$ is an admissible $(\mathfrak{g},K)$-module in which each weight space is finite-dimensional.
\end{lemma}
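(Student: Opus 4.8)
The plan is to show that the six hypotheses force $\mathcal{U}(\mathfrak{n})v_0$ to be finite-dimensional, and then to deduce admissibility by reducing to the universal property of the parabolic Verma module $N(\lambda)$. First I would observe that $\mathcal{U}(\mathfrak{n})$ is the polynomial algebra $\C[X_-,P_{1-},P_{0-}]$, so $\mathcal{U}(\mathfrak{n})v_0$ is spanned by the vectors $X_-^a P_{1-}^b P_{0-}^c v_0$ with $a,b,c\geq 0$. The hypothesis $P_{0-}^s v_0 = 0$ immediately bounds $c < s$. The hypothesis $D_- = P_{1-}^2 - 4 X_- P_{0-}$ (from Table~\ref{Nplusoperatorstable}) annihilates $v_0$ after the $t$-th power, and since $[N_+, D_-]=[N_-,D_-]=0$ and $D_-$ commutes with $X_-, P_{1-}, P_{0-}$ (these are all in the commutative algebra $\mathcal{U}(\mathfrak{n})$ together with $D_-\in\mathcal{U}(\mathfrak n)$), one gets $D_-^t X_-^a P_{1-}^b P_{0-}^c v_0 = 0$ for all $a,b,c$. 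The key algebraic point is then purely about the polynomial ring: in $\C[X_-,P_{1-},P_{0-}]$, the ideal generated by $P_{0-}^s$ and $(P_{1-}^2 - 4X_- P_{0-})^t$ has finite codimension — indeed modulo $P_{0-}^s$ the element $P_{1-}^2 - 4 X_- P_{0-}$ becomes a polynomial whose leading behavior in $P_{1-}$ is $P_{1-}^2$, so $(P_{1-}^2-\cdots)^t$ gives a relation making $P_{1-}^{2t}$ congruent to lower-order terms, bounding $b < 2t$; and then, using $N_-^r v_0 = 0$ together with the weight structure, one bounds the exponent $a$ of $X_-$. (Alternatively, and perhaps more cleanly: weight considerations show $X_-^a P_{1-}^b P_{0-}^c v_0$ has weight $(\ell+m-2a-b, \ell-b-2c)$, and once $b$ and $c$ are bounded one must bound $a$; here the relation coming from $D_-^t$ expresses high powers of $X_- P_{0-}$ in terms of $P_{1-}$-powers, and combined with $P_{0-}^s v_0 = 0$ this caps $a$ as well.) This establishes $\mathfrak{n}$-finiteness of $v_0$.

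Next, since $v_0$ generates $V$ and is $\mathfrak{n}$-finite, every vector $X v_0 \in V$ is $\mathfrak{n}$-finite: for $X \in \mathcal{U}(\mathfrak{g}_\C)$ and $Y \in \mathcal{U}(\mathfrak n)$, one rewrites $Y X$ using the PBW theorem and the fact that $\mathfrak{n}$ is normalized by $\mathfrak{k}_\C$, moving the $\mathfrak n$-part to act on $v_0$; since only finitely many monomials in $\mathcal{U}(\mathfrak n)$ act nontrivially on $v_0$, the space $\mathcal{U}(\mathfrak n) X v_0$ is finite-dimensional. Hence $V$ is locally $\mathfrak n$-finite. I would also note that $V$ is a $(\mathfrak g, K)$-module with the $K$-action as in \eqref{Mlambdadefeq}; the weight $\lambda = (\ell+m,\ell)$ vector $v_0$ with $N_+ v_0 = 0$ generates a $K$-type contained in $\rho_{(\ell+m,\ell)}$ or its sub-$K$-types, and $N_+ v_0 = 0$ together with $\mathfrak n v_0$ being finite means $V$ is in fact the direct sum of its $K$-types (each $K$-type appearing with finite multiplicity since the weight spaces are — which is what we prove next).

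For admissibility and finite-dimensionality of weight spaces, I would invoke the universal property of $N(\lambda)$: since $\mathfrak n v_0$ need not vanish, $v_0$ is not necessarily a parabolic highest weight vector. However, because $v_0$ is $\mathfrak n$-finite, the submodule $\mathcal{U}(\mathfrak n) v_0$ is finite-dimensional; choosing a basis and using that $\mathfrak k_\C$ normalizes $\mathfrak n$, one sees that the $\mathfrak k_\C$-module generated by $v_0$ inside $\mathcal{U}(\mathfrak n)v_0$ decomposes into finitely many $K$-types, each a highest weight vector of which is annihilated by all of $\langle X_-, P_{1-}, P_{0-}, N_+\rangle$ (using maximality/minimality of weights in the finite-dimensional space $\mathcal{U}(\mathfrak n)v_0$ — an argument entirely parallel to the proof of Lemma~\ref{admissibilitylemma}). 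Each such highest weight vector of weight $\mu$ then generates, via the universal property listed after \eqref{Mlambdadefeq}, a quotient of $N(\mu)$. Since $V = \mathcal{U}(\mathfrak{g}_\C) v_0 = \mathcal{U}(\mathfrak p_+)\mathcal{U}(\mathfrak k_\C)\mathcal{U}(\mathfrak n) v_0$ and the latter is the span of $\mathcal{U}(\mathfrak p_+)$ applied to the finitely many $K$-types in $\mathcal{U}(\mathfrak n)v_0$, we get that $V$ is a quotient of a finite direct sum $\bigoplus_i N(\mu_i)$. Each $N(\mu_i)$ lies in category $\mathcal{O}^{\mathfrak p}$, has finite-dimensional weight spaces by property (1) after \eqref{Mlambdadefeq}, and is admissible as a $(\mathfrak g, K)$-module; these properties pass to finite direct sums and to quotients. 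Therefore $V$ is admissible with finite-dimensional weight spaces, completing the proof.

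The main obstacle I anticipate is the bookkeeping in the first paragraph — making precise that the ideal $\langle P_{0-}^s, D_-^t \rangle$ in $\C[X_-,P_{1-},P_{0-}]$, or rather the annihilator conditions on $v_0$, genuinely bound all three exponents $a,b,c$ simultaneously. The subtlety is that $P_{0-}^s v_0 = 0$ and $D_-^t v_0 = 0$ by themselves cut out a curve (not a point) in $\mathrm{Spec}\,\C[X_-,P_{1-},P_{0-}]$, since $D_- \bmod P_{0-}^s$ still involves $X_-$ only through $X_- P_{0-}$; the condition $N_-^r v_0 = 0$, which a priori is about a different operator, must be brought in — and here one uses that in $\mathcal{U}(\mathfrak{g}_\C)$ there are commutation relations expressing, modulo $\mathfrak n$-monomials already known to annihilate $v_0$, a high power of $X_-$ acting on $v_0$ in terms of $N_-$ applied to $v_0$ (cf.\ the relation $X_+ U - U X_+ = (m+1)D_+$ and its lowered analogue, or direct $\mathfrak{sl}_2$-triple considerations with $(X_+, X_-)$ and the weight $(Z)$). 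Once that link is established the finiteness is immediate, but verifying the precise commutator identity that does the job is the technical heart of the argument.
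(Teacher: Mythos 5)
There is a genuine gap at exactly the step you yourself flag as the ``technical heart'': bounding the exponent of $X_-$. Your parenthetical claim that $D_-^tv_0=0$ combined with $P_{0-}^sv_0=0$ ``caps $a$ as well'' is false: the quotient $\C[X_-,P_{1-},P_{0-}]/\big(P_{0-}^s,(P_{1-}^2-4X_-P_{0-})^t\big)$ is infinite-dimensional, since the variety cut out is the $X_-$-axis --- as you correctly observe two paragraphs later, in contradiction with that parenthetical. So everything rests on extracting a bound on the $X_-$-power from $N_-^rv_0=0$, and the mechanism you propose --- a commutator identity in $\mathcal{U}(\g_\C)$ expressing a high power of $X_-$ applied to $v_0$ in terms of $N_-$ applied to $v_0$ --- is not substantiated and is unlikely to exist in the form suggested, precisely because $[N_-,X_-]=0$: the hypothesis $N_-^rv_0=0$ propagates verbatim to every vector $X_-^av_0$ and by itself says nothing about their vanishing. (An $\mathfrak{sl}_2$-triple argument with $(X_+,Z,X_-)$ also does not apply, since nothing is assumed about the action of $X_+$ on $v_0$.)

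The paper closes this gap with a global, two-stage argument rather than an identity in $\mathcal{U}(\g_\C)$. First, the Nullstellensatz applied to the ideal generated by $P_{0-}^s$ and $D_-^t$ --- whose zero set is the $X_-$-axis, on which the coordinate $P_{1-}$ vanishes --- yields $P_{1-}^nv_0=0$ for some $n$; a PBW ordering placing $P_{1-},P_{0-},N_+$ on the right (the only root vectors with a downward weight component) then shows that the weight support of \emph{all} of $V$ is bounded below by a horizontal line. Second, if $X_-^qv_0\neq0$ for large $q$, this vector has very negative $Z-Z'$ eigenvalue and is killed by $N_-^r$ (again because $[N_-,X_-]=0$), so the compact $\mathfrak{sl}_2$ spanned by $N_\pm$ would generate weights below that line --- a contradiction; hence $X_-^qv_0=0$ for some $q$. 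Only at that point does the Nullstellensatz, applied to the ideal generated by $X_-^q$, $P_{0-}^s$ and $D_-^t$ (whose zero set is the origin), give finite codimension and hence $\mathfrak{n}$-finiteness. Your final two paragraphs (local $\mathfrak{n}$-finiteness of $V$, then admissibility via a surjection from finitely many parabolic Vermas) are workable in outline, though the claim that each highest weight vector of a $K$-type inside $\mathcal{U}(\mathfrak{k}_\C)\mathcal{U}(\mathfrak{n})v_0$ is annihilated by $\mathfrak{n}$ is not correct; it is cleaner to note that $V=\mathcal{U}(\mathfrak{p}_+)W$ for a finite-dimensional $W$, whence finite-dimensional weight spaces. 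But none of this can be completed until the first step is repaired.
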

\begin{proof}
Let $X=X_-$, $Y=P_{1-}$ and $Z=P_{0-}$, so that $\mathcal{U}(\mathfrak{n})$ is the polynomial ring $\C[X,Y,Z]$. In this ring, let $I$ be the ideal generated by $D_-^t=(Y^2-4XZ)^t$ and $Z^s$. By our hypothesis, every element of $I$ annihilates $v_0$.

In affine three-space, consider the vanishing set $N(I)$. Clearly, a point $(x,y,z)$ in $N(I)$ must have $y=z=0$. Since the polynomial $Y$ vanishes on all of $N(I)$, we have $Y^n\in I$ for some positive integer $n$ by Hilbert's Nullstellensatz.

By the PBW theorem, $\mathcal{U}(\mathfrak{g}_\C)$ is spanned by monomials of the form
$$
 \text{(monomial in $X_-,N_-,P_{0+},P_{1+},X_+,Z,Z'$)}\times P_{1-}^\alpha P_{0-}^\beta N_+^\gamma
$$
with $\alpha,\beta,\gamma\geq0$. Since $P_{1-}$, $P_{0-}$, $N_+$ are the only root vectors with a downwards component, and since $P_{1-}^nv_0=P_{0-}^sv_0=N_+v_0=0$, it follows that $V$ cannot have weights $(k,k')$ below a certain line $k'=k'_0$ for some $k'_0<\ell$.

Now consider the vectors $X_-^qv_0$ for positive integers $q$. Since $[N_-,X_-]=0$, all these vectors are annihilated by $N_-^r$. If $X_-^qv_0$ would be non-zero for very large $q$, then it would generate a $\mathfrak{k}_\C$-module containing weights below the line $k'=k'_0$; this is impossible. Hence there exists a $q$ such that $X_-^qv_0=0$.

Now, in $\C[X,Y,Z]$, consider the ideal $J$ generated by $X^q$ and $D_-^t=(Y^2-4XZ)^t$ and $Z^s$. Clearly, its vanishing set in affine three-space consists of only the point $(0,0,0)$. It follows that $\C[X,Y,Z]/J$ is finite-dimensional as a $\C$-vector space (see, e.g., Corollary 4 in Sect.~1.7 of \cite{Fulton1989}). Since the annihilator of $v_0$ contains $J$, it follows that $v_0$ is $\mathfrak{n}$-finite.

Since we know that $X_-^qv_0=0$, an argument analogous to the above shows that $V$ cannot have any weights $(k,k')$ to the left of a certain line $k=k_0$. Thus, $V$ contains only finite-dimensional $\mathfrak{k}_\C$-modules. It also follows that each weight space is finite-dimensional. In particular, $V$ is admissible.
\end{proof}

\begin{proposition}\label{Nplmautformprop}
 Let $\ell$ be an integer, and $m$ and $p$ be non-negative integers. Let $\Gamma$ be a congruence subgroup of $\Sp_4(\Q)$. Let $F\in N^p_{\ell,m}(\Gamma)$ be non-zero. Let $\Phi:\:\Sp_4(\R)\to\C$ be the function corresponding to $F$ via Lemma \ref{FPhilemma}. Then $\Phi\in\AA(\Gamma)_{\mathfrak{n}\text{-fin}}$. Furthermore, $\Phi\in\AA(\Gamma)^\circ_{\mathfrak{n}\text{-fin}}$ if and only if $F$ is a cusp form.
\end{proposition}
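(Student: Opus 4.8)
The plan is to check, one by one, that $\Phi$ satisfies all the defining properties of an automorphic form, and then separately that it is $\mathfrak{n}$-finite; the latter is where essentially all the content lies, and it will come from Lemma~\ref{nfinitenesslemma}. First I would record the easy properties. By Lemma~\ref{FPhilemma}, $\Phi$ lies in $\mathcal{V}_{\ell,m}$, hence is a smooth, $K$-finite function of weight $(\ell+m,\ell)$ with $N_+\Phi=0$; and since $F$ satisfies the transformation property \eqref{modformeq1}, the correspondence $F\leftrightarrow\Phi$ forces $\Phi(\gamma g)=\Phi(g)$ for all $\gamma\in\Gamma$, so $\Phi$ is left $\Gamma$-invariant. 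It then remains to show that $\Phi$ is $\mathcal{Z}$-finite, slowly increasing, and $\mathfrak{n}$-finite.

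The core step is to apply Lemma~\ref{nfinitenesslemma} with $v_0=\Phi$ and $V=\mathcal{U}(\mathfrak{g}_\C)\Phi$. The weight condition and $N_+\Phi=0$ are already in hand. For $N_-^{m+1}\Phi=0$: since $\vec\Phi$ takes values in $W_m\simeq\sym^m(\C^2)$, and since $\iota(N_-)=-N_+$ under the identification \eqref{ku2eq} while $\eta_{\ell,m}(N_+)^{m+1}$ vanishes on $W_m$, identity \eqref{vecPhitransformlemmaeq1} gives $N_-^{m+1}\vec\Phi=0$, and extracting the $S^m$-component yields $N_-^{m+1}\Phi=0$. For $P_{0-}^{p+1}\Phi=0$ and $D_-^{p+1}\Phi=0$: by Proposition~\ref{Nplusoperatorsnearholoprop}(1), applying $P_{0-}$ (resp.\ $D_-$) to $\Phi$ corresponds, under the bijection of Lemma~\ref{FPhilemma}, to applying the operator $P_{0-}$ (resp.\ $D_-$) of Section~\ref{s:nearholofn} to $F$; iterating and using part (2) of that proposition together with Table~\ref{Nplusoperatorsnearholotable}, the iterated operators land in $N^{p-(p+1)}_{\bullet,\bullet}(\HH_2)=0$, resp.\ $N^{p-2(p+1)}_{\bullet,\bullet}(\HH_2)=0$, so they annihilate $F$ and hence $\Phi$. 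All hypotheses of Lemma~\ref{nfinitenesslemma} are thus met, and it yields at once that $\Phi$ is $\mathfrak{n}$-finite and that $V$ is an admissible $(\mathfrak{g},K)$-module with finite-dimensional weight spaces. I expect this verification of the nilpotency hypotheses — the observation that a finite nearly holomorphic degree forces $P_{0-}$ and $D_-$ to act nilpotently on $\Phi$ — to be the main point; everything else is bookkeeping or standard.

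For $\mathcal{Z}$-finiteness I would note that $V$ in fact lies in category $\mathcal{O}^{\p}$: it is finitely generated (cyclic) and a direct sum of $K$-types, and it is locally $\mathfrak{n}$-finite because $\mathfrak{n}=\langle X_-,P_{1-},P_{0-}\rangle$ lowers weights, the weights of $V$ are bounded below in both coordinates (as established in the proof of Lemma~\ref{nfinitenesslemma}), and every weight space is finite-dimensional. Hence $V$ has finite length, so $\mathcal{Z}$ acts on it through finitely many generalized characters $\chi_{\mu+\varrho}$; as $\mathcal{Z}$ is a finitely generated $\C$-algebra, $\mathcal{Z}\Phi$ is finite-dimensional.

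Finally, for the growth condition I would argue exactly as in the holomorphic case: the ``no poles at cusps'' condition in the definition of $N^p_{\ell,m}(\Gamma)$ means that in each Fourier expansion \eqref{Fourierexpansioneq3} of $F\big|_{\ell,m}g$ the coefficients $a(Q)$ are supported on $Q\geq0$ and are polynomials in the entries of $Y^{-1}$, which are bounded on Siegel domains, while $|e^{2\pi i\,\Tr(QZ)}|\leq1$ there; together with the polynomial growth of $\eta_{\ell,m}(J(g,I))^{-1}$ this shows $\vec\Phi$, hence $\Phi$, is slowly increasing. Therefore $\Phi\in\AA(\Gamma)$, and being $\mathfrak{n}$-finite, $\Phi\in\AA(\Gamma)_{\mathfrak{n}\text{-fin}}$. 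If moreover $F$ is a cusp form, then at every cusp its Fourier expansion is supported on positive-definite $Q$; since the constant term of $\Phi$ along the unipotent radical of any proper parabolic of $\Sp_4$ is a sum of Fourier coefficients at non-positive-definite $Q$, all these constant terms vanish, and hence $\Phi\in\AA(\Gamma)^\circ_{\mathfrak{n}\text{-fin}}$.
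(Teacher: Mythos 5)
Your proposal is correct and follows essentially the same route as the paper: the easy properties from Lemma~\ref{FPhilemma}, the boundedness of the nearly holomorphic monomials on a Siegel set for the growth estimate, the observation that $P_{0-}$ and $D_-$ lower the nearly holomorphic degree so that suitable powers annihilate $F$ (hence $\Phi$), and then Lemma~\ref{nfinitenesslemma} to get $\mathfrak{n}$-finiteness, admissibility, and $\mathcal{Z}$-finiteness. Your explicit verification of the $N_-$-nilpotency hypothesis (which the paper leaves implicit, since $\Phi$ is a highest weight vector of an $(m+1)$-dimensional $K$-type) is a welcome addition but does not change the argument.
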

\begin{proof}
Evidently, $\Phi$ is smooth, left $\Gamma$-invariant, $K$-finite and has weight $(\ell+m,\ell)$. Next, we briefly sketch a proof that $\Phi$ is slowly increasing (for further details, the reader can look at \cite{pss-growth} where a generalisation of this to higher degree is proved). Using the fact that the Fourier expansion of each component function $F_j$ is supported on the positive semi-definite matrices, an argument similar to \cite[\S13]{maassbook} shows that the function $F$ is bounded in a Siegel set; use the fact that the functions $[\alpha,\beta,\gamma]$ are bounded in a Siegel set.
This shows that there is a constant $c$ such that $|(F |_{\ell,m}\gamma)(Z)|\leq c$ for all $\gamma$ in $\Sp_4(\Z)$ and all $Z$ in the Siegel set.

For any point $Z\in\HH_2$ there exists $\gamma\in\Sp_4(\Z)$ such that $Z':=\gamma Z$ lies in the Siegel set. One can prove that there exists a positive integer $n$, depending on $\ell$ and $m$, such that
$$
 \|\eta_{\ell,m}(J(\gamma,Z))^{-1}\|\leq({\rm tr}(Y)+{\rm tr}(Y^{-1}))^n
$$
for all $\gamma\in\Sp_4(\Z)$ and all $Z=X+iY\in\HH_2$.
This shows that $|F(Z)|\leq c({\rm tr}(Y)+{\rm tr}(Y^{-1}))^n$ for all $Z\in\HH_2$. It then follows easily that $\Phi$ is slowly increasing. 

Since, by Table \ref{Nplusoperatorsnearholotable}, the operators $D_-$ and $P_{0-}$ lower the nearly holomorphic degree, we have $P_{0-}^sF=D_-^tF=0$ for some $s,t>0$. By the diagram \eqref{Nplusoperatorsnearholopropeq1}, it follows that $P_{0-}^s\Phi=D_-^t\Phi=0$. Hence, we can apply Lemma \ref{nfinitenesslemma} and conclude that $\Phi$ is $\mathfrak{n}$-finite, and generates an admissible $(\mathfrak{g},K)$-module. Since each weight space is finite-dimensional by Lemma \ref{nfinitenesslemma}, it follows that $\Phi$ is $\mathcal{Z}$-finite. This proves $\Phi\in\AA(\Gamma)_{\mathfrak{n}\text{-fin}}$.

Finally, we sketch a proof that $\Phi\in\AA(\Gamma)^\circ_{\mathfrak{n}\text{-fin}}$  if and only if $F$ is a cusp form. Since we already know that $\Phi\in\AA(\Gamma)_{\mathfrak{n}\text{-fin}}$, it suffices to show that $\Phi$ is a cusp form  if and only if $F$ is a cusp form.
First, start with a cusp form $F \in N_{\ell, m}(\Gamma)^\circ$. For any $\gamma \in \Sp_4(\Q)$, define $F_\gamma := F|_{\ell,m} \gamma$. Then $F_\gamma \in N_{\ell, m}(\Gamma(N_\gamma))^\circ$ for some integer $N_\gamma$, and its  Fourier expansion is supported on positive definite matrices. For any $x \in \R$, define $n_x = \left[\begin{smallmatrix}1&0&0&0\\0&1&0&x\\0&0&1&0\\0&0&0&1
 \end{smallmatrix}\right]$ and put $N'(\R):= \{n_x : x \in \R\}$. Note that $N'(\R)$ is contained in the unipotent radicals of all the (standard-form) maximal parabolic subgroups  of $\Sp_4(\R)$.  In order to show that $\Phi$ is a cusp form, it suffices to show that for all $\gamma \in \Sp_4(\Q)$, $g \in \Sp_4(\R)$, we have  $\int_{N_\gamma\Z \bs \R} \Phi(\gamma n_x g) dx = 0$. This is implied by $\int_{N_\gamma\Z\bs \R} F_\gamma\left(Z + \left[\begin{smallmatrix}0&0\\0&x \end{smallmatrix}\right]\right) dx = 0$, which in turn follows from the fact that  Fourier coefficients of $F_\gamma$ vanish at all matrices of the form $\left[\begin{smallmatrix}a&b/2\\b/2&0 \end{smallmatrix}\right]$.

  Conversely, suppose that $F$ is \emph{not} a cusp form. We will show that $\Phi$ is not a cusp form. We can find $\gamma \in \Sp_4(\Q)$ such that $F_\gamma:= F|_{\ell,m} \gamma$ has a non-vanishing degenerate Fourier coefficient. By choosing $\gamma$ appropriately, we can assume that the Fourier coefficient of $F_\gamma$ is non-vanishing at a matrix of the form $Q_t:=\frac{1}{N_\gamma}\left[\begin{smallmatrix}0&0\\0&t
 \end{smallmatrix}\right]$ with $t \in \Z$. Fix such a matrix $Q_t$.
Write the Fourier expansion as \begin{equation}
 F_\gamma(Z)=\sum_{Q \in (1/N_\gamma)M_2^{\sym}(\Z)}a(Q)e^{2\pi i\,{\rm Tr}(QZ)},
\end{equation}
where
\begin{equation}
 a(Q)=\sum_{j=0}^m\:\sum_{\alpha,\beta,\gamma}a_{j,\alpha,\beta,\gamma}(Q)
 \Big(\frac{y}{yy'-v^2}\Big)^\alpha\Big(\frac{v}{yy'-v^2}\Big)^\beta\Big(\frac{y'}{yy'-v^2}\Big)^\gamma S^{m-j}T^j.
\end{equation}
Since $a(Q_t) \neq 0$, we can find positive real numbers $r_2$, $r_3$ such that the element $f(y)$ in $\C(y)[S, T]$ given by $$f(y) =\sum_{j=0}^m\:\sum_{\alpha,\beta,\gamma}a_{j,\alpha,\beta,\gamma}(Q_t)
 \Big(\frac{y}{yr_3 - r_2^2}\Big)^\alpha\Big(\frac{r_2}{yr_3 - r_2^2}\Big)^\beta\Big(\frac{r_3}{yr_3 - r_2^2}\Big)^\gamma S^{m-j}T^j $$ is not identically zero. Fix such $r_2, r_3$. For $y> r_2^2/r_3$, define the matrix $$M_y := \frac{1}{\sqrt{y+r_3+2 \sqrt{yr_3 - r_2^2}}} \mat{y+ \sqrt{yr_3 - r_2^2}}{r_2}{r_2}{r_3 + \sqrt{yr_3 - r_2^2}}$$ which is a square root of the matrix $ \mat{y}{r_2}{r_2}{r_3}$. Then  there is a real number $\alpha$ and some positive real $c$, both independent of $y$, such that \begin{equation}\label{e:keylowebd}|\eta_{\ell, m}(M_y)f(y)| > cy^\alpha\end{equation} holds  for all large enough $y$. Above, $| \ |$ is the norm on $\C[S,T]$ defined by $|\sum_{i,j}a_{i,j} S^i T^j | := \max_{i,j} |a_i|$, though any other norm on $\C[S,T]$ will work equally well.

Now let $\Phi_\gamma(g)= \Phi(\gamma g)$ be the automorphic form attached to $F_\gamma$. Consider the integral \begin{align*}&\int_{M_2^{\sym}(N_\gamma \Z \bs \R)}\vec\Phi_\gamma\left( \left[\begin{smallmatrix}I_2&X\\0&I_2 \end{smallmatrix}\right]\left[ \begin{smallmatrix}M_y&0\\0&M_y^{-1} \end{smallmatrix}\right]\right) e^{-2 \pi i \Tr\left(Q_t X\right) } dX\\&=\eta_{\ell,m}(M_y)\int_{M_2^{\sym}(N_\gamma \Z \bs \R)}F_\gamma\left( X + i  \left[\begin{smallmatrix}y&r_2\\r_2&r_3 \end{smallmatrix}\right]  \right) e^{-2 \pi i \Tr\left(Q_t X\right) } dX \\ &= \vol(M_2^{\sym}(N_\gamma \Z \bs \R)) \ e^{-\frac{2 \pi t r_3}{N_\gamma}} (\eta_{\ell,m}(M_y)  f(y)).\end{align*}

Now, using the triangle inequality on the above integral, together with \eqref{e:keylowebd},  we conclude that for all sufficiently large $y$, we can find $X \in M_2^\sym(\R)$ such that the matrix $$g = \left[\begin{smallmatrix}I_2&X\\0&I_2 \end{smallmatrix}\right]\left[ \begin{smallmatrix}M_y&0\\0&M_y^{-1} \end{smallmatrix}\right]$$ has the property $|\vec\Phi_\gamma(g)| > c e^{-\frac{2 \pi t r_3}{N_\gamma}} y^\alpha$. Letting $y \rightarrow \infty$, we conclude that  $|\vec\Phi_\gamma|$ is \emph{not} a rapidly decaying function on $\Sp_4(\R)$. Hence, $\Phi_\gamma$ is not a cusp form and therefore neither is $\Phi$. This completes the proof.
\end{proof}
\subsection{The structure theorem for cusp forms}
In this section we prove the structure theorem for cusp forms. It is based on the following decomposition of the space $\AA(\Gamma)^\circ_{\mathfrak{n}\text{-fin}}$ into irreducibles.

\begin{proposition}\label{AA0nfindecomp2prop}
 As $(\mathfrak{g},K)$-modules, we have
 $$
  \AA(\Gamma)^\circ_{\mathfrak{n}\text{-fin}}=\bigoplus_{\ell=1}^\infty\,\bigoplus_{m=0}^\infty\,n_{\ell,m}L(\ell+m,\ell),\qquad n_{\ell,m}=\dim S_{\ell,m}(\Gamma).
 $$
 The lowest weight vectors in the isotypical component $n_{\ell,m}L(\ell+m,\ell)$ correspond to elements of $S_{\ell,m}(\Gamma)$ via the isomorphism from Lemma \ref{FPhilemma}.
\end{proposition}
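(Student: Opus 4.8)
The plan is to combine the semisimplicity of the cuspidal spectrum with the classification of locally $\mathfrak{n}$-finite irreducible $(\mathfrak{g},K)$-modules from Lemma~\ref{locnfinLlambdalemma}, and then to match the resulting multiplicities with dimensions of spaces of holomorphic cusp forms using the dictionary established in Section~\ref{s:diff} and in Proposition~\ref{Nplmautformprop}.

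First I would note that $\AA(\Gamma)^\circ_{\mathfrak{n}\text{-fin}}$ is a $(\mathfrak{g},K)$-submodule of $\AA(\Gamma)^\circ$, which decomposes orthogonally into irreducibles each occurring with finite multiplicity; hence $\AA(\Gamma)^\circ_{\mathfrak{n}\text{-fin}}$ is itself such a direct sum. Every summand is locally $\mathfrak{n}$-finite, so by Lemma~\ref{locnfinLlambdalemma} it equals $L(\lambda)$ for some $\lambda=(k,\ell)\in\Lambdap$, and it is unitarizable, since it carries the restriction of the Petersson inner product. By Proposition~\ref{Mlambdaunitaryprop}(3) this forces $\ell\geq1$ or $\lambda=(0,0)$; but $L(0,0)$ is the trivial representation, whose only automorphic realization consists of constant functions, which are not cuspidal, so this case does not occur. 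Writing $k=\ell+m$ with $m\geq0$ (possible since $k\geq\ell$), we obtain
$$
 \AA(\Gamma)^\circ_{\mathfrak{n}\text{-fin}}=\bigoplus_{\ell\geq1}\bigoplus_{m\geq0}n_{\ell,m}\,L(\ell+m,\ell)
$$
with finite multiplicities $n_{\ell,m}\geq0$; the summands are pairwise non-isomorphic because they have distinct minimal $K$-types $\rho_{(\ell+m,\ell)}$.

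Next I would identify the $n_{\ell,m}$. Since the decomposition is by pairwise non-isomorphic irreducibles, $n_{\ell,m}$ equals the dimension of the space of vectors $\Phi\in\AA(\Gamma)^\circ_{\mathfrak{n}\text{-fin}}$ of weight $(\ell+m,\ell)$ with $N_+\Phi=0$ and $\mathfrak{n}\Phi=0$. Indeed, decomposing such a $\Phi$ according to the direct sum, each of its components lies in a single summand $L(\mu)$ and still has these properties; by the universal property of $N(\ell+m,\ell)$, a nonzero such component generates a quotient of $N(\ell+m,\ell)$ inside the irreducible module $L(\mu)$, so $L(\mu)$ is an irreducible quotient of $N(\ell+m,\ell)$ and hence equals $L(\ell+m,\ell)$ (the unique irreducible quotient). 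Conversely, inside $L(\ell+m,\ell)$ the minimal $K$-type occurs with multiplicity one, so a vector of this type is unique up to scalars. Now I would invoke Lemma~\ref{FPhilemma} together with Corollary~\ref{PhiFholomorphiclemma}: such $\Phi$ correspond precisely to holomorphic functions $F\colon\HH_2\to W_m$ that transform under $\Gamma$ via $\eta_{\ell,m}$, the left $\Gamma$-invariance of $\Phi$ becoming \eqref{modformeq1}; slow growth of $\Phi$ together with holomorphy of $F$ gives the ``no poles at cusps'' condition, and cuspidality of $\Phi$ gives vanishing at all cusps, so $F\in S_{\ell,m}(\Gamma)$, while Proposition~\ref{Nplmautformprop} produces such a $\Phi$ from any $F\in S_{\ell,m}(\Gamma)$. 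Hence $n_{\ell,m}=\dim S_{\ell,m}(\Gamma)$, and this bijection is exactly the claimed identification of the lowest weight vectors of the component $n_{\ell,m}L(\ell+m,\ell)$ with $S_{\ell,m}(\Gamma)$.

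The main obstacle I anticipate is the bookkeeping in this last step: one must verify carefully that the growth and cuspidality conditions defining a cuspidal automorphic form $\Phi$ translate \emph{exactly} into the classical definition of a Siegel cusp form, neither more nor less, and that the component-wise analysis really isolates the $L(\ell+m,\ell)$-isotypical part with the correct multiplicity. The purely representation-theoretic ingredients — semisimplicity of $\AA(\Gamma)^\circ$, Lemma~\ref{locnfinLlambdalemma}, and Proposition~\ref{Mlambdaunitaryprop} — are applied essentially verbatim; the substance lies in the translation between the automorphic and classical languages, most of which has already been carried out in Section~\ref{s:diff}.
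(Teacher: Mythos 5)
Your proposal is correct and follows essentially the same route as the paper's proof: orthogonal decomposition of the cuspidal submodule into irreducibles, Lemma~\ref{locnfinLlambdalemma} to identify them as $L(\lambda)$'s, unitarizability plus Proposition~\ref{Mlambdaunitaryprop}(3) to force $\ell\geq1$, exclusion of $L(0,0)$ by non-cuspidality of constants, and the dictionary of Lemma~\ref{FPhilemma}, Corollary~\ref{PhiFholomorphiclemma} and Proposition~\ref{Nplmautformprop} to equate multiplicities with $\dim S_{\ell,m}(\Gamma)$. Your treatment of the multiplicity count is merely more explicit than the paper's one-sentence version of the same argument.
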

\begin{proof}
Since $\AA(\Gamma)^\circ_{\mathfrak{n}\text{-fin}}$ is a $(\mathfrak{g},K)$-submodule of $\AA(\Gamma)^\circ$, it decomposes into an orthogonal direct sum of irreducible $(\mathfrak{g},K)$-modules, each occurring with finite multiplicity. Recall from Lemma \ref{locnfinLlambdalemma} that the only irreducible, locally $\mathfrak{n}$-finite $(\mathfrak{g},K)$-modules are the $L(\lambda)$ for $\lambda\in\Lambda$. Since $\AA(\Gamma)^\circ_{\mathfrak{n}\text{-fin}}$ admits the inner product \eqref{AA0innerproducteq}, each $L(\lambda)$ occurring in the decomposition of $\AA(\Gamma)^\circ_{\mathfrak{n}\text{-fin}}$ is unitarizable. The trivial $(\mathfrak{g},K)$-module $L(0,0)$ cannot occur, since constant functions are not cuspidal. Proposition \ref{Mlambdaunitaryprop} (3) therefore implies that only $L(\ell+m,\ell)$ with $\ell\geq1$ can occur. The module $L(\ell+m,\ell)$ must occur with multiplicity $\dim S_{\ell,m}(\Gamma)$, since every lowest weight vector in its isotypical component gives rise to an element of
$S_{\ell,m}(\Gamma)$, and conversely.
\end{proof}
\begin{remark}
 By (2) of Proposition \ref{Mlambdaunitaryprop}, the modules $L(1+m,1)$ are non-tempered. Still, it is possible for these modules to occur in $\AA(\Gamma)^\circ_{\mathfrak{n}\text{-fin}}$ for certain $\Gamma$. After all, cusp forms of weight $1$ do exist; see \cite{Weissauer1992}. Globally, the modules $L(1+m,1)$ occur in CAP representations with respect to the Borel or Klingen parabolic subgroup, which were considered in \cite{Soudry1988}. Therefore, these modules have to be excluded from any correct formulation of the Ramanujan conjecture.
\end{remark}

Recall from Lemma \ref{slashliecommute} that $\mathcal{X}$ denotes the free monoid consisting of all strings of the symbols in the left column of Table~\ref{Nplusoperatorstable}. For integers $\ell,m,\ell',m'$, we define the following subsets of $\mathcal{X}$. If $\ell\geq\ell'\geq2$, $m \ge 0$, $m' \ge 0$, then let
\begin{align}\label{Xlmlmdefeq1}
  \mathcal{X}_{\ell',m'}^{\ell,m}=\:&\Big\{X_+^\alpha D_+^\beta U^\gamma\:\big|\:\alpha,\beta,\gamma\in\Z_{\geq0},\:\gamma\leq m'/2,\nonumber\\
   &\hspace{8ex}\ell'+m'+2\alpha+2\beta=\ell+m,\:\ell'+2\beta+2\gamma=\ell\Big\}\nonumber\\
   \cup\:&\Big\{E_+X_+^\alpha D_+^\beta U^\gamma\:\big|\:\alpha,\beta,\gamma\in\Z_{\geq0},\:\gamma<m'/2,\nonumber\\
   &\hspace{8ex}\ell'+m'+2\alpha+2\beta+1=\ell+m,\:\ell'+2\beta+2\gamma+1=\ell\Big\}.
\end{align}
If $\ell\geq\ell'=1$, $m \ge 0$, $m' \ge 0$, then let
\begin{equation}\label{Xlmlmdefeq2}
  \mathcal{X}_{\ell',m'}^{\ell,m}=\begin{cases}
       \emptyset&\text{if $m'>m$ or }m\not\equiv m'\bmod2,\\[2ex]
       \Big\{X_+^{\frac{m-m'}2}D_+^{\frac{\ell-1}2}\Big\}&\text{if }m'\leq m,\:m\equiv m'\bmod2,\text{ and $\ell$ is odd},\\[2ex]
       \Big\{E_+X_+^{\frac{m-m'}2}D_+^{\frac{\ell-2}2}\Big\}&\text{if }m'\leq m,\:m\equiv m'\bmod2,\text{ and $\ell$ is even}.
      \end{cases}
\end{equation}
In every other case we put $\mathcal{X}_{\ell',m'}^{\ell,m}= \emptyset$, except for $\mathcal{X}_{0,0}^{0,0}$ which we put equal to $\{1\}$.

With these notations we are now ready to prove one of our main results.
\begin{theorem}[Structure theorem for cusp forms]\label{cuspidalstructuretheorem}
 Let $\ell$ be an integer, and $m$ a non-negative integer. Then we have an orthogonal direct sum decomposition
 \begin{equation}\label{cuspidalstructuretheoremeq1}
  N_{\ell,m}(\Gamma)^\circ=\bigoplus_{\ell'=1}^\ell
  \bigoplus_{m'=0}^{\ell+m-\ell'}\;\sum_{X \in \mathcal{X}_{\ell', m'}^{\ell, m}} X(S_{\ell',m'}(\Gamma)).
 \end{equation}
\end{theorem}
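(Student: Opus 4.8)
\emph{Proof idea.} The plan is to transport the entire statement to the automorphic side, apply the decomposition of $\AA(\Gamma)^\circ_{\mathfrak{n}\text{-fin}}$ from Proposition~\ref{AA0nfindecomp2prop}, and then read off the highest weight vectors of weight $(\ell+m,\ell)$ inside each irreducible summand using Propositions~\ref{navigateKtypesprop} and~\ref{navigateKtypesell1prop}.

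First I would set up the reduction. By Lemma~\ref{FPhilemma} and Proposition~\ref{Nplmautformprop}, together with its converse (every $\mathfrak{n}$-finite cuspidal automorphic form of weight $(\ell+m,\ell)$ annihilated by $N_+$ arises from a nearly holomorphic cusp form, by Proposition~\ref{AAnfindecomp2prop} and Corollary~\ref{PhiFholomorphiclemma}, the ``no poles at cusps'' condition being automatic for cusp forms), the map $F\mapsto\Phi$ is an isomorphism
$$
 N_{\ell,m}(\Gamma)^\circ\;\xrightarrow{\ \sim\ }\;A_{\ell,m}:=\{\Phi\in\AA(\Gamma)^\circ_{\mathfrak{n}\text{-fin}}\,:\,Z\Phi=(\ell+m)\Phi,\ Z'\Phi=\ell\Phi,\ N_+\Phi=0\}.
$$
By Proposition~\ref{Nplusoperatorsnearholoprop} this isomorphism carries the classical operators $X_+$, $D_+$, $U$, $E_+$ to the corresponding elements of $\mathcal{U}(\g_\C)$ from Table~\ref{Nplusoperatorstable}, and it intertwines the Petersson product with the inner product \eqref{AA0innerproducteq} up to a positive scalar. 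Hence it suffices to prove the claimed decomposition for $A_{\ell,m}$.

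Next I would invoke Proposition~\ref{AA0nfindecomp2prop}: $\AA(\Gamma)^\circ_{\mathfrak{n}\text{-fin}}$ is the orthogonal direct sum over $\ell'\geq1$, $m'\geq0$ of its $L(\ell'+m',\ell')$-isotypic components, each isomorphic to $\dim S_{\ell',m'}(\Gamma)$ copies of $L(\ell'+m',\ell')$, with the lowest weight vectors of that component corresponding to $S_{\ell',m'}(\Gamma)$. Intersecting with $A_{\ell,m}$ gives an orthogonal decomposition of $A_{\ell,m}$ along $(\ell',m')$, and by Lemma~\ref{MlambdaKtypeslemma} the $K$-type $\rho_{(\ell+m,\ell)}$ occurs in $L(\ell'+m',\ell')$ only when $\ell'\leq\ell$ and $\ell'+m'\leq\ell+m$, which (with $\ell'\geq1$) is exactly the summation range in~\eqref{cuspidalstructuretheoremeq1}. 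It remains to identify, inside a single copy of $L(\ell'+m',\ell')$ with lowest weight vector $w_0$, the space of highest weight vectors of weight $(\ell+m,\ell)$. For $\ell'\geq2$ the module $N(\ell'+m',\ell')$ is irreducible by Proposition~\ref{Mlambdairredprop}, so $L(\ell'+m',\ell')=N(\ell'+m',\ell')$ and Proposition~\ref{navigateKtypesprop} applies: the parity-$0$ highest weight vectors are spanned by $X_+^\alpha D_+^\beta U^\gamma w_0$ and the parity-$1$ ones by $E_+X_+^\alpha D_+^\beta U^\gamma w_0$ (bringing $E_+$ to the front via~\eqref{Empluscommuteeq1}--\eqref{Empluscommuteeq3}), these being linearly independent by Lemmas~\ref{wiDplusXpluslemma} and~\ref{plusoperatorsinjectivelemma}. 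A direct weight count — $X_+^\alpha D_+^\beta U^\gamma$ carries $(\ell'+m',\ell')$ to $(\ell'+m'+2\alpha+2\beta,\ \ell'+2\gamma+2\beta)$, and $E_+$ shifts by $(1,1)$ — shows that the operators landing in weight $(\ell+m,\ell)$ are exactly those listed in~\eqref{Xlmlmdefeq1}. For $\ell'=1$ one argues identically but with Proposition~\ref{navigateKtypesell1prop} in place of Proposition~\ref{navigateKtypesprop} (now $L(1+m',1)$ is a proper quotient of $N(1+m',1)$ when $m'\geq2$ and the $U$-direction disappears), which produces the sets~\eqref{Xlmlmdefeq2}.

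Finally I would assemble the pieces. Combining the previous paragraph over all multiplicity copies, the part of $A_{\ell,m}$ lying in the $L(\ell'+m',\ell')$-isotypic component equals $\sum_{X\in\mathcal{X}_{\ell',m'}^{\ell,m}}X\bigl(S_{\ell',m'}(\Gamma)\bigr)$, each $X$ acting injectively because $Xw_0\neq0$ (Lemmas~\ref{P0minusUmlemma} and~\ref{plusoperatorsinjectivelemma}) and the inner sum being direct by the linear independence of the $Xw_0$. Summing over $(\ell',m')$ and transporting back along the isomorphism $A_{\ell,m}\cong N_{\ell,m}(\Gamma)^\circ$ yields~\eqref{cuspidalstructuretheoremeq1}; the outer sum is orthogonal because distinct $L(\lambda)$-isotypic components of $\AA(\Gamma)^\circ$ are orthogonal, each $X\in\mathcal{U}(\g_\C)$ preserves these components (as $\AA(\Gamma)^\circ_{\mathfrak{n}\text{-fin}}$ is a direct sum of irreducibles), and the inner products match up to a positive scalar. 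The main obstacle is the bookkeeping in the third paragraph: one must verify, case by case (parity $0$ versus parity $1$, and $\ell'\geq2$ versus $\ell'=1$), that the combinatorially defined sets $\mathcal{X}_{\ell',m'}^{\ell,m}$ of~\eqref{Xlmlmdefeq1}--\eqref{Xlmlmdefeq2} exactly match the highest weight vectors produced by the navigation propositions — but this is precisely the content of Propositions~\ref{navigateKtypesprop} and~\ref{navigateKtypesell1prop}, so no genuinely new idea is needed beyond careful accounting.
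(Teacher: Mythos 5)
Your proposal is correct and follows essentially the same route as the paper: pass to $\mathfrak{n}$-finite cuspidal automorphic forms via Lemma~\ref{FPhilemma} and Proposition~\ref{Nplmautformprop}, decompose using Proposition~\ref{AA0nfindecomp2prop}, identify the highest weight vectors of weight $(\ell+m,\ell)$ in each irreducible summand via Propositions~\ref{navigateKtypesprop} and~\ref{navigateKtypesell1prop}, and transport back through the commutative diagram \eqref{Nplusoperatorsnearholopropeq2}. Your write-up is in fact more explicit than the paper's on the directness and orthogonality of the decomposition and on matching the combinatorial sets $\mathcal{X}_{\ell',m'}^{\ell,m}$ to the navigation propositions.
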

\begin{proof}
Let $F\in N_{\ell,m}(\Gamma)^\circ$. Let $\Phi:\:\Sp_4(\R)\to\C$ be the function corresponding to $F$ via Lemma \ref{FPhilemma}. By Proposition \ref{Nplmautformprop}, we have $\Phi\in\AA(\Gamma)^\circ_{\mathfrak{n}\text{-fin}}$. According to Proposition \ref{AA0nfindecomp2prop}, we can write $\Phi=\sum_{j=1}^r\Phi_j$ with non-zero $\Phi_j$ of weight $(\ell+m,\ell)$ and lying in an irreducible submodule $L(\ell_j+m_j,\ell_j)$ of $\AA(\Gamma)^\circ_{\mathfrak{n}\text{-fin}}$. Since $N_+\Phi=0$, we have $N_+\Phi_j=0$ for all $j$. Considering the possible $K$-types of the $L(\lambda)$ given in Lemma \ref{MlambdaKtypeslemma}, we see $\ell_j\leq\ell$ and $\ell_j+m_j\leq\ell+m$ for all $j$.

Let $\Psi_j$ be a vector of weight $(\ell_j+m_j,\ell_j)$ in $L(\ell_j+m_j,\ell_j)$. By Propositions \ref{navigateKtypesprop} and \ref{navigateKtypesell1prop}, we can navigate from $\Psi_j$ to $\Phi_j$ using the operators $ {U}$, $X_+$, $D_+$ and $E_+$.
The functions $\Psi_j$ correspond to elements of $S_{\ell_j,m_j}(\Gamma)$. The commutativity of the diagram \eqref{Nplusoperatorsnearholopropeq2} allows us to rewrite the relations in terms of functions on $\HH_2$. This proves the theorem.
\end{proof}

\begin{corollary}\label{cuspidalstructuretheoremcor1}
 Let $\ell$ be an integer, and $m$ a non-negative integer. Then
 $$
  N_{\ell,m}(\Gamma)^\circ=N^p_{\ell,m}(\Gamma)^\circ\qquad\text{with }p=\ell-1+\Big\lfloor\frac m2\Big\rfloor.
 $$
\end{corollary}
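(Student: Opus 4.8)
The plan is to bound the nearly holomorphic degree of an arbitrary cusp form by examining what the structure theorem (Theorem~\ref{cuspidalstructuretheorem}) produces. By that theorem, every $F\in N_{\ell,m}(\Gamma)^\circ$ is a finite sum of elements of the form $X(G)$ with $G\in S_{\ell',m'}(\Gamma)$ and $X\in\mathcal{X}_{\ell',m'}^{\ell,m}$; each $G$ is holomorphic, hence lies in $N^0_{\ell',m'}(\Gamma)^\circ$, so it suffices to show that every such $X$ has degree at most $p=\ell-1+\lfloor m/2\rfloor$, since then $X(G)\in N^{p}_{\ell,m}(\Gamma)^\circ$ and the sum lies in $N^p_{\ell,m}(\Gamma)^\circ$. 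The reverse inclusion $N^p_{\ell,m}(\Gamma)^\circ\subseteq N_{\ell,m}(\Gamma)^\circ$ is trivial, so the corollary reduces to a degree count on the generating operators.

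First I would treat the main case $\ell'\geq 2$. Here $X$ is either $X_+^\alpha D_+^\beta U^\gamma$ or $E_+X_+^\alpha D_+^\beta U^\gamma$, subject to the constraints in~\eqref{Xlmlmdefeq1}: $\gamma\leq m'/2$, and the two linear relations $\ell'+m'+2\alpha+2\beta=\ell+m$, $\ell'+2\beta+2\gamma=\ell$ (in the $E_+$ case with an extra $+1$ on each side). Using Table~\ref{Nplusoperatorsnearholotable}, the degrees are: $X_+$ raises $p$ by $1$, $U$ by $1$, $D_+$ by $2$, and $E_+$ by $1$. So the degree of $X$ is $\alpha+\gamma+2\beta$ (plus $1$ if $E_+$ is present). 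From the second linear relation, $2\beta+2\gamma=\ell-\ell'$, i.e.\ $2\beta=\ell-\ell'-2\gamma$, whence $\alpha+\gamma+2\beta=\alpha+\gamma+\ell-\ell'-2\gamma=\alpha-\gamma+\ell-\ell'$. Then from the first relation, $2\alpha=\ell+m-\ell'-m'-2\beta=\ell+m-\ell'-m'-(\ell-\ell'-2\gamma)=m-m'+2\gamma$, so $\alpha=\tfrac{m-m'}{2}+\gamma$ and $\alpha-\gamma=\tfrac{m-m'}{2}$. Hence the degree of $X$ (in the non-$E_+$ case) is $\tfrac{m-m'}{2}+\ell-\ell'$, and since $\ell'\geq 1$ and $m'\geq 0$ this is at most $\tfrac m2+\ell-1$; because the degree is an integer and $m-m'$ is even here, it is at most $\ell-1+\lfloor m/2\rfloor$. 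In the $E_+$ case the linear relations acquire a $+1$ on the right, which shifts the computation so that $\alpha-\gamma=\tfrac{m-m'-1}{2}$ and the degree becomes $\tfrac{m-m'-1}{2}+\ell-\ell'+1=\tfrac{m-m'+1}{2}+\ell-\ell'\leq\ell-1+\lfloor m/2\rfloor$ again (now $m-m'$ is odd, so $\tfrac{m-m'+1}{2}=\lceil(m-m')/2\rceil\leq\lfloor m/2\rfloor$ since $m'\geq 1$ in this case by the parity constraint forcing $\ell'$ even hence $\ell'\geq 2$). I would double-check the edge inequalities, but the bound $\ell-1+\lfloor m/2\rfloor$ is attained precisely when $\ell'=1$, $\gamma=0$.

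Next, the case $\ell'=1$: by~\eqref{Xlmlmdefeq2}, $\mathcal{X}_{1,m'}^{\ell,m}$ is nonempty only when $m'\leq m$ and $m\equiv m'\bmod 2$, and then $X$ is $X_+^{(m-m')/2}D_+^{(\ell-1)/2}$ (for $\ell$ odd) or $E_+X_+^{(m-m')/2}D_+^{(\ell-2)/2}$ (for $\ell$ even). In the odd case the degree is $\tfrac{m-m'}{2}+2\cdot\tfrac{\ell-1}{2}=\tfrac{m-m'}{2}+\ell-1\leq \ell-1+\lfloor m/2\rfloor$, with equality when $m'=0$ (and automatically $m$ even since $m\equiv m'=0$). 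In the even case the degree is $1+\tfrac{m-m'}{2}+(\ell-2)=\tfrac{m-m'}{2}+\ell-1$; here $m\equiv m'$ and $m'\geq 0$, so again $\leq\ell-1+\lfloor m/2\rfloor$. Finally the trivial cases ($\ell\leq 0$, or $\mathcal{X}^{\ell,m}_{\ell',m'}=\emptyset$) contribute nothing.

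The only genuinely delicate point is making sure the integrality/parity bookkeeping is airtight --- e.g.\ that $\tfrac{m-m'}{2}$ really is a non-negative integer whenever the relevant $\mathcal{X}$ is nonempty, which follows from the parity conditions recorded in the footnote to Theorem~\ref{structuretheorem} and the defining constraints in~\eqref{Xlmlmdefeq1}--\eqref{Xlmlmdefeq2}, and that the ceiling/floor comparison $\lceil(m-m')/2\rceil\leq\lfloor m/2\rfloor$ holds when $m'\geq 1$. I expect this to be the main (though still routine) obstacle; otherwise the corollary is an immediate consequence of the explicit description of the operator sets in the structure theorem together with the degree column of Table~\ref{Nplusoperatorsnearholotable}.
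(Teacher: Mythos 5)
Your proposal is correct and follows essentially the same route as the paper: bound the nearly holomorphic degree $\alpha+2\beta+\gamma$ (plus $1$ for $E_+$) of each operator in $\mathcal{X}_{\ell',m'}^{\ell,m}$ using the linear constraints in \eqref{Xlmlmdefeq1}--\eqref{Xlmlmdefeq2} and the degree column of Table~\ref{Nplusoperatorsnearholotable}, obtaining the maximum $\ell-1+\lfloor m/2\rfloor$ at $\ell'=1$. One small slip: in the $E_+$ case of \eqref{Xlmlmdefeq1} the $+1$ sits on the \emph{left} of both relations, so one still gets $2\alpha=m-m'+2\gamma$ (hence $m-m'$ even) and degree $\tfrac{m-m'}{2}+\ell-\ell'\leq\ell-2+\tfrac m2$; this only simplifies your parity bookkeeping and does not affect the conclusion.
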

\begin{proof}
Consider a typical term $X_+^\alpha D_+^\beta {U}^\gamma\,S_{\ell',m'}(\Gamma)$ appearing in the structure theorem. By Table \ref{Nplusoperatorsnearholotable}, such a term can produce nearly holomorphic degrees no larger than $\alpha+2\beta+\gamma$. By the conditions in the first set in \eqref{Xlmlmdefeq1},
$$
 \alpha+2\beta+\gamma=\ell-\ell'+\frac{m-m'}2\leq\ell-2+\frac m2.
$$
Similarly we can estimate the nearly holomorphic degree of all the terms in the structure theorem. The maximal number is $\ell-1+\frac m2$, proving our result.
\end{proof}

\begin{corollary}[Structure theorem for scalar-valued cusp forms]\label{cuspidalstructuretheoremcor2}
Let $\ell$ be an integer. Then we have an orthogonal direct sum decomposition
\begin{align*}
 N_{\ell,0}(\Gamma)^\circ&=\bigoplus_{\substack{\ell'=2\\\ell'\equiv\ell\bmod2}}^\ell\;
 \bigoplus_{\substack{m'=0\\m'\equiv0\bmod2}}^{\ell-\ell'}\;D_+^{(\ell-\ell'-m')/2}\, {U}^{m'/2}\,S_{\ell',m'}(\Gamma)\;\oplus\;N_{\ell,0}(\Gamma)^\circ_1,
\end{align*}
where
$$
  N_{\ell,0}(\Gamma)^\circ_1=
  \begin{cases}
    \displaystyle D_+^{(\ell-1)/2}\,S_{1,0}(\Gamma)&\text{if $\ell$ is odd},\\[1ex]
    0&\text{if $\ell$ is even}.
  \end{cases}
$$
\end{corollary}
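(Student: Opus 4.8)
The plan is to derive Corollary~\ref{cuspidalstructuretheoremcor2} as a direct specialization of Theorem~\ref{cuspidalstructuretheorem} to the case $m=0$. First I would write out the general decomposition
$$
 N_{\ell,0}(\Gamma)^\circ=\bigoplus_{\ell'=1}^\ell\bigoplus_{m'=0}^{\ell-\ell'}\;\sum_{X\in\mathcal{X}_{\ell',m'}^{\ell,0}}X(S_{\ell',m'}(\Gamma))
$$
and then go through the possible $X\in\mathcal{X}_{\ell',m'}^{\ell,0}$ using the explicit descriptions \eqref{Xlmlmdefeq1} and \eqref{Xlmlmdefeq2}. The point is that with $m=0$ the defining equations in \eqref{Xlmlmdefeq1} become $\ell'+m'+2\alpha+2\beta=\ell$ and $\ell'+2\beta+2\gamma=\ell$; subtracting gives $m'+2\alpha=2\gamma$, and combined with the constraint $\gamma\le m'/2$ (resp.\ $\gamma<m'/2$ in the $E_+$-part) this forces $\alpha=0$ and $\gamma=m'/2$. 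In particular $m'$ must be even. The first equation then reads $\ell'+m'+2\beta=\ell$, i.e.\ $\beta=(\ell-\ell'-m')/2$, and in the $E_+$-branch one gets $\ell'+m'+2\beta+1=\ell$ together with $\ell'+2\beta+2\gamma+1=\ell$; but the latter becomes $\ell'+2\beta+m'+1=\ell$, which contradicts the former unless we are in a degenerate situation — so the $E_+$-part of \eqref{Xlmlmdefeq1} contributes nothing when $m=0$ and $\ell'\ge2$. (One should also note from \eqref{Xlmlmdefeq1} that $\ell'\equiv\ell\bmod2$ is forced by $\ell'+m'+2\beta=\ell$ together with $m'$ even.) This yields exactly the first sum in the corollary, with the single operator $D_+^{(\ell-\ell'-m')/2}U^{m'/2}$.

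Next I would handle the contribution of $\ell'=1$, governed by \eqref{Xlmlmdefeq2}. Setting $m=0$ there, the set is empty unless $m'=0$ (since we need $m'\le m=0$ and $m\equiv m'\bmod 2$), so only $S_{1,0}(\Gamma)$ can appear. When $\ell$ is odd, \eqref{Xlmlmdefeq2} gives the singleton $\{X_+^{0}D_+^{(\ell-1)/2}\}=\{D_+^{(\ell-1)/2}\}$; when $\ell$ is even it gives $\{E_+X_+^{0}D_+^{(\ell-2)/2}\}$, but the target weight of $E_+X_+^0D_+^{(\ell-2)/2}$ acting on weight-$(1,1)$ data is $(\ell-1,\ell-1)$, which is not of scalar type $(\ell',0)$ — more simply, one observes directly from the bookkeeping that the only way to land in $N_{\ell,0}$ starting from $\ell'=1$ is with $\ell$ odd. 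This is precisely the description of $N_{\ell,0}(\Gamma)^\circ_1$. Assembling the two pieces, and using that the sum over a singleton $\mathcal{X}_{\ell',m'}^{\ell,0}$ is just the image of that one operator, gives the stated orthogonal direct sum decomposition, with orthogonality and the directness of the sum inherited verbatim from Theorem~\ref{cuspidalstructuretheorem}.

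The only genuinely nontrivial point is the purely combinatorial verification that, for $m=0$, each nonempty $\mathcal{X}_{\ell',m'}^{\ell,0}$ is in fact a singleton of the claimed shape, and that no $E_+$-type operators survive (except in the bookkeeping already absorbed into $N_{\ell,0}(\Gamma)^\circ_1$). I expect this case analysis over the Diophantine conditions in \eqref{Xlmlmdefeq1}--\eqref{Xlmlmdefeq2} to be the main — though still routine — obstacle; everything else is a formal consequence of Theorem~\ref{cuspidalstructuretheorem}. I would present the argument by first recording the reduction of the constraints when $m=0$, then reading off the surviving terms, and finally noting that the injectivity statement in Theorem~\ref{structuretheorem}(1) shows each $D_+^{(\ell-\ell'-m')/2}U^{m'/2}$ and $D_+^{(\ell-1)/2}$ acts injectively, so the displayed decomposition is exactly as claimed.
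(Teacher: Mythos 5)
Your overall strategy is exactly the paper's: specialize Theorem~\ref{cuspidalstructuretheorem} to $m=0$ and work out which operators survive. Your analysis of the first branch of \eqref{Xlmlmdefeq1} is correct: subtracting the two weight equations gives $\gamma=\alpha+m'/2$, so $\gamma\le m'/2$ forces $\alpha=0$, $\gamma=m'/2$, $m'$ even, $\beta=(\ell-\ell'-m')/2$ and $\ell'\equiv\ell\bmod 2$; and the same relation $\gamma=\alpha+m'/2\ge m'/2$ is incompatible with $\gamma<m'/2$, so the $E_+$-branch of \eqref{Xlmlmdefeq1} is empty. (Your subsequent attempt to re-derive this emptiness as a "contradiction" between $\ell'+m'+2\beta+1=\ell$ and $\ell'+2\beta+m'+1=\ell$ is spurious — those are the same equation — but the correct reason is already contained in your preceding sentence.)

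There is, however, a genuine error in your treatment of $\ell'=1$ with $\ell$ even. You claim the operator $E_+D_+^{(\ell-2)/2}$ applied to weight-$(1,1)$ data lands in weight $(\ell-1,\ell-1)$ and hence "is not of scalar type," so the term can be discarded by bookkeeping. Both halves of this are wrong: $D_+^{(\ell-2)/2}$ sends $(1,1)$ to $(\ell-1,\ell-1)$ and then $E_+$ sends that to $(\ell,\ell)$, which is precisely the scalar weight $\det^\ell\sym^0$ (and $(\ell-1,\ell-1)$ would itself be of scalar type in any case). So the set $\mathcal{X}_{1,0}^{\ell,0}$ is genuinely nonempty for $\ell$ even, the term $E_+D_+^{(\ell-2)/2}\bigl(S_{1,0}(\Gamma)\bigr)$ genuinely appears in the decomposition of Theorem~\ref{cuspidalstructuretheorem}, and it cannot be excluded by weight arithmetic. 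What makes it vanish is representation theory: by Proposition~\ref{navigateKtypesell1prop}(2), for $\lambda=(1,1)$ one has $L(\lambda)_{\text{par}(1)}=0$, so $E_+$ annihilates every vector $X_+^\alpha D_+^\beta w_0$ in $L(1,1)$; hence $E_+D_+^{(\ell-2)/2}F=0$ for every $F\in S_{1,0}(\Gamma)$. This is exactly the citation the paper's one-line proof makes (together with Proposition~\ref{navigateKtypesprop}(3) for the general even-$m'$ case), and it is the missing ingredient needed to justify $N_{\ell,0}(\Gamma)^\circ_1=0$ for $\ell$ even. With that substitution your argument is complete; the rest (directness, orthogonality, injectivity of the surviving operators) is correctly inherited from Theorems~\ref{structuretheorem} and~\ref{cuspidalstructuretheorem}.
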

\begin{proof}
The terms of the decomposition in Theorem \ref{cuspidalstructuretheorem} simplify for $m=0$. Note that all the $ {E}^+$ terms are zero by (3) of Proposition \ref{navigateKtypesprop} and (2) of Proposition \ref{navigateKtypesell1prop}.
\end{proof}

\begin{corollary}[Structure theorem for scalar-valued cusp forms of bounded nearly holomorphic degree]\label{cuspidalstructuretheoremcor3}
Let $\ell$ be an integer. Then, for each $p\ge 0$, we have an orthogonal direct sum decomposition
\begin{align*}
 N_{\ell,0}^p(\Gamma)^\circ&=\bigoplus_{\substack{\ell'=\max(2, \ell-2p)\\\ell'\equiv\ell\bmod2}}^\ell\;\bigoplus_{\substack{m'=\max(0, 2(\ell - \ell' -p))\\m'\equiv0\bmod2}}^{\ell-\ell'}\;D_+^{\frac{\ell-\ell'-m'}2}\, {U}^{\frac{m'}{2}}\,S_{\ell',m'}(\Gamma)\;\oplus\;N_{\ell,0}^p(\Gamma)^\circ_1,
\end{align*}
where
$$
  N_{\ell,0}^p(\Gamma)^\circ_1=
  \begin{cases}
    \displaystyle D_+^{(\ell-1)/2}\,S_{1,0}(\Gamma)&\text{if $\ell$ is odd and $p \ge \ell -1$},\\[1ex]
    0&\text{otherwise}.
  \end{cases}
$$
\end{corollary}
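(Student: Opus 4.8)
The plan is to refine the unbounded structure theorem for scalar-valued cusp forms, Corollary~\ref{cuspidalstructuretheoremcor2}, by keeping track of nearly holomorphic degrees. Write its conclusion as $N_{\ell,0}(\Gamma)^\circ=\bigoplus_{(\ell',m')}V_{\ell',m'}$, where $(\ell',m')$ ranges over the index set appearing there --- the pairs with $\ell'\equiv\ell\bmod2$, $2\le\ell'\le\ell$, $0\le m'\le\ell-\ell'$, $m'$ even, together with the ``extra'' index $(1,0)$, present only when $\ell$ is odd --- and $V_{\ell',m'}=D_+^{r(\ell',m')}U^{m'/2}S_{\ell',m'}(\Gamma)$ with $r(\ell',m')=(\ell-\ell'-m')/2$. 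Put $d(\ell',m')=2r(\ell',m')+m'/2=\ell-\ell'-m'/2$. Since $N_{\ell,0}^p(\Gamma)^\circ=N_{\ell,0}^p(\Gamma)\cap N_{\ell,0}(\Gamma)^\circ$, and since an elementary computation identifies $\{(\ell',m'):d(\ell',m')\le p\}$ with precisely the index set written in the statement, the assertion is equivalent to
$$
 N_{\ell,0}^p(\Gamma)^\circ=\bigoplus_{d(\ell',m')\le p}V_{\ell',m'}.
$$

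The inclusion ``$\supseteq$'' is immediate from Table~\ref{Nplusoperatorsnearholotable}: $U$ raises nearly holomorphic degree by at most $1$ and $D_+$ by at most $2$, so applying $D_+^{r}U^{m'/2}$ to the holomorphic (degree-$0$) space $S_{\ell',m'}(\Gamma)$ yields forms of nearly holomorphic degree at most $2r(\ell',m')+m'/2=d(\ell',m')$; thus $V_{\ell',m'}\subseteq N_{\ell,0}^{d(\ell',m')}(\Gamma)^\circ$. For ``$\subseteq$'' I would introduce, for each index $(\ell',m')$, the operator $\Xi_{\ell',m'}:=P_{0-}^{m'/2}D_-^{r(\ell',m')}\in\mathcal U(\mathfrak g_\C)$. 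By Table~\ref{Nplusoperatorsnearholotable} it carries $N^q_{\ell,0}(\HH_2)$ into $N^{q-d(\ell',m')}_{\ell',m'}(\HH_2)$, and since $[N_+,P_{0-}]=[N_+,D_-]=0$ it takes highest weight vectors to highest weight vectors. Two facts about $\Xi_{\ell',m'}$ are needed. First, $\Xi_{\ell',m'}\circ D_+^{r}U^{m'/2}$ equals a nonzero scalar times the identity on $S_{\ell',m'}(\Gamma)$: this is Lemma~\ref{Dminuslemma} applied with lowest weight $(\ell'+m',\ell')$ and $\beta=r(\ell',m')$, whose hypotheses are met because $m'$ is even and the only index with $\ell'=1$ is $(1,0)$. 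Second --- the crucial point --- $\Xi_{\ell',m'}$ annihilates every other summand $V_{\ell'',m''}$ with $d(\ell'',m'')\le d(\ell',m')$. Indeed, via Proposition~\ref{Nplmautformprop} the space $V_{\ell'',m''}$ corresponds (under Lemma~\ref{FPhilemma}) to highest‑weight‑of‑weight‑$(\ell,\ell)$ vectors inside the $L(\ell''+m'',\ell'')$-isotypic component of $\AA(\Gamma)^\circ_{\mathfrak n\text{-fin}}$, which by Proposition~\ref{AA0nfindecomp2prop} is a $(\mathfrak g,K)$-submodule; hence $\Xi_{\ell',m'}$ maps it into its space of highest weight vectors of weight $(\ell'+m',\ell')$. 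By the $K$-type description (Lemma~\ref{MlambdaKtypeslemma}), that space vanishes unless $\ell'\ge\ell''$ and $\ell'+m'\ge\ell''+m''$, and a short arithmetic check shows these two inequalities together with $d(\ell'',m'')\le d(\ell',m')$ force $(\ell'',m'')=(\ell',m')$.

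Granting the second fact, the inclusion ``$\subseteq$'' and the directness follow in one step. Let $F\in N_{\ell,0}^p(\Gamma)^\circ$ and write $F=\sum_{(\ell',m')}F_{\ell',m'}$ via Corollary~\ref{cuspidalstructuretheoremcor2}. Put $d_0=\max\{d(\ell',m'):F_{\ell',m'}\neq0\}$ and choose an index $(\ell',m')$ realizing $d_0$ with $F_{\ell',m'}\neq0$. Applying $\Xi_{\ell',m'}$ to $F$, every summand dies except $V_{\ell',m'}$ (those with $d(\ell'',m'')\le d_0$ by the second fact, and there are none with $d(\ell'',m'')>d_0$), so $\Xi_{\ell',m'}(F)=\Xi_{\ell',m'}(F_{\ell',m'})$ is, by the first fact, a nonzero multiple of the nonzero holomorphic form $F'\in S_{\ell',m'}(\Gamma)$ determined by $F_{\ell',m'}=D_+^{r}U^{m'/2}F'$; in particular $\Xi_{\ell',m'}(F)\neq0$. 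On the other hand $\Xi_{\ell',m'}(F)\in N^{p-d_0}(\HH_2)$, which is $\{0\}$ whenever $d_0>p$. Therefore $d_0\le p$, which means every nonzero $F_{\ell',m'}$ has $d(\ell',m')\le p$; combined with the directness already in Corollary~\ref{cuspidalstructuretheoremcor2} this gives $N_{\ell,0}^p(\Gamma)^\circ=\bigoplus_{d(\ell',m')\le p}V_{\ell',m'}$, which is the stated decomposition.

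The step I expect to be the main obstacle is the combinatorial bookkeeping in the ``second fact'': making precise which highest weight vectors of weight $(\ell'+m',\ell')$ can occur in $L(\ell''+m'',\ell'')$ via Lemma~\ref{MlambdaKtypeslemma}, carrying out the arithmetic that pins the index down to $(\ell',m')$, and handling in tandem the degenerate modules $L(1+m,1)$ and the ``extra'' index $(1,0)$ together with the restricted range of validity of Lemma~\ref{Dminuslemma}. Everything else is a routine combination of Table~\ref{Nplusoperatorsnearholotable} with Lemmas~\ref{P0minusUmlemma} and~\ref{Dminuslemma}.
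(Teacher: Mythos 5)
Your proposal is correct and follows essentially the same route as the paper: both reduce to Corollary~\ref{cuspidalstructuretheoremcor2}, and both rule out summands of too-high degree by applying $P_{0-}^{m'/2}D_-^{(\ell-\ell'-m')/2}$ (which kills anything in $N^p$ when $\ell-\ell'-m'/2>p$) and invoking Lemma~\ref{Dminuslemma} to get a contradiction. The only difference is presentational: where the paper deduces that the operator kills $F$ hence kills each summand from the directness of the isotypic decomposition, you justify the isolation of the relevant summand explicitly via the $K$-type bounds of Lemma~\ref{MlambdaKtypeslemma} and a maximal-degree argument, which is a correct (if slightly longer) way to make the same step rigorous.
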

\begin{proof}
The fact that the right side is contained in the left side follows immediately from Table \ref{Nplusoperatorsnearholotable}. Next, let $F \in  N_{\ell,0}^p(\Gamma)^\circ$. By Corollary~\ref{cuspidalstructuretheoremcor2}, we can write
$$
 F = \sum_{\ell', m'}  D_+^{\frac{\ell-\ell'-m'}2}\, {U}^{\frac{m'}{2}} F_{\ell', m'} \ + \ D_{+}^{(\ell-1) /2} F_{1,0},
$$
where $F_{\ell', m'} \in  S_{\ell',m'}(\Gamma)$ and $F_{1,0} \in S_{1,0}(\Gamma)$ (with $F_{1,0} = 0$ if $\ell$ is even). To complete the proof, it suffices to show that each $\ell'$, $m'$ above with  $F_{\ell', m'} \neq 0$ satisfies $\ell-\ell' - \frac{m'}2 \le p$, and furthermore, that $F_{1,0} \neq 0$ implies $p \ge \ell-1$.

We show that $F_{\ell', m'} \neq 0$ implies $\ell-\ell' - \frac{m'}2 \le p$; the proof for the other inequality is similar. Suppose that $\ell-\ell' - \frac{m'}2 >p$. Then, using Table \ref{Nplusoperatorsnearholotable}, we see that  ${P^{m'/2}_{0-}}D_-^{(\ell-\ell'-m')/2}F = 0$. This implies that $${P^{m'/2}_{0-}}D_-^{(\ell-\ell'-m')/2} D_+^{(\ell-\ell'-m')/2}\, {U}^{m'/2}\,F_{\ell', m'} = 0.$$ But this contradicts Lemma~\ref{Dminuslemma}.
\end{proof}
\subsection{Petersson inner products}\label{peterssonsec}
Let $\ell$ be an integer, and $m,p$ be non-negative integers. We let $\langle\cdot,\cdot\rangle_m$ be the unique $U(2)$-invariant inner product on $W_m$ such that $\langle S^m , S^m \rangle_m = 1$. Let $\Gamma$ be a congruence subgroup of $\Sp_4(\Q)$. For $F,G \in N_{\ell, m}(\Gamma)$, we define the Petersson inner product $\langle F, G \rangle$ by
$$
 \langle F, G \rangle = \vl(\Gamma \bs \H_2)^{-1}\int\limits_{\Gamma \bs \H_2}\langle \eta_{\ell,m}\left( \Im(Z) \right)(F(Z)), G(Z)\rangle_m\,dZ,
$$
where $dZ$ is any invariant measure on $\H_2$,\emph{ provided the integral converges absolutely}. We denote this absolute convergence condition by $\langle F, G \rangle<\infty$. If the integral does not converge absolutely, we write $\langle F, G \rangle=\infty$.

\begin{remark}Let $F,G \in N_{\ell, m}(\Gamma)$ such that at least one of $F$ and $G$ lies in $N_{\ell, m}(\Gamma)^\circ$. Then $\langle F, G\rangle <\infty$. \end{remark}

\begin{lemma}\label{peterssonequallemma2}
 Let $\ell$ be an integer, and $m$ a non-negative integer. Let $F,G \in N_{\ell, m}^p(\Gamma)$ and let $\Phi_F$, $\Phi_G$ be the functions on $\Sp_4(\R)$ corresponding to $F$, $G$ respectively via Lemma~\ref{FPhilemma}. Suppose that $\langle F, G \rangle<\infty$. Then $\langle \Phi_F, \Phi_G \rangle <\infty$ and
 \begin{equation}\label{peterssonequallemma2eq1}
  \langle F, G \rangle = \langle \Phi_F, \Phi_G \rangle,
 \end{equation}
 where $\langle \Phi_F, \Phi_G \rangle$ is defined by \eqref{AA0innerproducteq}.
\end{lemma}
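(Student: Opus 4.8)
The plan is to unwind the definitions and compare the two integrals via the standard dictionary between functions on $\H_2$ and functions on $\Sp_4(\R)$. Recall from Lemma~\ref{FPhilemma} that $\Phi_F$ is recovered from $F$ by $\Phi_F(g) = \big(\eta_{\ell,m}(J(g,I))^{-1}F(gI)\big)_{S^m\text{-component}}$, equivalently $\vec\Phi_F(g) = \eta_{\ell,m}(J(g,I))^{-1}F(gI)$ and $\Phi_F$ is the $S^m$-component of $\vec\Phi_F$. The first step is to express the pointwise quantity $\langle\eta_{\ell,m}(\Im Z)(F(Z)),G(Z)\rangle_m$ appearing in the Petersson integral in terms of $\vec\Phi_F$ and $\vec\Phi_G$ evaluated on the group. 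Writing $Z = gI$ and using $\Im(gI) = \tfrac12\big(J(g,I)^{-1}\,{}^t\!\overline{J(g,I)^{-1}}\big)^{-1}$ up to the standard normalization, one checks that
\[
 \langle\eta_{\ell,m}(\Im Z)(F(Z)),G(Z)\rangle_m
  = c\,\langle\vec\Phi_F(g),\vec\Phi_G(g)\rangle_m
\]
for an appropriate positive constant $c$ (absorbed into the normalization of the invariant measures), because the inner product $\langle\cdot,\cdot\rangle_m$ is $U(2)$-invariant and the factor $\eta_{\ell,m}(J(g,I))$ conjugating $F$ into $\vec\Phi_F$ interacts with $\eta_{\ell,m}(\Im Z)$ exactly so as to produce the invariant pairing. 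This is the key identity; it says the integrand on $\H_2$ is a right-$K$-invariant function on $\Sp_4(\R)$, namely $g\mapsto\langle\vec\Phi_F(g),\vec\Phi_G(g)\rangle_m$.

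The second step is to relate $\langle\vec\Phi_F(g),\vec\Phi_G(g)\rangle_m$ to $\Phi_F(g)\overline{\Phi_G(g)}$. From the definition \eqref{vecPhidefeq}, $\vec\Phi_F(g) = \sum_{j=0}^m \frac{(-1)^j}{j!}(N_-^j\Phi_F)(g)S^{m-j}T^j$, so the pairing $\langle\vec\Phi_F(g),\vec\Phi_G(g)\rangle_m$ expands as a sum over $j$ of $\langle\cdot,\cdot\rangle_m$-norms of the basis monomials times $(N_-^j\Phi_F)(g)\overline{(N_-^j\Phi_G)(g)}$. This does \emph{not} immediately equal $\Phi_F(g)\overline{\Phi_G(g)}$ pointwise. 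However — and this is the crucial reduction — after integrating over $\Gamma\backslash\Sp_4(\R)$ the cross terms can be handled by the skew-symmetry relation $\langle X\Phi_1,\Phi_2\rangle + \langle\Phi_1,X\Phi_2\rangle = 0$ for $X\in\mathfrak{g}$ quoted in Section~\ref{peterssonsec}, applied with $X = N_-$ (note $\overline{N_-} = -N_+$ as real operators, so $\langle N_-^j\Phi_F, N_-^j\Phi_G\rangle$ can be successively integrated by parts against $N_+^jN_-^j$ acting on $\Phi_G$); using $N_+\Phi_G=0$ and the commutation relation $N_+N_-^j = N_-^jN_+ + jN_-^{j-1}(Z'-Z) + j(j-1)N_-^{j-1}$ from the proof of Lemma~\ref{vecPhitransformlemma}, together with the fact that $\Phi_F,\Phi_G$ have weight $(\ell+m,\ell)$ so $(Z'-Z)$ acts by $-m$, one reduces each term $\int\langle N_-^j\Phi_F, N_-^j\Phi_G\rangle$ to a constant multiple of $\int\Phi_F\overline{\Phi_G}$. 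Choosing the normalization of the invariant measure $dZ$ on $\H_2$ to match $dg$ on $\Sp_4(\R)$ then gives \eqref{peterssonequallemma2eq1} exactly, with constant $1$ after the natural normalization.

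The third, essentially bookkeeping, step is the absolute-convergence claim $\langle\Phi_F,\Phi_G\rangle<\infty$. This follows from $\langle F,G\rangle<\infty$ because the integrand of \eqref{AA0innerproducteq}, namely $|\Phi_F(g)\overline{\Phi_G(g)}|$, is bounded pointwise by (a constant times) $\sum_j|(N_-^j\Phi_F)(g)||(N_-^j\Phi_G)(g)| = c'\langle\vec\Phi_F(g),\vec\Phi_G(g)\rangle_m^{?}$ — more carefully, by Cauchy--Schwarz applied to the finite sum it is dominated by $\big(\sum_j\frac1{j!^2}\|S^{m-j}T^j\|_m^2|N_-^j\Phi_F(g)|^2\big)^{1/2}$ times the analogous expression for $\Phi_G$, and both of these are controlled by $|F|$ and $|G|$ transported to the group via \eqref{FvecPhieq}; since $\int_{\Gamma\backslash\H_2}|\langle\eta_{\ell,m}(\Im Z)F(Z),G(Z)\rangle_m|\,dZ<\infty$ is exactly the hypothesis $\langle F,G\rangle<\infty$, absolute convergence transfers. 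The main obstacle I anticipate is not any single hard idea but getting the constant right in the pointwise identity of the first step and justifying the integration-by-parts manipulations of the second step rigorously in the presence of the non-compact domain $\Gamma\backslash\Sp_4(\R)$ — one must know the integrands are absolutely integrable before integrating by parts, which is why the convergence bookkeeping and the main identity are genuinely intertwined and should be organized so that the absolute integrability of every term is established first (using the hypothesis $\langle F,G\rangle<\infty$ and boundedness of the $[\alpha,\beta,\gamma]$ on Siegel sets, as in the proof of Proposition~\ref{Nplmautformprop}), and only then the integration by parts invoked.
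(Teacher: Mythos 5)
The paper's own ``proof'' of this lemma is only a pointer to a standard computation in the Asgari--Schmidt reference, so your write-up is in effect an attempt to supply that computation, and its architecture is the right one: (i) the pointwise identity $\langle\eta_{\ell,m}(\Im Z)F(Z),G(Z)\rangle_m=\langle\vec\Phi_F(g),\vec\Phi_G(g)\rangle_m$ for $Z=gI$, which follows from $\Im(gI)={}^t\overline{J(g,I)}^{-1}J(g,I)^{-1}$ together with the adjunction $\langle\eta_{\ell,m}(A)v,w\rangle_m=\langle v,\eta_{\ell,m}({}^t\bar A)w\rangle_m$ valid for any $U(2)$-invariant inner product; and (ii) a reduction of $\int\langle\vec\Phi_F,\vec\Phi_G\rangle_m$ to $\int\Phi_F\overline{\Phi_G}$. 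Two remarks on step (i): the constant there is exactly $1$, and in any case it could not be ``absorbed into the normalization of the invariant measures,'' because both $\langle F,G\rangle$ and $\langle\Phi_F,\Phi_G\rangle$ are divided by the total volume and are therefore independent of the choice of measure; no constant can be hidden there.

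The genuine issue is the constant in step (ii). If you carry out your own integration by parts to the end, you get $\langle N_-^j\Phi_F,N_-^j\Phi_G\rangle=(-1)^j\langle\Phi_F,N_+^jN_-^j\Phi_G\rangle$ and, using $N_+\Phi_G=0$, the weight $(\ell+m,\ell)$, and the quoted commutation relation, $N_+^jN_-^j\Phi_G=(-1)^j\,j!\,\tfrac{m!}{(m-j)!}\,\Phi_G$; since invariance and $\|S^m\|_m=1$ force $\|S^{m-j}T^j\|_m^2=\binom{m}{j}^{-1}$, the definition \eqref{vecPhidefeq} gives
\begin{equation*}
\int\limits_{\Gamma\bs\Sp_4(\R)}\langle\vec\Phi_F(g),\vec\Phi_G(g)\rangle_m\,dg
=\sum_{j=0}^m\binom{m}{j}\|S^{m-j}T^j\|_m^2\int\limits_{\Gamma\bs\Sp_4(\R)}\Phi_F(g)\overline{\Phi_G(g)}\,dg
=(m+1)\int\limits_{\Gamma\bs\Sp_4(\R)}\Phi_F(g)\overline{\Phi_G(g)}\,dg,
\end{equation*}
i.e.\ $\langle F,G\rangle=(m+1)\langle\Phi_F,\Phi_G\rangle$ rather than the asserted equality with constant $1$. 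The same factor $\dim W_m=m+1$ drops out of a cleaner route that avoids integration by parts on the noncompact quotient entirely: integrate over $K$ first and apply Schur orthogonality, $\int_K|\langle\eta_{\ell,m}({}^tk)u,S^m\rangle_m|^2\,dk=\vol(K)\,\|u\|_m^2/(m+1)$. So your argument, done carefully, establishes the lemma only up to the positive constant $m+1$ (which agrees with $1$ exactly when $m=0$); this is harmless for every use made of the lemma in the paper (orthogonality statements and ratios), but you should either track the constant explicitly or state the conclusion as a proportionality rather than assert it is $1$ ``after the natural normalization.'' A secondary caveat: the hypothesis $\langle F,G\rangle<\infty$ is absolute convergence of $\int\langle\vec\Phi_F,\vec\Phi_G\rangle_m$, which does not by itself dominate $\int\|\vec\Phi_F\|_m\|\vec\Phi_G\|_m$ (there can be cancellation inside the pairing), so both your domination of $|\Phi_F\overline{\Phi_G}|$ and the discarding of boundary terms in the integration by parts really require one of $F,G$ to be cuspidal --- the only case the paper uses --- or else the Schur-orthogonality route, which needs no integration by parts.
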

\begin{proof}
This follows from a standard computation as in~\cite[p.\ 195]{Asch}. We omit the details.
\end{proof}

We define the subspace $\E_{\ell, m}(\Gamma)$ to be the orthogonal complement of $N_{\ell, m}(\Gamma)^\circ$ in $N_{\ell, m}(\Gamma)$ with respect to the Petersson inner product.
\begin{lemma}\label{slashEorthogonallemma}
 Let $\ell,m$ be non-negative integers. Let $F\in\E_{\ell,m}(\Gamma)$, and let $\Phi\in\AA(\Gamma)_{\mathfrak{n}\text{-fin}}$ be the function corresponding to $F$ via Lemma \ref{FPhilemma}. Then $\Phi$ is orthogonal to $\AA(\Gamma)_{\mathfrak{n}\text{-fin}}^\circ$.
\end{lemma}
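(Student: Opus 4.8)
The plan is to reduce the statement to a computation with the inner product \eqref{AA0innerproducteq} using the adjointness relation $\langle X\Phi_1,\Phi_2\rangle+\langle\Phi_1,X\Phi_2\rangle=0$ for $X\in\mathfrak{g}$, together with Lemma~\ref{peterssonequallemma2}. First I would observe that $\AA(\Gamma)_{\mathfrak{n}\text{-fin}}^\circ$ decomposes, by Proposition~\ref{AA0nfindecomp2prop}, as an orthogonal direct sum of irreducible lowest weight modules $L(\ell'+m',\ell')$ with $\ell'\geq1$, each generated by a lowest weight vector corresponding to a holomorphic cusp form. So it suffices to show that $\Phi$ is orthogonal to every such irreducible constituent $L(\mu)$ with $\mu=(\ell'+m',\ell')$, $\ell'\geq1$. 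Since $\Phi$ has weight $(\ell+m,\ell)$, only those $L(\mu)$ whose $(\ell+m,\ell)$-weight space is nonzero contribute; for those, any weight-$(\ell+m,\ell)$ vector $\Psi\in L(\mu)$ can be written, by the navigation results (Propositions~\ref{navigateKtypesprop} and \ref{navigateKtypesell1prop}), as $\Psi=Y\,v_0$ where $v_0$ is the lowest weight vector of $L(\mu)$ and $Y\in\mathcal{U}(\mathfrak{p}_+)$ is a polynomial in $X_+,D_+,U,E_+$; equivalently $\Psi$ lies in the span of vectors of the form \eqref{wiDplusXpluslemmaeq1} (pushed up by $E_+$), each of which is obtained from $v_0$ by applying root vectors in $\mathfrak{p}_+$.

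The key step is then the adjointness argument: $\mathfrak{p}_+$ is spanned by $X_+,P_{1+},P_{0+}$, and under the inner product on $\AA(\Gamma)^\circ$ the adjoint of each $P\in\mathfrak{p}_+\subset\mathfrak{g}$ (after conjugating by the Cartan involution, so that $X_+^*$ is a multiple of $X_-$, etc., up to elements of $\mathfrak{k}_\C$) lies in $\mathfrak{n}=\mathfrak{p}_-$ plus $\mathfrak{k}_\C$. Concretely, writing $\Psi=Y v_0$ with $Y$ a product of elements of $\mathfrak{p}_+$, I would move $Y$ across the inner product one factor at a time: $\langle\Phi,Y v_0\rangle$ becomes, up to sign, $\langle Y^\dagger\Phi,v_0\rangle$ where $Y^\dagger$ is built from the adjoints of the factors, hence from $\mathfrak{n}$ and $\mathfrak{k}_\C$. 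Now $\Phi$, corresponding to $F\in\E_{\ell,m}(\Gamma)$, is annihilated by $N_+$ and (crucially) lies in the \emph{orthogonal complement} of the cuspidal part — but what we actually need is that $F\in\E_{\ell,m}(\Gamma)$ forces a relation making $\langle Y^\dagger\Phi,v_0\rangle$ vanish. Here is where I must be careful: the cleaner route is to run the argument in the other direction, moving the $\mathfrak{p}_-$-part of the adjoint onto $v_0$. Since $v_0$ is the lowest weight vector of $L(\mu)$, it is annihilated by all of $\mathfrak{n}=\langle X_-,P_{1-},P_{0-}\rangle$; thus if $Y^\dagger$ (when brought to PBW-ordered form with $\mathfrak{n}$-factors on the right) has any nonzero $\mathfrak{n}$-component acting on $v_0$ it dies, and the surviving terms have $Y^\dagger$ replaced by a lower-degree operator acting on $v_0$ but with a $\mathfrak{k}_\C$-factor acting on $\Phi$. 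By weight considerations $\langle\Phi,\Psi\rangle$ reduces to $\langle\Phi,c\,v_0\rangle$ for a scalar $c$ (possibly zero), i.e.\ to the inner product of $\Phi$ against the lowest weight vector, which corresponds to a holomorphic cusp form.

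To finish, I would invoke Lemma~\ref{peterssonequallemma2} to translate $\langle\Phi,v_0\rangle$ back into the classical Petersson pairing $\langle F,G_0\rangle$ where $G_0\in S_{\ell,m}(\Gamma)$ is the holomorphic cusp form corresponding to $v_0$ — but note this requires $\mu=(\ell+m,\ell)$, i.e.\ $v_0$ itself has weight $(\ell+m,\ell)$, which is exactly the case where $L(\mu)$ contributes to the lowest layer; in the other cases the scalar $c$ is forced to be $0$ by the fact that $v_0$ and $\Phi$ have different weights, so the pairing vanishes outright. In the remaining case $\langle F,G_0\rangle=0$ holds because $G_0\in S_{\ell,m}(\Gamma)\subseteq N_{\ell,m}(\Gamma)^\circ$ and $F\in\E_{\ell,m}(\Gamma)$ is by definition orthogonal to $N_{\ell,m}(\Gamma)^\circ$. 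Summing over the finitely many constituents $L(\mu)$ that meet the weight $(\ell+m,\ell)$ gives $\langle\Phi,\Psi\rangle=0$ for all $\Psi\in\AA(\Gamma)_{\mathfrak{n}\text{-fin}}^\circ$, as desired. The main obstacle I anticipate is the bookkeeping in the adjointness/PBW reordering step — making precise that after moving $\mathfrak{p}_+$-operators across the pairing and re-ordering, every surviving term either kills $v_0$ via $\mathfrak{n}$, kills $\Phi$ via $N_+$ (using $N_+\Phi=0$), or collapses to a scalar multiple of $\langle\Phi,v_0\rangle$ of the correct weight; keeping track of the $\mathfrak{k}_\C$-corrections generated by the commutators $[\mathfrak{p}_+,\mathfrak{n}]\subseteq\mathfrak{k}_\C$ is the delicate point, though it is ultimately routine given the explicit commutation relations in Section~\ref{setupsec}.
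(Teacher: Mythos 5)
Your overall strategy (decompose $\AA(\Gamma)^\circ_{\mathfrak{n}\text{-fin}}$ into irreducibles $L(\mu)$ and pair weight by weight) starts in the right place, but the central adjointness step fails, and it fails in a way that cannot be repaired by more careful bookkeeping. In the branch $\mu\neq(\ell+m,\ell)$ you conclude that $\langle\Phi,\Psi\rangle$ ``vanishes outright'' because $\Phi$ and the lowest weight vector $v_0$ of $L(\mu)$ have different weights; that branch makes no use whatsoever of the hypothesis $F\in\E_{\ell,m}(\Gamma)$, so if it were correct it would show that \emph{every} highest weight vector of weight $(\ell+m,\ell)$ is orthogonal to every $L(\mu)$ with $\mu\neq(\ell+m,\ell)$. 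That is false: take a nonzero $G_0\in S_{\ell,m-2}(\Gamma)$ with lowest weight vector $v_0$ generating $L(\mu)$, $\mu=(\ell+m-2,\ell)$, and set $\Phi=\Psi=X_+v_0$; then $\langle\Phi,\Psi\rangle=\langle X_+v_0,X_+v_0\rangle>0$. The precise point where your PBW/adjointness argument breaks is the sentence ``if $Y^\dagger$ \dots has any nonzero $\mathfrak{n}$-component acting on $v_0$ it dies'': after moving $Y\in\mathcal{U}(\mathfrak{p}_+)$ across the pairing, the operator $Y^\dagger\in\mathcal{U}(\mathfrak{n})$ acts on $\Phi$, not on $v_0$, and, e.g., $\langle X_-\Phi,v_0\rangle$ has no reason to vanish (moving $X_-$ back onto $v_0$ merely restores $X_+$, which does not annihilate $v_0$). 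A secondary inaccuracy: the weight-$(\ell+m,\ell)$ subspace of $L(\mu)$ is in general strictly larger than the span of the navigation vectors of Propositions \ref{navigateKtypesprop} and \ref{navigateKtypesell1prop}, which produce only the \emph{highest} weight vectors; the reduction to highest weight vectors must come from $K$-isotypic orthogonality, not from those propositions.

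The conceptual root of the problem is that your argument only ever tests $F$ against \emph{holomorphic} cusp forms (the single pairing $\langle F,G_0\rangle$ with $G_0\in S_{\ell,m}(\Gamma)$ in the case $\mu=(\ell+m,\ell)$), whereas the hypothesis $F\in\E_{\ell,m}(\Gamma)$ is orthogonality to all of $N_{\ell,m}(\Gamma)^\circ$, and the lemma genuinely needs that full strength: the form $X_+G_0$ above is orthogonal to $S_{\ell,m}(\Gamma)$ but certainly not to $\AA(\Gamma)^\circ_{\mathfrak{n}\text{-fin}}$. The intended proof is short and needs no Lie-algebra adjoints at all. Since $\Phi$ generates a copy of the $K$-type $\rho_{(\ell+m,\ell)}$ and the inner product is $K$-invariant, one may assume $\Psi$ lies in a copy of $\rho_{(\ell+m,\ell)}$ inside some irreducible constituent and has weight $(\ell+m,\ell)$; such a $\Psi$ is automatically annihilated by $N_+$ and hence corresponds via Lemma \ref{FPhilemma} to some $G\in N_{\ell,m}(\Gamma)^\circ$ --- a \emph{nearly holomorphic} cusp form of the same weight $\det^{\ell}\sym^{m}$ as $F$, not necessarily holomorphic. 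Then $\langle F,G\rangle=0$ by the very definition of $\E_{\ell,m}(\Gamma)$, and Lemma \ref{peterssonequallemma2} converts this into $\langle\Phi,\Psi\rangle=0$.
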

\begin{proof}
Let $\Psi\in\AA(\Gamma)_{\mathfrak{n}\text{-fin}}^\circ$; we have to show that $\langle\Phi,\Psi\rangle=0$. We may assume that $\Psi$ generates an irreducible module $L(\lambda)$ for some $\lambda$. Since, under the $K$-action, $\Phi$ generates the $K$-type $\rho_{(\ell+m,\ell)}$, we may assume that $\Psi$ does as well. Writing $\Psi$ as a sum of weight vectors, we may even assume that $\Psi$ has the same weight as $\Phi$, namely $(\ell+m,\ell)$. But then $\Psi$ corresponds to an element $G$ of $N_{\ell,m}(\Gamma)^\circ$. By hypothesis $\langle F,G\rangle=0$. Hence $\langle\Phi,\Psi\rangle=0$ by Lemma \ref{peterssonequallemma2}.
\end{proof}

\begin{lemma}\label{preserveeissiegel}
 Let $X$, $\mathcal{X}$ be as in Lemma~\ref{slashliecommute}. Then $X$ takes $N_{\ell, m}(\Gamma)^\circ$ to $ N_{\ell', m'}(\Gamma)^\circ$ and $\E_{\ell, m}(\Gamma)$ to $\E_{\ell', m'}(\Gamma)$.
\end{lemma}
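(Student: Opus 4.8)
The plan is to prove the two assertions of Lemma~\ref{preserveeissiegel} separately: the cuspidality statement by a direct Fourier‑expansion argument, and the statement about $\E_{\ell,m}(\Gamma)$ by passing to $\mathfrak{n}$‑finite automorphic forms. Throughout, write $(\ell_1,m_1)$ for the ``new weight'' data attached to $\ell,m,X$ as in Lemma~\ref{slashliecommute}, and recall that every $X\in\mathcal{X}$ corresponds to an element of $\mathcal{U}(\mathfrak{g}_\C)$ (each of the eight generators lies in $\mathcal{U}(\mathfrak{g}_\C)$ by Table~\ref{Nplusoperatorstable}, and a string is a product of these).

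First I would check that $X$ maps $N_{\ell,m}(\Gamma)$ into $N_{\ell_1,m_1}(\Gamma)$ and $N_{\ell,m}(\Gamma)^\circ$ into $N_{\ell_1,m_1}(\Gamma)^\circ$. The transformation property under $\Gamma$ is the commutative diagram~\eqref{Nplusoperatorsnearholopropeq2} (iterated along the string), near‑holomorphy of the correct degree comes from Proposition~\ref{Nplusoperatorsnearholoprop}(2), and the ``no poles at cusps'' condition is handled via Lemma~\ref{slashliecommute}: for $g\in\Sp_4(\Q)$ one has $(XF)\big|_{\ell_1,m_1}g=X(F\big|_{\ell,m}g)$, and by~\eqref{Fourierexpansioneq5}–\eqref{Fourierexpansioneq6} the operator $X$ introduces no new Fourier coefficients, so if $F\big|_{\ell,m}g$ has Fourier expansion supported on positive semidefinite $Q$ (resp.\ positive definite $Q$), the same holds for $X(F\big|_{\ell,m}g)$. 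This settles the first claim of the lemma.

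For the second claim, let $F\in\E_{\ell,m}(\Gamma)$ and let $\Phi$ be the automorphic form attached to $F$ via Lemma~\ref{FPhilemma}. By Proposition~\ref{Nplmautformprop}, $\Phi\in\AA(\Gamma)_{\mathfrak{n}\text{-fin}}$, and by Lemma~\ref{slashEorthogonallemma}, $\Phi$ is orthogonal to $\AA(\Gamma)^\circ_{\mathfrak{n}\text{-fin}}$. The key point is that the subspace
\[
 \mathcal{C}=\{\Psi\in\AA(\Gamma)_{\mathfrak{n}\text{-fin}}\;:\;\langle\Psi,\Theta\rangle=0\text{ for all }\Theta\in\AA(\Gamma)^\circ_{\mathfrak{n}\text{-fin}}\}
\]
is a $(\mathfrak{g},K)$‑submodule of $\AA(\Gamma)_{\mathfrak{n}\text{-fin}}$: $K$‑invariance is immediate since $\AA(\Gamma)^\circ_{\mathfrak{n}\text{-fin}}$ is $K$‑stable, and for $Y\in\mathfrak{g}$ and $\Theta\in\AA(\Gamma)^\circ_{\mathfrak{n}\text{-fin}}$ the skew‑symmetry relation $\langle Y\Psi,\Theta\rangle=-\langle\Psi,Y\Theta\rangle$ recorded in the ``Automorphic forms'' subsection, together with $Y\Theta\in\AA(\Gamma)^\circ_{\mathfrak{n}\text{-fin}}$ (that space being a $(\mathfrak{g},K)$‑submodule), forces $\langle Y\Psi,\Theta\rangle=0$. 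Consequently $\mathcal{C}$ is stable under $\mathcal{U}(\mathfrak{g}_\C)$, so $X\Phi\in\mathcal{C}$. Now transport back: by the (iterated) diagram~\eqref{Nplusoperatorsnearholopropeq1}, $X\Phi$ is the automorphic form attached to $XF\in N_{\ell_1,m_1}(\Gamma)$; given any $G\in N_{\ell_1,m_1}(\Gamma)^\circ$ with attached form $\Phi_G\in\AA(\Gamma)^\circ_{\mathfrak{n}\text{-fin}}$, the pairing $\langle XF,G\rangle$ is finite (Remark preceding Lemma~\ref{peterssonequallemma2}) and equals $\langle X\Phi,\Phi_G\rangle$ by Lemma~\ref{peterssonequallemma2}, which is $0$ since $X\Phi\in\mathcal{C}$. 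As $G$ was arbitrary, $XF\in\E_{\ell_1,m_1}(\Gamma)$.

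The main point requiring care is bookkeeping of convergence: every inner product appearing in the argument must have at least one cuspidal argument, so that Lemma~\ref{peterssonequallemma2}, the integration‑by‑parts identity for elements of $\mathfrak{g}$, and the finiteness remark all apply — this is exactly why the proof must be organized around the cuspidal module $\AA(\Gamma)^\circ_{\mathfrak{n}\text{-fin}}$ and its orthogonal complement rather than directly around $\AA(\Gamma)_{\mathfrak{n}\text{-fin}}$, and it is also why we do not need to exhibit an explicit adjoint of $X$ on the space of nearly holomorphic forms. (The degenerate cases $\ell\le0$, where $N_{\ell,m}(\Gamma)$ is either zero or the constants, are checked directly and cause no difficulty.)
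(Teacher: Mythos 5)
Your proof is correct and follows essentially the same route as the paper: the cuspidality claim via the fact that the operators introduce no new Fourier coefficients, and the claim about $\E_{\ell,m}(\Gamma)$ by passing to the automorphic form $\Phi$, invoking Lemma~\ref{slashEorthogonallemma}, and observing that the whole $(\mathfrak{g},K)$-module $\mathcal{U}(\mathfrak{g}_\C)\Phi$ remains orthogonal to $\AA(\Gamma)^\circ_{\mathfrak{n}\text{-fin}}$. The only difference is that you make explicit (via the skew-symmetry $\langle Y\Psi,\Theta\rangle=-\langle\Psi,Y\Theta\rangle$ and Lemma~\ref{peterssonequallemma2}) two steps the paper leaves implicit, which is a welcome addition rather than a deviation.
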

\begin{proof}
The fact that $X$ takes $N_{\ell, m}(\Gamma)^\circ$ to $ N_{\ell', m'}(\Gamma)^\circ$  is an immediate consequence of the fact that $X$ does not introduce new Fourier coefficients (this is true for each operator in Table~\ref{Nplusoperatorstable} by~\eqref{Fourierexpansioneq6} and is therefore true for all elements of  $\mathcal{X}$).

To prove that $X$ takes $\E_{\ell, m}(\Gamma)$ to $ \E_{\ell', m'}(\Gamma)$, let $F\in\E_{\ell, m}(\Gamma)$, and let $\Phi\in\AA(\Gamma)_{\mathfrak{n}\text{-fin}}$ be the corresponding automorphic form. By Lemma \ref{slashEorthogonallemma}, $\Phi$ is orthogonal to $\AA(\Gamma)_{\mathfrak{n}\text{-fin}}^\circ$. Hence the entire $(\g, K)$-module $\U(\g_\C)\Phi$ is orthogonal to $\AA(\Gamma)_{\mathfrak{n}\text{-fin}}^\circ$. Since $XF$ corresponds to $X\Phi\in\U(\g_\C)\Phi$, our assertion follows.
\end{proof}
\begin{lemma}\label{peterssonequivlemma}
 Let $\ell$ be a positive integer, and $m$ a non-negative integer. Let $X$, $\mathcal{X}$ be as in Lemma~\ref{slashliecommute}. There exists a constant $c_{\ell, m, X}$ (depending only on $\ell$, $m$, $X$) such that for all $F \in S_{\ell, m}(\Gamma)$ we have $\langle XF,XF \rangle  = c_{\ell,m,X}\langle F ,F\rangle$.
\end{lemma}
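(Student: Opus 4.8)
The plan is to transport everything to the automorphic side, where the statement becomes a purely representation-theoretic fact about the lowest weight module $L(\ell+m,\ell)$. First I would take $F\in S_{\ell,m}(\Gamma)$ and let $\Phi=\Phi_F$ be the corresponding function on $\Sp_4(\R)$ via Lemma~\ref{FPhilemma}. By Proposition~\ref{Nplmautformprop} we have $\Phi\in\AA(\Gamma)^\circ_{\mathfrak{n}\text{-fin}}$, by Corollary~\ref{PhiFholomorphiclemma} we have $\mathfrak{n}\Phi=0$, and $\Phi$ has weight $(\ell+m,\ell)$ with $N_+\Phi=0$; thus $\Phi$ is a lowest weight vector. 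Writing $X$ also for the element of $\mathcal{U}(\mathfrak{g}_\C)$ obtained by composing the operators of Table~\ref{Nplusoperatorstable} in the appropriate weights, Proposition~\ref{Nplusoperatorsnearholoprop} shows $XF$ corresponds to $X\Phi$, and $XF$ is again a cusp form by Lemma~\ref{preserveeissiegel}, so all relevant Petersson products converge absolutely. Hence by Lemma~\ref{peterssonequallemma2}, $\langle XF,XF\rangle=\langle X\Phi,X\Phi\rangle$ and $\langle F,F\rangle=\langle\Phi,\Phi\rangle$, and it suffices to produce a constant $c=c_{\ell,m,X}$ with $\langle X\Phi,X\Phi\rangle=c\langle\Phi,\Phi\rangle$ for every lowest weight vector $\Phi$ of weight $(\ell+m,\ell)$ in $\AA(\Gamma)^\circ_{\mathfrak{n}\text{-fin}}$.

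Next I would introduce the adjoint. Because the inner product \eqref{AA0innerproducteq} is invariant under right translation by $\Sp_4(\R)$, the real Lie algebra $\mathfrak{g}$ acts by skew-adjoint operators, so every $Y\in\mathfrak{g}_\C$ satisfies $\langle Y\Phi_1,\Phi_2\rangle=-\langle\Phi_1,\bar Y\Phi_2\rangle$, where $\bar Y$ is the conjugate of $Y$ with respect to the real form $\mathfrak{g}$. This yields an anti-linear anti-automorphism $Y\mapsto Y^\ast:=-\bar Y$ of $\mathcal{U}(\mathfrak{g}_\C)$ with $\langle X\Phi_1,\Phi_2\rangle=\langle\Phi_1,X^\ast\Phi_2\rangle$ for all $X\in\mathcal{U}(\mathfrak{g}_\C)$. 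Therefore $\langle X\Phi,X\Phi\rangle=\langle\Phi,(X^\ast X)\Phi\rangle$, and the problem is reduced to showing that $X^\ast X\in\mathcal{U}(\mathfrak{g}_\C)$ acts, on the space of lowest weight vectors of weight $(\ell+m,\ell)$ in $\AA(\Gamma)^\circ_{\mathfrak{n}\text{-fin}}$, by a single scalar that depends only on $\ell$, $m$, and $X$.

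The crucial step is the claim that, for $\lambda=(\ell+m,\ell)$, the weight-$\lambda$ space of $L(\lambda)$ is one-dimensional. I would argue this from Lemma~\ref{MlambdaKtypeslemma} and the basic properties of $N(\lambda)$: any $K$-type $\rho_{(x,y)}$ occurring in $N(\lambda)$ satisfies $x\geq\ell+m$, $y\geq\ell$, $x-y\equiv m\bmod 2$, and has weights $\{(x-j,y+j):0\le j\le x-y\}$; such a $K$-type contains the weight $\lambda$ only when $x+y=2\ell+m$ together with $x\geq\ell+m\geq y$, which combined with $y\geq\ell$ forces $(x,y)=(\ell+m,\ell)$. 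Since $\rho_{(\ell+m,\ell)}$ occurs in $N(\lambda)$ with multiplicity one, $N(\lambda)_\lambda$ is one-dimensional, and hence so is $L(\lambda)_\lambda$ (a nonzero quotient containing the image of the lowest weight vector). Now by Proposition~\ref{AA0nfindecomp2prop} the $L(\lambda)$-isotypic part of $\AA(\Gamma)^\circ_{\mathfrak{n}\text{-fin}}$ is an orthogonal direct sum of submodules $M_i\cong L(\lambda)$; the operator $X^\ast X$ preserves each $M_i$ and each weight space, so it acts on the one-dimensional line $(M_i)_\lambda$ by a scalar $c_i$, and $c_i$ is independent of $i$ since any $\mathcal{U}(\mathfrak{g}_\C)$-module isomorphism $M_i\to M_j$ intertwines $X^\ast X$. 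Calling the common value $c_{\ell,m,X}$, every lowest weight vector $\Phi$ of weight $\lambda$ lies in $\bigoplus_i(M_i)_\lambda$, so $X^\ast X\Phi=c_{\ell,m,X}\Phi$, and the lemma follows.

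The main obstacle is precisely the one-dimensionality of $N(\lambda)_\lambda$: this is what makes the scalar $c_{\ell,m,X}$ intrinsic to the isomorphism class $L(\ell+m,\ell)$ (rather than depending on the particular cusp form $F$), and hence what forces $\langle XF,XF\rangle$ and $\langle F,F\rangle$ to be proportional with a universal constant. Once that point and the adjoint formalism are in place, the rest is bookkeeping via Lemmas~\ref{FPhilemma}, \ref{peterssonequallemma2}, \ref{preserveeissiegel} and Proposition~\ref{AA0nfindecomp2prop}. If an explicit value of $c_{\ell,m,X}$ were wanted, one could compute it by repeated use of the commutation relations, but for this lemma existence is all that is required.
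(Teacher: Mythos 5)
Your proof is correct, and it follows the same overall strategy as the paper -- transport $F$ to the automorphic form $\Phi$ via Lemmas~\ref{FPhilemma} and \ref{peterssonequallemma2} and reduce to a statement about the module $L(\ell+m,\ell)$ inside $\AA(\Gamma)^\circ_{\mathfrak{n}\text{-fin}}$ -- but the mechanism for extracting the universal constant is genuinely different. The paper invokes Proposition~\ref{Mlambdaunitaryprop} to say $L(\lambda)$ is unitarizable, notes that an irreducible module carries at most one invariant inner product up to scalar and a lowest weight vector unique up to scalar, and simply \emph{defines} $c_{\ell,m,X}=\langle Xv_0,Xv_0\rangle/\langle v_0,v_0\rangle$ in an abstract model; the restriction of the Petersson product to $\langle\Phi\rangle\cong L(\lambda)$ is then forced to be a multiple of that abstract form. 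You instead avoid any appeal to abstract unitarizability: you form the adjoint $X^\ast$ directly from the skew-symmetry of the Petersson product recorded after \eqref{AA0innerproducteq}, reduce to the element $X^\ast X\in\mathcal{U}(\mathfrak{g}_\C)$, and use the one-dimensionality of the weight-$(\ell+m,\ell)$ space of $L(\lambda)$ (correctly extracted from Lemma~\ref{MlambdaKtypeslemma}) together with the fact that module isomorphisms intertwine $X^\ast X$ to get a single eigenvalue across all copies $M_i$. This buys a slightly more self-contained argument (only the decomposition of Proposition~\ref{AA0nfindecomp2prop} and the $K$-type computation are needed), at the cost of a little extra bookkeeping: you should note explicitly that a weight-$\lambda$ vector killed by $\mathfrak{n}$ and $N_+$ can have no component in a copy of $L(\mu)$ with $\mu\neq\lambda$ (such a component would exhibit $L(\mu)$ as a quotient of $N(\lambda)$, forcing $\mu=\lambda$), and that the eigenvalue of $X^\ast X$ is real (automatic from positivity), so that $\langle\Phi,X^\ast X\Phi\rangle=c\langle\Phi,\Phi\rangle$ despite the conjugate-linearity in the second slot. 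Neither point is a real gap.
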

\begin{proof}
Set $\lambda=(\ell+m,\ell)$, and consider the $(\mathfrak{g},K)$-module $L(\lambda)$. Let $v_0$ be a highest weight vector in the minimal $K$-type of $L(\lambda)$; of course,  $v_0$ is unique up to multiples. Since $L(\lambda)$ is unitary by Proposition \ref{Mlambdaunitaryprop}, we may endow it with a $\mathfrak{g}$-invariant inner product $\langle\cdot,\cdot\rangle$. By irreducibility, this inner product is unique up to multiples. Put $c_{\ell,m,X}=\langle Xv_0,Xv_0\rangle/\langle v_0,v_0\rangle$. Note that $c_{\ell,m,X}$ does not depend on the choice of model for $L(\lambda)$, the choice of $v_0$, or the normalization of inner products.

Now all we need to observe is that the automorphic form $\Phi\in\AA(\Gamma)_{\mathfrak{n}\text{-fin}}^\circ$ corresponding to $F$ generates a module isomorphic to $L(\lambda)$, that $\Phi$ is a lowest weight vector in this module, and Lemma \ref{peterssonequallemma2}.
\end{proof}

\begin{proposition}\label{peterssonequivprop}
 Let $\ell$ be a positive integer, and $m$ a non-negative integer. Let $X$, $\mathcal{X}$ be as in Lemma~\ref{slashliecommute}. Then, for all $F \in S_{\ell, m}(\Gamma)$ and $G \in M_{\ell, m}(\Gamma)$, we have $\langle X F,XG \rangle  = c_{\ell,m,X}\langle F , G\rangle$,
 where the constant $c_{\ell,m,X}$ is the same as in Lemma \ref{peterssonequivlemma}.
\end{proposition}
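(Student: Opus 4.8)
The plan is to reduce the bilinear identity $\langle XF,XG\rangle = c_{\ell,m,X}\langle F,G\rangle$ to the quadratic identity already established in Lemma~\ref{peterssonequivlemma} by a polarization argument, combined with the crucial observation that $M_{\ell,m}(\Gamma) = S_{\ell,m}(\Gamma) \oplus \E_{\ell,m}(\Gamma)\cap M_{\ell,m}(\Gamma)$ in a way compatible with the operators $X$ and with the Petersson inner product. More precisely, since $\ell \ge 1$, Proposition~\ref{Mlambdaunitaryprop}(3) tells us that $L(\ell+m,\ell)$ is unitarizable, and the cuspidal automorphic forms attached to $S_{\ell,m}(\Gamma)$ generate copies of this module. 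The key point is that $X$ preserves orthogonality to cusp forms (Lemma~\ref{preserveeissiegel}), so the contribution of $G \in M_{\ell,m}(\Gamma)$ lying in $\E_{\ell,m}(\Gamma)$ pairs to zero on both sides.

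First I would decompose $G = G^\circ + G^\perp$ with $G^\circ \in S_{\ell,m}(\Gamma)$ and $G^\perp \in \E_{\ell,m}(\Gamma)\cap M_{\ell,m}(\Gamma)$ (this decomposition exists because $S_{\ell,m}(\Gamma)$ is finite-dimensional by Lemma~\ref{Nlmpfindimlemma}, and $\E_{\ell,m}(\Gamma)$ is by definition its orthogonal complement inside $N_{\ell,m}(\Gamma)$; note $G^\circ, G^\perp$ are each holomorphic since the orthogonal projection within the finite-dimensional space $M_{\ell,m}(\Gamma)$ onto the subspace $S_{\ell,m}(\Gamma)$ stays inside $M_{\ell,m}(\Gamma)$). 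Then, since $F \in S_{\ell,m}(\Gamma)$, the right-hand side is $\langle F, G\rangle = \langle F, G^\circ\rangle$. For the left-hand side, Lemma~\ref{preserveeissiegel} shows $XG^\perp \in \E_{\ell_1,m_1}(\Gamma)$ and $XF \in N_{\ell_1,m_1}(\Gamma)^\circ$; hence $\langle XF, XG^\perp\rangle = 0$, and $\langle XF, XG\rangle = \langle XF, XG^\circ\rangle$. So we are reduced to proving the identity for $F, G^\circ \in S_{\ell,m}(\Gamma)$.

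Next I would polarize. For $F, G \in S_{\ell,m}(\Gamma)$, both sides of the desired identity are Hermitian sesquilinear forms in $(F,G)$ (the left side because $\langle\cdot,\cdot\rangle$ is Hermitian and $X$ is $\C$-linear, applying Lemma~\ref{peterssonequallemma2} to pass to $\Phi$'s). By Lemma~\ref{peterssonequivlemma}, the two forms agree on the diagonal $F = G$. A Hermitian form over $\C$ is determined by its values on the diagonal via the polarization identity
\begin{equation*}
 4\,B(F,G) = B(F+G,F+G) - B(F-G,F-G) + i\,B(F+iG,F+iG) - i\,B(F-iG,F-iG),
\end{equation*}
so the two forms coincide everywhere on $S_{\ell,m}(\Gamma) \times S_{\ell,m}(\Gamma)$, with the same constant $c_{\ell,m,X}$. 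Combining with the reduction above gives $\langle XF,XG\rangle = c_{\ell,m,X}\langle F,G\rangle$ for all $F \in S_{\ell,m}(\Gamma)$, $G \in M_{\ell,m}(\Gamma)$.

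I do not expect any serious obstacle here; the only point requiring a little care is verifying that the orthogonal projection $G \mapsto G^\circ$ genuinely lands in the holomorphic subspace (so that Lemma~\ref{preserveeissiegel} applies to $G^\perp$ as a nearly holomorphic, indeed holomorphic, form) — but this is automatic since $S_{\ell,m}(\Gamma) \subseteq M_{\ell,m}(\Gamma)$ are both finite-dimensional and the projection is taken within $M_{\ell,m}(\Gamma)$. One should also note that all the inner products appearing are finite: $\langle XF, XG^\circ\rangle$ and $\langle F, G^\circ\rangle$ involve a cusp form in at least one slot, and $\langle XF, XG^\perp\rangle$ is likewise controlled since $XF$ is cuspidal (Lemma~\ref{preserveeissiegel}), so the convergence hypotheses of Lemma~\ref{peterssonequallemma2} are met throughout.
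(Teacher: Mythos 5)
Your proposal is correct and follows essentially the same route as the paper: reduce to the case $F,G\in S_{\ell,m}(\Gamma)$ via Lemma~\ref{preserveeissiegel}, then recover the bilinear identity from the quadratic one in Lemma~\ref{peterssonequivlemma} by polarization. The paper compresses this to ``apply the previous lemma to $F+G$''; your explicit decomposition $G=G^\circ+G^\perp$ and the four-term polarization identity simply spell out the details the paper leaves implicit.
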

\begin{proof}
Because of Lemma~\ref{preserveeissiegel}, we may assume that $F$ and $G$ both belong to $S_{\ell,m}(\Gamma)$. Now the proposition follows by applying the previous lemma to $F+G$.
\end{proof}

%
%
%
\subsection{Initial decomposition in the general case}\label{initialdecompsec}
As before, we fix a congruence subgroup $\Gamma$, and consider the space $\AA(\Gamma)_{\mathfrak{n}\text{-fin}}$ of $\mathfrak{n}$-finite automorphic forms. In this and the next sections we investigate the algebraic structure of this $(\mathfrak{g},K)$-module. We know from Proposition \ref{AA0nfindecomp2prop} that the subspace of cusp forms is completely reducible. Since there is no inner product defined on all of $\AA(\Gamma)_{\mathfrak{n}\text{-fin}}$, this may no longer be true for the entire space. The following vanishing result for Siegel modular forms will imply some basic restrictions on the possible $K$-types occurring in $\AA(\Gamma)_{\mathfrak{n}\text{-fin}}$.

\begin{lemma}\label{weissauerlemma}
 Let $\ell,m\in\Z$ with $m\geq0$. Assume that $M_{\ell,m}(\Gamma)\neq0$. Then $\ell\geq1$ or $\ell=m=0$. The space $M_{0,0}(\Gamma)$ consists only of the constant functions.
\end{lemma}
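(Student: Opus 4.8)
The statement naturally breaks into three pieces: the description of $M_{0,0}(\Gamma)$, the vanishing of the cuspidal part $S_{\ell,m}(\Gamma)$ for $\ell\le0$, and the vanishing of the full space $M_{\ell,m}(\Gamma)$ for $\ell\le0$ with $(\ell,m)\ne(0,0)$. The plan is to dispatch the first two with tools already at hand, and to reduce the third to a classical small-weight vanishing theorem.

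For $M_{0,0}(\Gamma)$: an element $F$ of weight $\det^0\sym^0$ is a holomorphic, $\Gamma$-invariant function on $\H_2$ whose Fourier expansion at every cusp is supported on positive semidefinite $Q$; since $|e^{2\pi i\,\Tr(QZ)}|\le1$ on such terms, $F$ is bounded in every Siegel domain, hence bounded on all of $\H_2$, hence descends to a bounded holomorphic function on $\Gamma\backslash\H_2$. Such a function extends across the (proper, closed, analytic) boundary of the Baily--Borel compactification $\bar X$, which is a normal connected projective variety, so $H^0(\bar X,\mathcal O_{\bar X})=\C$ forces $F$ to be constant. Thus $M_{0,0}(\Gamma)=\C$.

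For the cuspidal case, Proposition \ref{AA0nfindecomp2prop} does essentially all the work: $\AA(\Gamma)^\circ_{\mathfrak n\text{-fin}}$ decomposes as $\bigoplus_{\ell\ge1}\bigoplus_{m\ge0}n_{\ell,m}L(\ell+m,\ell)$ with $n_{\ell,m}=\dim S_{\ell,m}(\Gamma)$, so no module $L(\ell+m,\ell)$ with $\ell\le0$ (in particular not the trivial module $L(0,0)$) occurs; equivalently $S_{\ell,m}(\Gamma)=0$ whenever $\ell\le0$. Combining this with the previous paragraph, it remains to prove that for $\ell\le0$ and $(\ell,m)\ne(0,0)$ there are no \emph{non-cuspidal} holomorphic forms either, i.e.\ $M_{\ell,m}(\Gamma)=S_{\ell,m}(\Gamma)$; together with $S_{\ell,m}(\Gamma)=0$ this gives $M_{\ell,m}(\Gamma)=0$ in the required range.

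This last point is the heart of the matter, and it is exactly the classical small-weight vanishing theorem for vector-valued Siegel modular forms of degree $2$, which I would invoke (see \cite{Weissauer1992} and the references therein). For completeness I would also record the part of it that is elementary: applying the Klingen--Siegel $\Phi$-operator to $F$, the restriction to a rank-one boundary component is a genus-one vector-valued form whose $S^{m-j}T^j$-component has weight $\ell+m-j$, and the genus-one valence formula forces that component to vanish for $j>\ell+m$ and to be constant for $j=\ell+m$; hence when $\ell+m<0$ the entire restriction vanishes at every rank-one cusp, $F$ is forced to be a cusp form, and we are back in the cuspidal case. The genuine obstacle — the reason one cannot finish purely formally — is the remaining range $\ell\le0\le\ell+m$: there the Siegel operator no longer kills $F$, and one needs the full strength of the classical vanishing result (equivalently, a plurisubharmonicity/maximum-principle argument for the Petersson metric on $\det^\ell\sym^m$ over $\Gamma\backslash\H_2$, which uses positivity properties of the Hodge bundle and lies outside the scope of the present paper's methods). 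Granting this input, the lemma follows by assembling the three pieces above.
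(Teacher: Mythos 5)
Your proof is correct, and for the genuinely hard part it ultimately rests on the same external input as the paper: the paper's entire proof of the first statement is a citation of the vanishing theorem, Satz~2 of \cite{Weissauer1983} (note that this, and not \cite{Weissauer1992}, is the right reference --- the 1992 paper is cited elsewhere for the \emph{existence} of weight-one cusp forms, which points in the opposite direction). Where you differ is in carving off two pieces before invoking that theorem. Your treatment of $M_{0,0}(\Gamma)$ via boundedness and the Baily--Borel compactification is a fine substitute for the paper's ``this is well known.'' Your treatment of the cuspidal part via Proposition~\ref{AA0nfindecomp2prop} is legitimate and non-circular: that proposition is proved from the unitarizability classification (Proposition~\ref{Mlambdaunitaryprop}\,(3)) and does not depend on the present lemma, so it genuinely gives $S_{\ell,m}(\Gamma)=0$ for $\ell\le0$ by representation theory alone. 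Your Siegel-operator reduction correctly disposes of the range $\ell+m<0$ (the $S^{m-j}T^j$-component of the restriction has weight $\ell+m-j$, so all components vanish), leaving only non-cuspidal forms with $\ell\le0\le\ell+m$, for which you --- like the paper --- must appeal to the classical vanishing theorem. The net effect is that your argument isolates more precisely which part of the lemma is ``soft'' and which part truly requires Weissauer's analytic input, at the cost of being longer; the paper buys brevity by citing the theorem for the whole first statement at once. The only corrections needed are the citation ($\cite{Weissauer1983}$, Satz~2) and, if you wish to keep the representation-theoretic route for the cuspidal part, an explicit remark that Proposition~\ref{AA0nfindecomp2prop} is independent of this lemma.
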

\begin{proof}
The first statement follows from the vanishing theorem Satz 2 of \cite{Weissauer1983}. The second statement says that the only holomorphic modular forms of weight $0$ are the constant functions; this is well known.
\end{proof}
\begin{lemma}\label{nonegelllemma}
 The space $\AA(\Gamma)_{\mathfrak{n}\text{-fin}}$ does not contain any weights $(k,\ell)$ with negative $\ell$. It contains the weight $(0,0)$ with multiplicity one; the corresponding weight space consists precisely of the constant functions.
\end{lemma}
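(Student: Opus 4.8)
The plan is to reduce both statements to facts already established about $\mathfrak{n}$-finite automorphic forms and modular forms. First I would recall that $\AA(\Gamma)_{\mathfrak{n}\text{-fin}}$ is, by Lemma~\ref{admissibilitylemma}, an admissible $(\mathfrak{g},K)$-module, and that by the general structure theory it is a finite direct sum of indecomposable modules, each of which lies in category $\mathcal{O}^{\mathfrak{p}}$. By~\eqref{categoryOpeq1}, every such indecomposable module has a finite filtration with quotients of the form $L(\lambda)$, $\lambda=(k,\ell)\in\Lambdap$. Hence the $K$-types, and in particular the weights, occurring in $\AA(\Gamma)_{\mathfrak{n}\text{-fin}}$ are among those occurring in the various $L(\lambda)$. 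Thus it suffices to control which weights appear in the $L(\lambda)$ that can actually occur as subquotients.

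For the first assertion (no weights $(k,\ell)$ with $\ell<0$), I would argue as follows. Suppose some $L(\lambda)$ with $\lambda=(\ell_0+m_0,\ell_0)$ occurs as a subquotient of a submodule generated by a highest weight vector in $\AA(\Gamma)_{\mathfrak{n}\text{-fin}}$. By Lemma~\ref{MlambdaKtypeslemma}, every $K$-type $\rho_{(x,y)}$ of $N(\ell_0+m_0,\ell_0)$ — and hence of its quotient $L(\ell_0+m_0,\ell_0)$ — satisfies $y\geq\ell_0$; in particular all weights $(k,\ell)$ occurring in $L(\lambda)$ satisfy $\ell\geq\ell_0$. So it is enough to show $\ell_0\geq0$ whenever $L(\ell_0+m_0,\ell_0)$ occurs. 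The lowest weight vector of such an $L(\lambda)$ is a highest weight vector in $\AA(\Gamma)_{\mathfrak{n}\text{-fin}}$ of weight $(\ell_0+m_0,\ell_0)$ annihilated by $\mathfrak{n}$; via Lemma~\ref{FPhilemma} and Corollary~\ref{PhiFholomorphiclemma} it corresponds to a nonzero element of $M_{\ell_0,m_0}(\Gamma)$. By Lemma~\ref{weissauerlemma} this forces $\ell_0\geq1$, or $\ell_0=m_0=0$; in either case $\ell_0\geq0$. A subtlety here is that not every indecomposable summand of $\AA(\Gamma)_{\mathfrak{n}\text{-fin}}$ need be generated by a single highest weight vector, so I would instead argue at the level of $K$-types directly: pick a weight $(k,\ell)$ occurring in $\AA(\Gamma)_{\mathfrak{n}\text{-fin}}$ with $\ell$ minimal; the $K$-type $\rho_{(x,y)}$ containing a vector of this weight then has $y\leq\ell$, and applying $P_{0-},P_{1-},X_-$ successively to a highest weight vector of this $K$-type (each of these either kills it or produces a vector of strictly smaller $\ell$, contradicting minimality — note $N_+$ commutes with $P_{0-}$ and one uses the minimality to kill the $P_{1-}$ and $X_-$ images too, exactly as in the proof of Lemma~\ref{admissibilitylemma}) lands in the holomorphic modular forms $M_{y,x-y}(\Gamma)$; Lemma~\ref{weissauerlemma} then gives $y\geq0$, hence $\ell\geq y\geq0$ is impossible to violate. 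Actually the cleanest route is: minimal $\ell$ forces a nonzero element of some $M_{\ell',m'}(\Gamma)$ with $\ell'\leq\ell$, so Lemma~\ref{weissauerlemma} gives $\ell'\geq0$ (or $\ell'=m'=0$), and since $\ell\geq\ell'$ we get $\ell\geq0$.

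For the statement about the weight $(0,0)$: by the above, if $(0,0)$ occurs it must lie in a $K$-type $\rho_{(x,y)}$ with $x\geq y$ and $y\leq 0$; since weights of $K$-types come in the pattern $(x-j,y+j)$, having $(0,0)$ among them with $y\leq 0\leq x$ and minimality of $\ell=0$ forces $x=y=0$, i.e.\ the trivial $K$-type. Repeating the $P_{0-},P_{1-},X_-$ argument (which is trivial here since these all lower the weight out of the module), the corresponding vector generates, or corresponds to, an element of $M_{0,0}(\Gamma)$, which by Lemma~\ref{weissauerlemma} consists only of constants. Conversely the constant functions are visibly $\mathfrak{n}$-finite automorphic forms of weight $(0,0)$ (they span $L(0,0)$), so the $(0,0)$-weight space is exactly the constants and has dimension one. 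The main obstacle is the bookkeeping in the first paragraph: making precise that "minimal $\ell$" plus the commutation relations really does produce a nonzero holomorphic modular form of weight with second coordinate $\leq\ell$, so that Lemma~\ref{weissauerlemma} applies — but this is essentially the argument already carried out in the proof of Lemma~\ref{admissibilitylemma}, so it should go through with only minor modification.
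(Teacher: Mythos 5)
Your overall strategy --- reduce to a vector annihilated by $\mathfrak{n}$ and $N_+$, identify it with a holomorphic modular form via Corollary~\ref{PhiFholomorphiclemma}, and invoke the vanishing theorem of Lemma~\ref{weissauerlemma} --- is exactly the paper's argument. But two of your steps fail as written, and both failures involve $X_-$. First, $X_-$ spans the root space of $(-2,0)$: it lowers $k$ by $2$ and leaves $\ell$ unchanged. So neither ``minimality of $\ell$'' nor the first statement of the lemma forces $X_-v=0$; your parenthetical claim that each of $P_{0-},P_{1-},X_-$ ``either kills it or produces a vector of strictly smaller $\ell$'' is false for $X_-$. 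The repair is to use $\mathfrak{n}$-finiteness of the single vector at hand: $\mathcal{U}(\mathfrak{n})v$ is finite-dimensional and $\mathfrak{n}$ acts on it nilpotently (each of $X_-,P_{1-},P_{0-}$ lowers $k+\ell$ by $2$), so it contains a nonzero vector $w$ killed by all of $\mathfrak{n}$ whose second weight coordinate is still $\leq\ell<0$; then apply $N_+$ finitely many times ($N_+$ normalizes $\mathfrak{n}$, so the $\mathfrak{n}$-vanishing is preserved) to reach a highest weight vector, and conclude with Weissauer. This also repairs another logical slip: you cannot ``pick a weight with $\ell$ minimal'' in all of $\AA(\Gamma)_{\mathfrak{n}\text{-fin}}$ before knowing the weights are bounded below --- which is precisely what you are trying to prove --- but you can take the minimum inside the finite-dimensional space $\mathcal{U}(\mathfrak{n})v$. (Relatedly, your opening appeal to a finite direct-sum decomposition into indecomposables in category $\mathcal{O}^{\mathfrak{p}}$ is circular: that decomposition is Lemma~\ref{noncusplemma3}, whose proof uses the present lemma to bound the weights from below.)

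The second failure is in the weight-$(0,0)$ statement. You assert that $X_-$ ``lowers the weight out of the module,'' but $X_-\Phi$ has weight $(-2,0)$, whose second coordinate is not negative, so the first statement does not exclude it. The paper gets around this as follows: the first statement gives $P_{1-}\Phi=P_{0-}\Phi=N_+\Phi=0$ (these images would have weights $(-1,-1)$, $(0,-2)$, $(1,-1)$); since $\Phi$ is then a highest weight vector of weight $(0,0)$ it generates the one-dimensional $K$-type $\rho_{(0,0)}$, so $N_-\Phi=0$ as well; and only then does the commutator relation $[N_-,P_{1-}]=2X_-$ force $X_-\Phi=0$, after which $\Phi$ corresponds to an element of $M_{0,0}(\Gamma)=\C$. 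Without that commutator step your argument does not close.
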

\begin{proof}
To prove the first statement, suppose that $\AA(\Gamma)_{\mathfrak{n}\text{-fin}}$ contains a non-zero vector $\Phi$ of weight $(k,\ell)$ with $\ell<0$. After applying $P_{0-}$, $P_{1-}$ and $X_-$ finitely many times to $\Phi$, we may assume that $\Phi$ is annihilated by all these operators. By Corollary \ref{PhiFholomorphiclemma}, $\Phi$ corresponds to a non-zero element $F$ of $M_{\ell,k-\ell}(\Gamma)$. But such $F$ do not exist by Lemma \ref{weissauerlemma}.

To prove the second statement, let $\Phi$ be a vector of weight $(0,0)$. By the first statement, $P_{1-}\Phi=P_{0-}\Phi=N_+\Phi=0$. Hence also $N_-\Phi=0$. Since $[N_-,P_{1-}]=2X_-$, then also $X_-\Phi=0$. Therefore $\Phi$ corresponds to an element of $M_{0,0}(\Gamma)$. By Lemma \ref{weissauerlemma}, $\Phi$ must be constant.
\end{proof}

\begin{lemma}\label{weight20lemma}
 The space $\AA(\Gamma)_{\mathfrak{n}\text{-fin}}$ does not contain the weight $(2,0)$.
\end{lemma}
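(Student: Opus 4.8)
The plan is to suppose $\Phi\in\AA(\Gamma)_{\mathfrak{n}\text{-fin}}$ has weight $(2,0)$ and deduce $\Phi=0$. First I would reduce $\Phi$ to a highest weight vector killed by most lowering operators. If a $K$-type $\rho_{(x,y)}$ (so $x\geq y$) contains the weight $(2,0)$, then $(2,0)=(x-j,y+j)$ for some integer $0\leq j\leq x-y$, so $j=-y$ and thus $y\leq0$; if moreover $y<0$, then $\rho_{(x,y)}$ already carries a weight with negative second coordinate, which cannot occur in $\AA(\Gamma)_{\mathfrak{n}\text{-fin}}$ by Lemma~\ref{nonegelllemma}. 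Hence the only contributing $K$-type is $\rho_{(2,0)}$, for which $(2,0)$ is the highest weight; so $\Phi$ lies in the $\rho_{(2,0)}$-isotypic part and is a highest weight vector, whence $N_+\Phi=0$. Similarly $P_{0-}\Phi$ and $P_{1-}\Phi$ have weights $(2,-2)$ and $(1,-1)$, which do not occur by Lemma~\ref{nonegelllemma}, so $P_{0-}\Phi=P_{1-}\Phi=0$. Consequently $\mathcal{U}(\mathfrak{n})\Phi=\C\Phi+\C X_-\Phi$, and $X_-\Phi$ has weight $(0,0)$, so by Lemma~\ref{nonegelllemma} it is a constant function, say $X_-\Phi=c$.

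Granting the key fact that $c=0$, the remainder is immediate: then $\mathfrak{n}\Phi=0$, so by Corollary~\ref{PhiFholomorphiclemma} the function $F$ on $\HH_2$ attached to $\Phi$ via Lemma~\ref{FPhilemma} is holomorphic, i.e.\ $F\in M_{0,2}(\Gamma)$; but $M_{0,2}(\Gamma)=0$ by Lemma~\ref{weissauerlemma} (here $\ell=0$ and $m=2\neq0$), so $F=0$ and $\Phi=0$.

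Thus the whole difficulty is concentrated in showing $c=0$. My preferred route is representation-theoretic: the constant functions form a $(\mathfrak{g},K)$-submodule of $\AA(\Gamma)_{\mathfrak{n}\text{-fin}}$ isomorphic to the trivial module $L(0,0)$, and it is in fact a direct summand (the constants are precisely the $\mathfrak{g}$-invariant automorphic forms, so this follows from the Langlands decomposition of $\AA(\Gamma)$, and then from $\AA(\Gamma)_{\mathfrak{n}\text{-fin}}=L(0,0)\oplus V$ since $L(0,0)$ is trivially $\mathfrak{n}$-finite). The $\C$-component of $\Phi$ vanishes because $\Phi$ has weight $(2,0)\neq(0,0)$, so $\Phi\in V$ and hence $X_-\Phi\in V$; but $X_-\Phi=c\in\C$, forcing $c\in\C\cap V=\{0\}$. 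An alternative is integration by parts: the conjugation of $\mathfrak{g}_\C$ fixing $\mathfrak{g}$ interchanges $X_-$ and $X_+$ (this is visible from the explicit matrices for $X_\pm$), so extending the adjointness relation following \eqref{AA0innerproducteq} to $\mathfrak{g}_\C$ gives $\langle X_-\Phi,X_-\Phi\rangle=-\langle\Phi,X_+X_-\Phi\rangle=-\langle\Phi,X_+c\rangle=0$ because $X_+$ annihilates the constant $c$, whereas $\langle X_-\Phi,X_-\Phi\rangle=\langle c,c\rangle$ is a nonnegative multiple of $|c|^2$; one then only has to justify absolute convergence of the integrals involved, which reduces to the moderate growth of $\Phi$ paired against the bounded function $c$.

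I expect the proof that $c=0$ to be the main obstacle. Concretely it amounts to excluding, inside $\AA(\Gamma)_{\mathfrak{n}\text{-fin}}$, a non-split extension $0\to L(0,0)\to M\to M'\to0$ with $M'$ a nonzero quotient of $N(2,0)$ and $M$ generated by a vector of weight $(2,0)$; such an $M$ is not ruled out purely algebraically, since $(2,0)+\varrho=(1,-2)$ and $\varrho=(-1,-2)$ are $W$-conjugate, so $L(2,0)$ and $L(0,0)$ share the central character $\chi_\varrho$ and could a priori be glued. Everything else in the argument is bookkeeping with the commutation relations among the root vectors together with Lemma~\ref{nonegelllemma}, Corollary~\ref{PhiFholomorphiclemma}, and Lemma~\ref{weissauerlemma}.
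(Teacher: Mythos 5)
Your reduction of the problem to the single question ``is $X_-\Phi=c$ zero?'' is correct and is exactly where the paper's proof also arrives: $N_+\Phi=P_{1-}\Phi=P_{0-}\Phi=0$ by Lemma~\ref{nonegelllemma}, and $X_-\Phi$ is a constant. You also correctly diagnose that $c=0$ cannot be forced by pure algebra. However, neither of your two proposed arguments for $c=0$ closes the gap. The representation-theoretic route assumes that $L(0,0)$ is a direct summand of $\AA(\Gamma)_{\mathfrak{n}\text{-fin}}$, but that assumption is essentially a restatement of what must be proved: a weight-$(2,0)$ vector with $X_-\Phi=c\neq0$ is precisely a non-split extension of a quotient of $N(2,0)$ over the constants. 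The decomposition of $\AA(\Gamma)$ by cuspidal support does not separate the constants from such an extension, since the degenerate Eisenstein series attached to the trivial character of the Borel live in the \emph{same} summand as the constants. The degree-one analogue shows your claim is genuinely false in general: for $\SL_2$, the module generated by $E_2$ is a non-split extension over the constants, so the trivial module is \emph{not} a direct summand of the space of $\mathfrak{n}$-finite automorphic forms there. Any correct proof must therefore use an input special to degree $2$. Your integration-by-parts route hits the same obstruction in analytic disguise: in the $\SL_2$ case the integral $\langle\Phi_{E_2},1\rangle$ diverges (the constant term grows like $y$ against the measure $dx\,dy/y^2$), so ``moderate growth paired against a bounded function'' does not give absolute convergence; and even where the integrals converge, integration by parts over the non-compact quotient for non-cuspidal forms produces boundary terms that must be controlled. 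As written, both routes are gaps rather than proofs.

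The paper's actual argument is elementary but uses a trick you do not have. From $E_-F=P_{0-}F=0$ and $LF=\text{const}\neq0$ one solves a small linear system to find that each component function of $F$ equals $\big[1,0,0\big]-2\big[0,1,0\big]+\big[0,0,1\big]$ plus a holomorphic function. The key step is to form $G(Z)=F(Z)-2F(2Z)$: each degree-one nearly holomorphic monomial scales by $\tfrac12$ under $Z\mapsto2Z$, so the non-holomorphic parts cancel and $G$ is a \emph{non-zero holomorphic} modular form of weight $\det^0\sym^2$ for a smaller congruence subgroup $\Gamma'$. This contradicts Weissauer's vanishing theorem (Lemma~\ref{weissauerlemma}) applied to $\Gamma'$. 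That second application of Weissauer to the rescaled form is the genuinely degree-$2$ input; in degree one the same construction yields $E_2(z)-2E_2(2z)$, a non-zero holomorphic form of weight $2$ on $\Gamma_0(2)$, which is exactly why $E_2$ exists and why no soft argument of the kind you propose can succeed.
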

\begin{proof}
Suppose that $\Phi\in\AA(\Gamma)_{\mathfrak{n}\text{-fin}}$ is a non-zero vector of weight $(2,0)$; we will obtain a contradiction. Since $\AA(\Gamma)_{\mathfrak{n}\text{-fin}}$ does not contain any weights $(k,\ell)$ with negative $\ell$, we have $E_-\Phi=P_{1-}\Phi=P_{0-}\Phi=N_+\Phi=0$. By Lemma \ref{weissauerlemma}, $\Phi$ cannot be annihilated by all of $\p_-$. Hence $X_-\Phi\neq0$. Since the formula for the $L$-operator in Table \ref{Nplusoperatorstable} can be rewritten as
$$
 L=m(m+1)X_--(m+1)N_-P_{1-}+N_-^2P_{0-},
$$
it follows that $L\Phi\neq0$. Since $L\Phi$ has weight $(0,0)$, it is a constant function by Lemma \ref{nonegelllemma}. We normalize such that $L\Phi=-6$; the reason for this normalization will become clear momentarily.

Let $F:\:\HH_2\to W_2$ be the function corresponding to $\Phi$. Let $F=F_0S^2+F_1ST+F_2T^2$, where $F_j$ are the component functions. By Proposition \ref{Nplusoperatorsnearholoprop} (1), the relations $E_-F=P_{0-}F=0$ and $LF=-6$ hold. Looking at the definitions \eqref{operatorsonfunctions1}, \eqref{operatorsonfunctions2}, \eqref{operatorsonfunctions6} of these differential operators, and solving the resulting linear system, we conclude
\begin{equation}\label{weight20lemmaeq1}
    F_j=\big[1,0,0\big]-2\big[0,1,0\big]+\big[0,0,1\big]+H_j
\end{equation}
for $j\in\{0,1,2\}$, where $H_j$ is holomorphic. (See \eqref{nearholomonomialeq} for notation.) Now consider the function on $\HH_2$ given by $G(Z):=F(Z)-2F(2Z)$. Then $G(Z)$ is a modular form with respect to a smaller congruence subgroup $\Gamma'$. It is easy to see that $G$ is non-zero. In view of \eqref{weight20lemmaeq1}, the nearly holomorphic parts of $F(Z)$ and $2F(2Z)$ cancel each other out, so that $G$ is holomorphic. Hence $G$ is a non-zero element of $M_{0,2}(\Gamma')$. By Lemma \ref{weissauerlemma}, this is impossible.
\end{proof}

For the next lemma, recall that $\AA_\lambda(\Gamma)_{\mathfrak{n}\text{-fin}}$ denotes the subspace of vectors of weight $\lambda\in\Lambda$.
\begin{lemma}\label{noncusplemma2}
 Let $\ell$ be an integer, and $m$ a non-negative integer.
 \begin{enumerate}
  \item $\AA_{(\ell+m,\ell)}(\Gamma)_{\mathfrak{n}\text{-fin}}=0$ if $\ell<0$.
  \item $\AA_{(0,0)}(\Gamma)_{\mathfrak{n}\text{-fin}}=\C$.
  \item $\AA_{(m,0)}(\Gamma)_{\mathfrak{n}\text{-fin}}=0$ for all $m>0$.
 \end{enumerate}
\end{lemma}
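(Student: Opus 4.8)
Parts (1) and (2) are immediate from Lemma~\ref{nonegelllemma}: a weight $(\ell+m,\ell)$ with $\ell<0$ has negative second coordinate, and the weight $(0,0)$ space was already identified there with the constant functions. The substance is part (3), which I would prove by induction on $m\geq1$.

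So fix $m\geq1$ and let $\Phi\in\AA_{(m,0)}(\Gamma)_{\mathfrak{n}\text{-fin}}$ be given. Since $N_+$, $P_{1-}$, $P_{0-}$ have roots $(1,-1)$, $(-1,-1)$, $(0,-2)$ respectively, the vectors $N_+\Phi$, $P_{1-}\Phi$, $P_{0-}\Phi$ have weights $(m+1,-1)$, $(m-1,-1)$, $(m,-2)$, each with negative second coordinate; hence all three vanish by Lemma~\ref{nonegelllemma}. Thus $\Phi$ is killed by $N_+$, $P_{1-}$ and $P_{0-}$, and the only root vector of $\mathfrak{n}$ that might not annihilate $\Phi$ is $X_-$, whose image $X_-\Phi$ has weight $(m-2,0)$. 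The key point is that once we know $X_-\Phi=0$ we have $\mathfrak{n}\Phi=0$, so by Corollary~\ref{PhiFholomorphiclemma} the function $\Phi$ corresponds to a holomorphic modular form $F\in M_{0,m}(\Gamma)$ (the ``no poles at cusps'' condition being automatic in degree $2$ by the Koecher principle); since $m>0$, Lemma~\ref{weissauerlemma} forces $F=0$, hence $\Phi=0$.

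It therefore remains to dispose of $X_-\Phi$. For $m=1$, $X_-\Phi$ has weight $(-1,0)$, and no such weight occurs in $\AA(\Gamma)_{\mathfrak{n}\text{-fin}}$: a vector of weight $(-1,0)$ is sent by $N_+$ to one of weight $(0,-1)$, which vanishes by Lemma~\ref{nonegelllemma}, so it would be a highest weight vector and hence would have to lie in a $K$-type $\rho_{(-1,0)}$, which is impossible because $(-1,0)\notin\Lambdap$. The case $m=2$ is exactly the already-established Lemma~\ref{weight20lemma}. For $m\geq3$, $X_-\Phi$ lies in $\AA_{(m-2,0)}(\Gamma)_{\mathfrak{n}\text{-fin}}$ with $m-2\geq1$, which is zero by the induction hypothesis. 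In every case $X_-\Phi=0$, which finishes the proof.

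I expect the genuine obstacle to be the case $m=2$: there $X_-\Phi$ lands in the weight $(0,0)$ space, which is nonzero (it consists of the constants), so the clean descent used for the other $m$ breaks down. This is precisely the reason the separate Lemma~\ref{weight20lemma}---with its device of subtracting $2F(2Z)$ from $F(Z)$ to land in a holomorphic form on a finer congruence subgroup---is needed as a base case. Once that is granted, everything else here is bookkeeping with root vectors and weights together with the correspondence of Corollary~\ref{PhiFholomorphiclemma} and Weissauer's vanishing theorem.
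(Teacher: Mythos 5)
Your proof is correct and is essentially the paper's own argument: the paper phrases the descent as ``$X_-$ induces injective maps $\AA_{(m+2,0)}(\Gamma)_{\mathfrak{n}\text{-fin}}\to\AA_{(m,0)}(\Gamma)_{\mathfrak{n}\text{-fin}}$'' (injectivity proved exactly as in your key step, via part (1), Corollary~\ref{PhiFholomorphiclemma} and Lemma~\ref{weissauerlemma}) and then kills the two base cases $\AA_{(1,0)}$ and $\AA_{(2,0)}$, the latter by Lemma~\ref{weight20lemma}, just as you do. Your closing observation that $m=2$ is the one genuinely delicate case is also exactly why that separate lemma exists.
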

\begin{proof}
(1) and (2) were already noted in Lemma \ref{nonegelllemma}.

(3) By part (1) and Lemma \ref{weissauerlemma}, the operator $X_-$ induces injective maps
\begin{equation}\label{noncusplemma2eq1}
 \AA_{(m+2,0)}(\Gamma)_{\mathfrak{n}\text{-fin}}\longrightarrow\AA_{(m,0)}(\Gamma)_{\mathfrak{n}\text{-fin}}
\end{equation}
for each $m\geq0$. Clearly, $\AA_{(1,0)}(\Gamma)_{\mathfrak{n}\text{-fin}}=0$ by Lemma \ref{weissauerlemma}, and $\AA_{(2,0)}(\Gamma)_{\mathfrak{n}\text{-fin}}=0$ by Lemma \ref{weight20lemma}. Hence $\AA_{(m,0)}(\Gamma)_{\mathfrak{n}\text{-fin}}$ is zero for all $m>0$.
\end{proof}
For a character $\chi$ of $\mathcal{Z}$ (the center of $\mathcal{U}(\mathfrak{g}_\C)$) let $\AA(\Gamma,\chi)_{\mathfrak{n}\text{-fin}}$ be the subspace of $\AA(\Gamma)_{\mathfrak{n}\text{-fin}}$ consisting of vectors $\Phi$ with the property $(z-\chi(z))^n\Phi=0$ for all $z\in\mathcal{Z}$ and some $n$ depending on $z$.

\begin{lemma}\label{noncusplemma3}
 We have
 \begin{equation}\label{noncusplemma3eq1}
  \AA(\Gamma)_{\mathfrak{n}\text{-fin}}=\bigoplus_\chi\AA(\Gamma,\chi)_{\mathfrak{n}\text{-fin}}.
 \end{equation}
 Each space $\AA(\Gamma,\chi)_{\mathfrak{n}\text{-fin}}$ has finite length as a $(\mathfrak{g},K)$-module.
\end{lemma}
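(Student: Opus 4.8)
The plan is to derive both assertions from the structure theory of category $\mathcal{O}^{\mathfrak{p}}$ recalled in Section~\ref{setupsec}, applied to the finitely generated submodules of $\AA(\Gamma)_{\mathfrak{n}\text{-fin}}$, together with the admissibility statement of Lemma~\ref{admissibilitylemma}.

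First I would observe that $\AA(\Gamma)_{\mathfrak{n}\text{-fin}}$ is the directed union of its finitely generated $(\mathfrak{g},K)$-submodules $M'$, and that every such $M'$ lies in $\mathcal{O}^{\mathfrak{p}}$: it is finitely generated by construction, it is a direct sum of $K$-types because $\AA(\Gamma)_{\mathfrak{n}\text{-fin}}$ is, and it is locally $\mathfrak{n}$-finite because \emph{every} element of $\AA(\Gamma)_{\mathfrak{n}\text{-fin}}$ is $\mathfrak{n}$-finite by definition. Hence each $M'$ has finite length, its composition factors are of the form $L(\mu)$ with $\mu\in\Lambdap$, and it admits a decomposition $M'=\bigoplus_\chi M'(\chi)$ into generalized $\mathcal{Z}$-eigenspaces with only finitely many summands nonzero. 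Passing to the union over all $M'$, and using that these decompositions are compatible with inclusions, one obtains $\AA(\Gamma)_{\mathfrak{n}\text{-fin}}=\sum_\chi\AA(\Gamma,\chi)_{\mathfrak{n}\text{-fin}}$, and the sum is direct because it is direct inside every $M'$; this proves~\eqref{noncusplemma3eq1}. (Equivalently, this is just the standard decomposition of a locally finite module over the commutative algebra $\mathcal{Z}$ into generalized eigenspaces.)

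The substantive point is the finite-length assertion for a fixed $\chi$, and here I would bound the lengths of the finitely generated submodules of $M:=\AA(\Gamma,\chi)_{\mathfrak{n}\text{-fin}}$ uniformly in the submodule. By the central-character computation used in the proof of Proposition~\ref{Mlambdaexactsequencesprop}, the simple modules $L(\mu)$ with central character $\chi$ correspond to the finitely many $\mu\in\Lambdap$ for which $\mu+\varrho$ lies in a single orbit of the (order-$8$) Weyl group under the dot action; call this finite set $S_\chi=\{\mu_1,\dots,\mu_N\}$, ordered so that $\mu_i\succ\mu_j$ implies $i<j$. Every composition factor of any finitely generated submodule $M'\subseteq M$ is one of the $L(\mu_i)$. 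By Lemma~\ref{MlambdaKtypeslemma} the $K$-type $\rho_{\mu_i}$ occurs in $L(\mu_j)$ only when $\mu_i\preccurlyeq\mu_j$, hence only for $j\le i$, and for $j=i$ it occurs with multiplicity one. Since $M$ is admissible (Lemma~\ref{admissibilitylemma}), $\rho_{\mu_i}$ occurs in $M$ with some finite multiplicity $d_i$; a downward induction on $i$ through the resulting triangular system relating $K$-type multiplicities to composition-factor multiplicities then shows that $L(\mu_i)$ occurs in $M'$ at most $d_i$ times, so $M'$ has length at most $D:=d_1+\dots+d_N$. Finally, a module all of whose finitely generated submodules have length $\le D$ has itself length $\le D$: given a strictly increasing chain $0\subsetneq V_1\subsetneq\dots\subsetneq V_{D+1}$ in $M$, choose $v_i\in V_i\setminus V_{i-1}$ and intersect the chain with the finitely generated submodule generated by $v_1,\dots,v_{D+1}$; this yields a strict chain of length $D+1$ inside a submodule of length $\le D$, a contradiction. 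Hence $M$ has finite length.

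The main obstacle is the third paragraph: the whole argument hinges on turning admissibility into a \emph{uniform} length bound, which requires both the finiteness of $S_\chi$ (a consequence of the description of central characters in terms of Weyl orbits) and the triangular relationship, governed by the order $\preccurlyeq$, between the $K$-types $\rho_{\mu_i}$ and the simple modules $L(\mu_j)$ supplied by Lemma~\ref{MlambdaKtypeslemma}. None of the individual steps is deep, but the combinatorial bookkeeping with the $\preccurlyeq$-ordering on $S_\chi$, together with the elementary passage from ``bounded length on finitely generated submodules'' to ``bounded length'', is where the care lies.
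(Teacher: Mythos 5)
Your proof is correct and follows essentially the same route as the paper: exhaust $\AA(\Gamma)_{\mathfrak{n}\text{-fin}}$ by submodules lying in category $\mathcal{O}^{\mathfrak{p}}$, invoke the block decomposition by generalized central character there, and pass to the union. Your third paragraph (finiteness of the Weyl orbit $S_\chi$, the multiplicity-one occurrence of $\rho_{\mu_i}$ in $L(\mu_i)$, and admissibility giving a uniform length bound on finitely generated submodules) makes explicit a point the paper's proof leaves implicit in the sentence ``If we move $\mu$ farther up and farther to the right, we will exhaust the whole space,'' so it is a faithful and slightly more complete rendering of the same argument.
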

\begin{proof}
For a weight $\mu\in\Lambda$, let $\AA_{\succcurlyeq\mu}(\Gamma)_{\mathfrak{n}\text{-fin}}$ be the subspace of $\AA(\Gamma)_{\mathfrak{n}\text{-fin}}$ spanned by all vectors of weight $\lambda\succcurlyeq\mu$; see \eqref{dominanceordereq2} for the definition of the order. Since $\AA(\Gamma)_{\mathfrak{n}\text{-fin}}$ is admissible by Lemma \ref{admissibilitylemma}, and since there are no weights below a horizontal line by Lemma \ref{nonegelllemma}, the space $\AA_{\succcurlyeq\mu}(\Gamma)_{\mathfrak{n}\text{-fin}}$ is finite-dimensional. Therefore, the $(\mathfrak{g},K)$-module $\AA_{\langle\succcurlyeq\mu\rangle}(\Gamma)_{\mathfrak{n}\text{-fin}}$ generated by $\AA_{\succcurlyeq\mu}(\Gamma)_{\mathfrak{n}\text{-fin}}$ lies in category $\mathcal{O}^\p$. By general properties of this category, it admits a decomposition into $\chi$-isotypical components, as defined in \eqref{Mchidefeq}, each of which has finite length. If we move $\mu$ farther up and farther to the right, we will exhaust the whole space $\AA(\Gamma)_{\mathfrak{n}\text{-fin}}$.
The assertion follows.
\end{proof}

By Lemma \ref{noncusplemma3} (and Lemma \ref{locnfinLlambdalemma}), each $\AA(\Gamma,\chi)_{\mathfrak{n}\text{-fin}}$ has a finite length composition series whose irreducible quotients are of the form $L(\lambda)$ for some $\lambda\in\Lambdap$. Since $L(\lambda)$ has central character $\chi_{\lambda+\varrho}$, only those $\lambda$ with $\chi_{\lambda+\rho}=\chi$ can occur in $\AA(\Gamma,\chi)_{\mathfrak{n}\text{-fin}}$. For a given $\chi$, this allows for only finitely many $\lambda$. Lemma \ref{noncusplemma2} puts restrictions on the possible $L(\lambda)$'s that can occur; for example, $L(k,\ell)$ with $\ell<0$ can never occur in $\AA(\Gamma,\chi)_{\mathfrak{n}\text{-fin}}$. We will go through the list of $\chi$'s for which there exists at least one $L(\lambda)$ that is permitted by Lemma \ref{noncusplemma2}; evidently, only such $\chi$'s can occur in the decomposition \eqref{noncusplemma3eq1}:
\begin{itemize}
 \item The trivial character, i.e., $\chi=\chi_{\varrho}$, where $\varrho=(-1,-2)$. The irreducible modules $L(\lambda)$ that can occur as subquotients of $\AA(\Gamma,\chi_\varrho)_{\mathfrak{n}\text{-fin}}$ are $L(0,0)$ (the trivial representation), $L(3,1)$ and $L(3,3)$. (The module $L(2,0)$ also has central character $\chi_{\varrho}$, but is not permitted by (3) of Lemma \ref{noncusplemma2}). Following terminology in the literature, we call $\chi_\varrho$ the \emph{principal character}.
 \item The characters $\chi_{\lambda+\varrho}$ for $\lambda=(k,1)$ with $k\geq4$. The irreducible modules that can occur as subquotients of $\AA(\Gamma,\chi_{\lambda+\varrho})_{\mathfrak{n}\text{-fin}}$ are $L(k,1)$ and $L(k,3)$. Since the modules $L(k,1)$ are non-tempered by Proposition \ref{Mlambdaunitaryprop}, we will refer to these $\chi_{\lambda+\varrho}$ as \emph{non-tempered characters}.
 \item The character $\chi_{\lambda+\varrho}$ for $\lambda=(1,1)$. The irreducible modules that can occur as subquotients of $\AA(\Gamma,\chi_{\lambda+\varrho})_{\mathfrak{n}\text{-fin}}$ are $L(1,1)$ and $L(2,2)$.
 \item The character $\chi_{\lambda+\varrho}$ for $\lambda=(2,1)$. The only irreducible module that can occur as a subquotient of $\AA(\Gamma,\chi_{\lambda+\varrho})_{\mathfrak{n}\text{-fin}}$ is $L(2,1)$.
 \item The characters $\chi_{\lambda+\varrho}$ for $\lambda=(\ell+m,\ell)$ with ($\ell\geq4$, $m\geq0$), or ($\ell=2$, $m\geq1$). The only irreducible module that can occur as a subquotient of $\AA(\Gamma,\chi_{\lambda+\varrho})_{\mathfrak{n}\text{-fin}}$ is $L(\lambda)$. We will refer to these $\chi_{\lambda+\varrho}$ as the \emph{tempered characters}.
\end{itemize}

Our task in the following will be to determine the structure of each $\AA(\Gamma,\chi)_{\mathfrak{n}\text{-fin}}$ occurring in \eqref{noncusplemma3eq1}. We can quickly treat the case of tempered $\chi$. Since $L(\lambda)$ admits no non-trivial self-extensions by Proposition 3.1 (d) of \cite{Humphreys2008}, the component $\AA(\Gamma,\chi)_{\mathfrak{n}\text{-fin}}$ for tempered $\chi=\chi_{\lambda+\varrho}$ is a direct sum of copies of $L(\lambda)$. The lowest weight vector in such an $L(\lambda)$ corresponds to an element of $M_{\ell,m}(\Gamma)$, where $\lambda=(\ell+m,\ell)$. Thus,
\begin{equation}\label{temperedchareq}
 \AA(\Gamma,\chi)_{\mathfrak{n}\text{-fin}}=n_\lambda L(\lambda),\qquad n_\lambda=\dim M_{\ell,m}(\Gamma),
\end{equation}
for tempered $\chi=\chi_{\lambda+\varrho}$ with $\lambda=(\ell+m,\ell)$.

The third and fourth cases above can also be dealt with easily. We get \begin{equation}\label{21chareq}
 \AA(\Gamma,\chi)_{\mathfrak{n}\text{-fin}}=n_\lambda L(\lambda),\qquad n_\lambda=\dim M_{1,1}(\Gamma),
\end{equation}
for $\chi=\chi_{\lambda+\varrho}$ with $\lambda=(2,1)$, and
\begin{equation}\label{1122chareq}
 \AA(\Gamma,\chi)_{\mathfrak{n}\text{-fin}}=n_1L(1,1)\oplus n_2L(2,2),\qquad n_k=\dim M_{k,0}(\Gamma),
\end{equation}
for $\chi=\chi_{\lambda+\varrho}$ with $\lambda=(1,1)$.

As for the principal character, note that, by (2) of Lemma \ref{noncusplemma2}, the trivial module $L(0,0)$ occurs exactly once in $\AA(\Gamma)_{\mathfrak{n}\text{-fin}}$, and it occurs as a submodule. It is easy to see that $L(0,0)$ does not admit any non-trivial extensions with $L(3,1)$ or $L(3,3)$. It follows that
\begin{equation}\label{principalblockinitialeq}
 \AA(\Gamma,\chi_\varrho)_{\mathfrak{n}\text{-fin}}\cong L(0,0)\oplus V_3,
\end{equation}
where the module $V_3$ has a composition series with the only subquotients being $L(3,1)$ and $L(3,3)$. This module $V_3$ can be treated together with the non-tempered characters, which we will take up in the next section.
\subsection{The non-tempered characters}\label{nontemperedsec}
In this section we investigate the contribution to the space $\AA(\Gamma)_{\mathfrak{n}\text{-fin}}$ coming from non-tempered central characters, as defined in the previous section. Recall that these are the $\chi_{\lambda+\varrho}$ for $\lambda=(k,1)$ with $k\geq4$. The only irreducible $L(\lambda)$ that can occur as subquotients of such modules are $\lambda=(k,1)$ and $\lambda=(k,3)$.

In the following we will require the dual module $N(\lambda)^\vee$ of the module $N(\lambda)$. The basic properties of the duality functor on category $\mathcal{O}^{\mathfrak{p}}$ are given in Sects.~3.2 and 3.3 of \cite{Humphreys2008}. In particular, $N(\lambda)^\vee$ contains the same $K$-types as $N(\lambda)$, and admits $L(\lambda)$ as its unique irreducible submodule.

\begin{lemma}\label{extvanishinglemma}
 Let $k\geq3$ be an integer and $\lambda=(k,1)\in\Lambda$. Then
\begin{align}
 \label{extvanishinglemmaeq1}{\rm Ext}_{\mathcal{O}}(N(\lambda),N(\lambda))&=0,\\
 \label{extvanishinglemmaeq2}{\rm Ext}_{\mathcal{O}}(L(\lambda),L(\lambda))&=0,\\
 \label{extvanishinglemmaeq3}{\rm Ext}_{\mathcal{O}}(N(\lambda),L(\lambda))&=0,\\
 \label{extvanishinglemmaeq4}{\rm Ext}_{\mathcal{O}}(L(\lambda),N(\lambda))&=0,\\
 \label{extvanishinglemmaeq20}{\rm Ext}_{\mathcal{O}}(N(\lambda)^\vee,N(\lambda)^\vee)&=0,\\
 \label{extvanishinglemmaeq21}{\rm Ext}_{\mathcal{O}}(L(\lambda),N(\lambda)^\vee)&=0,\\
 \label{extvanishinglemmaeq22}{\rm Ext}_{\mathcal{O}}(N(\lambda)^\vee,L(\lambda))&=0.
\end{align}
\end{lemma}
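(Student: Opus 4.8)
The plan is to establish all seven Ext-vanishing statements by reducing them to facts about category $\mathcal{O}^\p$ for the weight $\lambda = (k,1)$, which lives in Region C (since $k \geq 3$ and $\ell = 1$ force $k + \ell \geq 4$). By Proposition~\ref{Mlambdaexactsequencesprop}(3) we have the short exact sequence $0 \to L(\mu) \to N(\lambda) \to L(\lambda) \to 0$ with $\mu = s_1 s_2 s_1 \cdot \lambda = (k, 3)$ in Region D, so $N(\mu) = L(\mu)$ is irreducible by Proposition~\ref{Mlambdairredprop}(1). Thus $N(\lambda)$ has length $2$, with composition factors $L(k,1)$ and $L(k,3)$, and these are the only irreducibles with the central character $\chi_{\lambda+\varrho}$.

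First I would note that all the Ext groups are taken in category $\mathcal{O}$ (or equivalently $\mathcal{O}^\p$, since every module here is $\mathfrak n$-finite), and every module appearing --- $N(\lambda)$, $L(\lambda)$, $N(\lambda)^\vee$ --- has all composition factors among $\{L(k,1), L(k,3)\}$, hence lies in the block of $\chi_{\lambda+\varrho}$. For \eqref{extvanishinglemmaeq2}, this is immediate from Proposition 3.1(d) of \cite{Humphreys2008}, which gives $\mathrm{Ext}_{\mathcal O}(L(\mu),L(\mu)) = 0$ for any $\mu$. For \eqref{extvanishinglemmaeq3}: an extension $0 \to L(\lambda) \to E \to N(\lambda) \to 0$ would be a module with composition factors $L(\lambda), L(\lambda), L(\mu)$; but BGG reciprocity / the structure of projectives in this rank-one-parabolic block, together with the known length of $N(\lambda)$, should force such an extension to split. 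More directly, I would invoke the universal property: $\mathrm{Ext}_{\mathcal O}(N(\lambda), M) = 0$ whenever $M$ has no composition factor $L(\mu')$ with $\mu' \succ \lambda$ strictly --- and $L(\lambda)$ itself trivially satisfies this --- which is a standard fact about Verma-type modules (parabolic Verma modules have projective dimension governed by their position in the poset; here $\lambda$ being already quite far up makes the relevant Ext vanish). For \eqref{extvanishinglemmaeq1}, I would combine \eqref{extvanishinglemmaeq3} with the long exact sequence obtained by applying $\mathrm{Hom}_{\mathcal O}(N(\lambda), -)$ to $0 \to L(\mu) \to N(\lambda) \to L(\lambda) \to 0$, using in addition that $\mathrm{Ext}_{\mathcal O}(N(\lambda), L(\mu)) = 0$ (again since $N(\lambda)$ is ``maximal enough'': $\mu \prec \lambda$, and there is no room for an extension because $L(\mu)$ is the socle of $N(\lambda)$ and $N(\lambda)$ already realizes the unique non-split extension).

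For \eqref{extvanishinglemmaeq4}, I would apply the standard duality: the simple-preserving contravariant duality $(-)^\vee$ on $\mathcal{O}$ satisfies $L(\mu)^\vee \cong L(\mu)$, $N(\lambda)^\vee = $ the dual parabolic Verma (a "co-Verma"), and $\mathrm{Ext}^i_{\mathcal O}(A,B) \cong \mathrm{Ext}^i_{\mathcal O}(B^\vee, A^\vee)$. Hence \eqref{extvanishinglemmaeq4} follows from \eqref{extvanishinglemmaeq21} (once that is proven), \eqref{extvanishinglemmaeq20} follows from \eqref{extvanishinglemmaeq1}, and \eqref{extvanishinglemmaeq22} follows from \eqref{extvanishinglemmaeq3}, all via $\mathrm{Ext}(A,B) \cong \mathrm{Ext}(B^\vee, A^\vee)$ together with $N(\lambda)^{\vee\vee} = N(\lambda)$ and $L(\lambda)^\vee = L(\lambda)$. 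So the genuinely new input is \eqref{extvanishinglemmaeq21}, $\mathrm{Ext}_{\mathcal O}(L(\lambda), N(\lambda)^\vee) = 0$: an extension $0 \to N(\lambda)^\vee \to E \to L(\lambda) \to 0$ has $E$ with factors $L(\lambda), L(\mu), L(\lambda)$ and socle containing the socle $L(\lambda)$ of $N(\lambda)^\vee$; I would argue that $L(\lambda)$ then appears in the socle, and since $N(\lambda)^\vee$ is the injective hull of $L(\lambda)$ within the subcategory of modules with these two composition factors (this is the key structural fact --- in a block with two simples and $N(\lambda)$ of length $2$, $N(\lambda)^\vee$ is injective in the truncated block, or at least injective relative to the quotient by larger weights), the inclusion $N(\lambda)^\vee \hookrightarrow E$ splits.

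The main obstacle will be pinning down the last point cleanly: showing that $N(\lambda)^\vee$ is injective (in the relevant sense) in this block, equivalently that $\mathrm{Ext}_{\mathcal O}(L(\mu), N(\lambda)^\vee) = 0$ as well as $\mathrm{Ext}_{\mathcal O}(L(\lambda), N(\lambda)^\vee) = 0$. The cleanest route is probably to identify the indecomposable projectives $P(k,1)$ and $P(k,3)$ in this two-simple block explicitly: by BGG reciprocity $P(k,3) = L(k,3) = N(k,3)$ is both projective and injective, while $P(k,1)$ has a two-step Verma flag $0 \to N(k,3) \to P(k,1) \to N(k,1) \to 0$ and Loewy structure $L(k,1) / L(k,3) / L(k,1)$; dualizing, the injective hulls are $I(k,3) = L(k,3)$ and $I(k,1) = P(k,1)^\vee$, and one checks $P(k,1)^\vee \cong P(k,1)$ (self-duality of the big projective in a rank-one block). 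Then $N(\lambda)^\vee$ sits inside $I(k,1) = P(k,1)$ as a submodule of length $2$, and the needed Ext-vanishings drop out from the projective/injective resolutions. Once the block structure is nailed down, everything else is bookkeeping with long exact sequences; I would present the argument by first computing the block (two simples, the one Verma exact sequence, BGG reciprocity) and then reading off all seven vanishing statements.
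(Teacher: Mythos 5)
Your treatment of \eqref{extvanishinglemmaeq1}--\eqref{extvanishinglemmaeq3} is essentially the paper's (which simply cites Proposition 3.1 of \cite{Humphreys2008}), but there are two genuine problems with the rest. First, the duality bookkeeping is backwards: since $(-)^\vee$ is \emph{contravariant}, ${\rm Ext}_{\mathcal{O}}(A,B)\cong{\rm Ext}_{\mathcal{O}}(B^\vee,A^\vee)$ pairs \eqref{extvanishinglemmaeq3} with \eqref{extvanishinglemmaeq21} and \eqref{extvanishinglemmaeq4} with \eqref{extvanishinglemmaeq22}, not \eqref{extvanishinglemmaeq4} with \eqref{extvanishinglemmaeq21} and \eqref{extvanishinglemmaeq22} with \eqref{extvanishinglemmaeq3} as you claim. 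Consequently your direct argument for \eqref{extvanishinglemmaeq21} only reproves \eqref{extvanishinglemmaeq3}, and neither \eqref{extvanishinglemmaeq4} nor \eqref{extvanishinglemmaeq22} is ever established; note that ${\rm Ext}_{\mathcal{O}}(L(\lambda),N(\lambda))=0$ does not follow from projectivity-type statements about $N(\lambda)$ or injectivity-type statements about $N(\lambda)^\vee$ and needs its own argument. The paper obtains it by the maximal-vector argument from the proof of Proposition 3.12 of \cite{Humphreys2008}: in an extension $0\to N(\lambda)\to E\to L(\lambda)\to 0$ every weight of $E$ is $\preccurlyeq\lambda$, so a preimage of the generator of $L(\lambda)$ is annihilated by $\mathfrak{n}$ and $N_+$ and generates a quotient of $N(\lambda)$, and one splits the sequence from there (using that $L(\mu)$, $\mu=(k,3)$, occurs only once in the relevant socle).

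Second, your ``cleanest route'' rests on a false description of the block. The linkage class of $\lambda=(k,1)$ in $\Lambdap$ is not $\{(k,1),(k,3)\}$: the dot-orbit of $\lambda$ also meets $\Lambdap$ in $(2,3-k)$ and $(0,3-k)$ (for $k=3$ these are $(2,0)$ and $(0,0)$, which is exactly why the paper's principal block $\AA(\Gamma,\chi_\varrho)$ involves the trivial module). Hence $N(k,1)$ is \emph{not} projective in $\mathcal{O}^\p$, BGG reciprocity gives $(P(k,3):N(k,1))=[N(k,1):L(k,3)]=1$ so $P(k,3)\neq L(k,3)$, and $N(k,1)^\vee$ is not the injective hull of $L(k,1)$. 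The computation can be salvaged by working in the Serre subcategory generated by $L(k,1)$ and $L(k,3)$ (where ${\rm Ext}^1$ agrees with ${\rm Ext}^1_{\mathcal{O}}$, since the middle term of any such extension again has all its factors among these two simples), but you would then have to re-identify the projective and injective objects there, and you would still be left with \eqref{extvanishinglemmaeq4}/\eqref{extvanishinglemmaeq22} to prove separately as above. As it stands the proposal has a real gap at exactly the two statements that require more than Proposition 3.1 of \cite{Humphreys2008} plus duality.
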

\begin{proof}
Equations \eqref{extvanishinglemmaeq1} -- \eqref{extvanishinglemmaeq3} are general properties; see Proposition 3.1 of \cite{Humphreys2008}. The claim \eqref{extvanishinglemmaeq4} follows exactly as in the first part of the proof of Proposition 3.12 of \cite{Humphreys2008}. Equations \eqref{extvanishinglemmaeq20} -- \eqref{extvanishinglemmaeq22} follow from the previous ones and the properties of duality.\end{proof}

\begin{lemma}\label{k1k3moduleslemma}
 Let $k\geq3$ be an integer. Let $\lambda=(k,1)\in\Lambda$ and $\mu=(k,3)\in\Lambda$. Let $V$ be a module in category $\mathcal{O}^\p$ with the following properties:
 \begin{itemize}
  \item The only possible irreducible subquotients of $V$ are $L(\lambda)$ and $L(\mu)$.
  \item $V$ does not contain $N(\lambda)$ as a submodule.
 \end{itemize}
 Then
 \begin{equation}\label{k1k3moduleslemmaeq1}
  V=aL(\lambda)\,\oplus\,bL(\mu)\,\oplus\,cN(\lambda)^\vee.
 \end{equation}
 with non-negative integers $a,b,c$.
\end{lemma}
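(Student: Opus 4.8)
The plan is to work inside the block of category $\mathcal{O}^\p$ singled out by the central character $\chi_{\lambda+\varrho}$, and more precisely inside the Serre subcategory $\mathcal{C}$ of $\mathcal{O}^\p$ whose objects have all composition factors in $\{L(\lambda),L(\mu)\}$; by hypothesis $V$ lies in $\mathcal{C}$. By the Krull--Schmidt property of $\mathcal{O}^\p$ (finite length modules, local endomorphism rings), $V$ is a finite direct sum of indecomposable objects of $\mathcal{C}$, uniquely up to isomorphism, so the whole problem reduces to (i) classifying the indecomposable objects of $\mathcal{C}$ and (ii) recording which of them admit a submodule isomorphic to $N(\lambda)$. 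As input I would first record the two basic exact sequences. Since $\lambda=(k,1)$ with $k\geq3$ lies in Region C and $s_1s_2s_1\cdot\lambda=(k,3)=\mu$, Proposition~\ref{Mlambdaexactsequencesprop}(3) gives a non-split sequence $0\to L(\mu)\to N(\lambda)\to L(\lambda)\to0$, while $N(\mu)=L(\mu)$ is irreducible by Proposition~\ref{Mlambdairredprop}(1). Applying the (exact, contravariant) duality of $\mathcal{O}^\p$, which fixes $L(\nu)$ and exchanges head with socle, produces a non-split sequence $0\to L(\lambda)\to N(\lambda)^\vee\to L(\mu)\to0$; thus $N(\lambda)^\vee$ has the same two composition factors as $N(\lambda)$ but socle $L(\lambda)$, and $L(\lambda)\not\cong L(\mu)$ since $\lambda\neq\mu$.

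Next I would compute the $\mathrm{Ext}$-quiver of $\mathcal{C}$. Simple modules have no non-trivial self-extensions (Proposition 3.1(d) of \cite{Humphreys2008}), so $\mathrm{Ext}^1(L(\lambda),L(\lambda))=\mathrm{Ext}^1(L(\mu),L(\mu))=0$. Applying $\mathrm{Hom}(L(\lambda),-)$ to $0\to L(\mu)\to N(\lambda)\to L(\lambda)\to0$ and using $\mathrm{Ext}^1(L(\lambda),N(\lambda))=0$ from \eqref{extvanishinglemmaeq4}, the connecting map $\mathrm{Hom}(L(\lambda),L(\lambda))\to\mathrm{Ext}^1(L(\lambda),L(\mu))$ is surjective, so $\dim\mathrm{Ext}^1(L(\lambda),L(\mu))\leq1$; it is nonzero because $N(\lambda)$ is non-split, hence equals $1$, and the duality then forces $\dim\mathrm{Ext}^1(L(\mu),L(\lambda))=1$ as well. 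Therefore the basic algebra of $\mathcal{C}$ has quiver exactly $1\rightleftarrows2$, with vertices $1=L(\lambda)$, $2=L(\mu)$ and a single arrow $a\colon1\to2$ and $b\colon2\to1$ each way. Finally the relation $ba=0$ holds: an indecomposable module with three consecutive Loewy layers $L(\lambda)/L(\mu)/L(\lambda)$ would contain a submodule isomorphic to $N(\lambda)^\vee$ (its radical) with quotient $L(\lambda)$, giving a non-zero class in $\mathrm{Ext}^1(L(\lambda),N(\lambda)^\vee)$, contradicting \eqref{extvanishinglemmaeq21}. Hence the basic algebra is a quotient of the Nakayama algebra $\C(1\rightleftarrows2)/(ba)$.

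Over a Nakayama algebra every indecomposable is uniserial; with no loops the Loewy layers strictly alternate, and with $ba=0$ no uniserial module can have layers $L(\lambda)/L(\mu)/L(\lambda)$. Hence the complete list of indecomposables of $\mathcal{C}$ is contained in
$$
 L(\lambda),\qquad L(\mu),\qquad N(\lambda)=\left[\begin{smallmatrix}L(\lambda)\\L(\mu)\end{smallmatrix}\right],\qquad N(\lambda)^\vee=\left[\begin{smallmatrix}L(\mu)\\L(\lambda)\end{smallmatrix}\right],\qquad Y=\left[\begin{smallmatrix}L(\mu)\\L(\lambda)\\L(\mu)\end{smallmatrix}\right],
$$
where $Y$ occurs only if the path $ab$ is non-zero (we need not decide this). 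A submodule isomorphic to $N(\lambda)$ has length $2$ and socle $L(\mu)$; inspecting the list, this happens exactly for $N(\lambda)$ itself and for $Y$ (whose radical is $\cong N(\lambda)$), but not for $L(\lambda)$ or $L(\mu)$ (too short), nor for $N(\lambda)^\vee$ (its only length-$2$ submodule is itself, with socle $L(\lambda)\neq L(\mu)$). Writing $V=\bigoplus_iV_i$ with each $V_i$ indecomposable, the hypothesis that $V$ contains no submodule isomorphic to $N(\lambda)$ forbids $V_i\cong N(\lambda)$ and $V_i\cong Y$ for every $i$, since such a summand would embed $N(\lambda)$ into $V$. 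Therefore each $V_i$ is $L(\lambda)$, $L(\mu)$ or $N(\lambda)^\vee$, which is precisely the asserted decomposition $V=aL(\lambda)\oplus bL(\mu)\oplus cN(\lambda)^\vee$ with $a,b,c\in\Z_{\geq0}$.

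The main obstacle is the middle step: pinning down the $\mathrm{Ext}$-quiver and the relation $ba=0$ so as to identify the block as (a quotient of) a Nakayama algebra. This is exactly where the vanishing statements of Lemma~\ref{extvanishinglemma} enter decisively — in particular $\mathrm{Ext}^1(L(\lambda),N(\lambda))=0$ for the one-dimensionality of the cross-extension and $\mathrm{Ext}^1(L(\lambda),N(\lambda)^\vee)=0$ for the relation — together with the non-splitness of the standard sequences and the duality; once this structural information is in hand the classification of indecomposables and the final bookkeeping are routine. A secondary point to handle carefully is the justification that $\mathcal{C}$, being a Serre subcategory of $\mathcal{O}^\p$ closed under the duality with finite-length objects and finite-dimensional Hom-spaces, is equivalent to the category of finite-dimensional modules over its (finite-dimensional) basic algebra, which is what licenses the Krull--Schmidt decomposition and the quiver-with-relations description used above.
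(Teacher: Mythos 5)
Your proof is correct, but it takes a genuinely different route from the paper's. The paper argues directly on $V$: it introduces the spaces $H_1,H_3$ of highest weight vectors of weights $(k,1)$ and $(k,3)$, notes that $V=\mathcal{U}(\g_\C)H_1+\mathcal{U}(\g_\C)H_3$, splits $H_1$ and $H_3$ using the map $P_{0-}\colon H_3\to H_1$, and shows that the three resulting pieces generate $aL(\lambda)$, $bL(\mu)$ and $cN(\lambda)^\vee$, with Lemma~\ref{extvanishinglemma} invoked only at the end to make the sums direct. You instead invoke Krull--Schmidt and classify the indecomposables of the two-simple Serre subcategory via its Ext-quiver, identifying the basic algebra as a quotient of a Nakayama algebra. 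Both arguments rest on Lemma~\ref{extvanishinglemma}, but on different parts of it: the paper uses \eqref{extvanishinglemmaeq2} and \eqref{extvanishinglemmaeq20} for directness, whereas you use \eqref{extvanishinglemmaeq4} (to get $\dim\mathrm{Ext}^1(L(\lambda),L(\mu))=1$) and \eqref{extvanishinglemmaeq21} (to kill the path giving Loewy layers $L(\lambda)/L(\mu)/L(\lambda)$). Your approach buys strictly more --- an essentially complete list of indecomposables in the subcategory, and a template that would generalize to other blocks --- at the cost of more infrastructure: you need the (true, but not free) fact that the Serre subcategory is equivalent to finite-dimensional modules over a finite-dimensional basic algebra, and you should say a word more about why the vanishing of $\mathrm{Ext}^1(L(\lambda),N(\lambda)^\vee)$ yields an actual relation in that algebra rather than merely excluding one module; since the quiver has a single arrow out of each vertex, the quotient $P_1/\mathrm{rad}^3P_1$ of the projective cover of $L(\lambda)$ would itself be such a forbidden module if the relation failed, so the gap is easily closed. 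The paper's argument is shorter and avoids these foundational points, but is more ad hoc.
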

\begin{proof}
For $i=1$ or $i=3$, denote by $H_i$ the space of vectors $v\in V$ of weight $(k,i)$ that are annihilated by $N_+$. Then
\begin{equation}\label{k1k3moduleslemmaeq10}
 V=\mathcal{U}(\mathfrak{g}_\C)H_1+\mathcal{U}(\mathfrak{g}_\C)H_3.
\end{equation}
Indeed, the quotient $V/(\mathcal{U}(\mathfrak{g}_\C)H_1+\mathcal{U}(\mathfrak{g}_\C)H_2)$ does not contain the $K$-types $\rho_\lambda$ or $\rho_\mu$. Since its only possible irreducible subquotients are $L(\lambda)$ or $L(\mu)$, it must be zero.

Note that $P_{0-}$ induces a linear map $H_3\to H_1$. Let $H_3''$ be the kernel of this map. Let $H_3'$ be any vector space complement to $H_3''$ in $H_3$. Let $H_1'=P_{0-}(H_3')$, and let $H_1''$ be any vector space complement to $H_1'$ in $H_1$. Thus $H_3=H_3'\oplus H_3''$ and $H_1=H_1'\oplus H_1''$, and $P_{0-}$ induces an isomorphism $H_3'\to H_1'$. From \eqref{k1k3moduleslemmaeq10} we see that
\begin{equation}\label{k1k3moduleslemmaeq11}
 V=\mathcal{U}(\mathfrak{g}_\C)H_1''+\mathcal{U}(\mathfrak{g}_\C)H_3''+\mathcal{U}(\mathfrak{g}_\C)H_3'.
\end{equation}
Our hypothesis that $V$ does not contain $N(\lambda)$ as a submodule implies that every non-zero $v\in H_1$ generates a copy of $L(\lambda)$. From \eqref{extvanishinglemmaeq2} we thus conclude that $\mathcal{U}(\mathfrak{g}_\C)H_1''=aL(\lambda)$ (a direct sum) for some $a\geq0$. Similarly, $\mathcal{U}(\mathfrak{g}_\C)H_3''=bL(\mu)$ for some $b\geq0$. Our hypothesis, together with the PBW theorem, also implies that every non-zero $v\in H_3'$ generates a copy of $N(\lambda)^\vee$. Using \eqref{extvanishinglemmaeq20}, we get $\mathcal{U}(\mathfrak{g}_\C)H_3'=cN(\lambda)^\vee$ for some $c\geq0$. By \eqref{k1k3moduleslemmaeq11} we now have
\begin{equation}\label{k1k3moduleslemmaeq12}
 V=aL(\lambda)+bL(\mu)+cN(\lambda)^\vee.
\end{equation}
Lemma \ref{extvanishinglemma} implies that $aL(\lambda)+cN(\lambda)^\vee=aL(\lambda)\oplus cN(\lambda)^\vee$. Since
$$
 \Hom(L(\mu),aL(\lambda)\oplus cN(\lambda)^\vee)=0,
$$
the intersection of $bL(\mu)$ with $aL(\lambda)\oplus cN(\lambda)^\vee$ is zero. It follows that the sums in \eqref{k1k3moduleslemmaeq12} are direct.
\end{proof}

Let $\lambda=(k,1)$ and $\mu=(k,3)$ for some $k\geq3$. Let $\chi=\chi_{\lambda+\varrho}$. If $k\geq4$, then let $V_k=\AA(\Gamma,\chi)_{\mathfrak{n}\text{-fin}}$; hence, $V_k$ is the component appearing in the decomposition \eqref{noncusplemma3eq1} corresponding to the non-tempered character $\chi$. Let $V_3$ be the module appearing in \eqref{principalblockinitialeq}; hence, $V_3$ is ``almost'' $\AA(\Gamma,\chi_\varrho)_{\mathfrak{n}\text{-fin}}$, but without the trivial module.

\begin{lemma}\label{weissauerL2lemma}
 $V_k$ does not contain $N(\lambda)$ as a submodule.
\end{lemma}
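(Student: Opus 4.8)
The plan is to argue by contradiction. Suppose $N(\lambda)\subseteq V_k$ as a submodule. For $k\geq4$ the space $V_k=\AA(\Gamma,\chi_{\lambda+\varrho})_{\mathfrak{n}\text{-fin}}$ is one of the blocks in \eqref{noncusplemma3eq1}, and for $k=3$ the space $V_3$ is the complement of $L(0,0)$ in $\AA(\Gamma,\chi_\varrho)_{\mathfrak{n}\text{-fin}}$ by \eqref{principalblockinitialeq}; in either case $V_k$ is a $(\mathfrak{g},K)$-submodule of $\AA(\Gamma)_{\mathfrak{n}\text{-fin}}$, so we obtain an embedding $\varphi\colon N(\lambda)\hookrightarrow\AA(\Gamma)_{\mathfrak{n}\text{-fin}}$. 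Let $w_0$ be a lowest weight vector of $N(\lambda)$; by the construction \eqref{Mlambdadefeq}, $w_0$ has weight $(k,1)$, is annihilated by $N_+$, and is annihilated by $\mathfrak{n}=\mathfrak{p}_-$. Hence $\Phi:=\varphi(w_0)$ is a nonzero automorphic form of weight $(k,1)$ with $N_+\Phi=0$ and $\mathfrak{n}\Phi=0$; by Lemma \ref{FPhilemma} it corresponds to a smooth function $F\colon\HH_2\to W_{k-1}$, which is holomorphic by Corollary \ref{PhiFholomorphiclemma}, and which, using the left $\Gamma$-invariance and slow increase of $\Phi$, is a nonzero element of $M_{1,k-1}(\Gamma)$.

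The key point is that $F$ must in fact be a cusp form. This is where the input from Weissauer enters: a holomorphic vector-valued Siegel modular form of degree $2$ and weight $\det^1\sym^m$ is square-integrable, and hence (being holomorphic, so not contributing to the continuous or residual Eisenstein spectrum) is a cusp form; equivalently, there are no non-cuspidal holomorphic forms of weight $\det^1\sym^m$. Thus $F\in S_{1,k-1}(\Gamma)$, and therefore $\Phi\in\AA(\Gamma)^\circ_{\mathfrak{n}\text{-fin}}$ by Proposition \ref{Nplmautformprop}.

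To finish, note that since $w_0$ generates $N(\lambda)$ we have $\mathcal{U}(\mathfrak{g}_\C)\Phi=\varphi(N(\lambda))\cong N(\lambda)=N(k,1)$. On the other hand $\mathcal{U}(\mathfrak{g}_\C)\Phi$ is a $(\mathfrak{g},K)$-submodule of $\AA(\Gamma)^\circ_{\mathfrak{n}\text{-fin}}$, which is completely reducible by Proposition \ref{AA0nfindecomp2prop}; hence $\mathcal{U}(\mathfrak{g}_\C)\Phi$ is semisimple. But $N(k,1)$ is reducible by Proposition \ref{Mlambdairredprop} (here $\ell=1$, $k\geq3$, $k+\ell\geq4$), and it is indecomposable, being a parabolic Verma module; indeed Proposition \ref{Mlambdaexactsequencesprop}(3) exhibits it as a non-split extension $0\to L(k,3)\to N(k,1)\to L(k,1)\to 0$. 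A reducible indecomposable module is not semisimple, a contradiction. Hence $V_k$ contains no submodule isomorphic to $N(\lambda)$. The only non-formal ingredient is Weissauer's small-weight vanishing/square-integrability theorem; everything else is the routine translation between $\mathfrak{n}$-finite automorphic forms and nearly holomorphic modular forms together with the classification of the $N(\lambda)$, and the one subtlety to get right is the identification of $\mathcal{U}(\mathfrak{g}_\C)\Phi$ with all of $N(\lambda)$ (not merely a quotient of it), which is immediate once one sees that $\varphi$ carries the generating lowest weight vector of $N(\lambda)$ to $\Phi$ — it is precisely this that makes the reducibility of $N(k,1)$ produce the contradiction.
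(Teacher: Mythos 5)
Your overall strategy is the same as the paper's: embed $N(\lambda)$ into the automorphic forms, identify the generating weight-$(k,1)$ vector with a holomorphic form $F\in M_{1,k-1}(\Gamma)$, invoke Weissauer's square-integrability theorem for weight-one forms, and derive a contradiction from the resulting semisimplicity against the fact that $N(k,1)$ is reducible and indecomposable. All of the module-theoretic bookkeeping (that $w_0$ is annihilated by $\mathfrak{n}$ and $N_+$, that $\mathcal{U}(\g_\C)\Phi$ is all of $N(\lambda)$ and not a proper quotient, that $N(k,1)$ is reducible for $k\geq3$ by Proposition \ref{Mlambdairredprop} with composition factors $L(k,1)$ and $L(k,3)$) is correct.

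The gap is in your second step, where you pass from ``$F$ is square-integrable'' to ``$F$ is a cusp form.'' The parenthetical justification --- that a holomorphic form does not contribute to the continuous or residual spectrum --- is an assertion, not an argument: square-integrability places $\Phi$ in $L^2_{\mathrm{disc}}=L^2_{\mathrm{cusp}}\oplus L^2_{\mathrm{res}}$, and ruling out the residual part for lowest weight $(k,1)$ requires knowing the residual spectrum of $\Sp_4$, which you do not supply. Note that the paper itself never claims $M_{1,m}(\Gamma)=S_{1,m}(\Gamma)$; on the contrary, its structure theorems carefully track a possibly nonzero complement $E_{1,m}(\Gamma)$ of $S_{1,m}(\Gamma)$ in $M_{1,m}(\Gamma)$, so this identity is precisely the kind of statement one should not assume. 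The detour through cuspidality is also unnecessary: you only invoked cuspidality in order to use the complete reducibility of $\AA(\Gamma)^\circ_{\mathfrak{n}\text{-fin}}$ from Proposition \ref{AA0nfindecomp2prop}, but square-integrability already gives you what you need. Since the $L^2$-inner product \eqref{AA0innerproducteq} is $\mathfrak{g}$-invariant and the square-integrable automorphic forms form a $(\g,K)$-submodule, the module $\mathcal{U}(\g_\C)\Phi\cong N(\lambda)$ would carry an invariant inner product and hence be semisimple --- the same contradiction, with no appeal to cuspidality. That is exactly the route the paper takes; replacing your step 2 by this observation closes the gap.
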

\begin{proof}
Suppose that $V_k$ does contain $N(\lambda)$ as a submodule; we will obtain a contradiction.  Let $\Phi\in N(\lambda)$ be an automorphic form of weight $(k,1)$. Clearly, $\Phi$ generates $N(\lambda)$. Let $F\in M_{1,k-1}(\Gamma)$ be the holomorphic modular form corresponding to $\Phi$. By the Folgerung to Satz 3 of \cite{Weissauer1983}, the modular form $F$ is square-integrable. By Lemma~\ref{peterssonequallemma2}
the function $\Phi$ is square-integrable on $\Sp_4(\R)$. Since square-integrable automorphic forms constitute a $(\mathfrak{g},K)$-submodule of $\AA(\Gamma)_{\mathfrak{n}\text{-fin}}$, it follows that $N(\lambda)$ consists entirely of square-integrable forms, and hence admits an invariant inner product. In particular, we obtain the contradiction that $N(\lambda)$ is semisimple.
\end{proof}

For any $k\geq3$, the module $V_k$ admits only $L(\lambda)$ and $L(\mu)$ as irreducible subquotients. Therefore, by Lemma \ref{k1k3moduleslemma} and Lemma \ref{weissauerL2lemma},
\begin{equation}\label{Vkdecompeq2}
 V_k\cong aL(\lambda)\,\oplus\,bL(\mu)\,\oplus\,cN(\lambda)^\vee
\end{equation}
with certain multiplicities $a,b,c$. These multiplicities may be related to dimensions of spaces of modular forms, as follows. Any vector of weight $(k,1)$ in either $L(\lambda)$ or $N(\lambda)^\vee$ gives rise to an element of $M_{1,k-1}(\Gamma)$. Conversely, a non-zero element $F\in M_{1,k-1}(\Gamma)$ (or rather the function $\Phi$ on $\Sp_4(\R)$ corresponding to $F$) generates a copy of $L(\lambda)$ (which may lie inside an $N(\lambda)^\vee$). This explains the first of the following three equations,
\begin{align}
 \label{Vkmulteq1}a+c&=\dim M_{1,k-1}(\Gamma),\\
 \label{Vkmulteq2}b&=\dim M_{3,k-3}(\Gamma),\\
 \label{Vkmulteq3}b+c&=\dim M^*_{3,k-3}(\Gamma).
\end{align}
For the second equation, observe that any vector of weight $(k,3)$ in $L(\mu)$ gives rise to an element of $M_{3,k-3}(\Gamma)$. Conversely, a non-zero element $F\in M_{3,k-3}(\Gamma)$ generates a copy of $L(\mu)$.

The space appearing in \eqref{Vkmulteq3} is defined by
\begin{equation}\label{N1stardefeq}
 M^*_{3,k-3}(\Gamma)=\{F\in N_{3,k-3}(\Gamma)\:\big|\:LF=E_-F=0,\:P_{0-}F\text{ is holomorphic}\}.
\end{equation}
By (3) of Lemma \ref{holomorphyconditionlemma}, an alternative definition is
\begin{equation}\label{N1stardef2eq}
 M^*_{3,k-3}(\Gamma)=\{F\in N^1_{3,k-3}(\Gamma)\:\big|\:LF=E_-F=0\}.
\end{equation}
Evidently,
\begin{equation}\label{N1starinbetweeneq}
 M_{3,k-3}(\Gamma)\subset M^*_{3,k-3}(\Gamma)\subset N^1_{3,k-3}.
\end{equation}
We already noted that a vector of weight $(k,3)$ in $L(\mu)$ gives rise to an element of $M_{3,k-3}(\Gamma)$, and hence to an element of $M^*_{3,k-3}(\Gamma)$. We claim that a vector $\Phi$ of weight $(k,3)$ in $N(\lambda)^\vee$ also gives rise to an element of $M^*_{3,k-3}(\Gamma)$. Let $F$ be the smooth function on $\HH_2$ corresponding to $\Phi$. Then (3) of Lemma \ref{holomorphyconditionlemma} implies that $F$ is nearly holomorphic of degree $1$. Hence $F\in N^1_{3,k-3}(\Gamma)$. Clearly $F\in M^*_{3,k-3}(\Gamma)$, as claimed. Conversely, a non-zero $F\in M^*_{3,k-3}(\Gamma)$ generates either a copy of $L(\mu)$ or a copy of $N(\lambda)^\vee$. This proves \eqref{Vkmulteq3}.

Solving the linear system \eqref{Vkmulteq1} -- \eqref{Vkmulteq3}, we obtain the following result.
\begin{lemma}\label{Vkdecomplemma}
 For $k\geq3$, let $V_k$ be defined as above. Then we have the direct sum decomposition
 \begin{equation}\label{Vkdecomplemmaeq1}
  V_k\cong a_kL(\lambda)\,\oplus\,b_kL(\mu)\,\oplus\,c_kN(\lambda)^\vee,
 \end{equation}
 where
 \begin{align}
  \label{Vkdecomplemmaeq2}a_k&=\dim M_{1,k-1}(\Gamma)+\dim M_{3,k-3}(\Gamma)-\dim M^*_{3,k-3}(\Gamma),\\
  \label{Vkdecomplemmaeq3}b_k&=\dim M_{3,k-3}(\Gamma),\\
  \label{Vkdecomplemmaeq4}c_k&=\dim M^*_{3,k-3}(\Gamma)-\dim M_{3,k-3}(\Gamma).
 \end{align}
\end{lemma}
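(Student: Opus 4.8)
The plan is to reduce the lemma to the three-term direct-sum decomposition \eqref{Vkdecompeq2}, which has already been established, together with three dimension-counting identities, and then to finish by a short linear-algebra computation. Concretely, for every $k\geq3$ we already know from Lemma~\ref{k1k3moduleslemma} combined with Lemma~\ref{weissauerL2lemma} that $V_k\cong aL(\lambda)\oplus bL(\mu)\oplus cN(\lambda)^\vee$ for some non-negative integers $a=a_k$, $b=b_k$, $c=c_k$, where $\lambda=(k,1)$ and $\mu=(k,3)$. So the only remaining task is to identify $a$, $b$, $c$ in terms of dimensions of spaces of modular forms.

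First I would extract three linear relations among $a$, $b$, $c$ by counting, in the two relevant weights $(k,1)$ and $(k,3)$, the dimension of the space of $N_+$-highest weight vectors of that weight — possibly subject to additional annihilation by $L$ and $E_-$ — and matching it, via Lemma~\ref{FPhilemma}, Corollary~\ref{PhiFholomorphiclemma} and Lemma~\ref{holomorphyconditionlemma}, against a space of (nearly holomorphic) Siegel modular forms. A weight $(k,1)$ highest weight vector occurs with multiplicity one in each copy of $L(\lambda)$ and of $N(\lambda)^\vee$ and with multiplicity zero in $L(\mu)$, and such vectors correspond bijectively to elements of $M_{1,k-1}(\Gamma)$, which gives \eqref{Vkmulteq1}. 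A weight $(k,3)$ highest weight vector that corresponds to a genuinely holomorphic form lies only in a copy of $L(\mu)$, since by Lemma~\ref{holomorphyconditionlemma}(3) the weight $(k,3)$ vector in an $N(\lambda)^\vee$ gives a function that is nearly holomorphic of degree exactly one; this yields \eqref{Vkmulteq2}. Finally, a weight $(k,3)$ highest weight vector annihilated by $L$ and $E_-$ occurs once in each copy of $L(\mu)$ and once in each copy of $N(\lambda)^\vee$, and such vectors correspond, again by Lemma~\ref{holomorphyconditionlemma}(3), exactly to the elements of $M^*_{3,k-3}(\Gamma)$; this yields \eqref{Vkmulteq3}.

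With \eqref{Vkmulteq1}--\eqref{Vkmulteq3} in hand, the proof ends with elementary algebra: \eqref{Vkmulteq2} gives $b_k=\dim M_{3,k-3}(\Gamma)$; subtracting it from \eqref{Vkmulteq3} gives $c_k=\dim M^*_{3,k-3}(\Gamma)-\dim M_{3,k-3}(\Gamma)$; and then \eqref{Vkmulteq1} gives $a_k=\dim M_{1,k-1}(\Gamma)-c_k=\dim M_{1,k-1}(\Gamma)+\dim M_{3,k-3}(\Gamma)-\dim M^*_{3,k-3}(\Gamma)$, which are exactly the asserted formulas \eqref{Vkdecomplemmaeq2}--\eqref{Vkdecomplemmaeq4}.

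The hard part is not the linear system but the justification of the three counting identities — essentially all the content lives in the discussion preceding the statement. The most delicate point is the claim that a weight $(k,3)$ highest weight vector sitting inside a summand $N(\lambda)^\vee$ produces an element of $M^*_{3,k-3}(\Gamma)$ that is typically not in $M_{3,k-3}(\Gamma)$; this relies on the fact that $N(\lambda)^\vee$ carries exactly the same $K$-types as $N(\lambda)$, so that the weight $(k,3)$ space is one-dimensional and sits just above the socle $L(\lambda)$, combined with the holomorphy criterion of Lemma~\ref{holomorphyconditionlemma}(3) to conclude that the associated function is nearly holomorphic of degree exactly one. Once these ingredients are assembled the lemma follows immediately.
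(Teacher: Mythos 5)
Your proposal is correct and follows essentially the same route as the paper: the paper also starts from the decomposition \eqref{Vkdecompeq2} supplied by Lemmas~\ref{k1k3moduleslemma} and~\ref{weissauerL2lemma}, establishes the three counting identities \eqref{Vkmulteq1}--\eqref{Vkmulteq3} by matching weight-$(k,1)$ and weight-$(k,3)$ highest weight vectors (the latter with or without the extra $L$- and $E_-$-annihilation) against $M_{1,k-1}(\Gamma)$, $M_{3,k-3}(\Gamma)$ and $M^*_{3,k-3}(\Gamma)$, and then solves the resulting linear system. Your identification of the delicate point --- that the weight-$(k,3)$ vector of a summand $N(\lambda)^\vee$ lands in $M^*_{3,k-3}(\Gamma)$ but not in $M_{3,k-3}(\Gamma)$ --- matches the argument given in the text preceding the lemma.
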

We note that the component $cN(\lambda)^\vee$ in \eqref{Vkdecomplemmaeq1} is not well-defined as a subspace of $V_k$; while the multiplicities of indecomposable modules are well-defined in category $\mathcal{O}^\p$, isotypical components are in general not. For example, if $\Phi$ has weight $(k,3)$ and generates an $N(\lambda)^\vee$, and if $\Psi$ has the same weight and generates an $L(\mu)$, then $\Phi+\Psi$ also generates an $N(\lambda)^\vee$. Hence, the vectors of weight $(k,3)$ generating the $N(\lambda)^\vee$ are only determined up to ``holomorphic'' vectors of the same weight.

In classical language, this means that we do not know of a canonical way to define a complement of $M_{3,k-3}(\Gamma)$ inside $M^*_{3,k-3}(\Gamma)$. We prefer not to choose any such complement, but work with the full space $M^*_{3,k-3}(\Gamma)$ instead. The modular forms in this space generate the component $b_kL(\mu)\,\oplus\,c_kN(\lambda)^\vee$, which is well-defined as a subspace of $V_k$.

Consider the map $P_{0-}$ from $M^*_{3,k-3}(\Gamma)$ to $M_{1,k-1}(\Gamma)$. Recall from \cite{Weissauer1983} that modular forms in the space $M_{1,k-1}(\Gamma)$ are square-integrable. Hence, we may consider the orthogonal complement $M^{**}_{1,k-1}(\Gamma)$ of $P_{0-}(M^*_{3,k-3}(\Gamma))$ inside $M_{1,k-1}(\Gamma)$. The various spaces are then connected by the exact sequence
\begin{equation}\label{MstarNstarexactseqeq}
 0\longrightarrow M_{3,k-3}(\Gamma)\longrightarrow M^*_{3,k-3}(\Gamma)\stackrel{P_{0-}}{\longrightarrow}M_{1,k-1}(\Gamma)\longrightarrow M^{**}_{1,k-1}(\Gamma)\longrightarrow0,
\end{equation}
in which the fourth map is orthogonal projection. The quantity $a_k$ in \eqref{Vkdecomplemmaeq2} is equal to $\dim M^{**}_{1,k-1}(\Gamma)$. Let $V_k^*$ be the subspace of $V_k$ generated by the elements of $M^*_{3,k-3}(\Gamma)$, and let $V_k^{**}$ be the subspace of $V_k$ generated by the elements of $M^{**}_{1,k-1}(\Gamma)$. Then
\begin{equation}\label{Vkdecomplemmaeq1b}
 V_k=V_k^*\:\oplus V_k^{**}.
\end{equation}
The subspaces $V_k^*$ and $V_k^{**}$ are canonically defined, and decompose according to
\begin{equation}\label{Vkdecomplemmaeq1c}
 V_k^*\cong b_kL(\mu)\,\oplus\,c_kN(\lambda)^\vee,\qquad V_k^{**}\cong a_kL(\lambda)
\end{equation}
as abstract modules.
\subsection{The structure theorem for all modular forms}
Recall that in Proposition \ref{AA0nfindecomp2prop} we obtained a decomposition of the space $\AA(\Gamma)^\circ_{\mathfrak{n}\text{-fin}}$ into irreducible $(\mathfrak{g},K)$-modules. The analogous statement for all $\mathfrak{n}$-finite modular forms is slightly more complicated.

\begin{proposition}\label{AAnfindecomp2prop}
 As $(\mathfrak{g},K)$-modules, we have
 \begin{align}\label{AAnfindecomp2propeq0}
  \AA(\Gamma)_{\mathfrak{n}\text{-fin}}&=\bigoplus_{\substack{\ell=2\\\ell\neq3}}^\infty\;\bigoplus_{m=0}^\infty n_{\ell,m}L(\ell+m,\ell)\nonumber\\
  &\quad\oplus\;\bigoplus_{k=3}^\infty V_k^*\;\oplus\:\bigoplus_{k=1}^\infty V_k^{**}\;\oplus\:L(0,0).
 \end{align}
 where $n_{\ell,m}=\dim M_{\ell,m}(\Gamma)$, the spaces $V_k^*,V_k^{**}$ for $k\geq3$ are as in \eqref{Vkdecomplemmaeq1c}, and $V_k^{**}=n_{1,k-1}L(k,1)$ for $k=1,2$.
\end{proposition}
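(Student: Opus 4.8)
The plan is to assemble the decomposition \eqref{AAnfindecomp2propeq0} from the block decomposition \eqref{noncusplemma3eq1} together with the block-by-block analysis carried out in Sections \ref{initialdecompsec} and \ref{nontemperedsec}. First I would invoke Lemma \ref{noncusplemma3} to write
\[
 \AA(\Gamma)_{\mathfrak{n}\text{-fin}}=\bigoplus_\chi\AA(\Gamma,\chi)_{\mathfrak{n}\text{-fin}},
\]
and recall that the characters $\chi$ that can actually appear were enumerated in Section \ref{initialdecompsec}: the tempered characters $\chi_{\lambda+\varrho}$ with $\lambda=(\ell+m,\ell)$ and ($\ell\ge4$) or ($\ell=2$, $m\ge1$); the character $\chi_{\lambda+\varrho}$ with $\lambda=(2,1)$; the character with $\lambda=(1,1)$; the non-tempered characters $\chi_{\lambda+\varrho}$ with $\lambda=(k,1)$, $k\ge4$; and the principal character $\chi_\varrho$. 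Everything else contributes zero, so it suffices to read off each block.

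Next I would match each block with a summand of \eqref{AAnfindecomp2propeq0}. For the tempered characters, \eqref{temperedchareq} gives $\AA(\Gamma,\chi)_{\mathfrak{n}\text{-fin}}=n_\lambda L(\lambda)$ with $n_\lambda=\dim M_{\ell,m}(\Gamma)$; these account for the terms $n_{\ell,m}L(\ell+m,\ell)$ with $\ell\ge4$ as well as the terms with $\ell=2$, $m\ge1$. The character with $\lambda=(2,1)$ contributes $n_{2,0}L(2,1)$ by \eqref{21chareq}, supplying the missing $m=0$ case for $\ell=2$; thus the full double sum $\bigoplus_{\ell\ge2,\ell\ne3}\bigoplus_m n_{\ell,m}L(\ell+m,\ell)$ is covered, except that the $\ell=1,2$ parts of the $k=1,2$ columns are handled separately. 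The character with $\lambda=(1,1)$ contributes $n_1L(1,1)\oplus n_2L(2,2)$ by \eqref{1122chareq}; since for $k\in\{1,2\}$ the module $N(\lambda)^\vee$ with $\lambda=(k,1)$ is irreducible (as $N(k,1)$ is irreducible for $k=1$ by Proposition \ref{Mlambdairredprop}, and $L(2,3)$ is not permitted), one checks $V_k^{**}=n_{1,k-1}L(k,1)$ and $V_1^*=V_2^*=0$, so the two columns $k=1,2$ of the $\bigoplus V_k^{**}$ sum reproduce exactly $n_{1,0}L(1,1)$ and $n_{2,0}L(2,2)=n_{2,0}L(2,1)$ — here I would double-check the bookkeeping so that no module is counted twice. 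For the non-tempered characters $\chi_{\lambda+\varrho}$, $\lambda=(k,1)$, $k\ge4$, Lemma \ref{Vkdecomplemma} and \eqref{Vkdecomplemmaeq1b}--\eqref{Vkdecomplemmaeq1c} give $V_k=V_k^*\oplus V_k^{**}$, which feeds into the $\bigoplus_{k\ge3}V_k^*$ and $\bigoplus_{k\ge1}V_k^{**}$ sums (modulo the $k=3$ adjustment below). Finally, for $\chi_\varrho$, \eqref{principalblockinitialeq} gives $\AA(\Gamma,\chi_\varrho)_{\mathfrak{n}\text{-fin}}\cong L(0,0)\oplus V_3$, and $V_3$ was shown in Section \ref{nontemperedsec} to be of the form $V_3=V_3^*\oplus V_3^{**}$ with the same decomposition formula \eqref{Vkdecomplemmaeq1c} for $k=3$; this supplies the $L(0,0)$ summand and the $k=3$ terms of the two $V_k$-sums. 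The reason $\ell=3$ is excluded from the first double sum, and why the $L(3,1)$ and $L(3,3)$ constituents instead get absorbed into $V_3^*$ and $V_3^{**}$, is precisely that these modules share the central character $\chi_\varrho$ (for $m=0$) or $\chi_{(k,1)+\varrho}$ (for $k=m+3\ge4$) with the non-semisimple pieces $N(k,1)^\vee$, so they cannot be split off canonically.

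The main obstacle — really the only nontrivial point left at this stage — is the low-weight bookkeeping: making sure that the columns $k=1$ and $k=2$ of the $\bigoplus_{k\ge1}V_k^{**}$ sum, the terms $L(1,1),L(2,1)$ coming from the characters $\lambda=(1,1),(2,1)$, and the $\ell=1,2$ entries of the first double sum are all consistently accounted for with the stated multiplicities $\dim M_{\ell,m}(\Gamma)$ and no overlaps. Concretely I would verify: (i) for $k=1,2$, $M^*_{3,k-3}(\Gamma)=0$ (since $k-3<0$), hence $b_k=c_k=0$ and $V_k^*=0$, while $a_k=\dim M_{1,k-1}(\Gamma)=n_{1,k-1}$, so $V_k^{**}=n_{1,k-1}L(k,1)$ as asserted; (ii) $L(1,1)=L(1,0+1)$ and $L(2,2)=L(2,0+1)$... — wait, $L(2,2)$ arises from $\lambda=(1,1)$ via \eqref{1122chareq} and corresponds to weight index $(\ell,m)=(2,0)$, matching $V_2^{**}=n_{2,0}L(2,1)$; one must be careful that $L(2,2)$ and $L(2,1)$ are genuinely different modules sitting in different blocks, so there is no conflict. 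Once this case-check is complete, the sum of all the blocks is literally the right-hand side of \eqref{AAnfindecomp2propeq0}, and the identification of lowest weight vectors with modular forms is inherited from Proposition \ref{AA0nfindecomp2prop}, \eqref{temperedchareq}, and the discussion around \eqref{Vkmulteq1}--\eqref{Vkmulteq3}. I expect the proof to be short: essentially a page of careful cross-referencing, with the only genuine content being the low-$k$ consistency verification and the observation that the $\ell=3$ terms are displaced into the $V_k$ pieces.
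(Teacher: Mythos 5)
Your overall route is exactly the paper's: the proof given there is literally ``combine Lemma \ref{noncusplemma3} with the results of the previous subsection,'' i.e.\ the block decomposition plus the block-by-block identifications \eqref{temperedchareq}, \eqref{21chareq}, \eqref{1122chareq}, \eqref{principalblockinitialeq} and Lemma \ref{Vkdecomplemma}. So the strategy is right, and your list of the characters that can occur is complete.

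However, the low-weight bookkeeping — which you yourself flag as the only genuine content — is carried out incorrectly. By \eqref{21chareq}, the block for $\lambda=(2,1)$ contributes $\dim M_{1,1}(\Gamma)\cdot L(2,1)=n_{1,1}L(2,1)$, not $n_{2,0}L(2,1)$, and since $L(2,1)=L(\ell+m,\ell)$ with $(\ell,m)=(1,1)$ this term is precisely $V_2^{**}=n_{1,k-1}L(k,1)\big|_{k=2}$; it does \emph{not} supply the missing $(\ell,m)=(2,0)$ entry of the first double sum. That entry, namely $n_{2,0}L(2,2)$, is supplied instead by the $\lambda=(1,1)$ block via \eqref{1122chareq}, whose other constituent $n_{1,0}L(1,1)$ is $V_1^{**}$. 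Your identity ``$n_{2,0}L(2,2)=n_{2,0}L(2,1)$'' is false as modules ($L(2,2)$ and $L(2,1)$ have different lowest weights and different central characters), and your later assertion ``matching $V_2^{**}=n_{2,0}L(2,1)$'' contradicts your own correct computation in item (i) that $V_2^{**}=n_{1,1}L(2,1)$. With the corrected matching — $\lambda=(2,1)\mapsto V_2^{**}$, $\lambda=(1,1)\mapsto V_1^{**}\oplus(\text{the }(\ell,m)=(2,0)\text{ term})$ — the totals do add up to the right-hand side of \eqref{AAnfindecomp2propeq0} with no overlaps, so the proof closes; but as written the accounting in your proposal does not.
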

\begin{proof}
This follows by combining Lemma \ref{noncusplemma3} with the results of the previous subsection.
\end{proof}

\begin{proposition}\label{holomorphicsimplemodule} Let $\ell$ be a positive integer, and $m$ a non-negative integer. Let $F \in M_{\ell, m}(\Gamma)$ and let $\Phi_F:\:\Sp_4(\R)\to\C$ be the function of weight $(\ell+m,\ell)$ corresponding to $F$ by Lemma~\ref{FPhilemma}. Then the submodule $\U(\g_\C)\Phi_F$ of $\AA(\Gamma)_{\mathfrak{n}\text{-fin}}$ is irreducible and isomorphic to $L(\ell+m, \ell).$
\end{proposition}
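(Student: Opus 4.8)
The plan is to show that $\Phi_F$ satisfies the hypotheses of the universal property of $N(\lambda)$ with $\lambda=(\ell+m,\ell)$, so that $\U(\g_\C)\Phi_F$ is a quotient of $N(\lambda)$; then to identify the quotient precisely. First, since $F\in M_{\ell,m}(\Gamma)=N^0_{\ell,m}(\Gamma)$ is holomorphic, Corollary~\ref{PhiFholomorphiclemma} gives $\mathfrak{n}\Phi_F=0$, i.e.\ $X_-\Phi_F=P_{1-}\Phi_F=P_{0-}\Phi_F=0$; moreover $\Phi_F\in\mathcal{V}_{\ell,m}$ means $\Phi_F$ has weight $(\ell+m,\ell)$ and $N_+\Phi_F=0$. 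Setting $V=\U(\g_\C)\Phi_F$, the triple $(V,\Phi_F)$ therefore satisfies the three bullet conditions of the universal property recorded in Section~\ref{setupsec}, so there is a surjection $N(\lambda)\twoheadrightarrow V$ sending a highest weight vector to $\Phi_F$. In particular $V$ is a nonzero quotient of $N(\lambda)$.

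The next step is to observe that $V$ is a submodule of $\AA(\Gamma)_{\mathfrak{n}\text{-fin}}$: by Proposition~\ref{Nplmautformprop}, $\Phi_F\in\AA(\Gamma)_{\mathfrak{n}\text{-fin}}$, and the latter is a $(\g,K)$-module, so $\U(\g_\C)\Phi_F\subseteq\AA(\Gamma)_{\mathfrak{n}\text{-fin}}$. Now I use the structural decomposition of $\AA(\Gamma)_{\mathfrak{n}\text{-fin}}$ from Proposition~\ref{AAnfindecomp2prop}. Since $\ell\geq1$, the weight $(\ell+m,\ell)$ has $\ell\geq1$, so by Proposition~\ref{Mlambdaunitaryprop}(3) the module $L(\ell+m,\ell)$ is unitarizable; in the decomposition \eqref{AAnfindecomp2propeq0}, the only indecomposable summands whose minimal $K$-type is $\rho_{(\ell+m,\ell)}$ are copies of $L(\ell+m,\ell)$ itself (the non-split pieces $N(\lambda)^\vee$ appearing in the $V_k^*$ have minimal $K$-type $\rho_{(k,1)}$, i.e.\ $\ell=1$, and when $\ell=1$ the vector $\Phi_F$ with $\mathfrak{n}\Phi_F=0$ lands in the irreducible submodule $L(\lambda)\subset N(\lambda)^\vee$ rather than generating the whole thing). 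Hence the submodule generated by $\Phi_F$, which is cyclic on a lowest-weight vector annihilated by $\mathfrak{n}$ and $N_+$, must be isomorphic to $L(\ell+m,\ell)$.

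To make the last point clean, here is the argument I would actually write: $N(\lambda)$ has a unique irreducible submodule $S$ and unique irreducible quotient $L(\lambda)$ (property (4) in Section~\ref{setupsec}). The surjection $N(\lambda)\to V$ has some kernel $K_0$; I claim $K_0$ is the unique maximal submodule, so that $V\cong L(\lambda)$. If not, $V$ would contain a proper nonzero submodule, and hence — using that $V$ sits inside $\AA(\Gamma)_{\mathfrak{n}\text{-fin}}$, which by Lemma~\ref{noncusplemma3} decomposes into finite-length blocks $\AA(\Gamma,\chi)_{\mathfrak{n}\text{-fin}}$, and by \eqref{temperedchareq}, \eqref{21chareq}, \eqref{1122chareq}, \eqref{Vkdecomplemmaeq1c} and Proposition~\ref{AAnfindecomp2prop} these blocks are explicitly understood — we would get a constituent $L(\mu)$ with $\mu\prec\lambda$, $\mu$ in the same $\mathcal{Z}$-orbit, and $\mu$ appearing with $\ell(\mu)\geq 1$; tracking through the block containing $\chi_{\lambda+\varrho}$ shows the only way a second constituent arises is in the non-tempered blocks with $\ell=1$, already excluded above. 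Therefore $V$ is irreducible, i.e.\ $\U(\g_\C)\Phi_F\cong L(\ell+m,\ell)$.

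The main obstacle is the bookkeeping in the final step: one has to be certain that no reducible indecomposable $N(\lambda)^\vee$ (or, a priori, some other extension) can be generated by a lowest-weight vector $\Phi_F$ satisfying $\mathfrak{n}\Phi_F=0$. The clean resolution is precisely that such a $\Phi_F$ satisfies the universal property of the parabolic Verma module and hence generates a \emph{quotient} of $N(\lambda)$, never a non-split extension in the ``wrong direction''; combined with the already-established decomposition of $\AA(\Gamma)_{\mathfrak{n}\text{-fin}}$ this forces irreducibility. Everything else — the identification of $N(\lambda)$'s universal property, unitarizability, membership in $\AA(\Gamma)_{\mathfrak{n}\text{-fin}}$ — is immediate from results already proved in the excerpt.
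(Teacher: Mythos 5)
Your proposal is correct and follows essentially the same route as the paper's proof: use the universal property of the parabolic Verma module to realize $\U(\g_\C)\Phi_F$ as a quotient of $N(\ell+m,\ell)$, then invoke the decomposition of $\AA(\Gamma)_{\mathfrak{n}\text{-fin}}$ in Proposition~\ref{AAnfindecomp2prop} to rule out the reducible quotient $N(\ell+m,\ell)$ itself. Your extra discussion of why a lowest weight vector in a copy of $N(k,1)^\vee$ lands in its $L(k,1)$ submodule is a correct (and slightly more explicit) justification of the step the paper compresses into the remark that the reducible $N(\lambda)$ never occurs as a submodule.
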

\begin{proof}By Property (3) of the modules $N(\lambda)$ in Section~\ref{setupsec}, we see that $\U(\g_\C)\Phi_F$ is isomorphic to a quotient of $N(\ell+m, \ell)$. If $N(\ell+m, \ell) = L(\ell+m, \ell)$ there is nothing to prove. Otherwise assume that $N(\ell+m, \ell) \neq L(\ell+m, \ell)$. It suffices to prove that $\U(\g_\C)\Phi_F$ is not isomorphic to  $N(\ell+m, \ell)$. But this follows from Proposition \ref{AAnfindecomp2prop}, as the module $N(\ell+m, \ell)$, when reducible, does not appear as a submodule of  $\AA(\Gamma)_{\mathfrak{n}\text{-fin}}$.
\end{proof}

Recall that the cuspidal structure theorem, Theorem \ref{cuspidalstructuretheorem}, was based on Proposition \ref{AA0nfindecomp2prop}, which is the cuspidal analogue of Proposition \ref{AAnfindecomp2prop}, \emph{and} Propositions \ref{navigateKtypesprop} and \ref{navigateKtypesell1prop}, which say that every highest weight vector in an $L(k,\ell)$ can be generated from the highest weight vector of its minimal $K$-type by applying $U$, $X_+$, $D_+$ and $E_+$ operators. We therefore require a result similar to Propositions \ref{navigateKtypesprop} and \ref{navigateKtypesell1prop} for the indecomposable modules $N(k,1)^\vee$ appearing in \eqref{AAnfindecomp2propeq0}. For these modules we define $N(k,1)^\vee_{\text{par}(0)}$ and $N(k,1)^\vee_{\text{par}(1)}$ just as we did in the paragraph before Proposition \ref{navigateKtypesprop} (set $\lambda=(k,1)$). Recall that $N(k,1)^\vee$ sits in an exact sequence
$$
 0\longrightarrow L(k,1)\longrightarrow N(k,1)^\vee\stackrel{\varphi}{\longrightarrow}L(k,3)\longrightarrow0.
$$
For the submodule $L(k,1)$ we have the spaces $L(k,1)_{\text{par}(0)}$ and $L(k,1)_{\text{par}(1)}$ of even and odd highest weight vectors, and clearly
$$
 L(k,1)_{\text{par}(0)}\subset N(k,1)^\vee_{\text{par}(0)}\qquad\text{and}\qquad L(k,1)_{\text{par}(1)}\subset N(k,1)^\vee_{\text{par}(1)}.
$$
The spaces $L(k,1)_{\text{par}(0)}$ and $L(k,1)_{\text{par}(1)}$ originate from $w_0$, the essentially unique vector of weight $(k,1)$, by applying $X_+$, $D_+$ and $E_+$ operators. Let $w_1$ be the essentially unique vector of weight $(k,3)$, so that $\varphi(w_1)$ is the highest weight vector in the minimal $K$-type of $L(k,3)$. Then, by Proposition \ref{navigateKtypesprop},
$$
 L(k,3)_{\text{par}(0)}=\bigoplus_{\substack{\alpha,\beta\geq0\\0\leq\gamma\leq\frac{k-3}2}}\C X_+^\alpha D_+^\beta U^\gamma\varphi(w_1)
, \quad
 L(k,3)_{\text{par}(1)}=\bigoplus_{\substack{\alpha,\beta\geq0\\0\leq\gamma<\frac{k-3}2}}\C E_+X_+^\alpha D_+^\beta U^\gamma\varphi(w_1).
$$
Now let
\begin{equation}\label{Nk1veeeq1}
 \tilde L(k,3)_{\text{par}(0)}=\bigoplus_{\substack{\alpha,\beta\geq0\\0\leq\gamma\leq\frac{k-3}2}}\C X_+^\alpha D_+^\beta U^\gamma w_1
\end{equation}
and
\begin{equation}\label{Nk1veeeq2}
 \tilde L(k,3)_{\text{par}(1)}=\bigoplus_{\substack{\alpha,\beta\geq0\\0\leq\gamma<\frac{k-3}2}}\C E_+X_+^\alpha D_+^\beta U^\gamma w_1.
\end{equation}
It is clear that $\varphi$ maps $\tilde L(k,3)_{\text{par}(i)}$ isomorphically onto $L(k,3)_{\text{par}(i)}$; in particular, the sums in \eqref{Nk1veeeq1} and \eqref{Nk1veeeq2} are really direct.
\begin{lemma}\label{Nk1veenavigatelemma}
 With the above notations, we have $N(k,1)^\vee_{\text{\rm par}(i)}=L(k,1)_{\text{\rm par}(i)}\oplus\tilde L(k,3)_{\text{\rm par}(i)}$ for $i=0,1$.
\end{lemma}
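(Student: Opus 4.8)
The plan is to exploit the short exact sequence $0\to L(k,1)\xrightarrow{\iota} N(k,1)^\vee\xrightarrow{\varphi} L(k,3)\to0$ recalled just before the lemma, together with the explicit navigation of highest weight vectors in $L(k,3)$. Since $\lambda=(k,3)$ has second coordinate $3\geq2$, Proposition \ref{Mlambdairredprop} gives $L(k,3)=N(k,3)$, so Proposition \ref{navigateKtypesprop} applies to $L(k,3)$ with lowest weight vector $\varphi(w_1)$ and describes $L(k,3)_{\text{par}(0)}$ and $L(k,3)_{\text{par}(1)}$ completely. The whole argument is then a diagram chase: pull this description back through $\varphi$.

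First I would record two facts about $\varphi$. Because $\varphi$ is a homomorphism of $(\mathfrak{g},K)$-modules it commutes with $N_+$ and with the $K$-action, hence sends highest weight vectors to highest weight vectors of the same weight; and since $3\equiv1\bmod2$, the parity of a weight $(x,y)$ computed relative to $\lambda=(k,1)$ agrees with the parity computed relative to $\lambda=(k,3)$. Therefore $\varphi(N(k,1)^\vee_{\text{par}(i)})\subseteq L(k,3)_{\text{par}(i)}$. Moreover $\ker\varphi=\iota(L(k,1))$ is the unique irreducible submodule, and a highest weight vector of $N(k,1)^\vee$ lying in it is a highest weight vector of $L(k,1)$ of the same parity, while conversely $L(k,1)_{\text{par}(i)}\subseteq\ker\varphi$. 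This gives $\ker\varphi\cap N(k,1)^\vee_{\text{par}(i)}=L(k,1)_{\text{par}(i)}$.

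Next I would check that $\tilde L(k,3)_{\text{par}(i)}$ actually sits inside $N(k,1)^\vee_{\text{par}(i)}$: each spanning vector $X_+^\alpha D_+^\beta U^\gamma w_1$ (and its $E_+$-version) is obtained from the highest weight vector $w_1$, which has parity $0$, by applying $U$, $X_+$, $D_+$ — which preserve the highest-weight-vector property and the parity — and at most one $E_+$ — which preserves the highest-weight-vector property and flips the parity — all by Lemma \ref{Nplusoperatorslemma}. Then, since $w_1$ and $\varphi(w_1)$ have the same weight, and step by step so do the images of the intermediate vectors, the element of $\mathcal{U}(\mathfrak{g}_\C)$ that each symbol $U,X_+,D_+,E_+$ denotes is the same on both sides; hence $\varphi(X_+^\alpha D_+^\beta U^\gamma w_1)=X_+^\alpha D_+^\beta U^\gamma\varphi(w_1)$, and similarly with a leading $E_+$. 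By Proposition \ref{navigateKtypesprop} applied to $L(k,3)$ these images span $L(k,3)_{\text{par}(i)}$, and $\varphi$ restricts to the isomorphism $\tilde L(k,3)_{\text{par}(i)}\xrightarrow{\sim}L(k,3)_{\text{par}(i)}$ already noted before the statement of the lemma.

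To conclude, given a highest weight vector $v$ of parity $i$ in $N(k,1)^\vee$, I would choose the unique $v'\in\tilde L(k,3)_{\text{par}(i)}$ with $\varphi(v')=\varphi(v)$; since $v'\in N(k,1)^\vee_{\text{par}(i)}$, we get $v-v'\in\ker\varphi\cap N(k,1)^\vee_{\text{par}(i)}=L(k,1)_{\text{par}(i)}$, so $v\in L(k,1)_{\text{par}(i)}+\tilde L(k,3)_{\text{par}(i)}$, which proves the asserted equality of spaces. The sum is direct because any $v$ in the intersection lies in $\ker\varphi$, and $\varphi$ is injective on $\tilde L(k,3)_{\text{par}(i)}$, forcing $v=0$. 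I do not expect a serious obstacle here; the only points needing care are the bookkeeping that the $U$-strings (whose meaning depends on the weight of the argument) act identically on $w_1$ and on $\varphi(w_1)$, and the observation that $L(k,3)=N(k,3)$ so that Proposition \ref{navigateKtypesprop} may be invoked verbatim.
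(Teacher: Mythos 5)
Your proof is correct and is essentially the paper's own argument: the same diagram chase through the exact sequence $0\to L(k,1)\to N(k,1)^\vee\xrightarrow{\varphi}L(k,3)\to0$, subtracting a preimage $\tilde v\in\tilde L(k,3)_{\text{par}(i)}$ of $\varphi(v)$ and landing in $L(k,1)_{\text{par}(i)}$, with directness following from $\varphi$ being injective on $\tilde L(k,3)_{\text{par}(i)}$ and zero on $L(k,1)_{\text{par}(i)}$. You simply spell out in more detail the points the paper leaves implicit (parity compatibility under $\varphi$ and the weight-dependent bookkeeping for the $U$-strings), which are correct as you state them.
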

\begin{proof}
It is clear that the sum is direct, since $L(k,1)_{\text{\rm par}(i)}$ lies in the kernel of $\varphi$, while the restriction of $\varphi$ to $\tilde L(k,3)_{\text{\rm par}(i)}$ is an isomorphism. Let $v\in N(k,1)^\vee_{\text{\rm par}(i)}$. Then $\varphi(v)\in L(k,3)_{\text{\rm par}(i)}$. Let $\tilde v\in \tilde L(k,3)_{\text{par}(i)}$ be such that $\varphi(\tilde v)=\varphi(v)$. Then $v-\tilde v\in L(k,1)_{\text{\rm par}(i)}$. The assertion follows.
\end{proof}

\begin{theorem}[Structure theorem for all modular forms]\label{allmodformstructuretheorem}
 Let $\ell$ be a positive integer, and $m$ a non-negative integer.  Let the sets $\mathcal{X}_{\ell',m'}^{\ell,m}$ be defined as in \eqref{Xlmlmdefeq1} and \eqref{Xlmlmdefeq2}. Then we have a direct sum decomposition

 \begin{equation}\label{allmodformstructuretheoremeq1}
  N_{\ell,m}(\Gamma)=\bigoplus_{\ell'=1}^\ell
  \bigoplus_{m'=0}^{\ell+m-\ell'}\;\sum_{X \in \mathcal{X}_{\ell', m'}^{\ell, m}} X(M^*_{\ell',m'}(\Gamma)),
 \end{equation}
 where
 \begin{equation}\label{allmodformstructuretheoremeq2}
  M^*_{\ell',m'}(\Gamma)=\begin{cases}
                          M_{\ell',m'}(\Gamma)&\text{if }\ell'\neq3,\\
                          \text{as in \eqref{N1stardefeq}}&\text{if }\ell'=3.
                         \end{cases}
 \end{equation}
 The decomposition \eqref{allmodformstructuretheoremeq1} is orthogonal in the following sense: If
 \begin{equation}\label{allmodformstructuretheoremeq3}
  F_1\in\sum_{X \in \mathcal{X}_{\ell', m'}^{\ell, m}} X(S_{\ell',m'}(\Gamma)),\qquad
  F_2\in\sum_{X \in \mathcal{X}_{\ell'', m''}^{\ell, m}} X(M^*_{\ell'',m''}(\Gamma)),
 \end{equation}
 and if $(\ell',m')\neq(\ell'',m'')$, then $\langle F_1,F_2\rangle=0$.
\end{theorem}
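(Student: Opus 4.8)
The plan is to obtain this theorem from the algebraic structure of $\AA(\Gamma)_{\mathfrak{n}\text{-fin}}$ in exactly the way Theorem~\ref{cuspidalstructuretheorem} was obtained from Proposition~\ref{AA0nfindecomp2prop}, but using the finer decomposition \eqref{AAnfindecomp2propeq0} in place of the cuspidal one. Given $F\in N_{\ell,m}(\Gamma)$, Proposition~\ref{Nplmautformprop} produces the corresponding automorphic form $\Phi_F\in\AA(\Gamma)_{\mathfrak{n}\text{-fin}}$; it has weight $(\ell+m,\ell)$ and is annihilated by $N_+$. Decomposing $\Phi_F$ according to \eqref{AAnfindecomp2propeq0} and using $\ell>0$ (so no component lies in $L(0,0)$), we write $\Phi_F$ as a finite sum of highest weight vectors, each lying in one of three kinds of indecomposable module: an $L(\ell'+m',\ell')$ with $\ell'\geq1$, $\ell'\neq3$ (from the summands $n_{\ell',m'}L(\ell'+m',\ell')$ and the $V_k^{**}$), an $L(k,3)$, or an $N(k,1)^\vee$ (the last two being summands of some $V_k^*$, cf.\ \eqref{Vkdecomplemmaeq1c}). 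In each case I would navigate from the generating lowest-weight datum to the given highest weight vector using only $X_+$, $D_+$, $U$, $E_+$: for $L(\ell'+m',\ell')$ by Proposition~\ref{navigateKtypesprop} (or Proposition~\ref{navigateKtypesell1prop} when $\ell'=1$), with the lowest weight vector corresponding to an element of $M_{\ell',m'}(\Gamma)=M^*_{\ell',m'}(\Gamma)$; for $L(k,3)$ likewise, the lowest weight vector giving an element of $M_{3,k-3}(\Gamma)\subset M^*_{3,k-3}(\Gamma)$; and for $N(k,1)^\vee$ by Lemma~\ref{Nk1veenavigatelemma}, whose highest weight vectors split into a part navigated from the weight-$(k,1)$ socle vector $w_0$ (corresponding to a form in $M_{1,k-1}(\Gamma)=M^*_{1,k-1}(\Gamma)$) and a part navigated from the weight-$(k,3)$ vector $w_1$, which by part (3) of Lemma~\ref{holomorphyconditionlemma} and the definition \eqref{N1stardefeq} corresponds to an element of $M^*_{3,k-3}(\Gamma)$. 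Transporting back through Lemma~\ref{FPhilemma} and the commuting diagram \eqref{Nplusoperatorsnearholopropeq2} (i.e.\ Lemma~\ref{slashliecommute}), $F$ becomes a sum of terms $X(G)$ with $G\in M^*_{\ell',m'}(\Gamma)$, and the weight bookkeeping for $X_+,D_+,U,E_+$ forces each $X$ into the set $\mathcal{X}^{\ell,m}_{\ell',m'}$ of \eqref{Xlmlmdefeq1}--\eqref{Xlmlmdefeq2}. This gives the inclusion ``$\subseteq$''; the reverse inclusion is immediate from Table~\ref{Nplusoperatorsnearholotable} and \eqref{Fourierexpansioneq6} (so that the no-poles-at-cusps condition is preserved), together with the fact that $M^*_{3,m'}(\Gamma)\subseteq N_{3,m'}(\Gamma)$ by definition.

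For directness of the sum, I would again work on the automorphic side: the subspace attached to $(\ell',m')$ corresponds to the span of the vectors $X\psi$, over $X\in\mathcal{X}^{\ell,m}_{\ell',m'}$ and $\psi$ ranging over the lowest-weight-type vectors of the modules ``belonging to $(\ell',m')$''. For distinct pairs with $\ell',\ell''\neq3$ (and, more generally, whenever the pairs are associated to disjoint families of indecomposable modules in \eqref{AAnfindecomp2propeq0}) directness is immediate from the directness of \eqref{AAnfindecomp2propeq0} and the linear-independence results for the navigation operators (Lemmas~\ref{wiDplusXpluslemma} and \ref{plusoperatorsinjectivelemma}). The one genuinely delicate interaction is between $(\ell',m')=(3,m')$ and $(\ell'',m'')=(1,m'+2)$, since both then contribute inside copies of the same module $N(m'+3,1)^\vee$: the $(1,m'+2)$-contribution sits in the socle $L(m'+3,1)$ (whose $K$-types, by Proposition~\ref{EpsKtypesprop}(2), do not include weight $(m'+3,3)$), while the $(3,m')$-contribution, because $\mathcal{X}^{\ell,m}_{3,m'}$ is precisely the set of navigation words mapping onto a basis of the highest weight vectors of the \emph{quotient} $L(m'+3,3)$ (the conditions $\gamma\leq m'/2$ resp.\ $\gamma<m'/2$ in \eqref{Xlmlmdefeq1} are tailored to this via Proposition~\ref{navigateKtypesprop}), has nonzero image in that quotient and therefore meets the socle only in $0$; combined with Lemma~\ref{Nk1veenavigatelemma} this yields directness in this case too.

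For the orthogonality, let $F_1\in\sum_{X\in\mathcal{X}^{\ell,m}_{\ell',m'}}X(S_{\ell',m'}(\Gamma))$ and $F_2\in\sum_{X\in\mathcal{X}^{\ell,m}_{\ell'',m''}}X(M^*_{\ell'',m''}(\Gamma))$ with $(\ell',m')\neq(\ell'',m'')$. By Lemma~\ref{preserveeissiegel}, $F_1$ is a cusp form, so $\Phi_{F_1}$ is cuspidal and, by Proposition~\ref{holomorphicsimplemodule} and Proposition~\ref{AA0nfindecomp2prop}, generates inside $\AA(\Gamma)^\circ_{\mathfrak{n}\text{-fin}}$ a direct sum of copies of $L(\ell'+m',\ell')$; by Lemma~\ref{peterssonequallemma2} it suffices to show $\langle\Phi_{F_1},\Phi_{F_2}\rangle=0$. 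The pairing $(u,w)\mapsto\langle u,w\rangle$ is defined on $\mathcal{L}_1\times\AA(\Gamma)_{\mathfrak{n}\text{-fin}}$ (the integral converges because $\mathcal{L}_1$ is cuspidal), is $(\g,K)$-invariant in the sign-twisted sense $\langle Xu,w\rangle=-\langle u,\bar Xw\rangle$, and hence gives a $(\g,K)$-map from $\AA(\Gamma)_{\mathfrak{n}\text{-fin}}$ into the Hermitian dual of $\mathcal{L}_1$; since $L(\ell'+m',\ell')$ is unitarizable (Proposition~\ref{Mlambdaunitaryprop}), this Hermitian dual is again $L(\ell'+m',\ell')$-isotypic. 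Restricting this map to the submodule $M_2=\U(\g_\C)\Phi_{F_2}$, its image is simultaneously a quotient of $M_2$ and a semisimple $L(\ell'+m',\ell')$-isotypic module. Writing $\Phi_{F_2}=\sum_jX_j'\psi_j$ as above and running through the constituents of $M_2$ allowed by \eqref{AAnfindecomp2propeq0} one finds that this forces the image to be zero: if $\psi_j$ generates $L(\ell''+m'',\ell'')$ or $L(k,3)$ the map kills $\U(\g_\C)X_j'\psi_j$ because $(\ell',m')\neq(\ell'',m'')$; and if $\psi_j$ generates $N(k,1)^\vee$ with $k=m''+3$, then (since $X_j'$ navigates $\psi_j$ to a vector with nonzero image in the quotient $L(k,3)$) $\U(\g_\C)X_j'\psi_j=N(k,1)^\vee$, whose only semisimple quotients are $L(k,3)$ and $0$, so again it is killed. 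Hence $\langle\Phi_{F_1},\Phi_{F_2}\rangle=0$.

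I expect the main obstacle to be the careful treatment of the reducible modules $N(k,1)^\vee$: away from the exceptional pairs $\{(3,m'),(1,m'+2)\}$ both the directness and the orthogonality are transparent, and it is precisely there that the fine structure must be used --- Lemma~\ref{Nk1veenavigatelemma}, the precise tailoring of the sets $\mathcal{X}^{\ell,m}_{3,m'}$, and the fact that $N(k,1)^\vee$ has $L(k,3)$ as its unique (nonzero) semisimple quotient and $L(k,1)$ as its unique irreducible submodule.
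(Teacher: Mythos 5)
Your proof of the decomposition \eqref{allmodformstructuretheoremeq1} is essentially the paper's: pass to $\AA(\Gamma)_{\mathfrak{n}\text{-fin}}$, invoke Proposition \ref{AAnfindecomp2prop} in place of Proposition \ref{AA0nfindecomp2prop}, and navigate with Propositions \ref{navigateKtypesprop}, \ref{navigateKtypesell1prop} and Lemma \ref{Nk1veenavigatelemma}; the extra care you take over the interaction of the $(3,m')$ and $(1,m'+2)$ contributions inside a common $N(m'+3,1)^\vee$ is precisely the content of Lemma \ref{Nk1veenavigatelemma}, so that part matches the (omitted) details. Where you genuinely diverge is the orthogonality statement. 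The paper splits $F_2=F'+F''$ with $F'\in\sum X(S_{\ell'',m''}(\Gamma))$ and $F''\in\sum X(E^*_{\ell'',m''}(\Gamma))$, observes via Lemmas \ref{EslashElemma} and \ref{preserveeissiegel} that $F''\in\E_{\ell,m}(\Gamma)$ and hence pairs to zero with the cusp form $F_1$, and then simply quotes the orthogonality already established in Theorem \ref{cuspidalstructuretheorem}. You instead argue directly on the automorphic side: pairing against the cuspidal, $L(\ell'+m',\ell')$-isotypic module generated by $\Phi_{F_1}$ gives a $(\g,K)$-map from $\U(\g_\C)\Phi_{F_2}$ into an $L(\ell'+m',\ell')$-isotypic semisimple module, and you check that none of $L(\ell''+m'',\ell'')$, $L(k,3)$, $N(k,1)^\vee$ admits such a nonzero quotient when $(\ell'',m'')\neq(\ell',m')$, using for $N(k,1)^\vee$ that its unique nonzero semisimple quotient is $L(k,3)$ rather than $L(k,1)$. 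Both arguments are sound: the paper's is shorter because it recycles Theorem \ref{cuspidalstructuretheorem} together with Lemma \ref{EslashElemma}, while yours is self-contained and makes explicit the representation-theoretic reason the non-tempered indecomposables cannot pair nontrivially with cuspidal irreducibles.
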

\begin{proof}
The proof of \eqref{allmodformstructuretheoremeq1} is similar to that of Theorem \ref{cuspidalstructuretheorem}. Instead of Proposition \ref{AA0nfindecomp2prop} one uses Proposition \ref{AAnfindecomp2prop}. In addition to Propositions \ref{navigateKtypesprop} and \ref{navigateKtypesell1prop}, one also uses Lemma \ref{Nk1veenavigatelemma}. We omit the details.

To prove the orthogonality statement, write $F_2=\sum X_iF_i'+\sum X_j F_j''$, where $F'_i\in S_{\ell'',m''}(\Gamma)$, and the $F''_j\in M^*_{\ell'',m''}(\Gamma)$ are orthogonal to $S_{\ell'',m''}(\Gamma)$. Clearly, if $F'=\sum X_iF_i'$, then $\langle F_1,F_2\rangle=\langle F_1,F'\rangle$. We are thus reduced to cusp forms, for which the statement follows from the orthogonality of the decomposition in Theorem \ref{cuspidalstructuretheorem}.
\end{proof}
\begin{remark}\label{notcontainedremark}
 Not contained in Theorem \ref{allmodformstructuretheorem} is the case $\ell=0$. But recall from Lemma \ref{noncusplemma2} (or Proposition \ref{AAnfindecomp2prop}) that $N_{0,0}(\Gamma)=\C$, while $N_{0,m}(\Gamma)=0$ for $m>0$.
\end{remark}

\subsubsection*{Modular forms orthogonal to cusp forms}
We will introduce some notation involving orthogonal complements of cusp forms. First, let $E_{\ell,m}(\Gamma)$ be the orthogonal complement of $S_{\ell,m}(\Gamma)$ inside $M_{\ell,m}(\Gamma)$, so that
\begin{equation}\label{Elmdefeq}
 M_{\ell,m}(\Gamma)=S_{\ell,m}(\Gamma)\oplus E_{\ell,m}(\Gamma).
\end{equation}
Recall from \eqref{N1stardef2eq} that $M^*_{3,m}(\Gamma)=\{F\in N^1_{3,m}(\Gamma)\:|\:LF=E_-F=0\}$. We let $E^*_{3,m}(\Gamma)$ be the orthogonal complement of  $S_{3,m}(\Gamma)$ in $M^*_{3,m}(\Gamma)$, so that
\begin{equation}\label{Estardefeq}
 M^*_{3,m}(\Gamma)=S_{3,m}(\Gamma)\oplus E^*_{3,m}(\Gamma).
\end{equation}
Recall that in Sect.\ \ref{peterssonsec} we defined $\E_{\ell,m}(\Gamma)$ to be the orthogonal complement of  $N_{\ell,m}(\Gamma)^\circ$ in $N_{\ell,m}(\Gamma)$, so that
\begin{equation}\label{slashEdefeq}
 N_{\ell,m}(\Gamma)=N_{\ell,m}(\Gamma)^\circ\oplus\E_{\ell,m}(\Gamma).
\end{equation}
\begin{lemma}\label{EslashElemma}
 Let $\ell$ be a positive integer, and $m$ a non-negative integer. Then:
 \begin{enumerate}
  \item $E^*_{3,m}(\Gamma)\subset\E_{3,m}(\Gamma)$.
  \item $\E_{3,m}(\Gamma)\cap M^*_{3,m}(\Gamma)=E^*_{3,m}(\Gamma)$.
  \item $E_{\ell,m}(\Gamma)\subset\E_{\ell,m}(\Gamma)$.
  \item $\E_{\ell,m}(\Gamma)\cap M_{\ell,m}(\Gamma)=E_{\ell,m}(\Gamma)$.
 \end{enumerate}
\end{lemma}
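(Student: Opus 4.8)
The plan is to establish all four statements by reducing them to the basic relationship between $N_{\ell,m}(\Gamma)^\circ$ and $\AA(\Gamma)^\circ_{\mathfrak{n}\text{-fin}}$, together with the already-established Lemma~\ref{slashEorthogonallemma}. Throughout I will use freely that a form $F$ in $N_{\ell,m}(\Gamma)$ (or in the subspace $M^*_{3,m}(\Gamma) \subset N^1_{3,m}(\Gamma)$) corresponds via Lemma~\ref{FPhilemma} to an automorphic form $\Phi \in \AA(\Gamma)_{\mathfrak{n}\text{-fin}}$, that cusp forms correspond to $\AA(\Gamma)^\circ_{\mathfrak{n}\text{-fin}}$, and that the Petersson inner product of two modular forms equals the inner product of the corresponding automorphic forms whenever the latter converges (Lemma~\ref{peterssonequallemma2}).

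For part (3), I would start with $F \in E_{\ell,m}(\Gamma)$, so $F \in M_{\ell,m}(\Gamma)$ and $F \perp S_{\ell,m}(\Gamma)$. I must show $F \perp N_{\ell,m}(\Gamma)^\circ$, not merely $F \perp S_{\ell,m}(\Gamma)$. Let $\Phi$ be the automorphic form attached to $F$; by Proposition~\ref{holomorphicsimplemodule}, $\mathcal{U}(\g_\C)\Phi \cong L(\ell+m,\ell)$, a single irreducible lowest weight module. Given any $G \in N_{\ell,m}(\Gamma)^\circ$ with attached $\Psi \in \AA(\Gamma)^\circ_{\mathfrak{n}\text{-fin}}$, I need $\langle \Phi, \Psi\rangle = 0$. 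By Proposition~\ref{AA0nfindecomp2prop}, $\Psi$ lies in a direct sum of irreducible submodules $L(\lambda_j)$; projecting, I may assume $\Psi$ generates a single $L(\lambda)$ and (taking its weight-$(\ell+m,\ell)$ component) that $\Psi$ has weight $(\ell+m,\ell)$ and satisfies $N_+\Psi=0$, hence $\lambda=(\ell+m,\ell)$ by the $K$-type analysis of Lemma~\ref{MlambdaKtypeslemma}. Then $\Psi$ corresponds to an element of $S_{\ell,m}(\Gamma)$, so $\langle F,\Psi\text{'s form}\rangle = 0$ by hypothesis, and Lemma~\ref{peterssonequallemma2} gives $\langle \Phi,\Psi\rangle=0$. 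This shows $E_{\ell,m}(\Gamma) \subset \E_{\ell,m}(\Gamma)$. Part (1) is the identical argument, except that $F \in S_{3,m}(\Gamma)^\perp$ inside $M^*_{3,m}(\Gamma)$ corresponds to a $\Phi$ lying (by the analysis of $V_k^*$ in Sect.~\ref{nontemperedsec}, cf.\ \eqref{Vkdecomplemmaeq1c}) in $b_kL(\mu)\oplus c_kN(\lambda)^\vee$, while the cusp forms of the same weight span a complement of $E^*_{3,m}(\Gamma)$ inside $M^*_{3,m}(\Gamma)$; the orthogonality $\langle F, G\rangle = \langle \Phi, \Psi\rangle = 0$ for $G$ cuspidal follows verbatim as before (again by Lemma~\ref{peterssonequallemma2}, noting convergence since $G$ is a cusp form).

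Parts (2) and (4) are then elementary set-theoretic consequences of (1) and (3). For (4): $E_{\ell,m}(\Gamma) \subset \E_{\ell,m}(\Gamma)$ by (3) and $E_{\ell,m}(\Gamma) \subset M_{\ell,m}(\Gamma)$ by definition, so $E_{\ell,m}(\Gamma) \subseteq \E_{\ell,m}(\Gamma) \cap M_{\ell,m}(\Gamma)$. Conversely, if $F \in \E_{\ell,m}(\Gamma) \cap M_{\ell,m}(\Gamma)$, then $F$ is a holomorphic modular form orthogonal to $N_{\ell,m}(\Gamma)^\circ \supseteq S_{\ell,m}(\Gamma)$, hence $F \in E_{\ell,m}(\Gamma)$ by the definition \eqref{Elmdefeq}. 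Part (2) is the same two-line argument with $M^*_{3,m}(\Gamma)$ in place of $M_{\ell,m}(\Gamma)$ and \eqref{Estardefeq} in place of \eqref{Elmdefeq}: if $F \in \E_{3,m}(\Gamma) \cap M^*_{3,m}(\Gamma)$ then $F \perp S_{3,m}(\Gamma)$ so $F \in E^*_{3,m}(\Gamma)$, and the reverse inclusion is (1) intersected with $M^*_{3,m}(\Gamma)$.

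The main obstacle is the reduction step in parts (1) and (3): one must be careful that orthogonality of $F$ to the \emph{holomorphic} cusp space $S_{\ell,m}(\Gamma)$ genuinely propagates to orthogonality of the generated $(\g,K)$-module $\mathcal{U}(\g_\C)\Phi$ to the entire cuspidal space $\AA(\Gamma)^\circ_{\mathfrak{n}\text{-fin}}$. The key point making this work is that every irreducible cuspidal constituent is a lowest weight module whose lowest $K$-type, when it meets weight $(\ell+m,\ell)$ with $N_+$-annihilation, forces the lowest weight to be exactly $(\ell+m,\ell)$ — so the only cuspidal vectors of that weight and highest-weight type come from $S_{\ell,m}(\Gamma)$. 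Once this is in place the convergence hypotheses in Lemma~\ref{peterssonequallemma2} are automatically satisfied because one of the two forms in each pairing is always a cusp form.
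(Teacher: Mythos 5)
Your reduction in parts (1) and (3) contains a genuine error at the key step. You claim that if $\Psi$ is a non-zero vector of weight $(\ell+m,\ell)$ with $N_+\Psi=0$ generating an irreducible cuspidal constituent $L(\lambda)$, then $\lambda=(\ell+m,\ell)$, so that $\Psi$ comes from a \emph{holomorphic} cusp form in $S_{\ell,m}(\Gamma)$. This is false: Lemma~\ref{MlambdaKtypeslemma} only forces the $K$-type $\rho_{(\ell+m,\ell)}$ to satisfy $\ell+m\geq k'$, $\ell\geq\ell'$ (and a parity condition) when it occurs in $L(k',\ell')$; it occurs in $L(\ell'+m',\ell')$ for many pairs $(\ell',m')\neq(\ell,m)$. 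Indeed, if your claim were true, every nearly holomorphic cusp form of weight $\det^\ell\sym^m$ would be holomorphic, contradicting Theorem~\ref{cuspidalstructuretheorem}, which builds $N_{\ell,m}(\Gamma)^\circ$ out of the spaces $X(S_{\ell',m'}(\Gamma))$ for a whole range of $(\ell',m')$. (Compare the proof of Lemma~\ref{slashEorthogonallemma}, where the analogous reduction correctly concludes only that $\Psi$ corresponds to an element of $N_{\ell,m}(\Gamma)^\circ$, not of $S_{\ell,m}(\Gamma)$ --- and there the hypothesis $F\in\E_{\ell,m}(\Gamma)$ is strong enough for that weaker conclusion to suffice. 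Here your hypothesis is only $F\perp S_{\ell,m}(\Gamma)$, so it is not.)

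The missing ingredient is the one the paper actually uses: the components $\Psi_j$ generating $L(\lambda_j)$ with $\lambda_j\neq(\ell+m,\ell)$ are orthogonal to $\Phi$ not because they are holomorphic, but because a non-zero pairing between $\mathcal{U}(\g_\C)\Phi$ and the irreducible module $L(\lambda_j)$ would produce a non-zero $\g_\C$-intertwining map, forcing $L(\lambda_j)$ to be isomorphic to the unique irreducible quotient of $\mathcal{U}(\g_\C)\Phi$. For part (3) that quotient is $L(\ell+m,\ell)$ itself (Proposition~\ref{holomorphicsimplemodule}), so only $\lambda_j=(\ell+m,\ell)$ survives; since $\rho_{(\ell+m,\ell)}$ has multiplicity one in $L(\ell+m,\ell)$, the surviving $\Psi_j$ is a lowest weight vector, hence holomorphic, and the hypothesis $F\perp S_{\ell,m}(\Gamma)$ finishes the argument. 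For part (1) there is the further wrinkle, which you gloss over, that $\Phi_F$ may generate the reducible module $N(m+3,1)^\vee$; one then needs that its unique irreducible quotient is $L(m+3,3)$, so a non-zero map onto $L(\kappa)$ again forces $\kappa=(m+3,3)$ and hence $G\in S_{3,m}(\Gamma)$. Your parts (2) and (4), as formal consequences of (1) and (3) together with the definitions \eqref{Elmdefeq} and \eqref{Estardefeq}, are correct.
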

\begin{proof}
(1) Let $F\in E^*_{3,m}(\Gamma)$ and $G\in N_{3,m}(\Gamma)^\circ$; we have to show that $\langle F,G\rangle=0$. We work instead with the corresponding automorphic forms $\Phi_F$, $\Phi_G$, and will show that $\langle\Phi_F,\Phi_G\rangle=0$. We may assume that $\Phi_G$ generates an irreducible module $L(\kappa)$. Recall from the definition of the space $M^*_{3,m}(\Gamma)$ that $\Phi_F$ generates either a module $L(\mu)$, where $\mu=(m+3,3)$, or a module $N(\lambda)^\vee$, where $\lambda=(m+3,1)$. Assume that $\langle\Phi_F,\Phi_G\rangle\neq0$; we will obtain a contradiction. Since the modules $\langle\Phi_F\rangle$ and $\langle\Phi_G\rangle\cong L(\kappa)$ pair non-trivially, we get a non-zero $\g_\C$-map
$$
 L(\mu)\longrightarrow L(\kappa)\qquad\text{or}\qquad N(\lambda)^\vee\longrightarrow L(\kappa).
$$
In either case we conclude $L(\kappa)\cong L(\mu)$, hence $\kappa=\mu$. It follows that $G$ is holomorphic, therefore an element of $S_{3,m}(\Gamma)$. Since $F\in E^*_{3,m}(\Gamma)$, we have $\langle F,G\rangle=0$, contradicting our assumption $\langle\Phi_F,\Phi_G\rangle\neq0$.

(2) follows from (1), (3) is proved in a way analogous to (1), and (4) follows from (3).
\end{proof}

\begin{theorem}[Structure theorem for modular forms orthogonal to cusp forms]\label{noncuspidalstructuretheorem}
 Let $\ell$ be a positive integer, and $m$ a non-negative integer.  Let the sets $\mathcal{X}_{\ell',m'}^{\ell,m}$ be defined as in \eqref{Xlmlmdefeq1} and \eqref{Xlmlmdefeq2}. Then we have a direct sum decomposition
 \begin{equation}\label{noncuspidalstructuretheoremeq1}
  \E_{\ell,m}(\Gamma)=\bigoplus_{\ell'=1}^\ell
  \bigoplus_{m'=0}^{\ell+m-\ell'}\;\sum_{X \in \mathcal{X}_{\ell', m'}^{\ell, m}} X(E^*_{\ell',m'}(\Gamma)),
 \end{equation}
 where
 \begin{equation}\label{noncuspidalstructuretheoremeq2}
  E^*_{\ell',m'}(\Gamma)=\begin{cases}
                          E_{\ell',m'}(\Gamma)&\text{if }\ell'\neq3,\\
                          \text{as in \eqref{Estardefeq}}&\text{if }\ell'=3.
                         \end{cases}
 \end{equation}
\end{theorem}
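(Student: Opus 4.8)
The plan is to deduce the orthogonal-complement structure theorem, Theorem~\ref{noncuspidalstructuretheorem}, from the full structure theorem, Theorem~\ref{allmodformstructuretheorem}, by intersecting the latter with $\E_{\ell,m}(\Gamma)$. First I would start from the decomposition \eqref{allmodformstructuretheoremeq1}, which writes $N_{\ell,m}(\Gamma)$ as a direct sum over pairs $(\ell',m')$ of the subspaces $W_{\ell',m'}:=\sum_{X\in\mathcal{X}_{\ell',m'}^{\ell,m}}X(M^*_{\ell',m'}(\Gamma))$. Inside each such summand I would further split using \eqref{Elmdefeq} (resp.\ \eqref{Estardefeq}): since the operators $X\in\mathcal{X}$ preserve cuspidality and preserve the orthogonal complement of cusp forms (Lemma~\ref{preserveeissiegel}), we have $X(M^*_{\ell',m'}(\Gamma))=X(S_{\ell',m'}(\Gamma))\oplus X(E^*_{\ell',m'}(\Gamma))$, with the first piece landing in $N_{\ell,m}(\Gamma)^\circ$ and the second in $\E_{\ell,m}(\Gamma)$; injectivity of the relevant $X$ on $M^*_{\ell',m'}(\Gamma)$ (part (1) of Theorem~\ref{structuretheorem}, which is Theorem~\ref{cuspidalstructuretheorem} in the refined form, together with Lemma~\ref{Nk1veenavigatelemma} for the $\ell'=3$ case) guarantees this is a genuine internal direct sum.

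The key step is then to check that $\E_{\ell,m}(\Gamma)$ is exactly the direct sum of the ``$E^*$-halves.'' One inclusion is easy: each $X(E^*_{\ell',m'}(\Gamma))$ lies in $\E_{\ell,m}(\Gamma)$ by Lemma~\ref{preserveeissiegel} (using $E^*_{\ell',m'}(\Gamma)\subset\E_{\ell',m'}(\Gamma)$, which for $\ell'\neq3$ is Lemma~\ref{EslashElemma}(3) and for $\ell'=3$ is Lemma~\ref{EslashElemma}(1)), and these are independent by the directness already established in Theorem~\ref{allmodformstructuretheorem}. For the reverse inclusion, take $F\in\E_{\ell,m}(\Gamma)\subset N_{\ell,m}(\Gamma)$ and expand it according to \eqref{allmodformstructuretheoremeq1} as $F=\sum_{\ell',m'}(F^\circ_{\ell',m'}+F^{\E}_{\ell',m'})$, where $F^\circ_{\ell',m'}\in X(S_{\ell',m'}(\Gamma))$-type terms and $F^{\E}_{\ell',m'}\in X(E^*_{\ell',m'}(\Gamma))$-type terms. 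The cuspidal part $\sum F^\circ_{\ell',m'}$ lies in $N_{\ell,m}(\Gamma)^\circ$ by part (2) of Theorem~\ref{cuspidalstructuretheorem}, the non-cuspidal part lies in $\E_{\ell,m}(\Gamma)$ by what we just showed, and since $N_{\ell,m}(\Gamma)=N_{\ell,m}(\Gamma)^\circ\oplus\E_{\ell,m}(\Gamma)$ (equation \eqref{slashEdefeq}), the hypothesis $F\in\E_{\ell,m}(\Gamma)$ forces the cuspidal part to vanish, giving $F\in\bigoplus X(E^*_{\ell',m'}(\Gamma))$ as desired.

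Finally I would note that the ranges of summation over $\ell'$ and $m'$ carry over verbatim from Theorem~\ref{allmodformstructuretheorem}, since $\mathcal{X}_{\ell',m'}^{\ell,m}=\emptyset$ outside them; and the statement of Theorem~\ref{noncuspidalstructuretheorem} does not assert an orthogonality among the summands beyond what is implicit (in contrast to Theorem~\ref{allmodformstructuretheorem}), so no additional pairing argument is needed here. The one point requiring a little care — the main obstacle, such as it is — is the $\ell'=3$ case: one must make sure that the splitting $X(M^*_{3,m'}(\Gamma))=X(S_{3,m'}(\Gamma))\oplus X(E^*_{3,m'}(\Gamma))$ is both internal-direct and compatible with the module-theoretic picture of Section~\ref{nontemperedsec}, i.e.\ that $E^*_{3,m'}(\Gamma)$ really is carried into $\E_{3,m'}(\Gamma)$ and that applying the navigating operators $X$ does not collapse or mix the $L(\mu)$ and $N(\lambda)^\vee$ contributions; this is precisely what Lemma~\ref{EslashElemma}(1),(2) and Lemma~\ref{Nk1veenavigatelemma} were set up to provide, so the argument goes through once those are invoked. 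I would therefore present the proof as: intersect \eqref{allmodformstructuretheoremeq1} with $\E_{\ell,m}(\Gamma)$, split each summand via \eqref{Elmdefeq}/\eqref{Estardefeq}, use Lemmas~\ref{preserveeissiegel} and \ref{EslashElemma} for the two inclusions, and conclude by the direct-sum decomposition \eqref{slashEdefeq}. We omit the routine verifications.
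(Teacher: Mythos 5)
Your proposal is correct and follows essentially the same route as the paper: the paper's proof also obtains the inclusion of the right-hand side via Lemma~\ref{EslashElemma} combined with Lemma~\ref{preserveeissiegel}, and deduces the reverse inclusion from Theorem~\ref{allmodformstructuretheorem} (your splitting of $F$ into its cuspidal and non-cuspidal constituents and invoking \eqref{slashEdefeq} is precisely the ``straightforward'' argument the paper leaves to the reader).
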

\begin{proof}
By Lemma \ref{EslashElemma}, $E^*_{\ell',m'}(\Gamma)\subset\E_{\ell',m'}(\Gamma)$ for all $\ell',m'$. Lemma \ref{preserveeissiegel} therefore implies that the right hand side is contained in the left hand side. The reverse inclusion follows in a straightforward way from Theorem \ref{allmodformstructuretheorem}.
\end{proof}

\section{Adelization and arithmeticity}\label{s:final}
\subsection{Adelization and automorphic representations}\label{s:adele}
Throughout this section, we let $G$ denote the group $\GSp_4$. Let $K_\infty$ denote the maximal compact subgroup of $\Sp_4(\R)$, and for each prime $p$, put $K_p = G(\Z_p)$. Recall that an automorphic form on $G(\A)$ is a smooth function on $G(\A)$ that is left $G(\Q)$-invariant, $\mathcal{Z}$-finite, $K_\infty$-finite and slowly increasing; here $\mathcal{Z}$ is the center of $\mathcal{U}(\mathfrak{g}'_\C)$, where $\mathfrak{g}'\cong\R\oplus\mathfrak{g}$ is the Lie algebra of $\GSp_4(\R)$. We let $\AA(G)$ denote the space of automorphic forms on $G(\A)$ and $\AA(G)^\circ$ denote the subspace of cusp forms on $G(\A)$. All the automorphic forms we will consider in the following will be annihilated by the center $\R$ of $\mathfrak{g}'$.

For each prime $p$, and each positive integer $N$, define a compact open subgroup $K_p^N$ of $G(\Z_p)$ by \begin{equation}\label{kpn}K_p^N =  \left\{g\in G(\Z_p)\;|\;g \equiv \mat{I_2}{}{}{aI_2} \pmod{N}, \ a\in \Z_p^\times\right\}.\end{equation} Note that our choice of $K_p^N$ satisfies the following properties:
\begin{itemize}
 \item $K_p^N = G(\Z_p)$ for all primes $p$ not dividing $N$,
 \item The multiplier map $\mu_2 : K_p^N \mapsto \Z_p^\times$ is surjective for all primes $p$,
 \item $\Gamma(N)=G(\Q) \bigcap G(\R)^+\prod_{p<\infty}K_p^N$.
\end{itemize}

As always, let $\ell$, $m$ denote integers with $m \ge 0$. Let $\Gamma$ be a congruence subgroup of $\Sp_4(\Q)$ and $F$ be an element of $C^\infty_{\ell,m}(\Gamma)$. Let $N$ be any integer such that $\Gamma(N) \subset \Gamma$.
By Lemma~\ref{FPhilemma}, we can attach to $F$ a function $\Phi$ on $\Sp_4(\R)$ that is left invariant by $\Gamma$.
By strong approximation, we can write any element $g \in G(\A) $ as
$$g = \lambda g_\Q g_\infty k_\mathfrak{f}, \qquad g_\Q\in G(\Q), \ g_\infty \in \Sp_4(\R),\ k_\mathfrak{f} \in \prod_{p}K_p^N, \ \lambda \in Z_{G}(\R)^+,$$
We define the \emph{adelization} $\Phi_F$ of $F$ to be the function on $G(\A)$ defined by $\Phi_F(g) = \Phi(g_\infty)$. This is well defined because of the way the groups $K_p^N$ were chosen. Furthermore, it is independent of the choice of $N$. By construction, it is clear that $\Phi_F(h g) = \Phi_F(g) \ \text{ for all } h \in G(\Q), g \in G(\A)$. It is also easy to see that the mapping $F \mapsto \Phi_F$ is linear. The following is immediate from Proposition~\ref{Nplmautformprop}.

\begin{proposition}\label{Nlmautformprop}
 Let $\Gamma$ be a congruence subgroup of $\Sp_4(\Q)$ and $F$ be an element of $N_{\ell, m}(\Gamma)$. Let $\Phi_F$ be the adelization of $F$. Then $\Phi_F\in\AA(G)$. If $F \in N_{\ell, m}(\Gamma)^\circ$, then $\Phi_F\in\AA(G)^\circ$.
\end{proposition}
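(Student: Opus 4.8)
\textbf{Proposal for the proof of Proposition \ref{Nlmautformprop}.}

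The plan is to reduce the statement entirely to the non-adelic Proposition \ref{Nplmautformprop}, which we have already proved, together with the bookkeeping built into the adelization construction. First I would recall what has to be checked: $\Phi_F$ must be smooth, left $G(\Q)$-invariant, $\mathcal{Z}$-finite (where now $\mathcal{Z}$ is the center of $\mathcal{U}(\mathfrak{g}'_\C)$, with $\mathfrak{g}'\cong\R\oplus\mathfrak{g}$), $K_\infty$-finite, and slowly increasing; and if $F$ is cuspidal then $\Phi_F$ must be cuspidal. Since $F\in N_{\ell,m}(\Gamma)\subset C^\infty_{\ell,m}(\Gamma)$, Lemma~\ref{FPhilemma} attaches to it the function $\Phi:\Sp_4(\R)\to\C$ of weight $(\ell+m,\ell)$, left invariant under $\Gamma$, and Proposition~\ref{Nplmautformprop} tells us that $\Phi\in\AA(\Gamma)_{\mathfrak{n}\text{-fin}}$, so in particular $\Phi$ is smooth, $K_\infty$-finite, $\mathcal{Z}$-finite (for the center of $\mathcal{U}(\mathfrak{g}_\C)$), and slowly increasing on $\Sp_4(\R)$; and if $F$ is a cusp form then $\Phi$ is a cusp form.

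Next I would go through the adelization dictionary. Write $g=\lambda\,g_\Q\,g_\infty\,k_{\mathfrak f}$ with $g_\Q\in G(\Q)$, $g_\infty\in\Sp_4(\R)$, $k_{\mathfrak f}\in\prod_p K_p^N$, $\lambda\in Z_G(\R)^+$, and set $\Phi_F(g)=\Phi(g_\infty)$. The well-definedness, the choice of $N$, and the left $G(\Q)$-invariance are already established in the discussion preceding the proposition. Smoothness of $\Phi_F$ follows because $\Phi_F$ is locally constant in the finite variable (it is right $\prod_p K_p^N$-invariant) and equals $\Phi$ up to a left $\Sp_4(\R)$-translation in the archimedean variable, where $\Phi$ is smooth. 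Right $K_\infty$-finiteness of $\Phi_F$ is immediate from the right $K_\infty$-finiteness of $\Phi$, since the archimedean variable transforms the same way. For $\mathcal{Z}$-finiteness: the center $\R$ of $\mathfrak{g}'$ annihilates $\Phi_F$ (as stated, all automorphic forms under consideration are annihilated by $\R$), and the action of $\mathcal{U}(\mathfrak{g}_\C)$ on $\Phi_F$ through the archimedean place agrees with its action on $\Phi$; hence $\mathcal{Z}$ (center of $\mathcal{U}(\mathfrak{g}'_\C)$) acts through a finite-dimensional space because $\Phi$ is $\mathcal{Z}$-finite for the $\Sp_4$-Casimir and $\R$ acts by zero. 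Slow increase transfers likewise, since the height function on $G(\Q)\backslash G(\A)$ is controlled (up to the compact finite part and the central $\R^+$, which $\Phi_F$ is invariant or nearly invariant under) by the height function on $\Gamma\backslash\Sp_4(\R)$, where $\Phi$ is slowly increasing. Thus $\Phi_F\in\AA(G)$.

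Finally, for the cuspidal assertion, suppose $F\in N_{\ell,m}(\Gamma)^\circ$. Then $\Phi\in\AA(\Gamma)^\circ_{\mathfrak{n}\text{-fin}}$ by Proposition~\ref{Nplmautformprop}, i.e.\ the constant term of $\Phi$ along the unipotent radical of every proper parabolic $\Q$-subgroup of $\Sp_4$ vanishes. One checks that the constant term of $\Phi_F$ along the unipotent radical of a proper parabolic $\Q$-subgroup of $\GSp_4$ is computed, via strong approximation and the relation $\Gamma(N)=G(\Q)\cap G(\R)^+\prod_p K_p^N$, by an integral that reduces to the corresponding constant term integral for $\Phi$ over a compact quotient of the real unipotent radical; this vanishes because $\Phi$ is cuspidal. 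Hence $\Phi_F\in\AA(G)^\circ$. The only point requiring genuine (though standard) care is this last matching of adelic and real constant terms — the rest is formal translation — so that is the step I would write out most carefully, citing the compatibility of strong approximation with the cusp condition as in the standard references on adelization of classical modular forms.
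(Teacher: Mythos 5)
Your proposal is correct and follows the same route as the paper, which simply declares the result "immediate from Proposition~\ref{Nplmautformprop}": both arguments reduce everything to the non-adelic statement and then transfer the defining properties of an automorphic form (and cuspidality) through the adelization dictionary. You have merely written out the standard bookkeeping that the paper leaves implicit.
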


Let $F \in N_{\ell, m}(\Gamma) $ and $\Phi_F \in \AA(G)$ be its adelization as defined above. Then $\Phi_F$ generates a representation $\pi_F$ under the natural right regular action\footnote{More precisely, one takes the right regular action of $G(\A_{\mathfrak{f}})$ together with the action of the Lie algebra at the infinite place.} of $G(\A)$. From the results of the previous sections it follows that any irreducible subquotient of $\pi_F$ is an irreducible \emph{automorphic} representation of $G(\A)$; it is cuspidal whenever $F \in N_{\ell, m}(\Gamma)^\circ$.

Let $X$, $\mathcal{X}$ be as in Lemma~\ref{slashliecommute}. Let $F \in N_{\ell, m}(\Gamma)$ be such that $\Phi_F$ generates a factorizable representation $\pi = \otimes_v \pi_v$ of $G(\A)$, and suppose that $\Phi_F$ corresponds to a factorizable vector $\phi = \otimes_v \phi_v$ inside $\pi$. Then, if $G:=XF \in N_{\ell',m'}(\Gamma)$, then  $\Phi_G$ is the vector inside $\pi$ corresponding to $\otimes_{p<\infty}\phi_v \otimes (X\phi_\infty)$. In particular, if $\pi$ is an irreducible automorphic representation, then $\Phi_G$ generates $\pi$. This is immediate from~\eqref{Nplusoperatorsnearholopropeq2}, the definition of the adelization map, and the fact that $X$ does not alter the components of $F$ at any of the finite places.


\begin{proposition}\label{piinfprop}
 Let $F \in M^*_{\ell, m}(\Gamma)$ and $ \pi_F$ be the $(\mathfrak{g},K_\infty)\times G(\A_{\mathfrak{f}})$-module generated by $\Phi_F$.  Let $\pi = \otimes_v \pi_v$ be any irreducible subquotient of $\pi_F$.
 \begin{enumerate}
  \item If $\ell \neq 3$, then $\pi_\infty \simeq L(\ell +m, \ell)$.
  \item If $\ell=3$, then $\pi_\infty$ is isomorphic to either $L(3 +m, 3)$ or $L(3 +m, 1)$.
\end{enumerate}
\end{proposition}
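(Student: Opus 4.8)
The plan is to reduce the statement about $\pi_\infty$ to the structure of the $(\mathfrak{g},K)$-module generated by $\Phi_F$ over the archimedean place, which we have already analyzed. First I would observe that, by the discussion immediately preceding this proposition (adelization, strong approximation, and the fact that $K_p^N$ was chosen so that $\Phi_F$ is right-invariant under $\prod_p K_p^N$), the underlying $(\mathfrak{g},K_\infty)$-module of $\pi_F$ — more precisely, the module obtained by restricting the action to the archimedean factor and taking $\prod_p K_p^N$-invariants — is exactly $\mathcal{U}(\mathfrak{g}_\C)\Phi$, where $\Phi\in\AA(\Gamma)_{\mathfrak{n}\text{-fin}}$ is the classical automorphic form attached to $F$ by Lemma~\ref{FPhilemma}. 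Here I use that $\Phi_F$ lies in $\AA(G)$ by Proposition~\ref{Nlmautformprop}, and that $X$ (and more generally $\mathcal{U}(\mathfrak{g}_\C)$) does not touch the finite components.

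Next I would identify the $(\mathfrak{g},K)$-module $\mathcal{U}(\mathfrak{g}_\C)\Phi$ for $F\in M^*_{\ell,m}(\Gamma)$. When $\ell\neq3$, we have $M^*_{\ell,m}(\Gamma)=M_{\ell,m}(\Gamma)$, and since $\ell\geq1$ (the case $\ell\leq0$ is excluded because $M^*_{\ell,m}(\Gamma)=0$ unless $\ell\geq1$ or $\ell=m=0$, by Lemma~\ref{weissauerlemma}; and $L(0,0)$ is the trivial module $=L(0+0,0)$, consistent with the claim), Proposition~\ref{holomorphicsimplemodule} tells us $\mathcal{U}(\mathfrak{g}_\C)\Phi\cong L(\ell+m,\ell)$. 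Passing to an irreducible subquotient $\pi$ of $\pi_F$, its archimedean component $\pi_\infty$ is the archimedean component of some irreducible subquotient of the $G(\A)$-representation generated by $\Phi_F$; since the underlying archimedean $(\mathfrak{g},K_\infty)$-module is the irreducible $L(\ell+m,\ell)$, every irreducible subquotient has archimedean component $L(\ell+m,\ell)$. This gives part (1).

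For part (2), with $\ell=3$, the form $F\in M^*_{3,m}(\Gamma)$ generates, by the analysis in Section~\ref{nontemperedsec} (specifically the discussion around \eqref{N1stardefeq}--\eqref{Vkmulteq3}: a nonzero $F\in M^*_{3,m}(\Gamma)$ generates either a copy of $L(\mu)$ with $\mu=(m+3,3)$, or a copy of $N(\lambda)^\vee$ with $\lambda=(m+3,1)$), a $(\mathfrak{g},K)$-module that is either $L(3+m,3)$ or $N(m+3,1)^\vee$. In the first case $\pi_\infty\cong L(3+m,3)$ as before. In the second case, $N(m+3,1)^\vee$ sits in the exact sequence $0\to L(m+3,1)\to N(m+3,1)^\vee\to L(m+3,3)\to0$, so its composition factors are $L(3+m,1)$ and $L(3+m,3)$; hence any irreducible subquotient of $\pi_F$ has archimedean component isomorphic to one of these two, which is exactly the assertion. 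For a general $F$ that is a sum of such generators lying in different isotypic pieces, the module $\mathcal{U}(\mathfrak{g}_\C)\Phi$ is a submodule of a finite direct sum of modules of the above two types (compare \eqref{Vkdecomplemmaeq1}), so its composition factors are still among $L(3+m,1)$ and $L(3+m,3)$, and the conclusion persists.

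The main obstacle, and the step requiring the most care, is the bookkeeping passage between the adelic representation $\pi_F$ and the classical $(\mathfrak{g},K)$-module: one must check that taking an irreducible subquotient of $\pi_F$ as a $G(\A)$-representation is compatible with taking an irreducible subquotient of the archimedean $(\mathfrak{g},K_\infty)$-module of $\prod_p K_p^N$-fixed vectors, i.e. that the archimedean component of every irreducible subquotient of $\pi_F$ appears as an irreducible subquotient of $\mathcal{U}(\mathfrak{g}_\C)\Phi$. This follows from the tensor factorization of the underlying $(\mathfrak{g},K_\infty)\times G(\A_{\mathfrak{f}})$-module and the fact that the finite part of the level structure is fixed throughout, but it should be stated carefully; everything else is a direct appeal to Proposition~\ref{holomorphicsimplemodule} and the results of Section~\ref{nontemperedsec}.
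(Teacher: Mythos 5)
Your proposal is correct and follows the same route the paper intends: the paper's proof is a one-line appeal to the results of Section~\ref{nontemperedsec}, and your argument simply fleshes out that derivation (reduction of $\pi_\infty$ to an irreducible subquotient of $\mathcal{U}(\mathfrak{g}_\C)\Phi$, then Proposition~\ref{holomorphicsimplemodule} for $\ell\neq3$ and the $L(3+m,3)$-or-$N(3+m,1)^\vee$ dichotomy for $\ell=3$). The only small imprecision is the claim that the $\prod_p K_p^N$-invariants of $\pi_F$ are \emph{exactly} $\mathcal{U}(\mathfrak{g}_\C)\Phi$ (a priori they could be larger), but your final paragraph gives the correct and sufficient statement — that every archimedean component of an irreducible subquotient of $\pi_F$ is a $(\mathfrak{g},K_\infty)$-subquotient of $\mathcal{U}(\mathfrak{g}_\C)\Phi$ — so the argument stands.
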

\begin{proof}
This can be derived in a straightforward way from our results in Sect.~\ref{nontemperedsec}.
\end{proof}

\subsection{Arithmeticity for nearly holomorphic forms}\label{s:arithmeticity}

Recall that any $F \in N_{\ell,m}(\Gamma) = \bigcup_{p\ge0}N_{\ell,m}^p(\Gamma)$ has a Fourier expansion as follows (note the difference in normalization between~\eqref{Fourierexpansioneq4new} and~\eqref{Fourierexpansioneq4}; this is for arithmetic purposes): \begin{equation}\label{degree2fourierexpansionnearholo} F(Z)=\sum_{Q \in M_2^{\sym}(\Q)}a(Q)e^{2\pi i\,{\rm Tr}(QZ)},\end{equation} where \begin{equation}\label{Fourierexpansioneq4new}
 a(Q)=\sum_{\alpha,\beta,\gamma}
 a_{\alpha,\beta,\gamma}(Q) \Big(\frac{y}{2 \pi \Delta}\Big)^\alpha\Big(\frac{v}{2\pi \Delta}\Big)^\beta\Big(\frac{y'}{2 \pi \Delta}\Big)^\gamma, \quad a_{\alpha,\beta,\gamma}(Q) \in W_m.
\end{equation}We note that  $a_{\alpha,\beta,\gamma}(Q) = 0$  unless $Q \in \frac{1}{N} M_2^{\sym}(\Z)$ for some integer $N$. Given any $\sigma \in \Aut(\C)$, we define a function $\leftexp{\sigma}F$ via the action of $\sigma$ on the elements $a_{\alpha,\beta,\gamma}(Q)$:

$$\leftexp{\sigma}F(Z)=\sum_{Q \in M_2^{\sym}(\Q)}\leftexp{\sigma}a(Q)e^{2\pi i\,{\rm Tr}(QZ)},$$ where
\begin{equation}\label{Fourierexpansioneq4newaut}
\leftexp{\sigma}a(Q):=\sum_{j=0}^m\,\sum_{\alpha,\beta,\gamma}
 \sigma(a_{j,\alpha,\beta,\gamma}(Q)) \Big(\frac{y}{2 \pi \Delta}\Big)^\alpha\Big(\frac{v}{2\pi \Delta}\Big)^\beta\Big(\frac{y'}{2 \pi \Delta}\Big)^\gamma S^{m-j}T^j.
\end{equation}

For any subfield $L$ of $\C$, define $N_{\ell,m}(\Gamma; L)$ to be the subspace of $N_{\ell,m}(\Gamma)$ consisting of the forms that are fixed by $\Aut(\C/L)$. Define $N_{\ell,m}(\Gamma; L)^\circ$,  $N^p_{\ell,m}(\Gamma; L)$, $N^p_{\ell,m}(\Gamma; L)^\circ$, $M_{\ell,m}(\Gamma; L)$,  $S_{\ell,m}(\Gamma; L)$ similarly.

We say that a congruence subgroup $\Gamma$ of $\Sp_4(\Q)$ is ``nice" if there exists a compact open subgroup $K_0$ of $G(\A_\mathfrak{f})$ with the following properties.

\begin{enumerate}
 \item $K_0 = \prod_{p<\infty}K_{0,p}$, where, for each prime $p$, $K_{0,p}$ is a compact open subgroup of $G(\Q_p)$ with $K_{0,p} =G(\Z_p)$ for almost all primes.
 \item For all $p$, and all $x \in  \Z_p^\times$, we have
  $$
   \diag(1,1,x,x)K_{0,p}\ \diag(1,1,x^{-1}, x^{-1}) = K_{0,p}.
  $$
 \item $K_0 \GSp_4(\R)^+ \:\cap\: \GSp_4(\Q) = \Gamma$.
\end{enumerate}
We note that all congruence subgroups that are naturally encountered in the theory, such as the principal, Siegel, Klingen, Borel or paramodular congruence subgroups, are nice in the above sense. The following result follows from \cite[Theorem 14.13]{shimura2000}.

\begin{theorem}[Shimura]\label{autcshimura}
 Let $\Gamma$ be a nice congruence subgroup of $\Sp_4(\Q)$. Then for all $p \ge 0$ we have the equalities
 $$
  N_{\ell,m}^p(\Gamma) = N_{\ell,m}^p(\Gamma; \Q) \otimes_{\Q} \C,
 $$
 $$
  N_{\ell,m}^p(\Gamma)^\circ = N_{\ell,m}^p(\Gamma; \Q)^\circ \otimes_{\Q} \C.
 $$
 In particular, the action of $\Aut(\C)$ preserves the above spaces.
\end{theorem}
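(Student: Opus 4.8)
The plan is to deduce Theorem \ref{autcshimura} from Shimura's arithmeticity theorem \cite[Theorem 14.13]{shimura2000} by matching the two frameworks. Shimura works adelically (equivalently, with a variable family of congruence subgroups indexed by a compact open subgroup $K_0$ of $G(\A_\mathfrak{f})$) and shows that, for each $p\ge0$, the space of nearly holomorphic forms of nearly holomorphic degree $\le p$ carries a model over $\Q$ on which $\Aut(\C)$ acts semilinearly through its action on Fourier coefficients, \emph{provided} the level structure is of the type we have axiomatized as ``nice''. The first step is therefore to observe that a nice congruence subgroup $\Gamma$ is exactly one that arises, via $K_0\,\GSp_4(\R)^+\cap\GSp_4(\Q)=\Gamma$, from a compact open $K_0=\prod_pK_{0,p}$ whose components are stable under conjugation by $\diag(1,1,x,x)$, $x\in\Z_p^\times$; this is precisely the condition under which Shimura's canonical models have field of moduli $\Q$ (rather than a cyclotomic extension), so that his arithmeticity statement is with respect to $\Q$ and the naive $q$-expansions. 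The groups $K_p^N$ of \eqref{kpn} satisfy these conditions, so every nice $\Gamma\supseteq\Gamma(N)$ is covered.

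Second, I would reconcile the two $\Aut(\C)$-actions. In the present paper the action is defined in \eqref{Fourierexpansioneq4new}--\eqref{Fourierexpansioneq4newaut} by letting $\sigma$ act on the $W_m$-valued coefficients $a_{\alpha,\beta,\gamma}(Q)$ occurring once one writes the Fourier expansion in terms of the normalized monomials $(y/2\pi\Delta)^\alpha(v/2\pi\Delta)^\beta(y'/2\pi\Delta)^\gamma$. The $2\pi$-denominators are not cosmetic: they are exactly what is needed so that this action agrees with Shimura's arithmetic normalization of the nearly holomorphic part, which carries the same powers of $2\pi$ (Shimura works with monomials in the entries of $(2\pi i\,Y)^{-1}$, identified with $y/\Delta,v/\Delta,y'/\Delta$ in the coordinates \eqref{H2coordinateseq}). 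One also has to match the automorphy factors: our weight $\det^\ell\sym^m$ with the slash action \eqref{slashoperatoreq} corresponds to Shimura's, and the passage $F\mapsto\Phi_F$ between classical and adelic forms (Lemma~\ref{FPhilemma}, Proposition~\ref{Nlmautformprop}) is the same one Shimura uses. With these identifications Shimura's Theorem 14.13 literally asserts
\[
 N^p_{\ell,m}(\Gamma)=N^p_{\ell,m}(\Gamma;\Q)\otimes_\Q\C ,
\]
and the cuspidal version follows because cuspidality is the vanishing of $a(Q)$ for non-positive-definite $Q$ in the expansion at every $g\in\Sp_4(\Q)$ — a condition visibly preserved by applying $\sigma$ to Fourier coefficients (Shimura's theorem also tracks the $\Aut(\C)$-action on expansions at all cusps, not just the standard one). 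The compatibility of these $\Q$-structures across different $p$ (so that $N^p_{\ell,m}(\Gamma;\Q)=N^p_{\ell,m}(\Gamma;\Q)\cap N^{p'}_{\ell,m}(\Gamma)$ for $p'\ge p$, etc.) is built into Shimura's filtered statement.

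Finally, the assertion that $\Aut(\C)$ \emph{preserves} $N^p_{\ell,m}(\Gamma)$ and $N^p_{\ell,m}(\Gamma)^\circ$ is then formal: writing $F\in N^p_{\ell,m}(\Gamma)$ as $\sum_ic_iF_i$ with $c_i\in\C$ and $F_i\in N^p_{\ell,m}(\Gamma;\Q)$, each $F_i$ has $\sigma$-fixed Fourier coefficients, so $\leftexp{\sigma}F=\sum_i\sigma(c_i)F_i$ again lies in $N^p_{\ell,m}(\Gamma)$, and likewise in the cuspidal subspace. (In particular this shows a posteriori that the formal series $\leftexp{\sigma}F$ of \eqref{Fourierexpansioneq4newaut} really is a nearly holomorphic modular form of the correct weight and level — the genuinely nontrivial content, supplied by Shimura.) The main obstacle I anticipate is bookkeeping rather than mathematics: verifying that the normalization chosen in \eqref{Fourierexpansioneq4new} coincides with Shimura's (the $2\pi$-powers and the precise way the $W_m$-valued coefficients are extracted), and that the ``nice'' hypothesis is exactly strong enough to pin down the $\Q$-rational structure with no spurious cyclotomic twist. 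Getting that dictionary right — including the behaviour at all cusps and across all $p$ — is where the care lies; once it is set up, the theorem is a direct transcription of \cite[Theorem 14.13]{shimura2000}.
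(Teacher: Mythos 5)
Your overall strategy coincides with the paper's: Theorem \ref{autcshimura} is obtained by direct citation of \cite[Theorem 14.13]{shimura2000}, and the paper in fact spends \emph{less} effort than you do on the dictionary between the two setups --- your discussion of the $2\pi$-normalization in \eqref{Fourierexpansioneq4new} and of the role of the stability of $K_{0,p}$ under conjugation by $\diag(1,1,x,x)$ makes explicit what the paper leaves implicit, and that part of your write-up is sound.

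There is, however, one hypothesis of Shimura's Theorem 14.13 that your proposal never addresses: Shimura's statement carries the added condition that $M_{k,0}(\Gamma;\overline{\Q})\neq\{0\}$ for some positive integer $k$, i.e.\ that $\Gamma$ admits a nonzero scalar-valued holomorphic modular form of some positive weight with algebraic Fourier coefficients. This is not part of the definition of a nice congruence subgroup and does not follow formally from it, so the citation is incomplete without a verification. The paper supplies one in the remark following the theorem: every compact open subgroup of $G(\Q_p)$ is contained in a conjugate of $G(\Z_p)$ or of the local paramodular group, so $\Gamma\subset\gamma^{-1}\Gamma^{\rm para}(N)\gamma$ for some squarefree $N$ and some $\gamma\in G(\Q)$; starting from the weight $10$ cusp form of full level (which has rational Fourier coefficients) and applying the theta operators $\theta_p$ of \cite{robertsschmidt06} --- injective by their Theorem 7.2, and preserving algebraicity of Fourier coefficients by the $q$-expansion principle --- one obtains a nonzero element of $S_{10,0}(\Gamma^{\rm para}(N);\overline{\Q})$, and slashing by $\gamma$ produces the required nonzero element of $S_{10,0}(\Gamma;\overline{\Q})$. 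You should add this step (or some substitute for it) before the appeal to Shimura's theorem is legitimate.
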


\begin{remark}
Theorem 14.13 of \cite{shimura2000} had the added condition that $M_{k,0}(\Gamma; \overline{\Q}) \neq \{0\}$ for some $0<k \in\Z$. This is clearly true in our case. Indeed, we have $\Gamma \subset \gamma^{-1} \Gamma^{\rm para}(N)\gamma$ for some squarefree integer $N$ and some $\gamma \in G(\Q)$; this is because every compact open subgroup of $G(\Q_p)$ is either contained in a conjugate of $G(\Z_p)$ or in a conjugate of the local paramodular group at $p$. Let $F_1 \in S_{10,0}(\Sp_4(\Z); \Q)$ be the unique weight 10 cusp form of full level. Then $F =(\prod_{p|N}\theta_p)F_1$ belongs to $S_{10,0}(\Gamma^{\rm para}(N); \overline{\Q})$, where $\theta_p$ is as in~\cite{robertsschmidt06}; the fact that the Fourier coefficients are algebraic follow from the $q$-expansion principle. Observe that $\theta_p$ is injective by Theorem 7.2 of \cite{robertsschmidt06}, so that $F\neq0$. Finally, $F|_{k,0}\gamma$ is an element of $S_{10,0}(\Gamma; \overline{\Q})$.

\end{remark}

Let $\mathcal{X_+}$ be the free monoid consisting of all (finite) strings of the symbols $X_+$, $U$, $E_+$, and $D_+$ in the left column of Table~\ref{Nplusoperatorstable}. Clearly  $\mathcal{X_+}$ is a submonoid of the monoid $\mathcal{X}$ defined earlier, and furthermore contains all the subsets $\mathcal{X}_{\ell',m'}^{\ell,m}$ introduced for the purpose of stating the structure theorems. Each element $X \in \mathcal{X_+}$ is an operator that for any particular $\ell, m, p$, takes $N^p_{\ell, m}(\Gamma)$ to $N^{p_1}_{\ell_1, m_1}(\Gamma)$, where the integers $\ell_1, m_1, p_1$ (that depend on $\ell$, $m$, $p$ and $X$) can be easily calculated using Table~\ref{Nplusoperatorsnearholotable}. In particular, the non-negative integer $v= p_1-p$ depends only on $X$. Precisely, $v=1$ for $X_+$, $U$, and $E_+$; $v=2$ for $D_+$. For longer strings, $v$ can be calculated by adding up the contributions from the individual operators.

\begin{definition}For any $X \in \mathcal{X_+}$, we define the degree of $X$ to be the integer $v$ described above.
\end{definition}

The following key proposition, when combined with our structure theorems, allows us to transfer arithmeticity results from holomorphic forms to nearly holomorphic forms.

\begin{proposition}\label{proparithmeticitydiff}Let $X \in \mathcal{X_+}$ and let $v$ be the degree of $X$. Then, for all $F \in N_{\ell, m}(\Gamma)$, and all $\sigma \in \Aut(\C)$, we have $$\leftexp{\sigma}( (2 \pi)^{-v}XF ) =  (2 \pi)^{-v}X(\leftexp{\sigma}F).$$

\end{proposition}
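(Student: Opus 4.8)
The key is to reduce the statement to a claim about the \emph{individual} operators $X_+$, $U$, $E_+$, $D_+$ and then combine using the fact that $\Aut(\C)$-action, the rescaling $(2\pi)^{-v}$, and composition of operators all behave multiplicatively/additively. More precisely, if the result holds for two operators $X$ and $Y$ of degrees $v_X$ and $v_Y$, then for the composite $YX$ (of degree $v_X+v_Y$) we have $\leftexp{\sigma}((2\pi)^{-(v_X+v_Y)}YXF) = (2\pi)^{-v_Y}Y(\leftexp{\sigma}((2\pi)^{-v_X}XF)) = (2\pi)^{-v_Y}Y((2\pi)^{-v_X}X(\leftexp{\sigma}F)) = (2\pi)^{-(v_X+v_Y)}YX(\leftexp{\sigma}F)$, using that $\leftexp{\sigma}$ is $\C$-linear so commutes with multiplication by the rational power of $2\pi$ appearing after the first step — wait, one must be careful here since $2\pi$ is transcendental. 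The correct bookkeeping is: write $\widetilde{X} := (2\pi)^{-v_X} X$ as a normalized operator; then the claim is $\leftexp{\sigma}\widetilde{X}F = \widetilde{X}\leftexp{\sigma}F$, and the composite of normalized operators is the normalization of the composite, so the induction on string length goes through cleanly. Thus it suffices to prove the statement for each of the four generators $X_+$, $U$, $E_+$, $D_+$.

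\textbf{The generator case.} For each generator, I would use the explicit classical formulas \eqref{operatorsonfunctions3}--\eqref{operatorsonfunctions7} together with the Fourier expansion in the arithmetically normalized form \eqref{Fourierexpansioneq4new}. The point is that each of these operators is built from the basic differential operators $\partial/\partial\tau$, $\partial/\partial z$, $\partial/\partial\tau'$ and multiplication by the entries $y/\Delta$, $v/\Delta$, $y'/\Delta$ of $Y^{-1}$, with \emph{rational} coefficient functions of $\ell$ and $m$. Writing $F(Z) = \sum_Q a(Q) e^{2\pi i \Tr(QZ)}$ with $a(Q)$ as in \eqref{Fourierexpansioneq4new}, I would compute the effect of $X_+$ (and similarly $U$, $E_+$, $D_+$) term by term: each holomorphic exponential $e^{2\pi i \Tr(QZ)}$ contributes a factor $2\pi i Q_{ij}$ under $\partial/\partial\tau$ etc., while the nearly holomorphic monomials $[\alpha,\beta,\gamma]$ transform — after incorporating the $(2\pi)^{-(\alpha+\beta+\gamma)}$ normalization — by combinatorial identities with rational coefficients (this is precisely the content of Lemma \ref{nearholodiffoplemma}, adapted to the normalized monomials). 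The upshot is that applying $(2\pi)^{-v}X$ to $F$ produces a new Fourier expansion of the same normalized shape whose coefficients $\leftexp{}b_{j,\alpha,\beta,\gamma}(Q)$ are \emph{$\overline{\Q}$-linear} (indeed $\Q$-linear, with coefficients involving only rational numbers, entries of $Q$, and $i$) combinations of the $a_{j',\alpha',\beta',\gamma'}(Q)$ for the \emph{same} $Q$. Since $\sigma\in\Aut(\C)$ fixes $\Q$ and the entries of $Q$ and commutes with such rational-linear-combinations (one must check the powers of $i$ and the entries $2\pi Q_{ij}$ are absorbed correctly into the normalization — the factor $2\pi i Q_{ij}$ pairs with a $(2\pi)^{-1}$ from the degree count, leaving $iQ_{ij}$ which is algebraic and fixed appropriately, or rather the $i$ is handled by noting $Q$ has rational entries so $\sigma(iQ_{ij}\cdot(\text{coeff})) = \sigma(i)\sigma(\cdots)$ and one tracks the $\sigma(i)=\pm i$ consistently on both sides), the identity $\leftexp{\sigma}(\widetilde{X}F) = \widetilde{X}(\leftexp{\sigma}F)$ follows by comparing Fourier coefficients.

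\textbf{Main obstacle.} The genuinely delicate point is the normalization bookkeeping: one must verify that in each of the four operator formulas, every occurrence of a partial derivative $\partial/\partial\tau$ (which pulls out $2\pi i Q$) is matched against exactly one unit of the degree $v$, and every occurrence of a multiplication by an entry of $Y^{-1}$ shifts the $[\alpha,\beta,\gamma]$-index in a way consistent with the $(2\pi\Delta)^{-1}$-normalized monomials of \eqref{Fourierexpansioneq4new}, so that the final expression is again a $\overline{\Q}$-rational combination of the normalized coefficients. For $D_+$, which has degree $2$ and whose formula \eqref{Dplusminuspropeq1} contains second-order derivatives, products of first-order derivatives, and terms with $1/\Delta^2$, this is the most involved check — but it is still a routine (if lengthy) verification that the transcendental factor $(2\pi)^2$ is accounted for exactly. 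The abstract $(\g,K)$-module picture offers a cleaner viewpoint but would require developing a notion of rational structure compatible with the operators; the direct Fourier-coefficient computation is more transparent for arithmetic purposes, so I would carry it out that way and relegate the explicit algebra to a remark that the calculation is straightforward from Lemma \ref{nearholodiffoplemma} and the formulas \eqref{operatorsonfunctions3}--\eqref{operatorsonfunctions7}.
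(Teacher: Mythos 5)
Your proposal is correct and follows essentially the same route as the paper's proof: reduce to the four generators via the multiplicativity of the normalization $(2\pi)^{-v}$, then observe from the explicit formulas \eqref{operatorsonfunctions3}--\eqref{operatorsonfunctions7} that each generator acts by rational linear combinations of the basic operators $y/\Delta$, $v/\Delta$, $y'/\Delta$, $2i\partial/\partial\tau$, $2i\partial/\partial z$, $2i\partial/\partial\tau'$ (compositions of two of these for $D_+$), and finally check that each such operator, normalized by $(2\pi)^{-1}$, sends a normalized monomial times $e^{2\pi i\,\Tr(QZ)}$ to a $\Q$-linear combination of normalized monomials times the same exponential. The bookkeeping worry you raise about the factors of $i$ is resolved exactly as the paper does it, by packaging the derivatives as $2i\,\partial/\partial\tau$ etc., so that the output coefficients are genuinely rational.
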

\begin{proof}It suffices to prove this for each of the basic operators $X_+$, $U$, $E_+$, and $D_+$. Using equations \eqref{operatorsonfunctions1} -- \eqref{operatorsonfunctions8}, we note that the  action of the operators $X_+$, $U$, and $E_+$ on the component functions of $F$ are given by rational linear combinations from the following set $S$ of operators on $C^\infty(\H_2)$,
$$
 S = \Big\{\frac{y}{\Delta}, \frac{y'}{\Delta}, \frac{v}{\Delta}, 2i\frac{\partial}{\partial z}, 2i\frac{\partial}{\partial \tau}, 2i\frac{\partial}{\partial \tau'} \Big\}.
$$
Furthermore, the  action of the operator $D_+$ on the component functions of $F$ is given by rational linear combinations of the objects formed by taking the composition of exactly \emph{two operators} from the set $S$.

Therefore, to complete the proof, it suffices to show that for each element $Q \in M_2^{\sym}(\Q)$, each triple of non-negative integers $\alpha, \beta, \gamma$, and each operator $s\in S$, there exist \emph{rational numbers} $a_{\alpha', \beta', \gamma'}(Q)$  indexed by a finite set of triples of non-negative integers $\alpha', \beta', \gamma'$, such that
\begin{align*}
 &(2 \pi)^{-1}s\left(\Big(\frac{y}{2 \pi \Delta}\Big)^\alpha\Big(\frac{v}{2\pi \Delta}\Big)^\beta\Big(\frac{y'}{2 \pi \Delta}\Big)^\gamma  e^{2\pi i\,{\rm Tr}(QZ)} \right) \\ &= \sum_{\alpha',\beta',\gamma'}
 a_{\alpha',\beta',\gamma'}(Q) \Big(\frac{y}{2 \pi \Delta}\Big)^{\alpha'}\Big(\frac{v}{2\pi \Delta}\Big)^{\beta'}\Big(\frac{y'}{2 \pi \Delta}\Big)^{\gamma'} e^{2\pi i\,{\rm Tr}(QZ)}.
\end{align*}
This is an elementary calculation and can be easily verified for each element $s$ of $S$. We omit the details.
\end{proof}

\subsubsection*{Isotypic projections}

By our structure theorem, the space $N_{\ell, m}(\Gamma)$ decomposes as a direct sum as follows:
\begin{equation}\label{structuredecomparith}
 N_{\ell, m}(\Gamma) = \bigoplus_{\substack{0\le \ell' \le \ell \\ 0\le \ell'+m' \le \ell+m \\ m'\ge 0}}\sum_{X \in \mathcal{X}^{\ell, m}_{\ell', m'}}X(M^{*}_{\ell',m'}(\Gamma)),
\end{equation}
where we adopt the convention that $M^{*}_{\ell',m'}(\Gamma) := M_{\ell', m'}(\Gamma)$ whenever $\ell' \neq 3$. The identical decomposition holds for the cuspidal subspace.

\begin{definition}
For each quadruple of integers $\ell, m, \ell', m'$ with $m,m'$ non-negative, define an endomorphism $\mathfrak{p}_{\ell',m'}$ of $N_{\ell,m}(\Gamma)$ by the projection map
$$
 N_{\ell, m}(\Gamma) \longrightarrow \bigg(\sum_{X \in \mathcal{X}^{\ell, m}_{\ell', m'}}X(M^*_{\ell',m'}(\Gamma))\bigg) \subset N_{\ell, m}(\Gamma)
$$
given by the direct sum decomposition~\eqref{structuredecomparith}. In particular, if the set
$\mathcal{X}^{\ell, m}_{\ell', m'}$ is empty, we have $\mathfrak{p}_{\ell', m'} = 0$.
\end{definition}

\begin{lemma}\label{nearlyholprojbasicproplemma}
 Suppose that $F \in N_{\ell,m}(\Gamma)$. Then the following hold.
\begin{enumerate}
 \item Suppose that $F \in N_{\ell, m}(\Gamma)^\circ$, resp.\ $F \in \E_{\ell, m}(\Gamma)$. Then, $\mathfrak{p}_{\ell', m'}(F) \in  N_{\ell, m}(\Gamma)^\circ$,   resp.\ $\mathfrak{p}_{\ell', m'}(F) \in \E_{\ell, m}(\Gamma)$.
 \item We have
  $$
   F = \sum_{\ell' \ge 0,\:m'\ge0}\mathfrak{p}_{\ell', m'}(F).
  $$
  The above sum is orthogonal in the sense that if $(\ell_1', m_1') \neq (\ell_2', m_2'),$ and  $\mathfrak{p}_{\ell_1', m_1'}(F) \in N_{\ell, m}(\Gamma)^\circ$, then
  $$
   \Big\langle \mathfrak{p}_{\ell_1', m_1'}(F), \ \mathfrak{p}_{\ell_2', m_2'}(F)\Big\rangle = 0.
  $$
 \item Suppose that $F \in N_{\ell, m}(\Gamma)$, and $G \in S_{\ell', m'}(\Gamma)$. Then, for all $X \in \mathcal{X}_{\ell',m'}^{\ell, m}$,
   $$
    \langle F, XG\rangle = \Big\langle \mathfrak{p}_{\ell', m'}(F), XG\Big\rangle.
   $$
\end{enumerate}
\end{lemma}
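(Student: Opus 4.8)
The plan is to deduce all three parts from the structure theorems (Theorem~\ref{allmodformstructuretheorem} and Theorem~\ref{noncuspidalstructuretheorem}) together with the orthogonality properties already established. First I would address part (1). By Theorem~\ref{allmodformstructuretheorem} we may write $F=\sum_{\ell'',m''}\mathfrak{p}_{\ell'',m''}(F)$, where each summand lies in $\sum_{X\in\mathcal{X}_{\ell'',m''}^{\ell,m}}X(M^*_{\ell'',m''}(\Gamma))$. If $F\in N_{\ell,m}(\Gamma)^\circ$, I would use the fact that this cuspidal decomposition is exactly the one in Theorem~\ref{cuspidalstructuretheorem} restricted to the cuspidal subspaces (i.e.\ $S_{\ell'',m''}(\Gamma)\subset M^*_{\ell'',m''}(\Gamma)$), so that the cuspidal part of $F$ lies entirely in $\bigoplus_{X}X(S_{\ell'',m''}(\Gamma))$; uniqueness of the decomposition then forces $\mathfrak{p}_{\ell',m'}(F)\in N_{\ell,m}(\Gamma)^\circ$. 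If instead $F\in\E_{\ell,m}(\Gamma)$, the same argument using Theorem~\ref{noncuspidalstructuretheorem} (whose right-hand side involves the $E^*_{\ell'',m''}(\Gamma)$, all contained in $\E_{\ell,m}(\Gamma)$ after applying $X$ by Lemma~\ref{preserveeissiegel}) shows $\mathfrak{p}_{\ell',m'}(F)\in\E_{\ell,m}(\Gamma)$.

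For part (2), the identity $F=\sum_{\ell'\ge0,\,m'\ge0}\mathfrak{p}_{\ell',m'}(F)$ is simply the defining property of the projections arising from the direct sum decomposition~\eqref{structuredecomparith}, since only finitely many summands are nonzero (the index set is finite: $0\le\ell'\le\ell$, $0\le\ell'+m'\le\ell+m$). The orthogonality statement is the content of the orthogonality clause in Theorem~\ref{allmodformstructuretheorem} (equivalently of Theorem~\ref{cuspidalstructuretheorem}): if $\mathfrak{p}_{\ell_1',m_1'}(F)$ is cuspidal and $(\ell_1',m_1')\ne(\ell_2',m_2')$, then $\mathfrak{p}_{\ell_1',m_1'}(F)$ and $\mathfrak{p}_{\ell_2',m_2'}(F)$ lie in different constituents of the decomposition, at least one of which is cuspidal, hence they are orthogonal.

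For part (3), given $F\in N_{\ell,m}(\Gamma)$ and $G\in S_{\ell',m'}(\Gamma)$ with $X\in\mathcal{X}_{\ell',m'}^{\ell,m}$, note first that $XG\in X(S_{\ell',m'}(\Gamma))\subset N_{\ell,m}(\Gamma)^\circ$ by Lemma~\ref{preserveeissiegel}. Using part (2), write $F=\sum_{\ell'',m''}\mathfrak{p}_{\ell'',m''}(F)$ and expand $\langle F,XG\rangle=\sum_{\ell'',m''}\langle\mathfrak{p}_{\ell'',m''}(F),XG\rangle$. Since $XG$ is cuspidal, the same orthogonality used in part (2) — now invoked with the constituent containing $XG$ being the cuspidal one — shows that $\langle\mathfrak{p}_{\ell'',m''}(F),XG\rangle=0$ whenever $(\ell'',m'')\ne(\ell',m')$: indeed $XG$ lies in the constituent indexed by $(\ell',m')$, and $\mathfrak{p}_{\ell'',m''}(F)$ in the one indexed by $(\ell'',m'')$, and these are orthogonal because at least one (namely the one containing $XG$) is cuspidal. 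Only the term with $(\ell'',m'')=(\ell',m')$ survives, giving $\langle F,XG\rangle=\langle\mathfrak{p}_{\ell',m'}(F),XG\rangle$.

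The main obstacle I anticipate is purely bookkeeping: making sure that the orthogonality clause in Theorem~\ref{allmodformstructuretheorem} — phrased there for elements $F_1\in\sum_X X(S_{\ell',m'}(\Gamma))$ and $F_2\in\sum_X X(M^*_{\ell'',m''}(\Gamma))$ — applies verbatim in the situations needed here, in particular when one factor is a single vector $XG$ rather than a general element of the constituent, and when we only know \emph{one} of the two relevant projections to be cuspidal. Both of these are immediate from the statement of that theorem once one observes that $S_{\ell',m'}(\Gamma)\subset M^*_{\ell',m'}(\Gamma)$ and that $X$ preserves cuspidality, so no genuinely new argument is required; the proof is essentially a careful invocation of results already in place.
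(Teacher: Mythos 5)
Your proof is correct and follows exactly the route the paper intends: the authors simply state that all three parts ``follow directly from the structure theorems and our definition of the projection map'' and omit the details, and your argument supplies precisely those details. The only step worth spelling out slightly more is the passage from ``$\mathfrak{p}_{\ell_1',m_1'}(F)$ is cuspidal'' to ``$\mathfrak{p}_{\ell_1',m_1'}(F)\in\sum_{X}X(S_{\ell_1',m_1'}(\Gamma))$'' needed to invoke the orthogonality clause of Theorem~\ref{allmodformstructuretheorem} verbatim: one uses $M^*_{\ell_1',m_1'}(\Gamma)=S_{\ell_1',m_1'}(\Gamma)\oplus E^*_{\ell_1',m_1'}(\Gamma)$ together with $X(E^*_{\ell_1',m_1'}(\Gamma))\subset\E_{\ell,m}(\Gamma)$ and $N_{\ell,m}(\Gamma)^\circ\cap\E_{\ell,m}(\Gamma)=0$, which is immediate and does not affect your argument.
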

\begin{proof}
All the parts follow directly from the structure theorems and our definition of the projection map. We omit the details.
\end{proof}

\begin{lemma}\label{autcgeneralholo}Let $\Gamma$ be a nice congruence subgroup of $\Sp_4(\Q)$. Then we have the equality
$$M^{*}_{\ell',m'}(\Gamma) = M^{*}_{\ell',m'}(\Gamma;\Q) \otimes_{\Q} \C.$$ In particular, the action of $\Aut(\C)$ preserves the above space.
\end{lemma}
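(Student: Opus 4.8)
\textbf{Proof proposal for Lemma~\ref{autcgeneralholo}.}

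The plan is to split the statement into two cases according to whether $\ell' = 3$ or not, exactly as in the definition~\eqref{allmodformstructuretheoremeq2} of $M^*_{\ell',m'}(\Gamma)$. If $\ell' \neq 3$, then $M^*_{\ell',m'}(\Gamma) = M_{\ell',m'}(\Gamma) = N^0_{\ell',m'}(\Gamma)$, and the assertion is an immediate special case $p=0$ of Shimura's Theorem~\ref{autcshimura} (using that $\Gamma$ is nice). So the only real content is the case $\ell' = 3$, which we now address.

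For $\ell' = 3$, recall from~\eqref{N1stardef2eq} that $M^*_{3,m'}(\Gamma) = \{F \in N^1_{3,m'}(\Gamma) : LF = E_-F = 0\}$. First I would invoke Theorem~\ref{autcshimura} with $p=1$ to deduce that $\Aut(\C)$ preserves the ambient space $N^1_{3,m'}(\Gamma)$, and that $N^1_{3,m'}(\Gamma) = N^1_{3,m'}(\Gamma;\Q) \otimes_\Q \C$. It then remains to show that the two linear conditions $LF = 0$ and $E_-F = 0$ cutting out $M^*_{3,m'}(\Gamma)$ are themselves $\Aut(\C)$-stable, i.e.\ that for $\sigma \in \Aut(\C)$ and $F \in N^1_{3,m'}(\Gamma)$ one has $L(\leftexp{\sigma}F) = 0$ and $E_-(\leftexp{\sigma}F) = 0$ whenever $LF = E_-F = 0$. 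For this, the key observation is that both $L$ and $E_-$ are built out of the nearly holomorphic derivatives $\bar\partial_0, \bar\partial_1, \bar\partial_2$ (this is explicit in~\eqref{Lmpropeq1} and~\eqref{Dmminuspropeq1a}), and by Lemma~\ref{nearholodiffoplemma} these operators act on a nearly holomorphic monomial $[\alpha,\beta,\gamma]$ simply by lowering one of the indices and multiplying by an integer: $\bar\partial_0[\alpha,\beta,\gamma] = \alpha[\alpha-1,\beta,\gamma]$, etc. Combining this with the fact that $e^{2\pi i \Tr(QZ)}$ is holomorphic, the action of $L$ and $E_-$ on the Fourier expansion~\eqref{degree2fourierexpansionnearholo}--\eqref{Fourierexpansioneq4new} is given by an explicit $\Q$-linear (in fact $\Z$-linear, after accounting for the $(2\pi)$-normalization) transformation of the coefficient vectors $a_{j,\alpha,\beta,\gamma}(Q) \in W_m$, \emph{with no new powers of $2\pi$ introduced} because each $\bar\partial_i$ decreases both the monomial degree and the differentiation count in a balanced way (this is the same bookkeeping as in Proposition~\ref{proparithmeticitydiff}; note $E_-$ has degree $-1$, but it suffices here that the rational structure is preserved). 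Since $\sigma$ acts on $F$ through its action on the $a_{j,\alpha,\beta,\gamma}(Q)$, and $\sigma$ commutes with any $\Q$-linear map of $W_m$-valued coefficients, we get $L(\leftexp{\sigma}F) = \leftexp{\sigma}(LF)$ and $E_-(\leftexp{\sigma}F) = \leftexp{\sigma}(E_-F)$; in particular these vanish when $LF = E_-F = 0$. By Lemma~\ref{nearholouniquelemma} (uniqueness of the nearly holomorphic expansion) the vanishing of $L(\leftexp{\sigma}F)$ is equivalent to the vanishing of all its coefficients, so no subtlety is lost in passing between the function and its coefficients.

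Putting these together: $M^*_{3,m'}(\Gamma)$ is the kernel of an $\Aut(\C)$-equivariant linear map out of the $\Aut(\C)$-stable space $N^1_{3,m'}(\Gamma)$, hence is itself $\Aut(\C)$-stable and admits a $\Q$-structure, namely $M^*_{3,m'}(\Gamma;\Q) = M^*_{3,m'}(\Gamma) \cap N^1_{3,m'}(\Gamma;\Q)$, with $M^*_{3,m'}(\Gamma) = M^*_{3,m'}(\Gamma;\Q) \otimes_\Q \C$ following from the corresponding statement for $N^1_{3,m'}(\Gamma)$ by taking kernels of a $\Q$-rational map (kernels commute with the flat base change $-\otimes_\Q \C$). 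I expect the main obstacle to be purely notational: one must check carefully that the operators $L$ and $E_-$, when written in terms of the arithmetically normalized Fourier coefficients~\eqref{Fourierexpansioneq4new}, really do act by $\Q$-rational matrices on the coefficient vectors --- this is the same verification underlying Proposition~\ref{proparithmeticitydiff} and is routine given Lemma~\ref{nearholodiffoplemma}, but it is the one place where a small computation is unavoidable. Everything else is formal.
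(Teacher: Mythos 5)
Your proposal is correct and follows essentially the same route as the paper: reduce to $\ell'=3$ via Theorem~\ref{autcshimura}, use the case $p=1$ of that theorem to see that $\leftexp{\sigma}F$ stays in $N^1_{3,m'}(\Gamma)$, and then check that the defining conditions $LF=E_-F=0$ are $\Aut(\C)$-stable. The one substantive difference is that the paper disposes of this last step by citing Proposition~\ref{proparithmeticitydiff}, which as stated only covers the raising operators in $\mathcal{X}_+$ and not $L$ or $E_-$; you instead carry out the needed verification directly from Lemma~\ref{nearholodiffoplemma}, which is the honest content behind the paper's citation. Your bookkeeping is right in substance, though the phrase ``no new powers of $2\pi$ introduced'' is not literally true: each $\bar\partial_i$ contributes a uniform factor of $(2\pi)^{-1}$ on the arithmetically normalized monomials, so it is $2\pi L$ and $2\pi E_-$ that commute with $\sigma$ on normalized Fourier coefficients --- but since only the vanishing of $LF$ and $E_-F$ is at stake, this uniform nonzero constant is harmless, as your parenthetical already acknowledges. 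Finally, where the paper appeals to Lemma 3.19 of \cite{sahapet} to pass from $\Aut(\C)$-stability to the existence of a $\Q$-structure, you argue via kernels and flat base change; both are valid.
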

\begin{proof}We only need to consider the case $\ell' =3$, since otherwise $M^{*}_{\ell',m'}(\Gamma) = M_{\ell',m'}(\Gamma)$ and this case has already been covered by Theorem~\ref{autcshimura}. So, assume $\ell'=3$. Let $F \in M^{*}_{3,m'}(\Gamma)$ and $\sigma \in \Aut(\C)$. It suffices to show that $\leftexp{\sigma}F \in  M^{*}_{3,m'}(\Gamma)$; see Lemma 3.19 of \cite{sahapet}. We already know from Theorem~\ref{autcshimura} that  $\leftexp{\sigma}F \in  N^1_{3,m'}(\Gamma).$ So, to complete the proof, we only need to show that $L(\leftexp{\sigma}F)=E_{-}(\leftexp{\sigma}F)=0$. But this is an immediate consequence of  Proposition~\ref{proparithmeticitydiff}.
\end{proof}

We now state our main arithmeticity result concerning this projection map.
\begin{proposition}\label{arithmeticityproj}
 For all quadruples $(\ell, m, \ell', m')$, all  $\sigma \in \Aut(\C)$, and all $F \in N_{\ell, m}(\Gamma)$, we have
 $$
  \mathfrak{p}_{\ell', m'}(\leftexp{\sigma}F) = \leftexp{\sigma}(\mathfrak{p}_{\ell', m'}(F)).
 $$
\end{proposition}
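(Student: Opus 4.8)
The plan is to reduce the statement to two facts already at hand: the arithmetic rationality of the structure-theorem building blocks (Theorem~\ref{autcshimura} and Lemma~\ref{autcgeneralholo}), and the fact (Proposition~\ref{proparithmeticitydiff}) that the operators $X \in \mathcal{X}_+$ commute with $\Aut(\C)$ up to an explicit power of $2\pi$. The key observation is that the projection $\mathfrak{p}_{\ell',m'}$ is the projection associated to the direct sum decomposition~\eqref{structuredecomparith}, so to show $\sigma$-equivariance it suffices to show that $\Aut(\C)$ acts on each summand $\sum_{X \in \mathcal{X}^{\ell,m}_{\ell',m'}} X(M^*_{\ell',m'}(\Gamma))$ separately, i.e.\ that this subspace is $\Aut(\C)$-stable.

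First I would fix $\sigma \in \Aut(\C)$ and a quadruple $(\ell,m,\ell',m')$ and write $W_{\ell',m'} = \sum_{X \in \mathcal{X}^{\ell,m}_{\ell',m'}} X(M^*_{\ell',m'}(\Gamma))$. By Lemma~\ref{autcgeneralholo}, the space $M^*_{\ell',m'}(\Gamma)$ is spanned over $\C$ by its $\Q$-rational elements, so $W_{\ell',m'}$ is spanned over $\C$ by elements of the form $XG$ with $X \in \mathcal{X}^{\ell,m}_{\ell',m'}$ and $G \in M^*_{\ell',m'}(\Gamma;\Q)$. For such a $G$, Proposition~\ref{proparithmeticitydiff} gives $\leftexp{\sigma}((2\pi)^{-v}XG) = (2\pi)^{-v}X(\leftexp{\sigma}G) = (2\pi)^{-v}XG$, where $v$ is the degree of $X$; hence $\leftexp{\sigma}(XG) = XG$, so $XG \in W_{\ell',m'}$ is actually $\sigma$-fixed. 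More to the point, for an arbitrary $H = XG' \in W_{\ell',m'}$ with $G' \in M^*_{\ell',m'}(\Gamma)$ (not necessarily rational), Proposition~\ref{proparithmeticitydiff} gives $\leftexp{\sigma}(XG') = (2\pi)^{v} \cdot (2\pi)^{-v} X(\leftexp{\sigma}G') = X(\leftexp{\sigma}G')$ since the $(2\pi)^{\pm v}$ factors are rational scalars fixed by $\sigma$; wait — more carefully, $\leftexp{\sigma}((2\pi)^{-v} XG') = (2\pi)^{-v} X(\leftexp{\sigma} G')$, and multiplying both sides by the $\sigma$-fixed scalar $(2\pi)^{v}$ yields $\leftexp{\sigma}(XG') = X(\leftexp{\sigma}G')$. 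By Lemma~\ref{autcgeneralholo} again, $\leftexp{\sigma}G' \in M^*_{\ell',m'}(\Gamma)$, so $X(\leftexp{\sigma}G') \in W_{\ell',m'}$. This shows $\leftexp{\sigma}(W_{\ell',m'}) \subseteq W_{\ell',m'}$, and applying the same argument with $\sigma^{-1}$ gives equality: $W_{\ell',m'}$ is $\Aut(\C)$-stable.

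Now I would conclude as follows. Given $F \in N_{\ell,m}(\Gamma)$, decompose $F = \sum_{\ell'',m''} \mathfrak{p}_{\ell'',m''}(F)$ via the direct sum~\eqref{structuredecomparith}; each term lies in the corresponding $W_{\ell'',m''}$. Applying $\sigma$ and using that $\leftexp{\sigma}(\cdot)$ is additive on Fourier expansions, $\leftexp{\sigma}F = \sum_{\ell'',m''} \leftexp{\sigma}(\mathfrak{p}_{\ell'',m''}(F))$, and by the stability just proved $\leftexp{\sigma}(\mathfrak{p}_{\ell'',m''}(F)) \in W_{\ell'',m''}$. Since~\eqref{structuredecomparith} is a \emph{direct} sum, the decomposition of $\leftexp{\sigma}F$ into its components is unique, so $\mathfrak{p}_{\ell',m'}(\leftexp{\sigma}F) = \leftexp{\sigma}(\mathfrak{p}_{\ell',m'}(F))$, which is the claim. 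I would also need the (immediate) remark that $F \mapsto \leftexp{\sigma}F$ is a well-defined $\C$-semilinear operator on $N_{\ell,m}(\Gamma)$ preserving the space (Theorem~\ref{autcshimura}), so that both sides of the desired identity make sense and lie in $N_{\ell,m}(\Gamma)$.

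The only real subtlety — and the step I would check most carefully — is the bookkeeping of the $2\pi$-normalization in Proposition~\ref{proparithmeticitydiff}: one must be sure that, because $v$ depends only on $X$ and not on $\sigma$ or $F$, the factors $(2\pi)^{\pm v}$ are genuinely $\sigma$-invariant rational scalars and cancel cleanly, so that the \emph{unnormalized} operators $X$ themselves commute with $\leftexp{\sigma}(\cdot)$ on rational-coefficient inputs — which is exactly what lets the argument on $W_{\ell',m'}$ go through. Everything else is a formal consequence of directness of the structure-theorem decomposition together with the $\Aut(\C)$-stability of the holomorphic (and $M^*_{3,m'}$) building blocks.
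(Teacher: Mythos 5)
Your argument is correct and is essentially the paper's own proof: decompose $F$ via the structure theorem, use Proposition~\ref{proparithmeticitydiff} to commute the operators $X$ past $\sigma$ (the $(2\pi)^{\pm v}$ factors being $\sigma$-fixed rational normalizations), invoke Theorem~\ref{autcshimura} and Lemma~\ref{autcgeneralholo} to see that $\leftexp{\sigma}$ preserves each constituent $\sum_X X(M^*_{\ell',m'}(\Gamma))$, and conclude by uniqueness of the direct-sum decomposition. The only step you omit is the preliminary reduction to a \emph{nice} congruence subgroup (by shrinking $\Gamma$ if necessary), which is required before Theorem~\ref{autcshimura} and Lemma~\ref{autcgeneralholo} can be invoked.
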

\begin{proof}
By shrinking $\Gamma$ if necessary, we may assume $\Gamma$ is nice. Using the structure theorem \ref{allmodformstructuretheorem}, write
$$
 F = \sum_{\ell', m'}\sum_{X \in \mathcal{X}^{\ell, m}_{\ell', m'}}X'(F_{X}), \qquad \text{where }X' = (2\pi)^{-v(X)}X
$$
and $F_X\in M^*_{\ell',m'}(\Gamma)$. Then, by Proposition \ref{proparithmeticitydiff},
$$
 \leftexp{\sigma}F = \sum_{\ell', m'}\sum_{X \in \mathcal{X}^{\ell, m}_{\ell', m'}}\leftexp{\sigma}(X'(F_X))= \sum_{\ell', m'}\sum_{X \in \mathcal{X}^{\ell, m}_{\ell', m'}}X'(\leftexp{\sigma}F_X).
$$
By Theorem \ref{autcshimura} and Lemma~\ref{autcgeneralholo}, the modular form $\leftexp{\sigma}F_X$ lies in $M^*_{\ell',m'}(\Gamma)$. Hence
$$
 \mathfrak{p}_{\ell', m'}(\leftexp{\sigma}F) = \sum_{X \in \mathcal{X}^{\ell, m}_{\ell', m'}}X'(\leftexp{\sigma}F_X)=  \sum_{X \in \mathcal{X}^{\ell, m}_{\ell', m'}}\leftexp{\sigma}X'(F_X)  = \leftexp{\sigma}(\mathfrak{p}_{\ell', m'}(F)).
$$
This completes the proof.
\end{proof}

\begin{remark}
In the special case $\ell'=\ell$, $m'=m$, Shimura defined the map $\mathfrak{p}_{\ell,m}:N_{\ell,m}(\Gamma) \to M_{\ell,m}(\Gamma)$ and called it the holomorphic projection map. He was able to prove results of $\Aut(\C)$-equivariance in this special case under the additional assumption that either $F \in N_{\ell,m}(\Gamma)^\circ$ or $m=0$; see ~\cite[Prop.\ 15.3, Prop.\ 15.6]{shimura2000}.
\end{remark}

\begin{definition}
Let $\mathfrak{q}$ denote the natural projection map from nearly holomorphic modular forms to nearly holomorphic cusp forms, i.e., $\mathfrak{q}:\oplus_{\ell,m}N_{\ell,m}(\Gamma) \rightarrow \oplus_{\ell, m}N_{\ell,m}(\Gamma)^\circ$ is obtained from the orthogonal direct sum decomposition  $$N_{\ell,m}(\Gamma) = N_{\ell,m}(\Gamma)^\circ \oplus \E_{\ell, m}(\Gamma).$$
\end{definition}
\begin{definition}
 Define $\mathfrak{p}^\circ_{\ell', m'} = \mathfrak{q} \circ \mathfrak{p}_{\ell', m'}$.
\end{definition}
Thus,
$$
 \mathfrak{p}^\circ_{\ell',m'} : N_{\ell, m}(\Gamma) \rightarrow \sum_{X \in \mathcal{X}^{\ell, m}_{\ell', m'}}X(S_{\ell',m'}(\Gamma)) \subset N_{\ell, m}(\Gamma)^\circ.
$$
If $F \in N_{\ell, m}(\Gamma)^\circ$, then $\mathfrak{p}^\circ_{\ell',m'}(F) = \mathfrak{p}_{\ell',m'}(F)$. It is clear that for all  $F \in N_{\ell, m}(\Gamma)$, $G \in S_{\ell', m'}(\Gamma)$, $X \in \mathcal{X}_{\ell',m'}^{\ell, m}$, we have
$$
 \langle F, XG\rangle = \big\langle \mathfrak{p}_{\ell', m'}(F), XG\big\rangle = \big\langle \mathfrak{p}^\circ_{\ell', m'}(F), XG\big\rangle.
$$
Furthermore, if $F \in N_{\ell, m}(\Gamma)$ and we write, using the structure theorem,
$$
 F  = \sum_{\ell', m'}\sum_{X \in \mathcal{X}^{\ell, m}_{\ell', m'}}X(F_{X}),
$$
then
$$
 \mathfrak{p}^\circ_{\ell', m'}(F) = \sum_{X \in \mathcal{X}^{\ell, m}_{\ell', m'}}X(\mathfrak{q}(F_{X})).
$$
Recall that $E_{\ell,m}(\Gamma)$ denotes the orthogonal complement of $S_{\ell,m}(\Gamma)$ in $M_{\ell, m}(\Gamma)$ and has the property that $E_{\ell,m}(\Gamma)=\E_{\ell, m}(\Gamma) \cap M_{\ell,m}(\Gamma)$; see Lemma \ref{EslashElemma}.

 \begin{definition} Given a number field $L$, we say that $E_{\ell,m}(\Gamma)$ is $L$-rational  if $$E_{\ell,m}(\Gamma) = E_{\ell,m}(\Gamma; L) \otimes_L \C.$$
 \end{definition}

\begin{remark}\label{arithmeticitycuspremark}
If $E_{\ell,m}(\Gamma)$ is $L$-rational, then for all $F \in M_{\ell,m}(\Gamma)$, $\sigma \in \Aut(\C/L)$, we have $\leftexp{\sigma}(\mathfrak{q}(F)) = \mathfrak{q}(\leftexp{\sigma}F).$
\end{remark}

 \begin{remark}The results of Harris (see~\cite{hareis}) imply that if $\ell>4$ (so that we are in the absolutely convergent range, and so $E_{\ell,m}(\Gamma)$ is spanned by holomorphic Siegel and Klingen Eisenstein series) and $\Gamma$ is nice, then $E_{\ell,m}(\Gamma)$ is $L$-rational for some number field $L$. It is unclear to us if we can always take $L =\Q$ in this case, though we suspect this to be the case.
 \end{remark}

\begin{proposition}\label{arithmeticitycusppro}
 Suppose that $\ell'>3$ and $E_{\ell',m'}(\Gamma)$ is $L$-rational. Then, for all $F \in N_{\ell, m}(\Gamma)$ and $\sigma \in \Aut(\C/L)$,
 $$
  \mathfrak{p}^\circ_{\ell', m'}(\leftexp{\sigma}F) = \leftexp{\sigma}(\mathfrak{p}^\circ_{\ell', m'}(F)).
 $$
\end{proposition}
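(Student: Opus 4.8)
The plan is to mimic the proof of Proposition~\ref{arithmeticityproj} closely, replacing the holomorphic projection $\mathfrak{p}_{\ell',m'}$ by its cuspidal refinement $\mathfrak{p}^\circ_{\ell',m'} = \mathfrak{q}\circ\mathfrak{p}_{\ell',m'}$ and invoking the $L$-rationality hypothesis to control the cuspidal projection $\mathfrak{q}$ on the relevant space of holomorphic forms. First I would reduce to the case that $\Gamma$ is nice, by shrinking $\Gamma$ if necessary (this is harmless: passing to a smaller congruence subgroup commutes with the projection maps, which are defined purely by the structure theorem, and with the action of $\Aut(\C)$). With $\Gamma$ nice, use the structure theorem~\ref{allmodformstructuretheorem} to write $F = \sum_{\ell'',m''}\sum_{X\in\mathcal{X}^{\ell,m}_{\ell'',m''}}X'(F_X)$ with $X' = (2\pi)^{-v(X)}X$ and $F_X\in M^*_{\ell'',m''}(\Gamma)$, exactly as in the proof of Proposition~\ref{arithmeticityproj}.

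Next I would apply Proposition~\ref{proparithmeticitydiff} to get $\leftexp{\sigma}F = \sum_{\ell'',m''}\sum_X X'(\leftexp{\sigma}F_X)$, and note (Theorem~\ref{autcshimura} together with Lemma~\ref{autcgeneralholo}) that $\leftexp{\sigma}F_X\in M^*_{\ell'',m''}(\Gamma)$, so that this is again the structure-theorem decomposition of $\leftexp{\sigma}F$. Hence $\mathfrak{p}_{\ell',m'}(\leftexp{\sigma}F) = \sum_{X\in\mathcal{X}^{\ell,m}_{\ell',m'}}X'(\leftexp{\sigma}F_X)$. Now apply $\mathfrak{q}$: since each $X\in\mathcal{X}^{\ell,m}_{\ell',m'}$ takes cusp forms to cusp forms and the orthogonal complement $\E$ to $\E$ (Lemma~\ref{preserveeissiegel}), and since $M^*_{\ell',m'}(\Gamma) = S_{\ell',m'}(\Gamma)\oplus E^*_{\ell',m'}(\Gamma)$ with the same statements for $E^*$ (Lemma~\ref{EslashElemma}), we get $\mathfrak{q}(X'(F_X)) = X'(\mathfrak{q}(F_X))$. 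Therefore $\mathfrak{p}^\circ_{\ell',m'}(\leftexp{\sigma}F) = \sum_{X\in\mathcal{X}^{\ell,m}_{\ell',m'}}X'(\mathfrak{q}(\leftexp{\sigma}F_X))$, and similarly $\leftexp{\sigma}(\mathfrak{p}^\circ_{\ell',m'}(F)) = \sum_{X\in\mathcal{X}^{\ell,m}_{\ell',m'}}\leftexp{\sigma}(X'(\mathfrak{q}(F_X))) = \sum_X X'(\leftexp{\sigma}(\mathfrak{q}(F_X)))$, again by Proposition~\ref{proparithmeticitydiff}.

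So the whole statement comes down to the single equality $\mathfrak{q}(\leftexp{\sigma}F_X) = \leftexp{\sigma}(\mathfrak{q}(F_X))$ for $F_X\in M^*_{\ell',m'}(\Gamma)$ and $\sigma\in\Aut(\C/L)$. Here I would use the hypotheses $\ell'>3$ and $L$-rationality of $E_{\ell',m'}(\Gamma)$: since $\ell'>3$ we have $M^*_{\ell',m'}(\Gamma) = M_{\ell',m'}(\Gamma) = S_{\ell',m'}(\Gamma)\oplus E_{\ell',m'}(\Gamma)$, and $\mathfrak{q}$ restricted to $M_{\ell',m'}(\Gamma)$ is precisely the projection onto $S_{\ell',m'}$ along $E_{\ell',m'}$. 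By Theorem~\ref{autcshimura}, $S_{\ell',m'}(\Gamma)$ is $\Q$-rational, hence $L$-rational; by hypothesis $E_{\ell',m'}(\Gamma)$ is $L$-rational; a direct sum of two $\Aut(\C/L)$-stable subspaces forces the projection onto either summand to be $\Aut(\C/L)$-equivariant. This is exactly Remark~\ref{arithmeticitycuspremark}, which I would simply cite. Combining, $\mathfrak{p}^\circ_{\ell',m'}(\leftexp{\sigma}F) = \leftexp{\sigma}(\mathfrak{p}^\circ_{\ell',m'}(F))$, completing the proof.

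\begin{proof}
By shrinking $\Gamma$ if necessary, we may assume that $\Gamma$ is nice. Using Theorem~\ref{allmodformstructuretheorem}, write
$$
 F = \sum_{\ell'', m''}\sum_{X \in \mathcal{X}^{\ell, m}_{\ell'', m''}}X'(F_{X}), \qquad X' = (2\pi)^{-v(X)}X,
$$
with $F_X \in M^*_{\ell'',m''}(\Gamma)$. By Proposition~\ref{proparithmeticitydiff},
$$
 \leftexp{\sigma}F = \sum_{\ell'', m''}\sum_{X \in \mathcal{X}^{\ell, m}_{\ell'', m''}}X'(\leftexp{\sigma}F_X).
$$
By Theorem~\ref{autcshimura} and Lemma~\ref{autcgeneralholo}, each $\leftexp{\sigma}F_X$ lies in $M^*_{\ell'',m''}(\Gamma)$, so this is the structure-theorem decomposition of $\leftexp{\sigma}F$. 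Hence
$$
 \mathfrak{p}_{\ell', m'}(\leftexp{\sigma}F) = \sum_{X \in \mathcal{X}^{\ell, m}_{\ell', m'}}X'(\leftexp{\sigma}F_X).
$$
Now apply $\mathfrak{q}$. Since $\ell'>3$, we have $M^*_{\ell',m'}(\Gamma) = M_{\ell',m'}(\Gamma) = S_{\ell',m'}(\Gamma)\oplus E_{\ell',m'}(\Gamma)$. Because each $X \in \mathcal{X}^{\ell,m}_{\ell',m'} \subset \mathcal{X}$ takes $S_{\ell',m'}(\Gamma)$ into $N_{\ell,m}(\Gamma)^\circ$ and, by Lemma~\ref{preserveeissiegel} together with Lemma~\ref{EslashElemma}, takes $E_{\ell',m'}(\Gamma) \subset \E_{\ell',m'}(\Gamma)$ into $\E_{\ell,m}(\Gamma)$, we obtain $\mathfrak{q}(X'(G)) = X'(\mathfrak{q}(G))$ for all $G \in M_{\ell',m'}(\Gamma)$. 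Therefore
$$
 \mathfrak{p}^\circ_{\ell', m'}(\leftexp{\sigma}F) = \sum_{X \in \mathcal{X}^{\ell, m}_{\ell', m'}}X'\big(\mathfrak{q}(\leftexp{\sigma}F_X)\big).
$$
By Theorem~\ref{autcshimura}, $S_{\ell',m'}(\Gamma)$ is $L$-rational, and by hypothesis $E_{\ell',m'}(\Gamma)$ is $L$-rational; hence the projection $\mathfrak{q}$ onto $S_{\ell',m'}(\Gamma)$ along $E_{\ell',m'}(\Gamma)$ satisfies $\mathfrak{q}(\leftexp{\sigma}F_X) = \leftexp{\sigma}(\mathfrak{q}(F_X))$ for $\sigma \in \Aut(\C/L)$; see Remark~\ref{arithmeticitycuspremark}. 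Applying Proposition~\ref{proparithmeticitydiff} once more,
$$
 \mathfrak{p}^\circ_{\ell', m'}(\leftexp{\sigma}F) = \sum_{X \in \mathcal{X}^{\ell, m}_{\ell', m'}}X'\big(\leftexp{\sigma}(\mathfrak{q}(F_X))\big) = \sum_{X \in \mathcal{X}^{\ell, m}_{\ell', m'}}\leftexp{\sigma}\big(X'(\mathfrak{q}(F_X))\big) = \leftexp{\sigma}\big(\mathfrak{p}^\circ_{\ell', m'}(F)\big).
$$
This completes the proof.
\end{proof}
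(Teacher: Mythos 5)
Your proposal is correct and follows exactly the route the paper intends: the paper's own proof is the one-line remark that the argument is essentially identical to that of Proposition~\ref{arithmeticityproj} with $\mathfrak{p}^\circ_{\ell',m'}$ in place of $\mathfrak{p}_{\ell',m'}$, and you have filled in precisely the two extra ingredients that makes this work, namely that $\mathfrak{q}$ commutes with the operators $X'$ (via Lemmas~\ref{preserveeissiegel} and~\ref{EslashElemma}) and that $\mathfrak{q}$ commutes with $\Aut(\C/L)$ on $M_{\ell',m'}(\Gamma)$ because of the $L$-rationality hypothesis (Remark~\ref{arithmeticitycuspremark}).
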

\begin{proof}
The proof is essentially identical to that of Proposition~\ref{arithmeticityproj}.
\end{proof}

We end this section with an arithmeticity result for ratios of Petersson inner products.
\begin{proposition}\label{pterssonratio}
 Let $F \in S_{\ell, m}(\Gamma)$ have the property that for all $G \in S_{\ell, m}(\Gamma)$ and all $\sigma \in \Aut(\C)$, we have
 $$
  \sigma \left( \frac{\langle G, F\rangle}{\langle F,F \rangle}  \right) =  \frac{\langle \leftexp{\sigma}G, \leftexp{\sigma}F\rangle}{\langle \leftexp{\sigma}F,\leftexp{\sigma}F \rangle}.
 $$
 Let $\ell_1, m_1$ be integers such that $\mathcal{X}_{\ell, m}^{\ell_1, m_1}$ is a singeleton set equal to $\{X \}$. Then for all $G \in N_{\ell_1, m_1}(\Gamma)^\circ$, and all $\sigma \in \Aut(\C)$, we have
 $$
  \sigma \left( \frac{\langle G, \ XF\rangle}{\langle XF, \ XF \rangle}  \right) =  \frac{\langle \leftexp{\sigma}G, \ \leftexp{\sigma}XF\rangle}{\langle \leftexp{\sigma}XF, \ \leftexp{\sigma}XF \rangle}.
 $$
\end{proposition}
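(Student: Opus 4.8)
The plan is to reduce the claim, via the isotypic projection of the structure theorem and Proposition~\ref{peterssonequivprop}, to the assumed arithmeticity property of $F$: I will show that $\langle G, XF\rangle/\langle XF, XF\rangle$ equals an honest ratio $\langle H, F\rangle/\langle F, F\rangle$ of Petersson products of holomorphic cusp forms of weight $\det^\ell\sym^m$, where the cusp form $H\in S_{\ell, m}(\Gamma)$ is built from $G$, and then let the hypothesis on $F$ finish the job. As in the proof of Proposition~\ref{arithmeticityproj}, I would first shrink $\Gamma$ so that it is nice; this is harmless, since Petersson ratios are insensitive to the volume normalization, the structure-theorem decompositions behave well under passage to a smaller congruence subgroup, and the hypothesis on $F$ will only be invoked for a cusp form already defined over the original $\Gamma$. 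With $\Gamma$ nice, Theorem~\ref{autcshimura} (and Lemma~\ref{autcgeneralholo}) ensures that $\leftexp{\sigma}{F}$, $\leftexp{\sigma}{G}$ and the other twisted forms below again lie in the appropriate spaces.

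Because $\mathcal{X}_{\ell, m}^{\ell_1, m_1}=\{X\}$, the isotypic projection $\mathfrak{p}_{\ell, m}$ on $N_{\ell_1, m_1}(\Gamma)$ has image contained in $X\big(M^*_{\ell, m}(\Gamma)\big)$; since $G$ is cuspidal, $\mathfrak{p}_{\ell, m}(G)=\mathfrak{p}^\circ_{\ell, m}(G)$ lies in $X\big(S_{\ell, m}(\Gamma)\big)$ by Lemma~\ref{nearlyholprojbasicproplemma}(1), and by the injectivity of $X$ on $M_{\ell, m}(\Gamma)$ (Theorem~\ref{structuretheorem}(1)) there is a unique $H\in S_{\ell, m}(\Gamma)$ with $XH=\mathfrak{p}_{\ell, m}(G)$. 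Writing $c=c_{\ell, m, X}$, which is a positive real number since it is a ratio of squared norms in a unitary $(\g,K)$-module, Lemma~\ref{nearlyholprojbasicproplemma}(3) together with Proposition~\ref{peterssonequivprop} and Lemma~\ref{peterssonequivlemma} gives
$$
 \langle G, XF\rangle=\langle \mathfrak{p}_{\ell, m}(G), XF\rangle=\langle XH, XF\rangle=c\,\langle H, F\rangle,\qquad \langle XF, XF\rangle=c\,\langle F, F\rangle,
$$
so that $\langle G, XF\rangle/\langle XF, XF\rangle=\langle H, F\rangle/\langle F, F\rangle$.

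The second half of the argument repeats this with $\sigma$ applied. By Proposition~\ref{proparithmeticitydiff} there is a nonzero constant $c_\sigma$, depending only on $\sigma$ and the degree of $X$, with $\leftexp{\sigma}{(XP)}=c_\sigma\, X(\leftexp{\sigma}{P})$ for every nearly holomorphic form $P$ of the relevant weight; in particular $\leftexp{\sigma}{(XF)}=c_\sigma\, X(\leftexp{\sigma}{F})$, and, combining this with the $\sigma$-equivariance of the projection (Proposition~\ref{arithmeticityproj}), $\mathfrak{p}_{\ell, m}(\leftexp{\sigma}{G})=\leftexp{\sigma}{(\mathfrak{p}_{\ell, m}(G))}=\leftexp{\sigma}{(XH)}=c_\sigma\, X(\leftexp{\sigma}{H})$, with $\leftexp{\sigma}{F},\leftexp{\sigma}{H}\in S_{\ell, m}(\Gamma)$ and $\leftexp{\sigma}{G}\in N_{\ell_1, m_1}(\Gamma)^\circ$. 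Running the previous display for $\leftexp{\sigma}{G}$, $\leftexp{\sigma}{F}$ and pulling the scalars out by sesquilinearity yields
$$
 \langle \leftexp{\sigma}{G}, \leftexp{\sigma}{(XF)}\rangle=|c_\sigma|^2 c\,\langle \leftexp{\sigma}{H}, \leftexp{\sigma}{F}\rangle,\qquad \langle \leftexp{\sigma}{(XF)}, \leftexp{\sigma}{(XF)}\rangle=|c_\sigma|^2 c\,\langle \leftexp{\sigma}{F}, \leftexp{\sigma}{F}\rangle,
$$
so the factor $|c_\sigma|^2 c$ cancels and $\langle \leftexp{\sigma}{G}, \leftexp{\sigma}{(XF)}\rangle/\langle \leftexp{\sigma}{(XF)}, \leftexp{\sigma}{(XF)}\rangle=\langle \leftexp{\sigma}{H}, \leftexp{\sigma}{F}\rangle/\langle \leftexp{\sigma}{F}, \leftexp{\sigma}{F}\rangle$. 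Since $H\in S_{\ell, m}(\Gamma)$, the hypothesis on $F$ (applied with $G$ replaced by $H$) gives $\sigma\big(\langle H, F\rangle/\langle F, F\rangle\big)=\langle \leftexp{\sigma}{H}, \leftexp{\sigma}{F}\rangle/\langle \leftexp{\sigma}{F}, \leftexp{\sigma}{F}\rangle$, and chaining the three identities proves the proposition.

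The step I expect to be the main obstacle — or at least the one that must be handled carefully — is this bookkeeping with the transcendental factor $2\pi$: the operator $X$ itself is not $\Aut(\C)$-equivariant (only $(2\pi)^{-v}X$ is, by Proposition~\ref{proparithmeticitydiff}), so $\leftexp{\sigma}{}\circ X$ and $X\circ\leftexp{\sigma}{}$ differ by the scalar $c_\sigma$. The argument survives precisely because $c_\sigma$, like $c_{\ell, m, X}$, enters the numerator and denominator of the ratio to the same degree (namely as $|c_\sigma|^2$) and cancels; one also has to be sure to apply $\mathfrak{p}_{\ell, m}$ only to cuspidal inputs, where it coincides with $\mathfrak{p}^\circ_{\ell, m}$, and to keep track of which congruence subgroup each form lives over after shrinking $\Gamma$ and twisting by $\sigma$.
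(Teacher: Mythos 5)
Your argument is correct and follows essentially the same route as the paper's own proof: reduce via Lemma~\ref{nearlyholprojbasicproplemma}(3) and the structure theorem to $\mathfrak{p}_{\ell,m}(G)=XH$ with $H\in S_{\ell,m}(\Gamma)$, cancel the constant $c_{\ell,m,X}$ from Proposition~\ref{peterssonequivprop}, repeat on the $\sigma$-twisted side using Propositions~\ref{proparithmeticitydiff} and~\ref{arithmeticityproj}, and invoke the hypothesis on $F$ for $H$. Your explicit tracking of the scalar $c_\sigma$ and its cancellation as $|c_\sigma|^2$ simply spells out what the paper compresses into the word ``Similarly.''
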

\begin{remark}
It is expected that whenever $\ell \ge 6$, all Hecke eigenforms $F$ in $S_{\ell, m}(\Gamma)$ with coefficients in a CM field have the property required in the above proposition. This has been proved in many special cases, e.g., when $\Gamma = \Sp_4(\Z)$ (see ~\cite{takei}).
\end{remark}
\begin{proof}
By (3) of Lemma \ref{nearlyholprojbasicproplemma},
$$
 \frac{\langle G, XF\rangle}{\langle XF,XF \rangle}  =  \frac{\langle \mathfrak{p}_{\ell,m}(G), XF\rangle}{\langle XF,XF \rangle}.
$$
Now, $\mathfrak{p}_{\ell,m}(G) = XG'$ for some $G' \in S_{\ell,m}(\Gamma)$. By Proposition \ref{peterssonequivprop},
$$
 \sigma\left( \frac{\langle G, XF\rangle}{\langle XF,XF \rangle}  \right)= \sigma \left( \frac{\langle X(G'), XF\rangle}{\langle XF,XF \rangle}  \right) = \sigma \left( \frac{\langle G', F\rangle}{\langle F,F \rangle}  \right).
$$
Similarly, using Proposition \ref{proparithmeticitydiff},
$$
 \frac{\langle \leftexp{\sigma}G, \leftexp{\sigma}XF\rangle}{\langle \leftexp{\sigma}XF,\leftexp{\sigma}XF \rangle}  = \frac{\langle \leftexp{\sigma}G', \leftexp{\sigma}F\rangle}{\langle \leftexp{\sigma}F,\leftexp{\sigma}F \rangle}.
$$
Now the result follows from the property of $F$ assumed in the statement of the proposition.
\end{proof}

\begin{remark}The condition that $\mathcal{X}_{\ell, m}^{\ell_1, m_1}$ is a singleton set is satisfied when $\ell_1 = \ell+m$ and $m_1=0$, provided $m$ is even. In this case,  we have $\mathcal{X}_{\ell, m}^{\ell_1, m_1} = \{U^{m/2} \}$. The application of the above proposition in this special case will be of crucial importance in our upcoming work.
\end{remark}

\begin{proposition}Let $F$ be as in Proposition~\ref{pterssonratio}.
Assume further that $\ell>3$ and $E_{\ell,m}(\Gamma)$ is $L$-rational for some number field $L$.

Let $\ell_1, m_1$ be integers such that $\mathcal{X}_{\ell, m}^{\ell_1, m_1}$ is a singeleton set equal to $\{X \}$.
Then for all $G \in N_{\ell_1, m_1}(\Gamma)$, and all $\sigma \in \Aut(\C/L)$, we have $$ \sigma \left( \frac{\langle G, \ XF\rangle}{\langle XF, \ XF \rangle}  \right) =  \frac{\langle \leftexp{\sigma}G, \ \leftexp{\sigma}XF\rangle}{\langle \leftexp{\sigma}XF, \ \leftexp{\sigma}XF \rangle}.
$$
\end{proposition}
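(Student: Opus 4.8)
The plan is to reduce the present statement, which allows $G$ to be an arbitrary (not necessarily cuspidal) element of $N_{\ell_1,m_1}(\Gamma)$, to the cuspidal case already handled in Proposition~\ref{pterssonratio}. The key observation is that $XF$ is a cusp form (by Lemma~\ref{preserveeissiegel}, since $F\in S_{\ell,m}(\Gamma)$), so pairing against $XF$ only sees the cuspidal part of $G$. First I would apply the cuspidal projection $\mathfrak{q}$ and write $\langle G, XF\rangle = \langle \mathfrak{q}(G), XF\rangle$, using that $\E_{\ell_1,m_1}(\Gamma)$ is orthogonal to $N_{\ell_1,m_1}(\Gamma)^\circ$ and hence to $XF\in N_{\ell_1,m_1}(\Gamma)^\circ$. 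This is valid because $\langle G, XF\rangle<\infty$ (one of the two forms is cuspidal), so the decomposition $G = \mathfrak{q}(G) + (G-\mathfrak{q}(G))$ with the second summand in $\E_{\ell_1,m_1}(\Gamma)$ gives the claimed equality termwise.

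Next, I would invoke Proposition~\ref{pterssonratio} with $\mathfrak{q}(G)\in N_{\ell_1,m_1}(\Gamma)^\circ$ in place of $G$, which yields
$$
 \sigma\left(\frac{\langle \mathfrak{q}(G), XF\rangle}{\langle XF, XF\rangle}\right) = \frac{\langle \leftexp{\sigma}(\mathfrak{q}(G)), \leftexp{\sigma}(XF)\rangle}{\langle \leftexp{\sigma}(XF), \leftexp{\sigma}(XF)\rangle}.
$$
To finish, I need two compatibility facts between $\sigma$ and the projection $\mathfrak{q}$. On the one hand, $\leftexp{\sigma}(XF) = (2\pi)^{v}\,\leftexp{\sigma}((2\pi)^{-v}XF) = (2\pi)^v X(\leftexp{\sigma}((2\pi)^{-v}F))$ up to the factor $(2\pi)^v$ which cancels in the ratio; more simply, by Proposition~\ref{proparithmeticitydiff}, $\leftexp{\sigma}(XF)$ and $X(\leftexp{\sigma}F)$ differ by the constant $(2\pi)^v$, and since $F\in S_{\ell,m}(\Gamma)$ and $\ell>3$ we have $F\in S_{\ell,m}(\Gamma;\overline{\Q})$-rationality is not even needed here, only that $\leftexp{\sigma}F$ is again a cusp form and the ratio is insensitive to scaling. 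On the other hand, I need $\leftexp{\sigma}(\mathfrak{q}(G)) = \mathfrak{q}(\leftexp{\sigma}G)$; this is exactly Remark~\ref{arithmeticitycuspremark}, which requires precisely the hypothesis that $E_{\ell_1,m_1}(\Gamma)$ is $L$-rational for $\sigma\in\Aut(\C/L)$ — wait, the hypothesis as stated is $L$-rationality of $E_{\ell,m}(\Gamma)$, the weight of $F$, not of $G$. I would therefore instead route the argument through $\mathfrak{p}^\circ_{\ell,m}$ rather than $\mathfrak{q}$: since $\mathcal{X}_{\ell,m}^{\ell_1,m_1} = \{X\}$, part (3) of Lemma~\ref{nearlyholprojbasicproplemma} (extended via $\mathfrak{p}^\circ$) gives $\langle G, XF\rangle = \langle \mathfrak{p}^\circ_{\ell,m}(G), XF\rangle$, and $\mathfrak{p}^\circ_{\ell,m}(G) = X(\mathfrak{q}(G'))$ for some $G'\in S_{\ell,m}(\Gamma)$ determined by the structure theorem applied to $G$. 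Then Proposition~\ref{arithmeticitycusppro}, whose hypothesis is $L$-rationality of $E_{\ell,m}(\Gamma)$ with $\ell>3$ — exactly what is assumed — gives $\leftexp{\sigma}(\mathfrak{p}^\circ_{\ell,m}(G)) = \mathfrak{p}^\circ_{\ell,m}(\leftexp{\sigma}G)$, which is what makes everything match up.

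Assembling: using Proposition~\ref{peterssonequivprop} to rewrite $\langle X(\mathfrak{q}(G')), XF\rangle = c_{\ell,m,X}\langle \mathfrak{q}(G'), F\rangle$ and $\langle XF, XF\rangle = c_{\ell,m,X}\langle F, F\rangle$, the constant cancels and the ratio becomes $\langle \mathfrak{q}(G'), F\rangle/\langle F,F\rangle$. Applying $\sigma$, the assumed property of $F$ gives $\sigma$ of this equals $\langle \leftexp{\sigma}(\mathfrak{q}(G')), \leftexp{\sigma}F\rangle/\langle \leftexp{\sigma}F, \leftexp{\sigma}F\rangle$. Running the same chain of identities backward for $\leftexp{\sigma}G$ in place of $G$, and using Proposition~\ref{proparithmeticitydiff}, Proposition~\ref{arithmeticitycusppro}, and Proposition~\ref{peterssonequivprop} once more, this equals $\langle \leftexp{\sigma}G, \leftexp{\sigma}(XF)\rangle/\langle \leftexp{\sigma}(XF), \leftexp{\sigma}(XF)\rangle$, completing the proof. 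The main obstacle I anticipate is purely bookkeeping: making sure the singleton hypothesis on $\mathcal{X}_{\ell,m}^{\ell_1,m_1}$ is used correctly so that the structure theorem produces a \emph{unique} $G'$ with $\mathfrak{p}^\circ_{\ell,m}(G) = X(\mathfrak{q}(G'))$, and tracking the $(2\pi)$-powers (which ultimately all cancel in the ratios) so that every invocation of Propositions~\ref{proparithmeticitydiff} and~\ref{arithmeticitycusppro} is literally applicable. None of this is deep, but it must be done carefully since the hypotheses are stated in terms of the weight $(\ell,m)$ of $F$, not the weight $(\ell_1,m_1)$ of $G$.
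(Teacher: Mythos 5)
Your final argument is correct and is essentially the paper's own proof: the paper states that the proof is identical to that of Proposition~\ref{pterssonratio} except that one uses $\mathfrak{p}^\circ_{\ell,m}$ in place of $\mathfrak{p}_{\ell,m}$, which is precisely the route you settle on after correctly discarding the first idea (projecting via $\mathfrak{q}$ at weight $(\ell_1,m_1)$, which would need $L$-rationality of $E_{\ell_1,m_1}(\Gamma)$ rather than of $E_{\ell,m}(\Gamma)$). Your chain --- $\langle G, XF\rangle = \langle \mathfrak{p}^\circ_{\ell,m}(G), XF\rangle$ with $\mathfrak{p}^\circ_{\ell,m}(G)=X(G'')$ for $G''\in S_{\ell,m}(\Gamma)$, then Proposition~\ref{peterssonequivprop} to cancel $c_{\ell,m,X}$, the hypothesis on $F$, and Propositions~\ref{proparithmeticitydiff} and~\ref{arithmeticitycusppro} (whose hypotheses $\ell>3$ and $L$-rationality of $E_{\ell,m}(\Gamma)$ are exactly what is assumed) to run the identities backward for $\leftexp{\sigma}G$ --- matches the intended argument, with the $(2\pi)^v$ normalizations cancelling in the ratios as you note.
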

\begin{proof}
The proof is identical to Proposition~\ref{pterssonratio}, except that we use $\mathfrak{p}^\circ_{\ell,m}$.
\end{proof}

\bibliography{nearholo}{}

\end{document}